\documentclass[a4paper,11pt]{amsart}

\usepackage{graphicx} % Required for inserting images

\title{Subconvexity Problem on $\gelle_3$ over number fields: the twist aspect}
\author{Filippo Berta}
\address{
EPFL, Station 8, 1015, Lausanne 
}
\email{filippo.berta@epfl.ch}

\date{}
\usepackage{amsmath}
\usepackage{amssymb}
\usepackage{amsthm}
\usepackage{graphicx}
\usepackage{enumerate}
\usepackage{mathtools}
\usepackage{tikz-cd}
\usetikzlibrary{arrows}
\usepackage{ bbold }
\usepackage{footmisc}
\usepackage{mathrsfs}
\usepackage{yfonts}
\usepackage[margin=30mm,paperwidth=216mm, paperheight=303mm]{geometry}

\newtheorem{theorem}{Theorem}[section]
\newtheorem{proposition}[theorem]{Proposition}
\newtheorem{lemma}[theorem]{Lemma}
\newtheorem{corollary}[theorem]{Corollary}

\theoremstyle{definition}
\newtheorem{definition}[theorem]{Definition}
\newtheorem{remark}[theorem]{Remark}

\newcommand{\N}{\mathbb{N}}
\newcommand{\A}{\mathbb{A}}
\newcommand{\R}{\mathbb{R}}

\newcommand{\Z}{\mathbb{Z}}
\newcommand{\Q}{\mathbb{Q}}

\newcommand{\gelle}{\operatorname{GL}}

\newcommand{\cl}{\mathcal{C}\ell}

\newcommand{\imag}{\operatorname{im}}
\newcommand{\real}{\operatorname{re}}

\newcommand{\fin}{\text{fin}}
\newcommand{\CC}{\mathbb{C}}

\newcommand{\mods}[1]{\,(\mathrm{mod}\,{#1})}

\usepackage{hyperref}
\definecolor{linkcolour}{rgb}{0,0.2,0.6}
\hypersetup{colorlinks,breaklinks,urlcolor=linkcolour, linkcolor=linkcolour}

\begin{document}

\begin{abstract} Let $F$ denote a number field and let $\mathfrak{q}\subset O_F$ traverse a sequence of prime ideals with norm $N(\mathfrak{q}) \to \infty$ and for each $\mathfrak{q}$, let $\chi \in \widehat{F^{\times}\setminus \A^\times}$ be a finite order character of conductor $\mathfrak{q}$. For a fixed unitary cuspidal automorphic representation $\pi$ of $\gelle_3/F$ we show that 
\begin{equation*}
L(\pi \otimes \chi,\tfrac{1}{2})\ll \ N(\mathfrak{q})^{3/4-\kappa}.\end{equation*} holds for all $\kappa< \frac{1}{36}$.
\end{abstract}
\maketitle

\section{Introduction } 

The subconvexity problem for higher rank automorphic forms has a recent history. 
The first result is due to Li. In \cite[Corollary 1.2]{Li2011GL3GL2} Li shows, among other results, that for any \emph{self-dual} Hecke--Mass form $\pi$ for $\operatorname{SL}_3(\Z)$,
\[L(\pi,\tfrac{1}{2}-t) \ll (1+\vert t\vert)^{3/4-\delta}, \]
for any $\delta < \frac{1}{16}$.
Subsequently, Blomer proved a similar result for the non-archimedean twists. More precisely, in \cite{blomersubconvexity} Blomer shows, among other results, that for any \emph{self-dual} Hecke--Mass form $\pi$ for $\operatorname{SL}_3(\Z)$ and any primitive quadratic Dirichlet character $\chi \mods{q}$
\[L(\pi\otimes \chi,\tfrac{1}{2}) \ll q^{3/4-\delta}, \]
for any $\delta < \frac{1}{8}$.

The first result for non necessarily self-dual cusp forms is by Munshi. In \cite{munshioriginal}, Munshi shows that for any fixed Hecke--Mass cusp form $\pi$ over $\gelle_3(\Z)$ that satisfies the Ramanujan-Selberg conjecture and for a sequence of Dirichlet characters of prime conductor $q$ that tends to infinity, the subconvex bound
\[L\left(\pi \otimes \chi, \frac{1}{2}\right)\ll q^{3/4-\delta} \]
holds for any $\delta < \frac{1}{1612}$. In a subsequent work \cite{Munshi1}, Munshi  was able to improve the exponent to $\frac{1}{308}$ and remove the Ramanujan-Selberg conjecture. All these results were possible using a so called $\gelle(2)$-delta symbol, whereby one detects the equality $n_1=n_2$ using the Petersson trace formula. The arguments leading to these results were carefully studied by Holowinsky and Nelson \cite{nelsonholowinsky}, who were able to drastically simplify the proof and obtain the exponent $\delta = \frac{1}{36}$.

In this paper, we deal with the following problem: consider $\pi$ a fixed automorphic representations on $\gelle_3/F$, where $F$ is any number field and let $\chi$ be a sequence of characters in $\widehat{F^{\times}\setminus\A^{\times}}$ of finite order of conductor $N(\mathfrak{q})$, where $\mathfrak{q}$ is an integral prime ideal with norm tending to infinity. The subconvexity problem in the twist aspect is to show the existence of $\kappa_0>0$ such that
\begin{equation}\label{eq:Theorem}L(\pi\otimes \chi,\tfrac{1}{2}) \ll N(\mathfrak{q})^{\frac{3}{4}-\kappa}. \end{equation}for all $\kappa < \kappa_0$. 
The corresponding problem (and other aspects as well) for representations over $\gelle_2$ has been completely solved in \cite{michelvenkatesh} (see also \cite{sparse-equidistribution-venkatesh} for the twist case).

For $\gelle_3$, recent progress has been made by Qi, \cite{Qi2019}, \cite{Qi2024}, in the case where $\pi$ is a self-dual cuspidal representation. Qi succeeded in generalizing Blomer's work \cite{blomersubconvexity} for a general number field $F$ using his theory of complex Bessel and Hankel transforms (see \cite{qi2021theory}). Unfortunately, this argument cannot deal with a non-self-dual automorphic representation $\pi$. In this paper, we generalize the methods of \cite{nelsonholowinsky} to number fields and obtain. 

\begin{theorem}
    The subconvex bound \eqref{eq:Theorem} holds for any $\kappa_0 < \frac{1}{36},$
    provided that the ideal conductor $\mathfrak{C}(\pi)$ of $\pi$ is coprime to the discriminant ideal of $F$. Moreover, the implicit constant depends at most polynomially on $\mathfrak{C}(\pi)$ and on the invariants of $F$.
\end{theorem}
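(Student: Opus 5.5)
We follow the Holowinsky--Nelson simplification of Munshi's method, carried out adelically over $F$. By the approximate functional equation for $L(\pi\otimes\chi,s)$ (analytic conductor $\asymp N(\mathfrak{q})^3$), it suffices to bound smoothed sums
\[
S(N)=\sum_{n}\lambda_\pi(n)\chi(n)V\!\left(\tfrac{N(n)}{N}\right),\qquad N\le N(\mathfrak{q})^{3/2+\varepsilon},
\]
where $n$ runs over integral ideals, understood adelically through $\mathcal{O}_F$-lattices in each class of $\cl(F)$ and Hecke characters. The target is $S(N)\ll N^{1/2}N(\mathfrak{q})^{3/4-1/36+\varepsilon}$ in the critical range $N\approx N(\mathfrak{q})^{3/2}$; smaller $N$ are handled by the trivial bound.

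\textbf{Key identity.} Following Holowinsky--Nelson, we write $\chi$ as an average of additive characters twisted by a Gauss sum, $\chi(n)=\tau(\bar\chi)^{-1}\sum_{a\bmod\mathfrak q}\bar\chi(a)\psi(an/\varpi_{\mathfrak q})$. We then introduce an auxiliary modulus. Fix a parameter $L$, and let $\ell$ run over primes of norm $\asymp L$ not dividing $\mathfrak{q}\mathfrak{C}(\pi)$. Using $\chi(\ell)\bar\chi(\ell)=1$, we shift $n\mapsto n\ell$ and write $n=\ell m$ with an amplification-type weight. Dualizing, the sum becomes a bilinear expression in which the additive frequency $a/\mathfrak q$ is perturbed by $r/(\mathfrak q\ell)$ with $N(r)$ small. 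This is the number-field analogue of the ``$\delta$-free'' identity of Holowinsky--Nelson.

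\textbf{Voronoi and Cauchy--Schwarz.} For each additive twist $\psi(n\,x)$ with $x=a/(\mathfrak q\ell)$, we apply the $\gelle_3$ Voronoi summation formula over $F$. Here we use the adelic version via the $\gelle_3$ functional equation for $\pi\otimes$ additive twists, in the Ichino--Templier form. It converts the length-$N$ sum into a dual sum of length about $N(\mathfrak q\ell)^3/N$, weighted by hyper-Kloosterman-type sums modulo $\mathfrak q\ell$. The coprimality of $\mathfrak C(\pi)$ with the discriminant keeps the ramified local factors of Voronoi explicit and polynomially bounded. We then apply Cauchy--Schwarz to remove $\lambda_\pi$, using Rankin--Selberg, $\sum_{N(m)\le X}|\lambda_\pi(m)|^2\ll X^{1+\varepsilon}$, with polynomial dependence on $\mathfrak C(\pi)$. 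After opening the square, we apply Poisson summation over the lattice $\mathcal O_F$ (or its fractional-ideal translates) in the free variable. This produces complete character sums modulo $\mathfrak q\ell_1\ell_2$.

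\textbf{Character sums and archimedean analysis.} These sums factor by CRT over $\mathfrak q$, $\ell_1$ and $\ell_2$. In the $\mathfrak q$-part, the diagonal term contributes $N(\mathfrak q)^{1}$-type savings. Off the diagonal, the sum is a Deligne-type correlation of Kloosterman sums, bounded by square-root cancellation over the residue field $\mathcal O_F/\mathfrak q$; this is uniform in the degree of $F$ because only the residue field enters. The archimedean integrals over $F_\infty=\R^{r_1}\times\CC^{r_2}$ are handled by stationary phase. We use the asymptotics of the $\gelle_3$ Voronoi kernels at real and complex places, reducing to Mellin--Barnes integrals of products of gamma factors, so as to avoid Qi's complex Bessel theory.

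\textbf{Balancing and main obstacle.} Balancing the diagonal term against the off-diagonal term, and optimizing $L=N(\mathfrak q)^{\theta}$ exactly as in Holowinsky--Nelson, gives the exponent $1/36$. We expect the main obstacle to be the uniform archimedean analysis at complex places and the Poisson step over a lattice of rank $[F:\Q]$ with nontrivial units and class group. The stationary-phase estimates must produce exactly the same ranges of effective summation as over $\Q$. Unit ambiguity must be controlled by summing over fundamental domains for $\mathcal O_F^\times$ acting on $F_\infty^\times$, with norm-truncated bump functions. Our first attempt would be to work throughout with the adelic Whittaker functions $W_\pi(\mathrm{diag}(y,1,1))$ and to perform Poisson and Voronoi on $\A_F$ directly. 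In that setting the unit and class-group issues disappear into the automorphy, and only local archimedean stationary phase has to be done, one place at a time.
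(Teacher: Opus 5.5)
Your outline has the right surrounding ingredients: the approximate functional equation, the Ichino--Templier Voronoi formula with the coprimality hypothesis controlling the ramified factors, Cauchy--Schwarz with Rankin--Selberg, Poisson, and adelic handling of units and class group. But it misses the one step that produces a saving, and the Voronoi step you put in its place cannot save anything.

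In the paper, as in Holowinsky--Nelson, one sums $\overline{\chi}(\iota_{\mathfrak q}(\delta))V_0(\delta\Delta^{-1})$ against the period translated by $\iota_{\mathfrak q}(\delta^{-1})$ over $\delta\in F$, and applies Poisson in $\delta$. The zero frequency returns a multiple of $I_t(\phi,\chi)$, which gives $I_t=\mathcal F_t-\mathcal O_t$ as in \eqref{eq:keyidentity}. In $\mathcal F_t$ the Whittaker expansion carries the twist $\psi_{v_{\mathfrak q}}(\gamma k(l\delta\alpha_{\mathfrak q})^{-1})$ of modulus $\mathfrak q$. Since $\psi$ is trivial on $F$, this equals a twist whose modulus divides $l\delta\mathfrak u$, where $\mathfrak u=(\alpha_{\mathfrak q})\mathfrak q^{-1}$. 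That modulus is coprime to $\mathfrak q$, of norm about $|\Delta L|$ (which is $N(\mathfrak q)^{7/9}$ for the final parameters). It is multiplied by $\psi_\infty(-\gamma k(l\delta\alpha_{\mathfrak q})^{-1})$, which is non-oscillating thanks to \eqref{eq:choicedelta_v}. Voronoi is applied only after this conductor lowering, so $\mathfrak q$ never enters a Voronoi modulus.

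You instead apply Voronoi modulo $\mathfrak q\ell$. In the critical range the dual sum then has length about $N(\mathfrak q)^{3/2}L^3$, longer than the original. For trivial $\ell$ this is just the functional equation of $L(\pi\otimes\chi,s)$, returning a dual sum of the same length with nothing gained. Lengthening can only pay off with a compensating device, such as the delta symbol in Munshi's method, and your identity provides none. Indeed, ``shift $n\mapsto n\ell$, write $n=\ell m$, dualize'' is not an exact identity, and nothing in it produces the frequency perturbation $r/(\mathfrak q\ell)$ you then rely on.

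You also have no counterpart of the dual term $\mathcal O_t$ (the nonzero Poisson frequencies in $\delta$), which needs its own argument. Expanding it gives $\chi$-twisted Kloosterman sums modulo $\mathfrak q$, and Cauchy--Schwarz, Rankin--Selberg and Poisson reduce it to the correlation bounds of Lemma \ref{lem:correlationkloosterman}. This is the only place where complete sums modulo $\mathfrak q$ occur. In $\mathcal F_t$ they live modulo divisors of $\delta l_1l_2\mathfrak u$ and are treated by Lemma \ref{lem:4.3} and the analysis of $\Omega$.

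The two terms pull in opposite directions: without amplification, $\mathcal F$ needs $|\Delta|<N(\mathfrak q)$ while $\mathcal O$ needs $|\Delta|>N(\mathfrak q)$ (Remark \ref{rmk:heuristic}). The paper reconciles them with two families of principal primes, inserted via $\sum_k b_k\overline{\chi}(\iota_{\mathfrak q}(k))=1=\sum_l c_l\chi(\iota_{\mathfrak q}(l))$. This turns the requirement for $\mathcal O$ into $|\Delta KL|>N(\mathfrak q)$. The exponent $1/36$ then comes from optimizing Propositions \ref{prop:boundF} and \ref{prop:boundO} jointly in $(\Delta,K,L)$, with $|K|=N(\mathfrak q)^{5/18}$ and $|L|=N(\mathfrak q)^{1/9}$. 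A one-parameter diagonal versus off-diagonal balance in a single family $\ell$ does not produce it.

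Finally, the obstacle you single out, stationary phase at complex places, is an artifact of keeping $\mathfrak q$ in the modulus. Once \eqref{eq:choicedelta_v} holds, all archimedean weights are inert. Only the crude decay \eqref{eq:decaybesselforinert} of Bessel transforms of inert functions is then needed, which the paper gets from Stirling.
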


\begin{remark}Since our main focus is not on the conductor of $\pi$, we deliberately assume that $\mathfrak{C}(\pi)$ and the discriminant ideal of $F$ are coprime. This simplifies computations for the Voronoi summation formula. However, it should be possible to perform the computations even in the case when the two ideals are not coprime.  
\end{remark}
\begin{remark}
    The assumption that $\chi$ is of finite order is taken for simplicity. Our method is also applicable for general $\chi$ and one would obtain \eqref{eq:Theorem} with a constant depending on the archimedean conductor of $\chi$. To obtain a hybrid subconvex bound in both the archideman and non-archimdean aspect following the method in this work (and \cite{schumacherphdthesis}), one has to use the theory of complex bessel transforms, developed by Zhi Qi. See Remark \ref{rmk:complexbesseltransform} for more details.
\end{remark}

   Given a finite order character $\chi \colon F^\times \setminus \A^\times \to \CC^\times$ of the conductor $\mathfrak{q}$, one obtains a character on $(O_F/\mathfrak{q})^\times$. This character is an instance of an $\ell$-adic trace function over $O_F/\mathfrak{q}$. From this perspective, if we consider subconvexity \eqref{eq:Theorem} as non-correlation results, it is natural to ask whether other trace functions also show non-correlation with automorphic forms. Fouvry, Kowalski and Michel studied in \cite{fouvry2015algebraic} the non-correlation of $\gelle_2/\Q$ automorphic forms with trace functions over $\Z/(q)$ for $q$ a prime number, and this was extended by Nadarajan in \cite{nadarajan2024algebraictwistsgl2automorphic} to the number field setting. Kowalski, Lin, Michel and Sawin \cite{kowalski2020periodic} have already used the strategy discovered in \cite{nelsonholowinsky} to show non-corellation of trace functions (more generally periodic twists) with automorphic forms over $\gelle_3/\Q$. It seems reasonable to expect that trace functions over $O/\mathfrak{q}$ and automorphic forms over $\gelle_3/F$ also show non-corellation properties. We hope to return to this question in future.

\subsection{Strategy for the proof and structure of the paper} 
We briefly discuss the strategy of the proof.
For $\phi \in \pi^{\infty}$ we consider the Jacquet-- Piateski-Shapiro-- Shalika adelic period $$ I(\phi,\chi,s) = \int_{F^\times\setminus \A^\times}\mathbb{P}\phi\begin{pmatrix}
    y&\\&1
\end{pmatrix}\chi(y)\vert y \vert ^{s-1/2}\ \mathrm{d}^\times y,$$
where $\mathbb{P}\phi$ is some non-complete global Whittaker function, see equation \eqref{eq:definitionPphi} for the precise definition. We choose $\phi_0$ via the local Whittaker models to correspond to the local new-vectors for finite places and to some nice smooth functions for the archimedean places. We choose then $\phi$ to be the right shift of $\phi_0$ by a unipotent element, more precisely $\phi = \phi_0^{u_{12}(\zeta_\mathfrak{q})}$. See Section \ref{sec:approximatefunctionalequation} for details about the new-form and Section \ref{sec:Notation and basic set-up} for the notations $u_{12}$ and $\zeta_\mathfrak{q}$. We can show similarly to \cite{sparse-equidistribution-venkatesh} and \cite{michelvenkatesh} that 
\[N(\mathfrak{q})^{-\frac{1}{2}}L(\pi \otimes \chi,s) \sim I(\phi,\chi,s),\]
and so it suffices to show the existence of some $\delta>0$ such that
\[ I(\phi,\chi,\tfrac{1}{2}) \ll N(\mathfrak{q})^{1/4-\delta}.\]
The sign $\sim$ can be interpreted as equal up to absolutely bounded non-oscillating archimedean factors.
We then write $I(\phi,\chi,s) = \int_{0}^\infty I_t(\phi,\chi)t^{s-1/2}\ \mathrm{d}^\times t$ (again following \cite{sparse-equidistribution-venkatesh} and also \cite{MR4370537}), where $I_t(\phi,\chi)$ is a period over $F^{\times}\setminus\A^{(1)}$, and we see that using an approximate functional equation (see Proposition \ref{lem:convexitybound} and Proposition \ref{prop:approximatefunctionalequation} for a refined version) it suffices to bound 
\[\int_{\substack{ N(\mathfrak{q})^{-3/2-\kappa}<t<N(\mathfrak{q})^{-3/2+\kappa}}}I_{t}(\phi,\chi) \ \mathrm{d}^{\times}t\]
for some $\kappa >0$ small enough. 
We then choose a fundamental domain $\bigsqcup_{I}I(\widehat O^\times E)$, where $I$ runs over fixed representatives of the class group, for integration over $F^{\times}\setminus \A^{(1)}$, and let $(O_F/\mathfrak{q})^\times$ act on this domain and we write

\begin{equation*}\label{eq:intro2}I_t(\phi,\chi) = \sum_{I}\chi(I)\sum_{a\in (O_F/\mathfrak{q})^\times}\chi(\iota_\mathfrak{q}(a))\int_{\widehat O^\times_1(\mathfrak{q})E}\mathbb{P}\phi(a_1(I\iota_\mathfrak{q}(a)yz_t))\chi(y_\infty)\ \mathrm{d}^\times y,\end{equation*}
see Section \ref{sec:Notation and basic set-up} for the definitions of $\widehat O^\times$ and $\widehat O_1^\times(\mathfrak{q})$ and for the explicit choice of $E$.

This allows us to write down a key identity similar to \cite[(3.5)]{nelsonholowinsky}, which is roughly as follows. In what follows we pretend that $\mathfrak{q}=(\alpha_\mathfrak{q})$ is principal. See \eqref{eq:keyidentity} for a more precise formulation.
\begin{align*}
    &I_t(\phi,\chi) = \mathcal{F}_t(\phi,\chi) - \mathcal{O}_t(\phi,\chi), \end{align*}where 
    \begin{equation}\mathcal{F}_t(\phi,\chi)=\frac{N(\mathfrak{q})}{\vert\Delta\vert}\sum_{I}\chi(I)\sum_{\delta}\chi(\iota_\mathfrak{q}(\delta))V_0\left(\frac{\delta}{\Delta}\right)\int_{\widehat{O}_1^\times(\mathfrak{q})E}\mathbb P\phi(a_1(I\iota_\mathfrak{q}(\delta^{-1})yz_t))\chi(y_\infty) \ \mathrm{d}^\times y\label{eq:intro3}\end{equation}and  \begin{align}
        \mathcal{O}_t(\phi,\chi)&=\sum_{\delta \neq 0}\widehat{V_0}\left(\frac{\delta\Delta}{\alpha_\mathfrak{q}^{-1}}\right)\sum_{I}\chi(I)\nonumber\\&\times\sum_{a\in (O_F/\mathfrak{q})^\times}\chi(\iota_\mathfrak{q}(a))\psi_\mathfrak{q}\left(\frac{\iota_\mathfrak{q}(a\delta)}{\alpha_\mathfrak{q}}\right)\int_{\widehat{O}^\times_1(\mathfrak{q}) E} \mathbb P\phi(a_1(I\iota_\mathfrak{q}(\delta^{-1})yz_t))\chi(y_\infty) \ \mathrm{d}^\times y. \label{eq:intro4}
\end{align}
for some compactly supported smooth function $V_0\in C_c^\infty(\A)$ and a parameter $\Delta \in \R^{S_\infty}_{>0}\subset \A^\times$ that we want to choose appropriately. We proceed by bounding \eqref{eq:intro3} and \eqref{eq:intro4} separately, see Propositions \ref{prop:splitFandO}, \ref{prop:boundF} and \ref{prop:boundO}.  To bound \eqref{eq:intro3} we do not use the oscillation $\chi(\iota_\mathfrak{q}(\delta))$ but rather we bound $\mathbb{P}\phi(a_1(\iota_\mathfrak{q}(\delta^{-1}z_t))$ non-trivially, here we ignore $I$ and $y$ for this introduction. We choose $\Delta$ and $V_0$ so that if $V_0\left(\frac{\delta}{\Delta}\right)\neq0$, then $N(\delta) \asymp \vert \Delta \vert < N(\mathfrak{q})$. We use additive reciprocity to write, for $\delta$ so that \eqref{eq:intro3} is non-zero, \[\mathbb P\phi_0^{u_{12}(\zeta_\mathfrak{q})}(a_1(\iota_\mathfrak{q}(\delta^{-1})z_t)) \sim \mathbb{P}\phi_0^{u_{12}(\zeta_\delta)}(a_1(\iota_\delta(\alpha_\mathfrak{q}^{-1})z_t))\]
again $\sim$ means that the two differ by absolutely bounded non-oscillating archimedean factors. An application of the Voronoi summation formula developed by Ichino and Templier in \cite{IchinoTemplier} allows us to obtain a non-trivial bound for $\mathbb{P}\phi_0^{u_{12}(\zeta_\delta)}(a_1(\iota_\delta(\alpha_\mathfrak{q}^{-1})z_t))$. See Remark \ref{rmk:heuristic} for more details.

Second, $\Delta$ must be so that the Fourier transform $\widehat{V_0}\left(\frac{\delta \Delta}{\alpha_\mathfrak{q}^{-1}}\right)$ decreases very rapidly as soon as the norm of $\delta$ is larger than the norm of $\mathfrak{q}$, that is $\vert \Delta \vert > N(\mathfrak{q})^\epsilon$. Choosing $\Delta$ optimally gives us the convexity bound, and therefore to reach subconvexity we need an additional amplification argument (see Section \ref{sec:amplification}). In particular, we consider the character $\chi$ as a signed measure on $(O/\mathfrak{q})^\times$, and we amplify this measure using (principal) prime ideals coprime to $\mathfrak{q}$.

The proof now follows closely \cite{nelsonholowinsky}, with the only difference that it is formulated using the adèlic language.

The structure of the paper is the following. \begin{itemize}
    \item In section \ref{sec:Notation and basic set-up} we fix the notations and the basic notions. 
    \item In section \ref{sec:Counting units}  we collect consequences of Dirichlet's unit theorem that are relevant to our analysis later on, for instance, the Fourier-Whittaker expansion of an automorphic form is a sum over elements in $F$, whereas the arithmetic $L$-function is Dirichlet series over ideals. 
\item In section \ref{sec:gausskloosterman} we prove the necessary local results, including bounds on Kloosterman sums and correlation bounds of Kloosterman sums with smooth functions. 
\item In section \ref{sec:integralrep} we recall the basics on the integral representation of $L$-functions. \item In section \ref{sec:voronoi} we recall and elaborate on the Voronoi summation formula of Ichino-Templier \cite{IchinoTemplier}, and deduce from it the formula we need in our situation. \item In section \ref{sec:keyidentity} we state the Key identity which does not depend on our fixed automorphic representation, but it is really just a statement on the characters of $(O_F/\mathfrak{q})^\times$. \item In section \ref{sec:approximatefunctionalequation} we state and prove the necessary approximate functional equations, and in section \ref{sec:amplification} we introduce an amplification. \item In sections \ref{sec:Bounds for FT} and \ref{sec:Bounds for OT} we conclude. These last two sections are very similar to \cite{nelsonholowinsky}, and it might be useful to first read the sections $4$ and $5$ of this work. 
\end{itemize}

\subsection*{Acknowledgments}
We would like to thank Philippe Michel for numerous corrections and valuable feedback that improved the presentation of the paper. We also thank Yongxiao Lin, Paul D. Nelson and Zhi Qi for useful comments on the paper.

\section{Notation and basic set-up}\label{sec:Notation and basic set-up} Let $F$ denote a number field and $O_F$ its ring of integers. By \emph{place} of $F$ we mean an identity class of discrete valuations of $F$. By Ostrowski's Theorem, we identify the set of (identity classes of) non-archimedean places $v$ of $F$ with $\operatorname{spec}(O_F)$. We denote the underlying prime ideal by $\mathfrak{p}_v \subset O_F$, in particular for $x\in O_F$ the valuation $v(x) \in \Z$ is the biggest integer so that $x\in \mathfrak{p}_v^{v(x)}$. Viceversa, for a prime ideal $\mathfrak{p} \subset O_F$ we denote by $v_{\mathfrak{p}}$ the corresponding discrete valuation.

 For any  non-archimedean place $v$ of $F$ fix the following data:\begin{itemize}\item A completion $F\subset F_v$ of $F$ with respect to the absolute value $\vert{\bullet}\vert_v$ induced by $v$.  \item The ring of $v$-adic integers $O_{v} = \{x \in F_v,\ \vert{x}\vert_v\leq 1\} \subset F_v$ and its subgroup of units $O_{v}^{\times}=\{x \in F_v,\ \vert{x}\vert_v=1\}.$
    \item A uniformizer $\varpi_v \in O_v$. 
\end{itemize}

The set of archimedean places, denoted here by $S_{\infty}$, is identified with the union of real embeddings, that is, $\rho_v \colon F \hookrightarrow \CC$ so that $\rho_v(F)\subset \R$, and a choice of one of the two conjugates $\rho_v,\overline{\rho_v}\colon F \hookrightarrow \CC$, when $\rho_v(F) \nsubseteq \R$. They will be called \emph{real} embeddings and \emph{complex} embeddings, respectively. Let $d_{\infty} = \vert{S_{\infty}}\vert$. For a place $v$ of $F$ we will usually write the condition $v\notin S_{\infty}$ as $v<\infty$.

Let $\A$ denote the adèle ring of $F$ and $\A^{\times}$ the idèle group of $F$. We embed, as usual, $F\subset \A$ and $F^{\times}\subset \A^{\times}$ diagonally. For any finite set of places $S$ of $F$ we denote $\A^{S} = \prod'_{v\notin S}(F_v,O_v)$ and $(\A^{\times})^{S} = \prod_{v\notin S}'(F_v^{\times},O_v^{\times})$ the adèle ring and the idèle group, respectively, defined omitting those places $v\in S$ from the definition. We will eventually use the notation $\A_{\fin} = \A^{S_{\infty}}$ and call it the ring \emph{of finite adèles}. For a finite set of places $R$ we will denote $\A_R = \prod_{v\in R}F_v$ and $\A_{R}^{\times} = \prod_{v\in R}F_v^{\times}$. We write $\widehat{O} = \prod_{v<\infty}O_{v}\subset \A_{\fin}$ and $\widehat{O}^{\times} = \prod_{v<\infty}O_{v}^{\times}\subset \A_{\fin}^{\times}$. 
 For an integral ideal $\mathfrak{a}$ we denote \[\widehat{O}^{\times}_{1}(\mathfrak{a}) =  \prod_{v|\mathfrak{a}}\left(1+\varpi_v^{v(\mathfrak{a})}O_{v}\right) \times \prod_{\substack{v<\infty\\v\nmid \mathfrak{a}}}O_v^{\times} \subset \A_{\fin}^{\times}.\]
 We also set $F_{\infty} = \prod_{v\in S_\infty}F_v$ and $F^{+}_{\infty} = \prod_{v\ \text{real}}\R_{>0} \times \prod_{v\ \text{complex}}\CC^{\times}$.

For functions $f\colon \A \to \CC^{\times}$, that is split as a product $f = \prod_vf_v$ for functions $f_v\colon F_v\to \CC^{\times}$ such that for all but finitely many $v<\infty$ it holds that $f_v|_{O_v}\equiv 1$, we set $f^S = \prod_{v\notin S}f_v$, and $f_S = \prod_{v\in S}f_v$.

We define the adèlic norm map \[\vert{\cdot}\vert\colon \A \to \R_{\geq 0};\ (y_v)_v\mapsto \prod_v \vert{y_v}\vert_v,\]
and the norm of a (finite) idèle 
\[ N\colon \A^{\times}\to \R_{>0};\ (y_v)_v \mapsto \prod_{v<\infty}\vert{y_v}\vert^{-1}_v.\]
The first induces a direct product decomposition $A_1\A^{(1)} = \A^{\times},$ where $\A^{(1)}$ is the kernel of the norm map restricted to $\A^{\times}$ and $A_1\simeq \R_{>0}$: this can and will be seen via the map $\R_{>0} \hookrightarrow \prod_{v \in S_{\infty}}F_v^{+};\ t \mapsto z_t=(t^{\frac{1}{d_v}})_v$, where $d_v=d_{\infty}$ if $v$ is real and $d_v=2d_{\infty}$ if $v$ is complex. 

We denote \[\vert{\cdot }\vert_{\infty} \colon F_{\infty} \to \R; x \mapsto \prod_{v\in S_{\infty}}\vert{x}\vert_v,\] we set
$F_{\infty}^{(1)}$ as its kernel. Also, we set $F_{\infty}^{(1),+}$ to be the intersection $F^{(1)}_{\infty}\cap F_{\infty}^+$. 

It is true that $F^{\times} \subset \A^{(1)}$.
We embed $\A^{\times}_{\fin} \hookrightarrow \A^{(1)}$ by setting for $x\in \A^{\times}_{\fin}$ the archimedean 'coordinate' $x_{\infty} = z_{N(x)}\in F_{\infty}^{\times}$. 

For a prime integral ideal $\mathfrak{p}$  (or alternatively a finite place $v$ depending on the context) of $F$ we denote $\iota_{\mathfrak{p}}\colon F_{\mathfrak{p}}^{\times}\to \A^{\times}_{\fin}\subset \A^{(1)}$ that maps to the finite idèle which is $1$ for each place $v\neq v_{\mathfrak{p}}$ and to $x$ at the place $v_{\mathfrak{p}}$. In particular, note that for $v\in S_{\infty}$ the value of $\iota_{\mathfrak{q}}(x)_{v}$ is $\vert{x}\vert_{v_{\mathfrak{q}}}^{-\frac{1}{d_v}}$. For an integral ideal $\mathfrak{a}$ we define $\iota_{\mathfrak{a}} = \prod_{v|\mathfrak{a}}\iota_v$. 

The gothic letters $\mathfrak{a,b,p,q}$ etc. are reserved to integral ideals. We identify an ideal $\mathfrak{a}$ with the idèle $(\varpi_v^{v(\mathfrak{a)}})_v \in \A^{\times}$. This identification will be used only for objects that are invariant under the action of $\widehat{O}^{\times}$.  

\subsection{Asymptotic notation}
For two quantities $A$ and $B$ we write $A\ll B$, or $A = O(B)$, if there exists a constant $C$, independent of $A$ and $B$, so that $\vert{A}\vert\leq C\vert{B}\vert$. We write $A\asymp B$ if $A\ll B$ and $B\ll A$. If we write $A\ll_j B$, to highlight that the constant $C$ depends on a variable $j$. We say that a quantity $A$ is negligible small, if $A\ll_B N(\mathfrak{q})^{-B}$ for all $B>0$.

Let $\epsilon >0$ denote a small positive number. We adopt the convention that $\epsilon$ can change from line to line, but it is a fixed variable.
In general, we allow constants to depend implicitly on fixed constants such as $\epsilon$ or the field $F$.

\subsection{Choice of fundamental domain for $F^{\times}\setminus\A^{(1)}$}\label{subsec:2.choice of fundamental domain}
First, we fix a set of representatives $
    \mathcal{S}_{\cl}=(I_i)_{1\leq i \leq \vert{\cl}\vert} \subset \A_{\fin}^{\times}$ for the Class group. We do so that for each place $v$ we have $v(I_i)\leq 0$. That is, we are choosing them to be the "opposite" of integral, the reason is to simplify some computations afterwards. In addition, we assume that each $I \in \mathcal{S}_{\cl}$ is coprime to $\mathfrak{C}(\pi)$ and $\mathfrak{q}$.

For the archimedean side we define $E_{\infty}\subset F_{\infty}^{\times}$ as follows: first fix a place $v_0 \in S_{\infty}$. Consider the map \begin{equation*}l \colon F_{\infty}^{(1)}\to \R^{d_{\infty}-1}; (x_v)_v \mapsto (\log\vert{x_v}\vert_v)_{v\neq v_0}.\end{equation*} Let $\{\epsilon_1,\dots,\epsilon_{d_{\infty}-1}\}$ denote a basis for $O_F^{\times}/\mu_F$, where $\mu_F$ is the finite set of roots of unity of $F$. Let $P\subset \R^{d_{\infty}-1}$ denote the convex polytope generated by $\{l(\epsilon_1),\dots,l(\epsilon_{d_{\infty}-1})\}$. We define \begin{equation}\label{eq:definitionE}
    E = \left\{b\in l^{-1}(P),\ 0\leq \operatorname{arg}b_{v_0} \leq \frac{2\pi}{\vert{\mu_F}\vert}\right\}.\end{equation}
 Then \begin{equation*}\label{eq:fundamentaldomain2}\mathcal{E} = \bigsqcup_{I\in \mathcal{S}_{\cl}} I(\widehat{O}^{\times}\times E)\end{equation*}is a fundamental domain for $F^{\times}\setminus
\A^{(1)},$ see \cite[Chapter VII §3,\ Theorem 6]{lang1986algebraic}.

\subsection{Special elements}
Given an ad\`ele  $x=(x_v)_v \in \A$ we will denote by $\zeta_x \in \A$ the (finite) ad\`ele obtained by setting $(\zeta_x)_v = 0$ whenever $x_v \in O_{v}$ or $v\in S_{\infty}$, and $(\zeta_x)_v = x_v$ otherwise. 
We also fix the following notation for matrices (with the convention that empty spaces are $0$) in $\gelle_3$:
\begin{align}
    a_1(y) &= \begin{pmatrix}
        y&&\\&1&\\&&1
    \end{pmatrix},\qquad y\neq 0,\quad 
    a(x,y)= \begin{pmatrix}
        xy&&\\&x&\\&&1
    \end{pmatrix}, \qquad x,y\neq 0,\nonumber\\
    u_{12}(x) &=\begin{pmatrix}
        1&x&\\&1&\\&&1
    \end{pmatrix},\quad
    \sigma_{23}=\begin{pmatrix}
        1&&\\&&1\\&1&
    \end{pmatrix}, \qquad \sigma_{13} = \begin{pmatrix}
        &&1\\&1&\\1&&
    \end{pmatrix}.\label{eq:u12}
\end{align}
For any $F$-algebra $R$ and any integer $n\geq 2$ we denote
\begin{align*}
    U_n(R) = \left\{\begin{pmatrix}
        1&x_{12}&\cdots &x_{1n}\\&\ddots&\ddots &\vdots\\&&1&x_{n-1n}\\&&&1 
    \end{pmatrix}, \ x_{ij} \in R\right\}
\end{align*}

\subsection{Additive character}Let $\A_{\Q}$ be the ring of adèles of $\Q$. Let $\psi_{\Q} \colon \Q\setminus \A_{\Q} \to \CC$ the standard additive character, that is $\psi_{\Q} = \prod_{p}\psi_{\Q_p} \times \psi_{\Q,\infty}$ and the local characters are as follows: 
$\psi_{\Q,\infty}(x) = e^{-2\pi ix}$, $x\in \R$, and for any finite prime number $p$ we have $\psi_{\Q_p}(x) = e^{2\pi i\{x\}}$, $x \in \Q_p$, where $\{x\}$ is defined so that $x-\{x\} \in \Z_p$.

The standard character $\psi \colon F\setminus\A \to \CC$ is defined as $\psi_{\Q}\circ \operatorname{tr}_{\A_F/\A_\Q}$. It satisfies $\psi = \prod_{v<\infty} \psi_v \times \prod_{v\in S_{\infty}}\psi_v$, where for $v< \infty$ let $p$ the prime under $v$ so that $F_v/\Q_p$ is a finite (possibly trivial) extension, then $\psi_v = \psi_{\Q_p}\circ \operatorname{tr}_{F_v/\Q_p}$ for $v< \infty$. For $v\in S_{\infty}$ let $\psi_v(x) = e^{-2\pi i\operatorname{tr}_{F_v/\R}(x)}.$   

\begin{remark}  We denote $\mathfrak{D}\subset O_F$ the discriminant ideal of $F/\Q$.
The standard character $\psi = \prod_{v}\psi_v$ we just defined is ramified at those prime ideals $\mathfrak{p}$ which are ramified over $\Q$: that is the prime ideals that divide $\mathfrak{D}$.
\end{remark}

\subsection{Measures}
The choices of Haar measures are not so crucial as long as we remain consistent, since we are interested only in upper bounds.  

For any place $v$ we endow $F_v$ with the additive Haar measure $\mathrm{d}x_v$, which is self-dual with respect to the Fourier transform with respect to $\psi_v$. In particular, or finite $v$ we have $\operatorname{vol}(O_v) = \vert{\mathfrak{D}}\vert_v^{1/2}$.
We endow $\A$ with the Haar measure $\mathrm{d}x$ normalized so that for any measurable function $f = \prod_{v}f_v$ with $f_v = \mathbb{1}_{O_v}$ for all but finitely many $v< \infty$ and $f_v$ is measurable for every $v$, then 
\[\int_\A f(x)\ \mathrm{d}x = \prod_{v}\int_{F_v}f_v(x_v)\ \mathrm{d}x_v.\]

Similarly, we endow $F_v^{\times}$ with the Haar measure $\mathrm{d}^{\times}x_v$ that assigns to $O_v^{\times} \subset F_v^{\times}$ volume $1$, when $v<\infty$ and it assigns volume $1$ and we set $\mathrm{d}^\times x=\frac{\mathrm{d}x}{\vert{x}\vert}$ for $v\in S_{\infty}$. Notice that for $v<\infty$ we have $d^{\times}x_v = \operatorname{vol}(\mathfrak{D}_v)^{-1/2}(1-\frac{1}{N(\mathfrak{p}_v)})^{-1}\frac{\mathrm{d}x_v}{\vert{x}_v\vert}.$ In particular, $\mathrm{d}^{\times} x = \frac{\mathrm{d}x}{\vert{x}\vert}$. 

We endow $\R_{>0}$ with the Haar measure $\mathrm{d}^{\times }t= \frac{\mathrm{d}t}{t}$ and push forward to $A_1$. We endow then $\A^{(1)}$ with the measure $\mathrm{d}^\times x$ so that \[\int_{\A^{\times}}f(x)\ \mathrm{d}^{\times}x= \int_{\R_{>0}}\int_{\A^{(1)}}f(z_tx)\ \mathrm{d}^{\times}x\ \mathrm{d}^\times t\]
for any integrable function $f$.

\subsection{Newforms}\label{subsect:newforms} Let $K$ be a non-archimedean local field with ring of integers $O$ and unique maximal ideal $\mathfrak{p}$.
We recall briefly the local theory of new-vectors for irreducible generic representations of  $\gelle_n/K$ (see \cite{conducteurgln} and \cite{matringe2012essential}).

Any character $\psi$ on $K$ induces a character, which we still denote $\psi$ on $U_n(K)$: 
\begin{equation}\label{eq:psiextended} \psi\left(\begin{pmatrix}
    1&x_{12}&\cdots&x_{1n}\\&\ddots&\ddots&\vdots\\&&1&x_{n-1n}\\&&&1
\end{pmatrix}\right) = \psi(x_{12}+\cdots + x_{n-1n}).\end{equation}

Let $\rho$ be an irreducible unitary generic representation of $\gelle_3(K)$ and $\psi$ an unramified character on $K$. Let $\mathcal{W}(\rho,\psi)$ denote the Whittaker model of $\rho$. By the theory of local new-vectors, developed by Jacquet e Shalika, \cite{conducteurgln}, and with a significant contribution of Matringe \cite{matringe2012essential}, there is a unique vector $W_\rho^\circ \in \mathcal{W}(\rho,\psi)$ satisfying the following properties:

\begin{itemize}
\item\label{itm:new-vector-normalization} $W_\rho^{\circ}(I_3) = 1$
\item $W_\rho^{\circ}\left(g\begin{pmatrix}k&\\&1\end{pmatrix}\right) = W_\rho^{\circ}(g)$  for all $g\in \gelle_3(K)$ and $k\in \gelle_2(O)$.
    \item For any unramified irreducible generic unitary representation $\sigma$ of $\gelle_2(K)$ let $W_{\sigma} \in \mathcal{W}(\sigma,\overline \psi)$ denote the spherical vector of $\sigma$ as in \cite{shintani} (normalized to take value $1$ at the identity $I_2$). Then, for $\real(s)>1$,  \begin{align*}\int_{U_{2}(K)\setminus\gelle_2(K)}W_\rho^{\circ}\begin{pmatrix}
        h&\\&1
    \end{pmatrix}W_{\sigma}(h)\vert{h}\vert_v^{s-\frac{1}{2}} \ \mathrm{d}h = L(\rho\times \sigma,s).\label{eq:newform1}\end{align*}
     \end{itemize}
If $\rho$ is itself unramified, then the unique spherical vector (provided that it takes value $1$ at the identity element) of \cite{shintani} is also the new-vector.

It is shown in \cite{matringe2012essential} that the new-vector satisfies also the following property: for any unramified character $\eta$ of $K^\times$ we have for $\real(s)$ big enough
\begin{equation*}
    \int_{K^\times}W_\rho^\circ(a_1(y))\eta(y)\vert y\vert^{s-1} \mathrm{d}^\times y = L(\rho \times \eta,s).
\end{equation*}

It is shown in \cite{conducteurgln} that the new-vector is right-invariant under the action of 
\[K_1(\mathfrak{p}^{a(\rho)}) = \left\{\begin{pmatrix}
    k&\omega\\\nu^t&u
\end{pmatrix}\in \gelle_3(O) |  \ \nu \in \mathfrak{p}^{a(\rho)}\times \mathfrak{p}^{a(\rho)}, u\equiv 1\mods{\mathfrak p^{a(\rho)}} \right\}, \]
where $a(\rho)$ denotes the exponent conductor of $\rho$. Moreover, this property characterizes uniquely, up to normalization, the new-vector.

We now consider the number field $F$ and the fixed cuspidal automorphic representations $\pi$ and its contragredient $\tilde{\pi}$. By Flath's theorem we have isomorphisms $\pi \simeq \otimes_v'\pi_v$, $\tilde{\pi} \simeq \otimes_v' \tilde{\pi}_v$ as restricted tensor product of local representations and $\tilde{\pi}_v$ is the contragredient representation of $\pi_v$ for every $v$. For any finite place $v<\infty$ let $\psi_v^\circ(x) = \psi_v(\varpi_v^{-v(\mathfrak{D})}x)$. We denote \begin{align*}W_v^\circ \in \mathcal{W}(\pi_v,\psi_v^\circ), \quad U_v^\circ \in \mathcal{W}(\tilde{\pi}_v,\overline{\psi_v^\circ})\end{align*}
the corresponding new-vectors.

Let $v<\infty$ be a place so that $v(\mathfrak{D})>0$, that is, $\psi_v$ is ramified, consider the following functions, obtained by left translation of the new-vectors in $W^\circ_{v}\in \mathcal{W}(\pi_v,\psi_v^\circ)$ (and $U_v^\circ \in \mathcal{W}(\tilde{\pi}_v,\overline{\psi_v^\circ})$ respectively) by $a(\varpi_v^{v(\mathfrak{D})},\varpi_v^{v(\mathfrak{D})})$. For instance, we have
    \[\ell\left(a(\varpi^{v(\mathfrak{D})},\varpi_v^{v(\mathfrak{D})})\right)W_{v}^{\circ} \colon g\mapsto  W_{v}^{\circ}\left(\begin{pmatrix}
    \varpi_v^{2v(\mathfrak{D})}&&\\&\varpi_v^{v(\mathfrak{D})}\\&&1
    \end{pmatrix}g \right)\]
    and similarly for $\ell\left(a(\varpi^{v(\mathfrak{D})},\varpi_v^{v(\mathfrak{D})})\right)U_{v}^{\circ}$.
    Then \begin{align*}\ell\left(a(\varpi_v^{v(\mathfrak{D})},\varpi_v^{v(\mathfrak{D})})\right)W_{v}^{\circ} \in \mathcal{W}(\pi_v,\psi_v), \quad \ell\left(a(\varpi^{v(\mathfrak{D})},\varpi_v^{v(\mathfrak{D})})\right)U_{v}^{\circ} \in \mathcal{W}(\tilde{\pi}_v,\overline{\psi_v})
    \end{align*} 
    and it is right-$\gelle_2(O_v)$ (or even $\gelle_3(O_v)$ if $\pi_v$ is unramified) invariant.

    \begin{remark}\label{rmk:newform}
        From now on, abusing notation, we will denote $\ell(a(\varpi_v^{v(\mathfrak{D})},\varpi_v^{v(\mathfrak{D}))})W_v^{\circ}$ also by $W_v^{\circ}$ and refer to it as the new-vector in $\mathcal{W}(\pi_v,\psi_v)$. Similarly, we denote $$\ell(a(\varpi_v^{v(\mathfrak{D})},\varpi_v^{v(\mathfrak{D}))}) U_v^\circ \in \mathcal{W}(\tilde{\pi}_v,\overline{\psi_v})$$ by $U_v^\circ$. We will always specify the Whittaker model in which we consider the new-vectors to live in order to make the statement clear.
        Note that for an unramified character $\eta \colon F_v^{\times}\to \CC^{\times}$ and $W_v^{\circ}\in \mathcal{W}(\pi_v,\psi_v)$ the newform we have for any $\real(s)$ large enough that
        \[\int_{F_v^{\times}}W_v^{\circ}(a(\varpi_v^{-v(\mathfrak{D})},y\varpi_v^{-v(\mathfrak{D})}))\eta(y)\vert{y}\vert^{s-1}\ \mathrm{d}^{\times}y = L(\pi\times\eta,s). \]
    \end{remark}
    
\begin{remark}
    The new-vector $W_v^{\circ}\in \mathcal{W}(\pi_v,\psi_v^{\circ})$ satisfies 
    \[ W_v^{\circ}\begin{pmatrix}
        \varpi_v^{l_1}&&\\&\varpi_v^{l_2}&\\&&\varpi_v^{l_3}
    \end{pmatrix}=0\]
    if $l_2>l_1$ or $l_3>l_2$. This can be deduced directly from the right $\gelle_2(O_v)$-invariance or by the formulae in \cite{Shalika} (for the unramified case) and \cite{matringe2012essential} (for the ramified case).
    Hence, the new-vector $W_v^{\circ} \in \mathcal{W}(\pi_v,\psi_v)$ satisfies 
    \[W_v^{\circ}\begin{pmatrix}
        \varpi_v^{l_1}&&\\&\varpi_v^{l_2}&\\&&\varpi_v^{l_3}
    \end{pmatrix}=0,\]
    whenever $l_2>l_1+v(\mathfrak{D})$ or $l_3>l_2+v(\mathfrak{D})$.
\end{remark}

\section{Counting units}\label{sec:Counting units}
A source of difficulty when working with an extension of $\Q$ that is neither trivial nor an imaginary quadratic field is that the set of units $O_F^{\times}$ is infinite. 

The classical result that one uses is the Dirichlet's unit theorem (see \cite[Unit Theorem]{lang1986algebraic}, with some changes in the notation):
\begin{theorem}[Dirichlet's unit theorem]
 Consider the $\log$ map 
    \[\operatorname{Log}_{S_\infty}\colon F^{\times} \to \prod_{v\in S_\infty}\R;\ x \mapsto \operatorname{log}(\vert{x}\vert_v)_v.\]
    Then $\operatorname{Log}_{S_\infty}(O_F) \subset \R^{d_{\infty}}$ is a lattice of rank $d_{\infty}-1$ contained in the hyperplane $\{x \in \R^{d_\infty}\ | \ x_1+\dots + x_{d_\infty} = 0\}.$
    Moreover, the kernel of $\operatorname{Log}_{S_{\infty}}$ is $\mu_F$.
\end{theorem}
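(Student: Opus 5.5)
This is Dirichlet's unit theorem, quoted from Lang; I would prove it by the standard geometry-of-numbers argument. (Here $\operatorname{Log}_{S_\infty}(O_F)$ is to be read as $\operatorname{Log}_{S_\infty}(O_F^\times)$, since $\operatorname{Log}_{S_\infty}$ is only defined on $F^\times$.) I take the absolute values $\vert\cdot\vert_v$ normalized as in Section \ref{sec:Notation and basic set-up}, so that the complex ones are squared, and the product formula then reads $\prod_{v\in S_\infty}\vert x\vert_v = N(x)$ for $x\in O_F\smallsetminus\{0\}$.

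\textbf{Step 1: image in the trace-zero hyperplane, and the kernel.} For a unit $\epsilon$ we have $N(\epsilon)=\pm1$, so $\sum_v \log\vert\epsilon\vert_v=0$ and the image lies in the hyperplane $H=\{x_1+\dots+x_{d_\infty}=0\}$.

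Next, for any bounded set $B\subset\R^{d_\infty}$, the units with $\operatorname{Log}_{S_\infty}(\epsilon)\in B$ have all conjugates bounded. The coefficients of their characteristic polynomials are therefore bounded integers, so there are only finitely many such units. Two consequences follow.
\begin{itemize}
\item The image $\operatorname{Log}_{S_\infty}(O_F^\times)$ is discrete in $H$, hence a lattice of rank at most $d_\infty-1$.
\item The kernel is a finite subgroup of $F^\times$. It therefore consists of roots of unity, so it is contained in $\mu_F$; conversely every root of unity has all absolute values equal to $1$, so the kernel is exactly $\mu_F$.
\end{itemize}

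\textbf{Step 2: the rank is $d_\infty-1$.} Fix $v_1\in S_\infty$. I would show that for each $v_1$ there is a unit $\epsilon$ with $\vert\epsilon\vert_v<1$ for all $v\neq v_1$. The construction uses Minkowski's convex body theorem on the lattice $O_F\subset F_\infty$, of covolume $\asymp\vert\mathfrak D\vert^{1/2}$, applied to boxes $\{\vert x\vert_v\le c_v\}$ with $\prod c_v=C$, where $C$ is a fixed constant large relative to the discriminant. This yields nonzero $\alpha\in O_F$ with $\vert\alpha\vert_v\le c_v$ for all $v$, and hence $1\le N(\alpha)\le C$.

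Starting from $\alpha_1$, I would shrink $c_v$ at every $v\neq v_1$ (enlarging $c_{v_1}$ to keep the product equal to $C$). This produces a sequence $\alpha_k$ with $\vert\alpha_{k+1}\vert_v<\vert\alpha_k\vert_v$ for all $v\neq v_1$. Since the norms are bounded, the principal ideals $(\alpha_k)$ take only finitely many values. So two of them coincide, $(\alpha_k)=(\alpha_j)$ with $k<j$, and then $\epsilon=\alpha_j/\alpha_k$ is a unit with the required property.

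\textbf{Step 3: linear independence.} Taking these units for the indices $v_1\neq v_0$ gives a matrix $(\log\vert\epsilon_i\vert_v)_{v\neq v_0}$. It has negative entries off the diagonal and positive row sums (because $\sum_v\log\vert\epsilon_i\vert_v=0$ and the omitted $v_0$-entry is negative). It is therefore strictly diagonally dominant, and hence invertible.

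So the image spans $H$ and the rank is exactly $d_\infty-1$. The main obstacle is Step 2: the Minkowski box construction together with the finiteness of ideals of bounded norm. The other steps are routine.
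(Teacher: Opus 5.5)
Your argument is correct and is the standard geometry-of-numbers proof. It uses finiteness of integers with bounded conjugates for discreteness and the kernel, Minkowski's theorem together with finiteness of ideals of bounded norm to produce units with $\vert\epsilon\vert_v<1$ for all $v\neq v_1$, and strict diagonal dominance for independence; this is exactly the proof in Lang, which the paper cites rather than reproduces.

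One precision: just as you read $\operatorname{Log}_{S_\infty}(O_F)$ as $\operatorname{Log}_{S_\infty}(O_F^\times)$, the kernel statement must also be read on $O_F^\times$. On all of $F^\times$ it fails: for example, $(3+4i)/5\in\mathbb{Q}(i)$ has absolute value $1$ at the unique archimedean place but is not a root of unity. Your finiteness argument concerns only units, so it proves precisely this corrected version.
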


We can state the following three consequences that will be useful.
\begin{lemma}\label{lem:unithteorem1}Let $v_0 \in S_{\infty}$ and let $S_{\infty}' = S_{\infty}\smallsetminus \{v_0\}$. Then for any pair of tuples $(T_v)_{v}, (t_v)_v \in \R_{>0}^{d_{\infty}-1}$  of positive numbers it holds that 
    \[\vert{\{o\in O_F^{\times}\ | \ \forall v \in S'_{\infty} : t_v\leq \vert{o}\vert_v \leq T_v\}}\vert \ll 1+\prod_{v\in S'_{\infty}}\max(\log T_v-\log t_v,0)\]
    and that 
     \[\vert{\{o\in O_F^{\times}\ | \ \forall v \in S'_{\infty} : t_v\leq \vert{o}\vert_v \leq T_v\}}\vert \gg \prod_{v\in S'_{\infty}}\max(\log T_v-\log t_v,0),\]
     where the implicit constants depend only upon $F$. 
\end{lemma}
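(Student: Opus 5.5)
The plan is to reduce the count to counting points of a full-rank lattice in a box in $\R^{d_\infty-1}$, and then to compare the box with translates of a fixed fundamental parallelepiped.

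\textbf{Reduction to a lattice.} Consider the projected logarithm
\[
\lambda\colon O_F^\times\to\R^{S_\infty'},\qquad o\mapsto(\log\vert o\vert_v)_{v\in S_\infty'} .
\]
By Dirichlet's unit theorem, $\operatorname{Log}_{S_\infty}(O_F^\times)$ is a lattice of rank $d_\infty-1$ inside the hyperplane $\sum_v x_v=0$. Forgetting the $v_0$-coordinate is a linear isomorphism from that hyperplane onto $\R^{S_\infty'}$. Hence $\Lambda:=\lambda(O_F^\times)$ is a full-rank lattice in $\R^{d_\infty-1}$, and every fibre of $\lambda$ is a coset of $\mu_F$. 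The condition $t_v\le\vert o\vert_v\le T_v$ for all $v\in S'_\infty$ says exactly that $\lambda(o)$ lies in the box
\[
B=\prod_{v\in S_\infty'}[\log t_v,\log T_v].
\]
Writing $\ell_v=\log T_v-\log t_v$, the quantity to estimate is therefore $\vert\mu_F\vert\cdot\#(\Lambda\cap B)$. If some $\ell_v<0$, the box is empty and both bounds are trivial, so assume every $\ell_v\ge 0$.

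\textbf{Upper bound.} Fix the fundamental parallelepiped $P_0$ of $\Lambda$ spanned by $\lambda(\epsilon_1),\dots,\lambda(\epsilon_{d_\infty-1})$, and let $D$ be its sup-norm diameter. The translates $x+P_0$ for $x\in\Lambda\cap B$ are pairwise disjoint and all lie in the enlarged box $B^{+}=\prod_v[\log t_v-D,\log T_v+D]$. Comparing volumes gives
\[
\#(\Lambda\cap B)\le\operatorname{covol}(\Lambda)^{-1}\prod_v(\ell_v+2D).
\]
When all $\ell_v\ge 1$, this is $\ll_F\prod_v\ell_v\le 1+\prod_v\max(\ell_v,0)$.

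The degenerate regime, where some $\ell_v$ is small, is the main obstacle. Here I would argue in two steps. First, if every $\ell_v$ is bounded, the count is $O_F(1)$, because the box has bounded size and $\Lambda$ is discrete. Second, in the mixed case I would project away the short directions and count along the remaining ones. I expect this step to need the implicit constant to absorb the thin directions, namely to use $\max(\ell_v,1)$ in place of $\max(\ell_v,0)$ there. The reason is that a thin slab can still contain a rank-one sublattice of $\Lambda$, whose points are counted by the long sides alone, while $\prod_v\max(\ell_v,0)$ is then close to $0$. This is the case I would check most carefully against the precise wording of the statement.

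\textbf{Lower bound.} The lower bound is only meaningful when the right-hand side is large, so assume all $\ell_v\ge 4D$; otherwise I would expect to need the same convention as above. Tile $\R^{d_\infty-1}$ by the translates $x+P_0$ with $x\in\Lambda$. Every tile meeting the shrunken box $B^{-}=\prod_v[\log t_v+D,\log T_v-D]$ is contained in $B$, and these tiles cover $B^{-}$. Comparing volumes gives
\[
\#(\Lambda\cap B)\ge\operatorname{covol}(\Lambda)^{-1}\prod_v(\ell_v-2D)\gg_F\prod_v\ell_v .
\]
All implicit constants depend only on $D$, $\operatorname{covol}(\Lambda)$ (essentially the regulator) and $\vert\mu_F\vert$, so they depend only on $F$.
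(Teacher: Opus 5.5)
You follow the same route as the paper. It too passes to the log-unit lattice and compares the box with lattice translates of a fixed cell (``spheres around each lattice point''). The difference is that the paper asserts $c_L\prod_v(Y_v-y_v)\le\#\le C_L\prod_v(Y_v-y_v)$ for \emph{every} box with $y_v<Y_v$, while you stopped at the degenerate regime. Your caution is justified: in that regime the lemma as literally stated is false, so the mixed case you left open cannot be closed by any argument.

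For the lower bound, take rank one: a real quadratic field with fundamental unit $\epsilon>1$. The choice $t_v=\epsilon^{1/4}$, $T_v=\epsilon^{3/4}$ gives an empty set, while the right-hand side equals $\tfrac12\log\epsilon>0$. So for small boxes the lower bound is not vacuous but wrong.

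For the upper bound, let $F=\Q(\tau)$ with $\tau$ the largest root of $x^4-x^3-x^2-x+1$. This is a Salem number, so $F$ has one complex place $w$ and $\vert\tau\vert_w=1$. Take $v_0$ real and $S'_\infty=\{v_1,w\}$. The box $e^{-N}\le\vert o\vert_{v_1}\le e^{N}$, $e^{-1/N}\le\vert o\vert_w\le 1$ contains every $\tau^k$ with $\vert k\vert\le N/\log\tau$, whereas $1+\prod_v\max(\log T_v-\log t_v,0)=3$. The choice $t_w=T_w=1$, which the paper dismisses as trivial, fails in the same way. This is exactly your rank-one sublattice in a thin slab.

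What your argument does establish is the correct form of the lemma, and more cheaply than you expected.
\begin{itemize}
\item The packing inequality $\#(\Lambda\cap B)\le\operatorname{covol}(\Lambda)^{-1}\prod_v(\ell_v+2D)$ holds for every box with all $\ell_v\ge 0$. So it gives $\ll_F\prod_v\max(\ell_v,1)$ at once, with no need to project away short directions.
\item Your covering argument gives $\gg_F\prod_v\ell_v$ as soon as all $\ell_v\ge 4D$.
\end{itemize}
These are precisely the forms used later. Lemma \ref{lem:countingunits} only needs the upper bound $\prod_v\max(\ell_v,1)$. In Lemma \ref{lem:choicegenerator} the sides are $2\delta\log N(\gamma)>2C$, so it suffices to take $C\ge 2D$ there.
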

\begin{proof}The statement is trivially true for $S_{\infty} = \{v_0\}$ or $t_v\geq  T_v$ for some $v\in S_{\infty}$. So suppose these are not the case.
    Let $(Y_v)_{v\neq v_0},(y_v)_{v\neq v_0} \in \R^{d_{\infty}-1}$ be so that $y_v < Y_v$ for each $v$. 
    By Dirichlet's unit theorem $L = \operatorname{Log}_{S_{\infty}}(O^{\times}_F)$ is a lattice of rank $d_{\infty}-1$.
    
    Then by constructing spheres of appropriate radius around each lattice point we deduce that there exists constants $c_L,C_L >0$ so that  
    \[c_L\prod_{v\in S_{\infty}'}(Y_v-y_v) \leq \vert{\{x \in L\ | \ \forall v \in S_{\infty}':y_v\leq x \leq Y_v \}}\vert \leq C_L\prod_{v\in S'_{\infty}}(Y_v-y_v),\]
    that translates to
    \begin{align*}&c_L\prod_{v\in S_{\infty}'}(Y_v-y_v)\leq  \vert{\{o \in O_F^{\times}\ | \  \forall v\in S'_{\infty}: y_v\leq \log\vert{o}_v\vert \leq  Y_v\}}\vert \\&\leq C_L\vert{\mu(F)}\vert \prod_{v\in S_{\infty}'}(Y_v-y_v).\end{align*}
    Choosing $Y_v = \log T_v$ and $y_v = \log t_v$ shows the assertion.
\end{proof}

\begin{lemma}\label{lem:countingunits}
    Let $(T_v)_v \in \R_{>0}^{d_{\infty}}$ and let $T = \prod_{v}T_v$. Suppose that for each $v$ there exists $\delta >0$ and constants $C_{\delta}>0$ so that for all $v\in S_{\infty}$ \[  T_v\leq C_{\delta}T^{\frac{1}{d_{\infty}}+\delta}.\] Then
    \begin{equation*}\label{eq:unittheorem1}
        \vert{\{o \in O_F^{\times}\ | \  \forall v \in S_{\infty}: \vert{o}\vert_v\leq T_v}\}\vert \ll (\log C_{\delta}T^{\frac{1}{d_{\infty}}+\delta})^{d_{\infty}-1}, \end{equation*} 
    where the implicit constant only depends upon $F$. 
\end{lemma}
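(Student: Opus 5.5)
The plan is to reduce to Lemma~\ref{lem:unithteorem1} by producing lower bounds $|o|_v \geq t_v$ for every $v\neq v_0$; the upper bounds $|o|_v\le T_v$ are given. The key input is the product formula for units: every $o\in O_F^{\times}$ satisfies $\prod_{v\in S_\infty}|o|_v = 1$, since $|N_{F/\Q}(o)|=1$. (Here $|\cdot|_v$ is the normalized absolute value, so it is the square of the usual one at complex places; the argument is insensitive to this.)

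Fix $v_0\in S_\infty$ and let $S'_\infty = S_\infty\smallsetminus\{v_0\}$. For $o$ in the set in question and any $w\in S'_\infty$, the product formula gives
\[|o|_w = \prod_{v\neq w}|o|_v^{-1} \geq \prod_{v\neq w}T_v^{-1} = T_w/T .\]
So the set is contained in $\{o\in O_F^\times : \forall w\in S'_\infty,\ T_w/T\le |o|_w\le T_w\}$. The condition at $v_0$ can simply be dropped, which only enlarges the set. Applying the upper bound of Lemma~\ref{lem:unithteorem1} with $t_w = T_w/T$ gives
\[\ll 1+\prod_{w\in S'_\infty}\max(\log T,0).\]

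If $T\le 1$ the product vanishes. Indeed, the set is then nonempty only when $|o|_v = T_v$ for all $v$ and $T=1$, which leaves $O(1)$ elements by Lemma~\ref{lem:unithteorem1}. Otherwise the count is $\ll 1+(\log T)^{d_\infty-1}$. It remains to compare this with $\log(C_\delta T^{1/d_\infty+\delta})$.

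I expect this comparison to be the main obstacle, and it is where the hypothesis $T_v\le C_\delta T^{1/d_\infty+\delta}$ enters. The width of the window at $w$ in log-scale is really $\log T_w - \log(T_w/T)=\log T$. To bound it by $\log(C_\delta T^{1/d_\infty+\delta})$, I would use the hypothesis to sharpen the lower bounds. For $w\in S'_\infty$,
\[|o|_w = \prod_{v\ne w}|o|_v^{-1}\ge \prod_{v\neq w}T_v^{-1}, \qquad \prod_{v\ne w} T_v = T/T_w .\]
This gives nothing new on its own. However, $\log T \le d_\infty\log\max_v T_v \le d_\infty\log(C_\delta T^{1/d_\infty+\delta})$ whenever this quantity is $\ge 0$; this is the case as soon as $T\ge1$ and $C_\delta\ge1$, which we may assume by enlarging $C_\delta$. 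Then $(\log T)^{d_\infty-1}\ll (\log C_\delta T^{1/d_\infty+\delta})^{d_\infty-1}$ with a constant depending only on $d_\infty$, hence only on $F$.

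The additive $1$ is absorbed provided $\log(C_\delta T^{1/d_\infty+\delta})\gg1$. If instead $C_\delta T^{1/d_\infty+\delta}$ is close to $1$, I would interpret the statement as holding with $\max(\cdot,1)$ or $1+\log$, in keeping with the paper's convention on implicit constants. Alternatively, one can note that all $T_v\le C_\delta T^{1/d_\infty+\delta}\approx 1$, so the set consists of units with all $|o|_v\ll 1$, which is a finite set of size $O_F(1)$.
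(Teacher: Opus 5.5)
Your argument is correct and is essentially the paper's. Both use the unit product formula $\prod_v|o|_v=1$ to confine $o$ to a box and then apply the upper bound of Lemma~\ref{lem:unithteorem1}. The difference is where the hypothesis enters. The paper uses it up front to get the uniform lower bound $|o|_v\ge (C_\delta T^{1/d_\infty+\delta})^{-(d_\infty-1)}$. Your lower bound $|o|_w\ge T_w/T$ instead gives the hypothesis-free count $\ll 1+(\log^+ T)^{d_\infty-1}$, and the hypothesis enters only at the end via $\log T\le d_\infty\log\max_v T_v\le d_\infty\log(C_\delta T^{1/d_\infty+\delta})$. Enlarging $C_\delta$ is not needed, and would actually weaken the conclusion, since $T\ge 1$ already forces $C_\delta T^{1/d_\infty+\delta}\ge \max_v T_v\ge 1$.

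Your caveat about the additive $1$ is legitimate and applies equally to the paper's proof, which silently drops it from Lemma~\ref{lem:unithteorem1}. For $d_\infty\ge 2$ and $T_v=C_\delta=1$, the set is $\mu_F$ while the stated right-hand side is $0$.
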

\begin{proof} Let $o\in O_F^{\times}$ so that $\vert{o}\vert_v \leq T_v$ for all $v\in S_{\infty}$. Suppose that there exists $v_0$ so that $\vert{o}\vert_{v_0} < C_{\delta}^{^{1-d_{\infty}}}T^{-(d_{\infty}-1)(\frac{1}{d_{\infty}}+\delta)}$. Then 
\[  C_{\delta}^{d_{\infty}-1}T^{(d_{\infty}-1)(\frac{1}{d_{\infty}}+\delta)}\geq \prod_{v\in S_{\infty}, v\neq v_0}\vert{o}\vert_v  = \vert{o}\vert_{v_0}^{-1} > C_{\delta}^{d_{\infty}-1}T^{(d_{\infty}-1)(\frac{1}{d_{\infty}}+\delta)}, \]
which is absurd. Hence, we see that the cardinality on the left-hand side of \eqref{eq:unittheorem1} is bounded by  
\[\vert{\{ o\in O_F^{\times}\ | \  \forall v \in S_{\infty}: C_{\delta}^{^{1-d_{\infty}}}T^{-(d_{\infty}-1)(\frac{1}{d_{\infty}}+\delta)}\leq \vert{o}\vert_v \leq T_v \}}\vert\]
and latter is bounded by the previous corollary by
\[\min_{v_0\in S_{\infty}}\prod_{v\in S_{\infty}\smallsetminus\{v_0\}}(\log T_v-(1-d_{\infty})\log C_{\delta}T^{\frac{1}{d_{\infty}}+\delta},0)\ll \log (C_{\delta}T^{\frac{1}{d_{\infty}}+\delta})^{d_{\infty}-1}, \]
where the implicit constant depends only upon $F$.
\end{proof}

\begin{lemma}\label{lem:choicegenerator}
    Let $(\gamma) \subset O_F$ be a non-zero integral principal ideal, $T=(T_v)_v \in \R_{>0}^{d_{\infty}}$ so that $\prod_{v\in S_{\infty}}\vert{T_v}\vert_v = N(\gamma)$. There exists a constant $C>0$ depending only on $F$ so that for each $\frac{1}{d_{\infty}}>\delta > \frac{C}{\log N(\gamma)}$, there exists a unit $o\in O_F^{\times}$ satisfying 
    \[\forall v \in S_{\infty}: \vert{T_v}\vert_vN(\gamma)^{-\delta(d_{\infty}-1)}\leq \vert{o\gamma}\vert_v \leq \vert{T_v}\vert_vN(\gamma)^{\delta(d_{\infty}-1)}.  \]
\end{lemma}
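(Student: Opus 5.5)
We work in logarithmic coordinates and use Dirichlet's unit theorem: $\operatorname{Log}_{S_\infty}(O_F^\times)$ is a full lattice $L$ in the trace-zero hyperplane $H=\{x\in\R^{d_\infty}: x_1+\dots+x_{d_\infty}=0\}$.

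First we normalise the target. Set $x_v=\log\vert T_v\vert_v-\log\vert\gamma\vert_v$. Since $\prod_v\vert T_v\vert_v=N(\gamma)=\prod_{v\in S_\infty}\vert\gamma\vert_v$, the vector $x=(x_v)_v$ has coordinate sum zero, so $x\in H$. The required inequality $\vert T_v\vert_v N(\gamma)^{-\delta(d_\infty-1)}\leq\vert o\gamma\vert_v\leq \vert T_v\vert_vN(\gamma)^{\delta(d_\infty-1)}$ is then equivalent to
\[
\bigl\vert \log\vert o\vert_v - x_v\bigr\vert\leq \delta(d_\infty-1)\log N(\gamma)\qquad\text{for all } v\in S_\infty .
\]
So the task is to find a lattice point of $L$ within sup-distance $\delta(d_\infty-1)\log N(\gamma)$ of the point $x\in H$.

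Next we approximate by a lattice point. Fix a basis $l(\epsilon_1),\dots,l(\epsilon_{d_\infty-1})$ of $L$; it spans $H$. Write $x=\sum_i c_i\operatorname{Log}_{S_\infty}(\epsilon_i)$ with $c_i\in\R$, and take $o=\prod_i\epsilon_i^{\lfloor c_i\rfloor}$. Then $x-\operatorname{Log}_{S_\infty}(o)$ lies in the fundamental parallelepiped $\{\sum_i \theta_i\operatorname{Log}_{S_\infty}(\epsilon_i):0\le\theta_i<1\}$. This gives
\[
\bigl\vert\log\vert o\vert_v-x_v\bigr\vert\le C_0:=\sum_i\max_v\bigl\vert\log\vert\epsilon_i\vert_v\bigr\vert ,
\]
a constant depending only on $F$ (and on the fixed choice of basis). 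The same bound follows from the covering radius of $L$, or from the counting in Lemma \ref{lem:unithteorem1} applied to a box of side of order one.

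It remains to choose the constant. If $d_\infty=1$, the unit group is finite and $\vert\gamma\vert_v=N(\gamma)^{1/d_v}$ (or the single archimedean norm equals $N(\gamma)$). In that case the hypothesis forces $\vert T_v\vert_v=\vert\gamma\vert_v$, and $o=1$ works. If $d_\infty\ge2$, take $C=C_0$. The condition $\delta>C/\log N(\gamma)$ gives
\[
\delta(d_\infty-1)\log N(\gamma)\ge \delta\log N(\gamma)>C_0,
\]
which completes the argument. The upper bound $\delta<1/d_\infty$ plays no role here beyond keeping the statement meaningful.

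There is no real obstacle. The only step needing care is the bookkeeping of normalisations: at complex places $\vert\cdot\vert_v$ is the square of the usual modulus, and we must check that $\sum_v\log\vert\gamma\vert_v=\log N(\gamma)$ under the paper's conventions, so that $x$ really lies in the hyperplane $H$ where $L$ is cocompact.
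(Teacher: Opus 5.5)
Your proof is correct, and it takes a different route from the paper's.

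The paper fixes $v_0\in S_\infty$ and applies the lower bound of Lemma \ref{lem:unithteorem1} to the box $\vert T_v\vert_v N(\gamma)^{-\delta}\le\vert o\gamma\vert_v\le\vert T_v\vert_v N(\gamma)^{\delta}$. This box lives only in the $d_\infty-1$ coordinates $v\neq v_0$, and the lemma produces a unit $o$ in it. The paper then controls the coordinate $v_0$ through the norm relation $\prod_v\vert o\gamma\vert_v=\prod_v\vert T_v\vert_v$, summing the $d_\infty-1$ individual deviations. That summation is exactly where the exponent $\delta(d_\infty-1)$ comes from.

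You instead bound the distance from $x$ to the unit lattice in all coordinates at once, using the fundamental parallelepiped of a fixed basis. This has two advantages.
\begin{enumerate}
\item It is self-contained and makes the role of $C$ transparent. The lower bound in Lemma \ref{lem:unithteorem1}, taken literally, is false for boxes smaller than the covering radius of the unit lattice. It is the hypothesis $\delta>C/\log N(\gamma)$ that implicitly guarantees the paper's box $B$ is nonempty. In your argument $C=C_0$ is visibly this covering constant.
\item It shows the factor $d_\infty-1$ is slack. You in fact get the window $N(\gamma)^{\pm\delta}$ in every archimedean coordinate.
\end{enumerate}

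The paper's route has the modest advantage of reusing an already stated lemma and only working with the projected lattice in $d_\infty-1$ coordinates. It does this at the cost of the extra factor $d_\infty-1$ and of the unstated "box large enough" condition.
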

\begin{proof} Let $v_0 \in S_{\infty}$ and let $S'_{\infty} = S_{\infty}\smallsetminus\{v_0\}$. We may assume that $S'_{\infty} \neq \varnothing$, otherwise the statement is clear.
 Let \[B = \{o \in O_F^{\times}\ | \  \forall v \in S_{\infty}' : \vert{T_v}\vert_vN(\gamma)^{-{\delta}} \leq \vert{\gamma o}\vert_v \leq \vert{T_v}\vert_vN(\gamma)^{\delta}\}. \]
 By Lemma \ref{lem:unithteorem1} we have that
 \[\vert{B}\vert \gg (2\delta\log N(\gamma))^{d_{\infty}-1}, \]
 where the implicit constant depends only upon $F$. 
 For $o\in B$ we have then $\vert{o}\vert_{v_0} = \prod_{v\neq v_0}\vert{o}\vert_ v$ and so \[\vert{T_{v_0}}\vert_{v_0} N(\gamma)^{-(d_{\infty}-1)\delta} \leq \vert{\gamma o}\vert_{v_0} \leq \vert{T_{v_0}}\vert_{v_0}N(\gamma)^{(d_{\infty}-1)\delta}.\]
 
\end{proof}

\section{Gauss sums and Kloosterman sums}\label{sec:gausskloosterman}

\subsection{Gauss sums}
Let $K$ be a non-archimedean local field (of characteristic $0$) with finite residue field and denote by $\vert \cdot \vert$ the attached absolute value. Let $O$ be its ring of integers, $\mathfrak{p}$ be the maximal ideal of $O$ and $O^{\times} = O\smallsetminus\mathfrak{p}$ be the group of units. Fix a generator $\varpi$ of $\mathfrak{p}$ and let $p = \vert O/\mathfrak{p}\vert$.

Let $\psi \colon K \to \mathbb{C}^{(1)}$ be an additive character and  $\eta \colon K^{\times}\to \mathbb{C}^{(1)}$ be a  multiplicative character. Since $K^{\times} = \varpi^{\Z}O^{\times}$ we can write $\eta = \eta_0\vert{\ }\vert_v^{it}$ for some $\eta_0 \in \widehat{O^{\times}}$ and $t \in \R/2\pi i \Z \log p.$ Denote by $\mathfrak{p}^{r}$ and $\mathfrak{p}^{s}$ the respective conductors, which means that $r$ is the smallest positive integer so that $\eta$ is trivial in $1+\mathfrak{p}^{r}$, or $r=0$ if $\eta_0$ is trivial and $s$ is the smallest integer so that $\psi$ is trivial in $\mathfrak{p}^{s} = \varpi^s O_F$. Notice that $s$ can be negative. The existence of conductors is ensured by continuity. We denote $r$ by $a(\eta)$ and $s$ by $a(\psi)$.

Fix a Haar measure $\mathrm{d}x$ on $K$ a Haar measure on $K^{\times}$. Then $\mathrm{d}^{\times}{x} = c\frac{\mathrm{d}x}{\vert{x}\vert}$ for some $c>0$. The Gauss sum associated to $\eta$ and $\psi$ is the following integral.
\begin{equation*}
    G(\eta,\psi) = \int_{O^{\times}}\eta(y)\psi(y)\ \mathrm{d}^{\times}y.
\end{equation*}
From \cite[7-4 Lemma]{fourieranalysisnumberfields}:
\begin{lemma}\label{lem:gausssum}We have
    \begin{enumerate}
        \item If $a(\psi)<a(\eta)$, then $G(\eta,\psi) = 0$.
        \item If $a(\eta)=a(\psi)$, then \[\vert{G(\eta,\psi)}\vert^2 = c\operatorname{vol}(O,\mathrm{d}x)\operatorname{vol}(1+\mathfrak{p}^{a(\eta)},d^\times x)\]
        \item If $a(\psi)>a(\eta)$, then \[\vert{G(\eta,\psi)}\vert^2 = c\operatorname{vol}(O,\mathrm{d}x)\left(\operatorname{vol}(1+\mathfrak{p}^{a(\eta)},\mathrm{d}^\times x)-p^{-1}\operatorname{vol}(1+\mathfrak{p}^{a(\psi)-1},\mathrm{d}^\times x)\right).\]
    \end{enumerate}
    
\end{lemma}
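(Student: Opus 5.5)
We prove the three cases of Lemma \ref{lem:gausssum} by a direct computation on $O^{\times}$. For parts (2) and (3) we square out the absolute value and use orthogonality of characters. For part (1) we use the invariance of $\psi$ under a subgroup on which $\eta$ is non-trivial.

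\emph{Case (1): $a(\psi)<a(\eta)$.} Write $r=a(\eta)\geq 1$. If $a(\psi)\le 0$, then $\psi$ is trivial on $O$, and $G(\eta,\psi)=\int_{O^\times}\eta\,\mathrm{d}^\times y=0$ because $\eta_0$ is non-trivial. In general, set $s=a(\psi)$ and substitute $y\mapsto y(1+x)$ with $x\in\mathfrak{p}^{r-1}$, where $r-1\ge s$. Then $yx\in\mathfrak{p}^{s}$, so $\psi(y(1+x))=\psi(y)$. By minimality of $r$, $\eta$ is non-trivial on $1+\mathfrak{p}^{r-1}$ (for $r=1$ we use $O^\times$ itself). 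Averaging over this subgroup therefore kills the integral.

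\emph{Cases (2) and (3): squaring out.} Expand
\[|G|^2=\int_{O^\times}\int_{O^\times}\eta(y/z)\psi(y-z)\,\mathrm{d}^\times y\,\mathrm{d}^\times z .\]
Substitute $y=zu$ and integrate over $z$ first. This gives
\[|G|^2=\int_{O^\times}\eta(u)\,J(u)\,\mathrm{d}^\times u,\qquad J(u)=\int_{O^\times}\psi(z(u-1))\,\mathrm{d}^\times z .\]
Convert $\mathrm{d}^\times z=c\,\mathrm{d}z$ on $O^\times$ and write $\int_{O^\times}=\int_O-\int_{\mathfrak p}$. The integral of an additive character over $\mathfrak{p}^k$ equals $\operatorname{vol}(\mathfrak p^k)$ if the character is trivial there and $0$ otherwise. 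Since $\operatorname{vol}(\mathfrak p)=p^{-1}\operatorname{vol}(O)$, we get:
\begin{itemize}
\item[(a)] $J(u)=c\operatorname{vol}(O)(1-p^{-1})$ if $u\in 1+\mathfrak p^{s}$;
\item[(b)] $J(u)=-c\,p^{-1}\operatorname{vol}(O)$ if $u\in 1+\mathfrak p^{s-1}$ but $u\notin 1+\mathfrak p^{s}$;
\item[(c)] $J(u)=0$ otherwise.
\end{itemize}
(Here $s=a(\psi)\ge 1$; the degenerate $s\le 0$ case of (3) is handled in the same way, with $J$ constant.) Rewriting,
\[J=c\operatorname{vol}(O)\bigl(\mathbb 1_{1+\mathfrak p^s}-p^{-1}\mathbb 1_{1+\mathfrak p^{s-1}}\bigr).\]

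We then integrate $\eta$ against $J$. The integral of $\eta$ over the subgroup $1+\mathfrak{p}^k$ equals $\operatorname{vol}(1+\mathfrak p^k)$ if $k\ge a(\eta)$ and $0$ otherwise.
\begin{itemize}
\item[(2)] If $s=a(\eta)$, only the first indicator survives, giving $c\operatorname{vol}(O)\operatorname{vol}(1+\mathfrak p^{a(\eta)})$. Here $s=a(\eta)$ gives $\operatorname{vol}(1+\mathfrak p^{s})=\operatorname{vol}(1+\mathfrak p^{a(\eta)})$ in the formula.
\item[(3)] If $s>a(\eta)$, both indicators survive, giving $c\operatorname{vol}(O)\bigl(\operatorname{vol}(1+\mathfrak p^{s})-p^{-1}\operatorname{vol}(1+\mathfrak p^{s-1})\bigr)$.
\end{itemize}
The stated formula in case (3) has $\operatorname{vol}(1+\mathfrak p^{a(\eta)})$ in place of $\operatorname{vol}(1+\mathfrak p^{s})$. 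To reconcile the two, we would recheck the normalization, possibly with the roles of the indices exchanged. This is the one place requiring care.

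The main obstacle is bookkeeping: the constant $c$ relating $\mathrm{d}^\times x$ to $\mathrm{d}x/|x|$, the edge cases $a(\eta)=0$ and $a(\psi)\le 0$, and matching the indices to the statement. We would treat these as in \cite[7-4 Lemma]{fourieranalysisnumberfields}.
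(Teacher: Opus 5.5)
Your computation is the standard one and it is correct; the paper gives no argument of its own, only the citation \cite[7-4 Lemma]{fourieranalysisnumberfields}. The one real problem is your last step, where you try to reconcile your answer with the printed formula in (3). No renormalization or swap of indices will do this: (3) as printed is false, and what you derived is the correct statement.

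To see this, note that for $k\geq 1$ one has $\operatorname{vol}(1+\mathfrak{p}^{k},\mathrm{d}^\times x)=c\,p^{-k}\operatorname{vol}(O,\mathrm{d}x)$.
\begin{itemize}
\item Your expression $c\operatorname{vol}(O)\bigl(\operatorname{vol}(1+\mathfrak{p}^{s})-p^{-1}\operatorname{vol}(1+\mathfrak{p}^{s-1})\bigr)$ therefore vanishes whenever $s=a(\psi)\geq 2$.
\item In the only remaining case, $a(\psi)=1$ and $a(\eta)=0$ (reading $1+\mathfrak{p}^{0}$ as $O^\times$), it equals $c^{2}p^{-2}\operatorname{vol}(O,\mathrm{d}x)^{2}$.
\item The printed right-hand side instead equals $c^{2}\operatorname{vol}(O,\mathrm{d}x)^{2}(p^{-a(\eta)}-p^{-a(\psi)})$ when $1\leq a(\eta)<a(\psi)$, which is nonzero.
\end{itemize}
A direct check: take $a(\eta)=1$, $a(\psi)=2$, and split $O^\times$ into cosets $u(1+\mathfrak{p})$. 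Writing $y=u(1+x)$ with $x\in\mathfrak{p}$, $\eta$ is constant on each coset, while $x\mapsto\psi(ux)$ is a non-trivial character of $\mathfrak{p}$. Hence $G(\eta,\psi)=0$.

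So you should record (3) with $\operatorname{vol}(1+\mathfrak{p}^{a(\psi)},\mathrm{d}^\times x)$ in the first term. Equivalently, in case (3) the Gauss sum vanishes unless $a(\psi)=1$ and $a(\eta)=0$. This correction is harmless for the paper, which only uses (1) in Lemma \ref{lem:shiftnewform} and (2) in Lemma \ref{prop:choiceofphi}.

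Separately, in (1) you write ``$r=a(\eta)\geq 1$'', and in (2) you say ``only the first indicator survives''. Both silently assume that $\eta$ is ramified, and this hypothesis is genuinely needed.
\begin{itemize}
\item Since $a(\psi)$ may be negative, the hypothesis of (1) allows $a(\eta)=0>a(\psi)$. But then $\psi$ is trivial on $O$, and $G(\eta,\psi)=\operatorname{vol}(O^\times,\mathrm{d}^\times x)\neq 0$.
\item For $a(\eta)=a(\psi)=0$, both indicators in your formula for $J$ equal $\mathbb{1}_{O^\times}$. This gives $|G|^2=c\operatorname{vol}(O,\mathrm{d}x)\operatorname{vol}(O^\times,\mathrm{d}^\times x)(1-p^{-1})$, not the stated value.
\end{itemize}
State (1) and (2) under $a(\eta)\geq 1$; this covers both places they are used. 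Finally, your remark about a ``degenerate $s\leq 0$ case of (3)'' is vacuous, since $a(\psi)>a(\eta)\geq 0$ forces $s\geq 1$.
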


Suppose now that $\eta$ is a ramified character, that is, $a(\eta)>0$.
Let $W \colon \gelle_3(K) \to \CC$ be a continuous function and suppose that $W$ is right invariant by the action of $a_1(O^\times)$, that is, $W\left(ga_1(u)\right) = W(g)$ for all $g\in \gelle_3(K)$ and $u\in O^{\times}$, whenever it converges one has 
\begin{equation*}\label{eq:local0newform}\int_{K^{\times}}W(a_1(y))\eta(y) \ \mathrm{d}^{\times}y = \sum_{k\in \Z}W(a_1(\varpi^k))\eta(\varpi)^k\int_{O^{\times}}\eta(u) \ \mathrm{d}^{\times}u= 0. 
\end{equation*}

As in \eqref{eq:psiextended} we extend $\psi$ to a character on $U_3(K)$ and suppose that $W$ is such that $$W(ug) = \psi(u)W(g)$$ for any $u\in U_3(K)$ and $g\in \gelle_3(K)$. Then we have the following Lemma 
\begin{lemma}\label{lem:shiftnewform}Let $\psi,\eta$ and $W$ as before. Suppose, moreover, that $W(a_1(y))= 0$ for all $y \notin O$ and $W$ is right-invariant by the action of $a_1(O^\times)$. Then, whenever the integral converges, we have
\[\int_{K^\times}W(a_1(y)u_{12}(\varpi^{-a(\eta)+a(\psi)}))\eta(y)\vert y\vert^{s}\ \mathrm{d}^\times y = G(\eta,\psi(\varpi^{-a(\eta)+a(\psi)}\cdot)) W(I_3).\]
\end{lemma}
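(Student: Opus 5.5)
The computation follows from the equivariance of $W$ under the unipotent radical. Write $x=\varpi^{-a(\eta)+a(\psi)}$. Since $a_1(y)u_{12}(x)=u_{12}(yx)a_1(y)$ and $W(ug)=\psi(u)W(g)$ with $\psi$ extended to $U_3(K)$ as in \eqref{eq:psiextended}, we get
\[W(a_1(y)u_{12}(x)) = \psi(xy)\,W(a_1(y)).\]
The integral therefore equals $\int_{K^\times}\psi(xy)W(a_1(y))\eta(y)\vert y\vert^s\,\mathrm{d}^\times y$. I decompose $K^\times=\bigsqcup_{k\in\Z}\varpi^kO^\times$. The vanishing hypothesis kills every $k<0$. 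By the right $a_1(O^\times)$-invariance, $W(a_1(\varpi^k u))=W(a_1(\varpi^k))$ for $u\in O^\times$. Hence the integral becomes
\[\sum_{k\ge 0}W(a_1(\varpi^k))\,\eta(\varpi)^k\,\vert\varpi\vert^{ks}\int_{O^\times}\eta(u)\,\psi(x\varpi^k u)\,\mathrm{d}^\times u .\]

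Each inner integral is a Gauss sum $G(\eta,\psi(x\varpi^k\,\cdot))$, and Lemma \ref{lem:gausssum} controls it. The additive character $u\mapsto\psi(x\varpi^k u)$ has conductor
\[a\bigl(\psi(x\varpi^k\cdot)\bigr)=a(\psi)-(-a(\eta)+a(\psi)+k)=a(\eta)-k.\]
For $k\ge1$ this is strictly less than $a(\eta)$, so part (1) of Lemma \ref{lem:gausssum} makes the Gauss sum vanish. This is exactly where the choice of the shift $x$ is used, together with $a(\eta)>0$. Only the term $k=0$ survives, and it equals $W(a_1(1))\,G(\eta,\psi(x\,\cdot))=W(I_3)\,G(\eta,\psi(\varpi^{-a(\eta)+a(\psi)}\cdot))$, which is the claimed identity.

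The only delicate point is the conductor bookkeeping: checking the sign convention that $a(\psi(c\,\cdot))=a(\psi)-v(c)$. This holds because $\psi(c\,\cdot)$ is trivial on $\mathfrak p^{m}$ exactly when $c\mathfrak p^m\subset\mathfrak p^{a(\psi)}$. Exchanging the sum and the integral is harmless, since we assume the integral converges and each shell is compact.
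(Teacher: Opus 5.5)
Your proof is correct and follows essentially the same route as the paper's. You pull out $\psi(xy)$ via the $U_3(K)$-equivariance, decompose $K^\times$ into shells $\varpi^kO^\times$ with $k\ge 0$, and kill every shell with $k\ge 1$ by Lemma~\ref{lem:gausssum}(1), since the twisted character has conductor $a(\eta)-k<a(\eta)$. You also correctly note that the standing hypothesis $a(\eta)>0$ is needed for that vanishing.
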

\begin{proof} For any $y\in K^\times$ we have $W(a_1(y)u_{12}(\varpi^{-a(\eta)+a(\psi)}))= \psi(y\varpi^{-a(\eta)+a(\psi)})W(a_1(y))$. Therefore, supposing that the integral is convergent, we see that it is equal to
\begin{align*}
    \sum_{k=0}^\infty W(a_1(\varpi^k))\eta(\varpi)^k p^{-ks}\int_{O^{\times}}\psi(u\varpi^{k-a(\eta)+a(\psi)})\eta(u)\ \mathrm{d}^\times u.\end{align*}
    The conductor of $\psi( \varpi^{k-a(\eta)+a(\psi)})$ is $a(\eta)-k \leq a(\eta)$, so the inner integral is always $0$, unless $k=0$ and we have the desired result.
\end{proof}

\subsection{Kloosterman sums}
Let $\psi$ be an additive character of conductor $a(\psi)\in \Z$. Let also $l\in \Z_{>0}$, $\omega\in \mathfrak{p}^{-2l+a(\psi)}$, $o \in \mathfrak{p}^{a(\psi)}$. Also, let $\eta\in \widehat{K^{\times}}$ be a multiplicative character with $a(\eta)\leq l$. We define the Kloosterman sum
\begin{align*}\label{eq:Kloostermansumdefinition}
    S_{\eta}(o,\omega;l,\psi) &= \int_{\varpi^{-l}O^{\times}/O}\eta(\varpi^lx)\psi(xo)\psi(x^{-1}\omega)\ \mathrm{d}x,\\ 
\end{align*}
    where $O$ acts on $\varpi^{-l}O^{\times}$ by additive translation. It is important to remark that the measure is the additive measure $\mathrm{d}x$ and not the multiplicative measure $\mathrm{d}^{\times}x$.
    If $\eta = 1$, we simply write $S(o,\omega;l,\psi)$.

   We also denote $Kl_{\eta}(\omega;l,\psi) = \frac{1}{p^{l/2}}S_{\eta}(\varpi_v^{a(\psi)},\omega;l,\psi)$ the \emph{normalized}, twisted by $\eta$, Kloosterman sum. As before, for $\eta=1$, we simply write $Kl(\gamma;l,\psi)$.  We have the obvious isomorphism
    \[\varpi^{-l}O^{\times}/O\to (O/\mathfrak{p}^l)^{\times}; x \mapsto \varpi^lx, \]
    so that 
    \[S_{\eta}(o ,\omega;l,\psi) = \sum_{u\in (O/\mathfrak{p}^l)^{\times}}\eta(u)\psi(\varpi^{-l}uo)\psi(\varpi^lu^{-1}\omega). \]  
    Observe that if $\omega\in \mathfrak{p}^{-2l+a(\psi)+1}$, then we have the following. 
     \begin{equation}\label{eq:trivialitykloosterman}    Kl(\omega;l,\psi) = Kl(0;l,\psi) = 
        \frac{1}{p^{l/2}}\sum_{u\in (O/\mathfrak{p}^l)^{\times}}\psi(\varpi^{-l+a(\psi)}u) 
        = \begin{cases}    0 &l>1\\-p^{-l/2}&l=1.\end{cases}  \end{equation}
  The case $l=1$ is shown by orthogonality of characters, while for $l>1$ let $x\in O/\mathfrak{p}$ act on $(O/\mathfrak{p}^l)^{\times}$ by additive translation: $x\cdot u = u+x\varpi^{l-1}$. Let $U$ be a set of representatives for $(O/\mathfrak{p}^l)^{\times}/(O/\mathfrak{p})$. Then \begin{align*} Kl(0;l,\psi) &= \sum_{u\in U}\sum_{u'\in O/\mathfrak{p}}\psi(\varpi^{-l+s}(u+u'\varpi^{l-1})) \\&= \sum_{u\in U}\psi(\varpi^{-l+s}u)\sum_{u'\in O/\mathfrak{p}}\psi(\varpi^{s-1}u') = 0.  \end{align*}
    Suppose now $l_1,l_2 $ are two non-negative integers. Let $\omega_1\in \mathfrak{p}^{-2l_1+a(\psi)}$, $\omega_2 \in \mathfrak{p}^{-2l_2+a(\psi)}$, $\gamma \in \mathfrak{p}^{-\max(l_1,l_2)+a(\psi)}$ and $\eta\in \widehat{K^{\times}}$ with conductor $a(\eta)\leq \min(l_1,l_2)$. 
    We define the correlation sum
    \[\mathcal{C}_{\eta}(\omega_1,\omega_2,\gamma;l_1,l_2,\psi) = \sum_{a\in O/\mathfrak{p}^{\max(l_1,l_2)}}Kl_{\eta}(a\omega_1;l_1,\psi)\overline{Kl_{\eta}(a\omega_2;l_2,\psi)}\psi(\gamma a). \] 
    As before, we omit $\eta$ from the notation when $\eta =1$. Next, we prove, with similar methods as in \cite[Appendix A]{nelsonholowinsky}, non-trivial bounds for some of these correlation sums.
 \begin{lemma}[Correlation of Kloosterman sums I]\label{lem:correlationkloosterman}   Consider the data $\psi,l_1,l_2,\omega_1,\omega_2,\gamma,\eta$ as above and assume that $l_1=l_2=l$. Assume that $\omega_1,\omega_2\in \varpi^{-2l+a(\psi)}O^{\times}$. Let $x\geq 0$ be so that $\omega_1-\omega_2 \in \varpi^{-2l+a(\psi)+x}O^{\times}$ if $\omega_1-\omega_2 \neq0$, otherwise let $x=\infty$. Similarly, let $y\geq 0$ be such that $\gamma\in \varpi^{-l+a(\psi)+y}O^{\times}$ if $\gamma \neq 0$ and if $\gamma = 0$ let $y=\infty$.  Then we have the following:  \begin{align*}              \mathcal{C}_{\eta}(\omega_1,\omega_2,0;l,\psi) &\ll p^{l/2}p^{\min(l,x)/2}\\
   \mathcal{C}(\omega_1,\omega_2,\gamma;l,\psi) &\ll       p^{l/2}p^{\min(l,x,y)/2}.   
 \end{align*}
 In the latter case, if $l>y>x$, then the correlation is $0$.

 \end{lemma}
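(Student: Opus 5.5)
Expand both Kloosterman sums, do the $a$-sum first, and reduce everything to a single (twisted) character sum in one variable. By the remark preceding the lemma we may write
\[
Kl_\eta(a\omega_i;l,\psi)=p^{-l/2}\sum_{u\in(O/\mathfrak p^l)^\times}\eta(u)\,\psi(\varpi^{-l+a(\psi)}u)\,\psi(\varpi^{l}u^{-1}a\omega_i).
\]
Inserting this into $\mathcal C_\eta$ gives
\[
\mathcal C_\eta=p^{-l}\sum_{u_1,u_2}\eta(u_1)\overline{\eta(u_2)}\,\psi\bigl(\varpi^{-l+a(\psi)}(u_1-u_2)\bigr)\sum_{a\in O/\mathfrak p^l}\psi\bigl(a(\varpi^l u_1^{-1}\omega_1-\varpi^l u_2^{-1}\omega_2+\gamma)\bigr).
\]
Normalize by writing $\omega_i=\varpi^{-2l+a(\psi)}w_i$ with $w_i\in O^\times$ and $\gamma=\varpi^{-l+a(\psi)}g$, with $g\in\mathfrak p^y$ (possibly $g=0$). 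The inner sum over $a$ is then $p^l$ times the indicator of the congruence
\[
u_1^{-1}w_1-u_2^{-1}w_2+g\equiv 0\pmod{\mathfrak p^l}.
\]
One checks that $a\mapsto$ the summand is well defined modulo $\mathfrak p^l$ because $\psi$ has conductor $a(\psi)$. This gives
\[
\mathcal C_\eta=\sum_{\substack{u_1,u_2\in(O/\mathfrak p^l)^\times\\ u_2^{-1}w_2\equiv u_1^{-1}w_1+g}}\eta(u_1\bar u_2)\,\psi\bigl(\varpi^{-l+a(\psi)}(u_1-u_2)\bigr).
\]

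\textbf{Case $\gamma=0$.} The constraint becomes $u_2=u_1w_2/w_1$, so we obtain
\[
\mathcal C_\eta=\eta(w_1/w_2)\sum_{u}\psi\bigl(\varpi^{-l+a(\psi)}u(1-w_2/w_1)\bigr).
\]
The $\eta$-dependence has disappeared. Here $1-w_2/w_1$ has valuation exactly $x$ (when $x<l$), so this is a Ramanujan-type sum over $(O/\mathfrak p^l)^\times$ of an additive character of conductor $l-x$. By the same translation argument as in \eqref{eq:trivialitykloosterman}, it vanishes if $l-x\ge2$, equals $-p^{l-1}$ if $l-x=1$, and equals $\phi(\mathfrak p^l)\le p^l$ if $x\ge l$. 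In all cases the bound $p^{l/2}p^{\min(l,x)/2}$ follows: for $x=l-1$ we have $p^{l-1}\le p^{l/2}p^{(l-1)/2}$, and for $x\ge l$ the bound is $p^l$.

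\textbf{Case $\gamma\neq0$, $\eta=1$.} Solve for $u_2$. The constraint forces $w_1+gu_1\in O^\times$ (automatic if $y\ge1$), and then $u_2=w_2u_1/(w_1+gu_1)$. The sum becomes
\[
\sum_{u}\psi\Bigl(\varpi^{-l+a(\psi)}\Bigl(u-\frac{w_2u}{w_1+gu}\Bigr)\Bigr)
=\sum_u\psi\Bigl(\varpi^{-l+a(\psi)}\,\frac{u(w_1-w_2)+gu^2}{w_1+gu}\Bigr).
\]
This is a one-variable exponential sum modulo $\mathfrak p^l$ with rational phase $f(u)$, and I would evaluate it by $p$-adic stationary phase. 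Write $u=u_0+\varpi^{\lceil l/2\rceil}t$ and Taylor expand; the $t$-sum then detects $f'(u_0)\equiv0 \pmod{\mathfrak p^{\lfloor l/2\rfloor}}$. Computing,
\[
f'(u)=\frac{(w_1-w_2)w_1+2gw_1u+g^2u^2}{(w_1+gu)^2}.
\]
Since the numerator of $f'$ is a quadratic (for $p$ odd; the residual characteristic $2$ and small-conductor cases are handled separately or by absorbing bounded factors), the number of solutions of $f'(u_0)\equiv0$ modulo $\mathfrak p^{j}$ is controlled by the valuations $x$ of $w_1-w_2$ and $y$ of $g$. Counting them and the size of each block gives $\ll p^{l/2}p^{\min(l,x,y)/2}$. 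When $y<x$, the dominant term $2gw_1u$ has valuation $y$ and forces no solutions once $l>y$; this is the vanishing assertion. More directly, in that case the linear term $\varpi^{-l+a(\psi)}\cdot g\,u\cdot(\text{unit})$ dominates and a translation argument as in \eqref{eq:trivialitykloosterman} kills the sum.

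The main obstacle is the stationary-phase bookkeeping for the second bound. The critical points are where the discriminant-like quantity $(w_1-w_2)w_1$ and the term in $g$ compete, so the case split $x<y$, $x=y$, $x>y$ against $l$ must be done carefully. I would first try the elementary route: substitute $v=u/(w_1+gu)$, a bijection on the relevant units, which turns the phase into a quadratic in $v$ plus a linear term. The sum is then a quadratic Gauss sum modulo $\mathfrak p^l$, whose absolute value is $p^{l/2}$ times $p^{(\text{valuation of the quadratic coefficient})/2}$, or zero when the linear coefficient has smaller valuation. This yields both the bound and the vanishing statement directly.
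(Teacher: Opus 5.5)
Your $\gamma=0$ case is correct, and cleaner than the paper's. The constraint forces $u_2=u_1w_2/w_1$, leaving an explicit Ramanujan sum that works for every $l\ge1$ with no sheaf input. For $\gamma\neq0$, however, there are genuine gaps.

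\textbf{The elementary route.} It rests on a false identity. The substitution $v=u/(w_1+gu)$ is just $v=u_2/w_2$, so $u=w_1v/(1-gv)$ and the phase becomes $(w_1-w_2)v+gw_1v^2+g^2w_1v^3+\cdots$. This is quadratic modulo $\mathfrak p^l$ only when $2y\ge l$. For $y=0$, substituting $v=w_1+gu$ turns the phase into $g^{-1}(v-w_1-w_2+w_1w_2v^{-1})$, so the sum is a Kloosterman sum minus one term. For $l=1$ neither Gauss sums nor stationary phase apply, and the bound $\ll p^{1/2}$ needs Weil's bound. This is exactly what the paper imports through its $\ell$-adic sheaf argument. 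Your phrase ``small-conductor cases are handled separately'' hides this, the only non-elementary input.

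\textbf{The stationary-phase count.} As you describe it, it does not reach the stated exponent. Let $m=\min(x,y)<l$. Both $w_1-w_2$ and $g$ are divisible by $\varpi^m$, so $f'=\varpi^mf_1'$. When $x=y=m<\lfloor l/2\rfloor$, about $p^{m}$ classes $u_0$ survive the congruence $f'(u_0)\equiv0\pmod{\mathfrak p^{\lfloor l/2\rfloor}}$. Bounding the blocks trivially then gives $p^{l/2+m}$, not $p^{l/2+m/2}$. The missing idea is the paper's reduction step. The whole phase is $\varpi^mf_1$, so the sum equals $p^m$ times a sum modulo $\mathfrak p^{l-m}$. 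Applying stationary phase there (or the $l=1$ bound when $l-m=1$) gives $p^m\cdot p^{(l-m)/2}$.

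\textbf{The vanishing claim.} Your argument addresses the wrong case. The lemma asserts vanishing for $l>y>x$, while you argue it for $y<x$ ``once $l>y$''. That claim is false in general. Take $p$ odd, $a(\psi)=0$, $l\ge2$, $\omega_1=\omega_2$, and $g=\varpi^{l-1}g_0$ with $g_0\in O^\times$. Then the phase is $\varpi^{l-1}g_0u^2/w_1$ modulo $\mathfrak p^l$. The correlation equals $p^{l-1}\sum_{u\in(O/\mathfrak p)^\times}\psi(\varpi^{-1}g_0u^2/w_1)$, which has size $\asymp p^{l-1/2}$. For $y=0$ it is the Kloosterman sum above, also nonzero in general. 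After factoring out $\varpi^y$, your derivative argument does give vanishing for $1\le y\le l-2$, $y<x$, $p$ odd, but that is a different statement.

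The claimed case works by the opposite mechanism. If $x<y$, then $g\in\mathfrak p$ and the linear coefficient $(w_1-w_2)/w_1$, of valuation $x$, dominates. So $f=\varpi^xf_1$ with $f_1'\equiv(w_1-w_2)\varpi^{-x}w_1^{-1}\pmod{\mathfrak p}$, a unit. Since $y<l$ forces $l-x\ge2$, translating $u\mapsto u+\varpi^{l-x-1}t$ kills the sum modulo $\mathfrak p^{l-x}$.
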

    \begin{proof} After changing $\psi$ with $\psi(\varpi^{a(\psi)}\cdot)$ we may assume that $a(\psi)=0$.
    Suppose first $l=1$. Then both $Kl(\cdot \omega_1;l,\psi), {Kl(\cdot \omega_2;l,\psi)}\overline{\psi}(\cdot \gamma)$ are trace functions attached to a (geometric irreducible) $\ell$-adic sheaf, that we denote respectively $[\times \omega_2]^*\mathscr{K}\ell$ and $([\times \omega_2]^*\mathscr{K}\ell)\otimes \psi_{\gamma}$. In this case, the statement follows from the fact that the two sheaves are geometrically isomorphic if and only if $x,y\geq 1$.

    Suppose now that $l >1$, we use the $\mathfrak{p}$-adic stationary phase, explained in \cite[§12.3]{iwaniec2004analytic}. Opening the Kloosterman sum and performing the $a$-sum, we have  \begin{align*}\mathcal{C}_{\eta}(\omega_1,\omega_2,\gamma;l,\psi)&=\sum_{\substack{u_1,u_2\in (O/\mathfrak{p}^{l})^{\times}\\u_1^{-1}\varpi^l\omega_1-u_2^{-1}\varpi^l\omega_2+\gamma \in O}}\eta(u_1)\eta(u_2^{-1})\psi(\varpi^{-l}u_1-\varpi^{-l}u_2)\end{align*}
    The above sum is $0$ if $\gamma+ u_1^{-1}\varpi^{l}\omega_1 \in \mathfrak{p}^{-l+1}$ or $\gamma - u_2^{-1}\varpi^{l}\omega_2 \in \mathfrak{p}^{-l+1}$. So, suppose that these are not the case. Then we get
    \begin{align*}
        \mathcal{C}_{\eta}(\omega_1,\omega_2,\gamma;l,\psi)&=\sum_{u\in (O/\mathfrak{p}^{l})^{\times}}\eta(u\varpi^{-l}\omega_2^{-1}(\varpi^lu^{-1}\omega_1+\gamma))
        \psi(\varpi^{-l}u-\omega_2(\gamma+\varpi^{l}u^{-1}\omega_1)^{-1})\\
        &=\eta(\omega_1\omega_2^{-1})
        \sum_{u\in (O/\mathfrak{p}^l)^{\times}}\eta(1+u(\varpi^{l}\gamma)(\varpi^{2l}\omega_1)^{-1})\psi(\varpi^{-l}g(u)),\end{align*}
        where $g(u) = u\frac{u\varpi^l\gamma+\varpi^{2l}(\omega_1-\omega_2)}{u \varpi^l\gamma+\varpi^{2l}\omega_1}$. 
        We call
 $a = \varpi^{l}\gamma \in \varpi^yO^{\times}$ (here we denote $\mathfrak{p}^{\infty}=\{0\}$), and $b=\varpi^{2l}(\omega_1-\omega_2)\in \varpi^xO^{\times}$ and $d=\varpi^{2l}\omega_1 \in O^{\times}$. Suppose $x,y\geq 1$. Then denoting $g_1(u) = u \frac{a\varpi^{-1}u+b\varpi^{-1}}{au+d}$ we have    \[\sum_{\substack{u\in (O/\mathfrak{p}^l)^{\times}}}\eta(1+uad^{-1})
 \psi(\varpi^{-l}g(u)) = p\sum_{u\in (O/\mathfrak{p}^{l-1})^{\times}}\eta(1+uad^{-1})
 \psi(\varpi^{-(l-1)}g_1(u)).\] By induction, we have  \begin{align*}&\sum_{\substack{u\in (O/\mathfrak{p}^l)^{\times}}}\eta(1+uad^{-1})
 \psi(\varpi^{-l}g(u)) = p^{\min(l,x,y)}\sum_{\substack{u\in (O/\mathfrak{p}^{l^*})^{\times}}}\eta(1+uad^{-1}))
 \psi(\varpi^{-l^*}g^*(u)),\end{align*}where $l^* = l-\min(l,x,y)$, $g^*(u ) = u\frac{a^*u + b^*}{au+d}$, $a^* = a\varpi^{-\min(l,x,y)}$ and $b^*=b\varpi^{-\min(l,x,y})$. Notice that $l^* = 0$ or $a^* \in O^{\times}$ or $b^* \in O^{\times}$. 
 
  Now we focus on our cases of interest. In both cases (corresponding to $\eta=1$ or $a=0$), we see that the absolute value of the correlation sum is bounded by 
 \[p^{\operatorname{min}(l,x,y)}\left\vert \sum_{(u\in O/\mathfrak{p}^{l^*})^\times}\psi(\varpi^{-l^*}g^*(u)) \right\vert,  \]
 so that now we only consider the sum $\sum_{u\in (O/\mathfrak{p}^{l^*})^\times}\psi(\varpi^{-l^*}g^*(u))$. If $l^* = 0$ (that is, $l=\min(l,x,y)$) we only have the trivial bound $\ll p^{l}$. If $l^* = 1$, then we can apply the case $l=1$ we discussed above by rewriting

 \[ \sum_{u\in (O/\mathfrak{p})^{\times}}
 \psi(\varpi^{-1}g^*(u)) = C(\omega_1^*,\omega_2^*,\gamma^*;1,\psi),\]
 where $\omega_i^* = \varpi^{2l-2}\omega_i$ and $\gamma^* = \varpi^{l-1}\gamma$.
 Hence, suppose $l^*\geq 2$. We apply the $\mathfrak{p}$-adic stationary phase, let $\alpha= \lfloor{l^*/2}\rfloor$ \begin{align*} \sum_{\substack{u\in (O/\mathfrak{p}^{l^*})^{\times}\\\varpi^{l}\gamma u\neq-\varpi^{2l}\gamma\mod\mathfrak{p}}}\psi\left(\varpi^{-l^*}g^*(u)\right) &\ll p^{l^*/2}\sum_{\substack{u\in (O/\mathfrak{p}^{\alpha})^{\times}\\\varpi^{l}\gamma u\neq -\varpi^{2l}\gamma \mod \mathfrak{p}\\\substack{(g^*)'(u)}=0 \mod \mathfrak{p}^{\alpha}}}p^{\frac{1}{2}\gamma_{(g^*)''(u)\in \mathfrak{p}}} \end{align*} We have $${g^*}'(u) = \frac{{a^*}^2u^2+2a^*du+b^*d}{(au+d)^2}\quad \text{and} \quad {g^*}''(u) = \frac{2a^*+2c(g^*)'(u)}{au+d}.$$ Notice that if $a^* \in \mathfrak{p}$ then $b^* \in O^{\times}$ and so ${g^*}'(u) \neq 0 \mod \mathfrak{p}^{\alpha}$ for every $u$. Hence, the sum is $0$. If $a^* \in O^{\times}$ and $u\in (O/\mathfrak{p}^{l^*})^{\times}$ is so that ${g^*}'(u)\equiv 0 \mod \mathfrak{p}^{\alpha}$ and $(g^*)''(u) \equiv 0 \mod \mathfrak{p}$, then $2a \equiv 0 \mod \mathfrak{p}$.
        \end{proof}

\begin{lemma}[Correlation of Kloosterman sums II]
     Consider the data $\psi,l_1,l_2,\omega_1,\omega_2,\gamma$ as above and assume that $l_1\neq l_2$. 
     Then we have the following:  \[    \vert{\mathcal{C}(\omega_1,\omega_2,\gamma;l_1,l_2)}\vert \ll p^{\frac{\max(l_1,l_2)}{2}}.\]
     Moreover, it is $0$ if $\gamma \in \mathfrak{p}^{-\max(l_1,l_2)+a(\psi)+1}$.
\end{lemma}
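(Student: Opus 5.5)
Without loss of generality $a(\psi)=0$: replace $\psi$ by $\psi(\varpi^{a(\psi)}\cdot)$ as in the proof of Lemma \ref{lem:correlationkloosterman}. By symmetry (complex conjugation, and $a\mapsto -a$ if necessary) assume $l_1>l_2$, so $\max(l_1,l_2)=l_1$. If $l_2=0$, then $Kl(a\omega_2;0,\psi)$ is a constant of modulus at most one. The correlation sum is then $\sum_{a}Kl(a\omega_1;l_1,\psi)\psi(\gamma a)$, which we will handle with the same opening argument as below.

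The main step is to open both Kloosterman sums and perform the $a$-sum over $O/\mathfrak{p}^{l_1}$ first. Write
\[
Kl(a\omega_i;l_i,\psi)=p^{-l_i/2}\sum_{u_i\in (O/\mathfrak{p}^{l_i})^\times}\psi(\varpi^{-l_i}u_i)\,\psi(\varpi^{l_i}u_i^{-1}a\omega_i).
\]
The $a$-dependence is then the additive character $a\mapsto \psi\big(a(\varpi^{l_1}u_1^{-1}\omega_1-\varpi^{l_2}u_2^{-1}\omega_2+\gamma)\big)$. Summing over $a\in O/\mathfrak{p}^{l_1}$ gives $p^{l_1}$ times the indicator of
\[
\varpi^{l_1}u_1^{-1}\omega_1-\varpi^{l_2}u_2^{-1}\omega_2+\gamma\in\mathfrak{p}^{-l_1}\cdot\mathfrak{p}^{l_1}\cap(\text{dual lattice})
\]
(note $\varpi^{l_i}\omega_i\in\mathfrak{p}^{-l_i}$ and $\gamma\in\mathfrak{p}^{-l_1}$, so everything is well defined modulo $O$). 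Hence
\[
\mathcal{C}=p^{l_1-\frac{l_1+l_2}{2}}\sum_{\substack{u_1,u_2\\ \varpi^{l_1}u_1^{-1}\omega_1+\gamma\equiv \varpi^{l_2}u_2^{-1}\omega_2 \ (O)}}\psi(\varpi^{-l_1}u_1)\overline{\psi(\varpi^{-l_2}u_2)}.
\]
The congruence says the two sides agree in $\mathfrak{p}^{-l_1}/O$. The right side lies in $\mathfrak{p}^{-l_2}/O$, so we need $\varpi^{l_1}u_1^{-1}\omega_1+\gamma\in\mathfrak{p}^{-l_2}$. Given $u_1$ with that property, the class of $u_2^{-1}$, and hence of $u_2$, modulo $\mathfrak{p}^{l_2}$ is determined, provided $\omega_2$ has exact valuation $-2l_2$. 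If $\omega_2$ has larger valuation, the $u_2$-sum factors into a Ramanujan-type sum, which is handled by \eqref{eq:trivialitykloosterman}. So in the generic case the double sum collapses to a single sum over $u_1\in(O/\mathfrak{p}^{l_1})^\times$ subject to a congruence condition, and its absolute value is at most the number of admissible $u_1$. This gives $|\mathcal{C}|\le p^{(l_1-l_2)/2}\cdot\#\{u_1\}$.

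Next we count the admissible $u_1$. Suppose $\omega_1$ has exact valuation $-2l_1$. Then $\varpi^{l_1}u_1^{-1}\omega_1$ is a unit multiple of $\varpi^{-l_1}$, so the condition $\varpi^{l_1}u_1^{-1}\omega_1\equiv-\gamma \bmod \mathfrak{p}^{-l_2}$ fixes $u_1^{-1}$ modulo $\mathfrak{p}^{l_1-l_2}$. This leaves $p^{l_2}$ choices, giving $|\mathcal{C}|\le p^{(l_1-l_2)/2}p^{l_2}\le p^{l_1/2}\cdot p^{l_2/2}$. That is too weak, so we must exploit oscillation in the remaining free variable. Write $u_1^{-1}=w+\varpi^{l_1-l_2}t$ with $t\in O/\mathfrak{p}^{l_2}$ and substitute. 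The surviving phase is $\psi(\varpi^{-l_1}u_1-\varpi^{-l_2}u_2(t))$, where $u_2(t)$ is an explicit rational function of $t$. A (possibly $\mathfrak{p}$-adic stationary phase, as in Lemma \ref{lem:correlationkloosterman}) evaluation in $t$ then saves $p^{l_2/2}$, yielding $p^{\max(l_1,l_2)/2}$. This oscillation step is the main obstacle. The plan is to show that the linear-in-$t$ part of $\varpi^{-l_1}u_1$ has conductor $l_2$ while the $u_2$ contribution does not cancel it, so that the stationary phase has at most $O(p^{l_2/2})$ critical points. If $\omega_1$ has smaller valuation, the congruence is more restrictive, and the count is handled similarly or again reduces to \eqref{eq:trivialitykloosterman}.

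Finally, the vanishing claim. If $\gamma\in\mathfrak{p}^{-l_1+1}$, then both $\varpi^{l_2}u_2^{-1}\omega_2$ and $\gamma$ lie in $\mathfrak{p}^{-l_1+1}$, because $l_2<l_1$. The congruence then forces $\varpi^{l_1}u_1^{-1}\omega_1\in\mathfrak{p}^{-l_1+1}$, a condition depending only on $u_1$ modulo $\mathfrak{p}^{l_1-1}$. So for every admissible $u_1$, all of $u_1+\varpi^{l_1-1}O$ is admissible, and the summand $\psi(\varpi^{-l_1}u_1)$ summed over this coset is $\sum_{x\in O/\mathfrak{p}}\psi(\varpi^{-1}x)=0$. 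This argument needs $l_1\ge1$, which holds since $l_1>l_2\ge0$. Hence $\mathcal{C}=0$.
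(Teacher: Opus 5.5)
Your opening steps are the same as the paper's: normalize $a(\psi)=0$, open both Kloosterman sums, and execute the $a$-sum. This gives $p^{(l_1-l_2)/2}$ times a sum over pairs $(u_1,u_2)$ with $\varpi^{l_1}u_1^{-1}\omega_1-\varpi^{l_2}u_2^{-1}\omega_2+\gamma\in O$. However, the actual content of the bound is only announced as a plan. That content is square-root cancellation in the remaining sum of about $p^{l_2}$ phases.

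You solve the congruence in the unfavourable direction ($u_2$ in terms of $u_1$). This leaves a sum over $u_1^{-1}\in w+\varpi^{l_1-l_2}O$ whose phase you never compute. Your stationary-phase step leaves its nondegeneracy unchecked. For $l_2=1$ stationary phase is not available at all; you would need Weil's bound for an explicit rational function over $O/\mathfrak{p}$.

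The missing idea is to parametrize by $u_2$ instead. After the non-exact valuations are disposed of via \eqref{eq:trivialitykloosterman}, $\omega_1$ has exact valuation $-2l_1$. Then the congruence forces $\gamma\in\varpi^{-l_1}O^\times$ and determines $u_1\bmod\mathfrak{p}^{l_1}$ from $u_2$. The unit $x=\varpi^{l_1}(\gamma u_2-\varpi^{l_2}\omega_2)$ runs over $(O/\mathfrak{p}^{l_2})^\times$ as $u_2$ does, and modulo $O$
\[
\varpi^{-l_1}u_1-\varpi^{-l_2}u_2=-\gamma^{-1}(\omega_1+\omega_2)-\varpi^{-l_2}(\varpi^{l_1}\gamma)^{-1}x-\varpi^{l_2}\omega_2(\varpi^{2l_1}\omega_1)(\varpi^{l_1}\gamma)^{-1}x^{-1}.
\]
Since $\varpi^{l_1}\gamma$ is a unit, the collapsed sum is a unimodular constant times $p^{l_1/2}Kl(\cdot\,;l_2,\psi)$. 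The Weil bound (an elementary bound for $l_2\ge 2$) then gives $\ll p^{l_1/2}$. Your $t$-sum is in fact this same sum in a worse coordinate, since $u_2^{-1}$ is affine in $t$ with unit slope.

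The vanishing argument is also flawed. You argue that the congruence forces $\varpi^{l_1}u_1^{-1}\omega_1\in\mathfrak{p}^{-l_1+1}$, which depends only on $u_1\bmod\mathfrak{p}^{l_1-1}$, hence all of $u_1+\varpi^{l_1-1}O$ is admissible. This confuses a necessary condition with the full congruence. It is repairable: a solution forces $v(\omega_1)\ge -2l_1+1$, and then $\varpi^{l_1}u_1^{-1}\omega_1\bmod O$ itself depends only on $u_1\bmod\mathfrak{p}^{l_1-1}$.

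More seriously, the coset argument needs $l_1\ge 2$, not $l_1\ge 1$. For $l_1=1$ the coset $u_1+O$ contains the non-unit $0$, and $\sum_{x\in(O/\mathfrak{p})^\times}\psi(\varpi^{-1}x)=-1$. This is not just a gap in your write-up; the claim is false there. Take $l_1=1$, $l_2=0$, $\omega_1\in\mathfrak{p}^{-1}$ and $\gamma\in O$. Then $Kl(a\omega_1;1,\psi)=-p^{-1/2}$ for every $a$, so $\mathcal{C}=-p^{-1/2}\sum_{a\in O/\mathfrak{p}}\psi(\gamma a)=-p^{1/2}\neq 0$. So the ``moreover'' clause fails in this degenerate case, and the paper's own proof records exactly this value.

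For $\max(l_1,l_2)\ge 2$ the vanishing does hold, and it follows directly. If $v(\omega_1)=-2l_1$, the congruence has no solution when $\gamma\in\mathfrak{p}^{-l_1+1}$. Otherwise $Kl(a\omega_1;l_1,\psi)\equiv 0$ by \eqref{eq:trivialitykloosterman}. A correct proof must exclude, or treat separately, the case $\{l_1,l_2\}=\{0,1\}$ with the level-one parameter of non-exact valuation.
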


\begin{proof} As before, we may assume without loss of generality that $a(\psi)=0$. We consider only the case $l_1>l_2$, the other being analogous.
Suppose first $\gamma \in \mathfrak{p}^{-2l_1+1}$. Then $Kl(a\omega_1;l_1,\psi) = 0,$ unless $l_1=1$ and $l_2=0$.  In the latter case $Kl(a\omega_1;l_1,\psi)$ is equal to $\frac{-1}{p^{1/2}}$and the correlation sum becomes 
\[-\frac{1}{p^{1/2}}\sum_{a \in O/\mathfrak{p}}\psi(\gamma a), \]
which is $0$ if $\gamma \in \varpi^{-1}O^{\times}$ and $-p^{1/2}$ if $\gamma \in O$.
Assume therefore that $\gamma \in \varpi^{-2l_1}O^{\times}$. 
We open the Kloosterman sums and get:
    \begin{equation}\label{eq:correlation2}\mathcal{C}(\omega_1,\omega_2,\gamma;l_1,l_2)
    =\frac{p^{l_1/2}}{p^{l_2/2}}\sum_{\substack{u_1\in (O/\mathfrak{p}^{l_1})^{\times}\\u_2\in (O/\mathfrak{p}^{l_2})^{\times}\\\varpi^{l_1}u_1^{-1}\omega_1-\varpi^{l_2}u_2^{-1}\omega_2+\gamma \in O}}\psi(\varpi^{-l_1}u_1-\varpi^{-l_2}u_2)
    \end{equation}
    The congruence condition is empty, unless $u_1^{-1} \varpi^{2l_1}\gamma +\varpi^{l_1}\gamma \in \mathfrak{p}^{l_1-l_2}$ and in particular is empty if $\gamma \in \mathfrak{p}^{-l_1+1}$. Assume then that $\gamma \in \varpi^{-l_1}O^{\times}$. Given $u_2$ and $u_1$ so that the sum is non-zero we have $u_1^{-1} = \omega_1^{-1}\varpi^{-l_1}(\varpi^{l_2}u_2^{-1}\omega_2-\gamma)  \mods{\mathfrak{p}^{l_1}}$. In particular, $u_1$ is uniquely defined by $u_2$ and the given data.  Denote $x = u_2\gamma\varpi^{l_1}+\varpi^{l_1+l_2}\omega_2$. Then \[u_2 = (\varpi^{l_1}\gamma)^{-1}(x+\varpi^{l_1-l_2}(\varpi^{2l_2}\omega_2)), \quad u_1 =x^{-1}(x+\varpi^{l_1-l_2}(\varpi^{2l_2}\gamma))(-\varpi^{2l_1}\omega_1)(\varpi^{l_1}\gamma)^{-1}\]The right hand side of \eqref{eq:correlation2} becomes:
    \begin{align*}&\mathcal{C}(\omega_1,\omega_2,\gamma;l_1,l_2)=
    \\& p^{(l_1-l_2)/{2}}\psi(-\gamma^{-1}(\omega_1+\omega_2))\sum_{x \in (O/\mathfrak{p}^{l_2})^{\times}}\psi(-\varpi^{-l_2}(\varpi^{l_1}\gamma)^{-1}x)\psi(-\varpi^{l_2}\omega_2(\varpi^{2l_1}\omega_1)(\varpi^{l_1}\gamma)^{-1})x^{-1})=
    \\ &p^{\frac{l_1}{2}}\psi(-\gamma^{-1}(\omega_1+\omega_2))Kl(\omega_2(\varpi^{2l_1}\omega_1)(\varpi^{l_1}\gamma)^{-2};l_2,\psi)\ll p^{\frac{l_1}{2}}.\end{align*}
\end{proof}

    \subsection{Correlation of Kloosterman sums in smooth boxes} We now consider Kloosterman sums and correlations in the global field $F$. Recall that we denote by $\psi$ the standard character of $\A/F$. Denote $\psi^{\circ} = \psi(\mathfrak{D}^{-1}\cdot)$. This character is unramified, although, in general, not $F$-invariant. For an integral ideal $\mathfrak{d}\subset O_F$, $\omega \in \A$ with $v(\omega)\geq -2v(\mathfrak{d})$ for every $v<\infty$ we define 
    \begin{equation}\label{eq:Klosterman}Kl(\omega;\mathfrak{d};\psi^{\circ}) = \prod_{v}Kl(\omega_v;v(\mathfrak{d}),\psi_v^{\circ}). \end{equation}  
    Let $\mathfrak{d}_1,\mathfrak{d}_2 \subset O_F$ be integral ideals.    Let $[\mathfrak{d}_1,\mathfrak{d}_2]$ be the least common multiple ideal of $\mathfrak{d}_1,\mathfrak{d}_2$. For $\omega_1,\omega_2 \in \A$ and $\gamma \in \A$ so that $v(\omega_i) \geq -2v(\mathfrak{d}_i)$ and $v(\gamma) \geq -v([\mathfrak{d}_1,\mathfrak{d}_2])$ for all $v< \infty$. For any integral ideal $\mathfrak{d}$ let $R_{\mathfrak{d}} = \{v< \infty \ | \  v(\mathfrak{d})>0\}$. We define the correlation sums
    \begin{equation*}\label{eq:correlationkkloosterman}\mathcal{C}(\omega_1,\omega_2,\gamma; \mathfrak{d}_1,\mathfrak{d}_2) = \prod_{v\in R_{[\mathfrak{d}_1,\mathfrak{d}_2]}}\mathcal{C}({\omega_1}_v,{\omega_2}_v,\gamma_v;v(\mathfrak{d}_1),v(\mathfrak{d}_2)),\end{equation*} 
    where the Local correlations are defined and studied in the previous section. Over a number field we can study correlations of Kloosterman sums along smooth boxes:
    
    \begin{lemma}\label{lem:4.3}Let $\mathfrak{d}_1,\mathfrak{d}_2$ be integral ideals so that $N(\mathfrak{d}_i)\leq N(\mathfrak{q})^A$ for some $A>0$, $X \in \R_{>0}^{S_{\infty}} \subset \A^{\times}$, $\omega_1,\omega_2 \in \A^{\times}$.
    Suppose that $V = \prod_{v}V_v \in C_c^{\infty}(\A)$ satisfies the following properties:\begin{itemize}
        \item for all $v<\infty$ the smooth function $V_v$ is of the form $V_v=\mathbb{1}_{\mathfrak{p}_v^{j_v}}$ for some $j_v \in \Z$ and only finitely many of the $j_v$'s are non-zero. If $v\in R_{[\mathfrak{d}_1,\mathfrak{d}_2]}$, then $$j_v\geq -2v([\mathfrak{d}_1,\mathfrak{d}_2])-\min(v(\omega_1),v(\omega_2)).$$
       
        \item For all $v\in S_{\infty}$ real it holds that $\vert{x^m\partial^n_xV(x)}\vert \ll_{m,n} 1$,
        \item For all $v\in S_{\infty}$ complex it holds that 
        $$\vert{\real(z)^{m_1}\imag(z)^{n_1} \partial_{\real(z)}^{m_1}\partial_{\imag(z)}^{n_2}V(z)}\vert \ll_{m_1,m_2,n_1,n_2} 1. $$
        \end{itemize}
        Then we have \begin{align*} \sum_{\gamma \in F}V(\gamma X^{-1})Kl(\gamma \omega_1;\mathfrak{d}_1,\psi^{\circ})\overline{Kl(\gamma\omega_2;\mathfrak{d}_2,\psi^{\circ}}) \ll \frac{\widehat{V}(0)\vert{X}\vert  }{N([\mathfrak{d}_1,\mathfrak{d}_2])^{1/2}N(J')} N\left((\mathfrak{d}_1,\mathfrak{d}_2,\Omega) \right)^{1/2} \\+   N(\mathfrak{q})^{\epsilon}N([\mathfrak{d}_1,\mathfrak{d}_2])^{1/2}, \end{align*}
        where $J' = \prod_{v<\infty}\mathfrak{p}_v^{j_v}\in \A^{\times}$ and $\Omega=J'(\omega_1-\omega_2)(\mathfrak{d}_1,\mathfrak{d}_2)^2 \in \A.$\end{lemma}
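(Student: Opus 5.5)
The plan is Poisson summation over $F\subset\A$ in $\gamma$, using periodicity of the Kloosterman product at the finite places. At each $v\in R_{[\mathfrak{d}_1,\mathfrak{d}_2]}$ the function
\[\gamma_v\mapsto Kl(\gamma_v\omega_{1,v};v(\mathfrak{d}_1),\psi_v^\circ)\overline{Kl(\gamma_v\omega_{2,v};v(\mathfrak{d}_2),\psi_v^\circ)}\]
is invariant under translation by $\mathfrak{p}_v^{m_v}$, where $m_v=2v([\mathfrak{d}_1,\mathfrak{d}_2])+\min(v(\omega_1),v(\omega_2))$. Indeed, $Kl(\omega;l,\psi)$ depends only on $\omega$ modulo $\mathfrak{p}^{-2l+a(\psi)}$. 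The hypothesis $j_v\ge -m_v$ then lets us write $V_v(\gamma_v)\cdot(\text{Kl-product})$ as $\mathbb{1}_{\mathfrak{p}_v^{j_v}}$ times a function on $\mathfrak{p}_v^{j_v}/\mathfrak{p}_v^{m_v'}$ for a suitable $m_v'$. At the remaining finite places the summand is just $\mathbb{1}_{\mathfrak{p}_v^{j_v}}$.

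So $\gamma\in F$ ranges over a lattice $J'$ (a fractional ideal), and we split it into residue classes modulo a sublattice on which the Kloosterman weight is constant. Applying adelic Poisson summation $\sum_{\gamma\in F}f(\gamma)=\sum_{\xi\in F}\widehat f(\xi)$ to the product function, the dual variable $\xi$ becomes a finite Fourier transform at each $v$. That transform is exactly the local correlation sum $\mathcal{C}(\omega_{1,v},\omega_{2,v},\xi_v;\cdot,\cdot)$, up to volume factors like $N(J')^{-1}N([\mathfrak{d}_1,\mathfrak{d}_2])^{-1}$ from the normalisation. At the archimedean places we get $|X|\widehat{V_\infty}(\xi X)$.

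Next we separate the zero frequency $\xi=0$ from $\xi\neq0$. For $\xi=0$ we use the bounds $\mathcal{C}\ll p^{l/2}p^{\min(l,x)/2}$ from Lemma \ref{lem:correlationkloosterman} when $l_1=l_2$, and $\ll p^{\max/2}$ or $0$ from the second correlation lemma when $l_1\ne l_2$. The factor $p^{\min(l,x)/2}$ multiplies over $v$ into $N((\mathfrak{d}_1,\mathfrak{d}_2,\Omega))^{1/2}$, with $\Omega$ encoding the valuation of $\omega_1-\omega_2$ normalised by $J'$ and $(\mathfrak{d}_1,\mathfrak{d}_2)^2$. Combined with the volume normalisation, this gives the first term $\widehat V(0)|X|N([\mathfrak{d}_1,\mathfrak{d}_2])^{-1/2}N(J')^{-1}N((\mathfrak{d}_1,\mathfrak{d}_2,\Omega))^{1/2}$.

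For $\xi\ne0$ the dual lattice is $\mathfrak{D}^{-1}$ times an inverse ideal. By the derivative bounds on $V_\infty$, $\widehat{V_\infty}(\xi X)$ is negligible unless $|\xi_v|\ll N(\mathfrak{q})^\epsilon X_v^{-1}$ for all $v$. Each surviving term has size $|X|N(J')^{-1}N([\mathfrak{d}_1,\mathfrak{d}_2])^{-1}\prod_v|\mathcal{C}_v|$. Here we use the uniform bounds $|\mathcal{C}_v|\ll p^{l}$, where we bound $\min(l,x,y)\le y$ so that the $p^{y/2}$ factor is absorbed by the shrinking of the admissible $\xi$ set. The number of such $\xi$ in the box is counted by lattice-point counting, using Lemma \ref{lem:countingunits} to control units when we pass from elements to ideals. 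This yields at most $N(\mathfrak{q})^\epsilon$ times the covolume ratio, which cancels the $|X|N(J')^{-1}$ factor, leaving $N(\mathfrak{q})^\epsilon N([\mathfrak{d}_1,\mathfrak{d}_2])^{1/2}$. The hypothesis $N(\mathfrak{d}_i)\le N(\mathfrak{q})^A$ ensures that divisor-type counts are $N(\mathfrak{q})^\epsilon$.

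The main obstacle is the bookkeeping in the $\xi\ne0$ range: showing that the gain $p^{\min(l,x,y)/2}$ (or its vanishing when $l>y>x$) is exactly compensated by the sparsity of $\xi$ with large $y$. I expect to handle this by dyadically decomposing according to the ideal $(\xi)\cap$ the relevant $\mathfrak{p}$-powers and summing a geometric series place by place.
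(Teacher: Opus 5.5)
Your plan is essentially the paper's proof: Poisson summation turns the sum into $\frac{|X|}{N([\mathfrak{d}_1,\mathfrak{d}_2]J')}\sum_{\xi}\widetilde{V}(\xi)\,\mathcal{C}(\omega_1J',\omega_2J',\xi;\mathfrak{d}_1,\mathfrak{d}_2)$ over the dual lattice $[\mathfrak{d}_1,\mathfrak{d}_2]^{-1}J'^{-1}\mathfrak{D}^{-1}$. The zero frequency is then bounded via $p^{l/2}p^{\min(l,x)/2}$, and the nonzero frequencies via $p^{l/2}p^{\min(l,x,y)/2}$ (resp.\ $p^{\max(l_1,l_2)/2}$) together with a lattice-point count in which the gcd with $\xi$ costs only $N(\mathfrak{q})^{\epsilon}$.

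There is one slip to fix. $Kl(\omega;l,\psi)$ is periodic in $\omega$ only modulo $\mathfrak{p}^{-l+a(\psi)}$; the space $\mathfrak{p}^{-2l+a(\psi)}$ is its domain of definition, which is what the condition on $j_v$ is meant to secure. So on the support $\mathfrak{p}_v^{j_v}$ the Kloosterman product is periodic modulo $\mathfrak{p}_v^{j_v+v([\mathfrak{d}_1,\mathfrak{d}_2])}$, and it is this period, not your $m_v$, that yields the normalisation $N([\mathfrak{d}_1,\mathfrak{d}_2]J')^{-1}$ and the dual lattice you then use. Similarly, for $\xi\neq 0$ the bound you actually need is $p^{l/2}p^{\min(l,x,y)/2}\le p^{l/2}p^{y/2}$. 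The cruder $|\mathcal{C}_v|\ll p^{l}$ alone would only give $N(\mathfrak{q})^{\epsilon}N([\mathfrak{d}_1,\mathfrak{d}_2])$.
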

    \begin{proof}  We apply Poisson summation formula to get 
    \begin{align*}&\sum_{\gamma \in F}V(\gamma X^{-1})Kl(\gamma\omega_1;\mathfrak{d}_1,\psi^{\circ})\overline{Kl(\gamma\omega_2;\mathfrak{d}_2,\psi^{\circ})} =
    \\& \frac{\vert{X}\vert }{N([\mathfrak{d}_1,\mathfrak{d}_2]J')} \sum_{\gamma} \widetilde{V}(\gamma)\mathcal{C}(\omega_1 J',\omega_2 J',\gamma;\mathfrak{d}_1,\mathfrak{d}_2),    
    \end{align*}
    where \begin{align*}
        \widetilde{V}_v = \begin{cases}
            \widehat{V_v}(\cdot X_v) & v \in S_{\infty}\\ \vert{\mathfrak{D}}\vert_v^{1/2}\mathbb{1}_{[\mathfrak{d}_1,\mathfrak{d}_2]^{-1}\mathfrak{p}_v^{-j_v}\mathfrak{D}_v^{-1}}&v<\infty.
        \end{cases}
    \end{align*}
    The $0$- contribution in the latter sum is bounded by
    \begin{align*}\frac{\widehat{V}(0)\vert{X}\vert }{N([\mathfrak{d}_1,\mathfrak{d}_2]J')}\prod_{v\in R_{[\mathfrak{d}_1,\mathfrak{d}_2]}}\vert{[\mathfrak{d}_1,\mathfrak{d}_2] (\mathfrak{d}_1,\mathfrak{d}_2,J'(\omega_1-\omega_2))}\vert_v^{-1/2} =\\  \widehat{V}(0)\vert{X}\vert N(J')^{-1}\frac{N(\mathfrak{d}_1,\mathfrak{d}_2,J'(\omega_1-\omega_2))^{1/2}}{N([\mathfrak{d}_1,\mathfrak{d}_2])^{1/2}}. \end{align*}
    whereas the non-zero contribution is, ignoring the cases where the correlation might be $0$, is bounded by 
    \begin{align*}&\ll \frac{\vert{X}\vert  }{N([\mathfrak{d}_1,\mathfrak{d}_2]J')}\sum_{\gamma \neq 0}\vert{\tilde{V}(\gamma)}\vert N(\mathfrak{d}_1,\mathfrak{d}_2,J'(\omega_1-\omega_2),\gamma)^{1/2}N([\mathfrak{d}_1,\mathfrak{d}_2])^{1/2} \\ &\ll N(\mathfrak{q})^{\epsilon}N([\mathfrak{d}_1,\mathfrak{d}_2])^{1/2}.  \end{align*}
    \end{proof}

\section{Automorphic representations and integral representation of $L$-functions}\label{sec:integralrep}
In this section, we recall some facts about automorphic representations and integral representation of $L$-functions. The reference for this section is Cogdell's lecture notes \cite{cogdell-lfunctions}. 

Let $\pi$ be a unitary cuspidal automorphic representation of $\gelle_3(\A)$ (realized in $L_{\operatorname{cusp}}^2(\gelle_3(F)\setminus\gelle_3(\A),\omega)$, where $\omega\in \widehat{F^{\times}\setminus\A^{\times}}$ is the central character) and let \[\pi^{\infty} \to \mathcal{W}(\pi,\psi);\ \varphi\mapsto \left(W_{\varphi}\colon \gelle_3(\A) \in g\mapsto \int_{U_3(F)\setminus U_3(\A)}\varphi(ug)\overline{\psi}(u) \ \mathrm{d}u\right)\]
denote the global Whittaker model of $\pi$ with respect to $\psi$. 
Each $\varphi \in \pi^{\infty}$ has a Fourier-Whittaker expansion 
\[ \varphi(g) = \sum_{\gamma \in \gelle_2(F)\setminus\gelle_2(\A)}W_{\varphi}\left(\begin{pmatrix}
    \gamma&\\&1
\end{pmatrix}g\right).\]
The representation $\pi$ splits as a restricted tensor product $\pi \simeq \otimes_v' \pi_v$, where each $\pi_v$ is a unitary irreducible generic representation of $\gelle_3(F_v)$ and for all but finitely many $v$ the representation $\pi_v$ contains a vector (necessarily unique up to scalar multiplication by \cite{Shalika}) fixed by $\gelle_3(O_{F_v})$. 
The global Whittaker model splits accordingly as a restricted tensor product of local Whittaker models of $\pi_v$, that is, each $W\in \mathcal{W}(\pi,\psi)$ can be written as a finite sum of functions $\prod_vW_v \colon \gelle_3(\A) \ni g \mapsto \prod_vW_v(g_v)$, where $W_v$ lies in the (unique) Whittaker model of $\pi_v$,  denoted by $\mathcal{W}(\pi_v,\psi_v)$, and for all but finitely many places $v$ the function $W_v$ is right $\gelle_3(O_{F_v})$-invariant and $W_v(I)=1$.

Denote by $\iota$ the inverse transposition on $\gelle_3$, that is, $\iota(g) = (g^{-1})^t$. We will also denote by $\iota$ the induced involution on the space of functions of $\gelle_3$. If $V_{\pi} \subset L^2(\gelle_3(F)\setminus\gelle_3(\A),\omega)$ is (the) a realization of $\pi$, then $V_{\pi}^{\iota} = \{\varphi^{\iota},\ \varphi \in V_{\pi}\}$ is a realization of the contragredient representation $\tilde{\pi}(g) = \pi(g^{\iota})$. For $\pi$ unitary, the representation $\tilde{\pi}$ is isomorphic to the unitary dual representation of $\pi$.

Consider the following projections:
\begin{align}
    \mathbb{P}\varphi(g)&= \vert{\det(g)}\vert_{\A}^{-\frac{1}{2}}\int_{F\setminus \A}\int_{F\setminus\A}\varphi\left(\begin{pmatrix}
    1&&u_{13}\\&1&u_{23}\\&&1
\end{pmatrix}g\right)\overline{\psi}(u_{23})\ \mathrm{d}u_{23}\mathrm{d}u_{13} \label{eq:definitionPphi}\\
\tilde{\mathbb{P}}\varphi&= \iota \circ \mathbb{P}\circ \iota. \nonumber
\end{align}

The function $\mathbb{P}\varphi$ is $a_1(F^\times)\leqslant \gelle_2(F)$ invariant. Similarly as $\varphi$, also $\mathbb{P}\varphi$ admits a Fourier-Whittaker expansion in terms of the global Whittaker function of $\varphi$.
\begin{equation}\label{eq:Fourierprojection}\mathbb{P}\varphi(g) = \sum_{\gamma \in F^{\times}}W_{\varphi}(a_1(\gamma)g), \qquad g\in \gelle_3(\A). \end{equation}
For any character $\eta \in \widehat{F^{\times}\setminus \A^{\times}}$ and any $\varphi \in \pi^{\infty}$, we define the complex functions \begin{align*}I(\varphi,\eta,s) &= \int_{F^{\times}\setminus\A^{\times}} \mathbb{P}\varphi(a_1(y))\eta(y)\vert{y}\vert^{s-\frac{1}{2}}\ \mathrm{d}^{\times}y,\\ \tilde{I}(\varphi,\eta,s)&= \int_{F^{\times}\setminus\A^{\times}}\tilde{\mathbb{P}}\varphi(a_1(y))\eta(y)\vert{y}\vert^{s-\frac{1}{2}}\ \mathrm{d}^{\times}y. \end{align*}
Since $\varphi$ is a smooth cusp form the integrals are absolutely convergent and uniform convergent on compacta, implying that $I(\varphi,\eta,s)$ and $\tilde{I}(\varphi,\eta,s)$ are entire functions. Also, they are connected by a functional equation:
\begin{equation}\label{eq:fctequations}I(\varphi,\eta,s) = \tilde{I}(\varphi^{\iota},\overline{\eta},1-s).\end{equation}
We can understand the period integral using the Fourier-Whittaker expansion \eqref{eq:Fourierprojection} and it becomes
\begin{equation*}
    I(\varphi,\eta,s) = \int_{\A^{\times}} W_{\varphi}(a_1(y))\eta(y)\vert{y}\vert^{s-1}\ \mathrm{d}^{\times}y; 
\end{equation*}
the right hand side converges for $\real(s)>1$. If, in addition, $W_{\varphi} = \prod_{v}W^{\varphi}_{v}$, $W_{v}^{\varphi} \in \mathcal{W}(\pi_v,\psi_v)$, then the integral is split as an Euler product ($\real(s)>1$): \begin{equation*}
    I(\varphi,\eta,s) = \prod_{v} \int_{F_v^{\times}}W_{v}^{\varphi}(a_1(y))\eta_v(y)\vert{y}\vert_v^{s-1}\mathrm{d}^{\times}y.
\end{equation*}
For $W \in \mathcal{W}(\pi_v,\psi_v)$ we define the local integral 
\begin{align*}\Psi(W,\eta_v,s) &= \int_{F_v^{\times}}W(a_1(y))\eta_v(y)\vert{y}\vert_v^{s-1}\ \mathrm{d}^{\times}y.
\end{align*}
Let $v<\infty$. Then the local integrals above generate a $\CC[p_v^{\pm s}]$-fractional ideal $\mathcal{I}(\pi_v\times\eta_v)$ in $\CC(p_v^{-s})$. By the properties of the Whittaker model $\mathcal{W}(\pi_v,\psi_v)$, this ideal contains the constant function $1$, and so it has a generator $L(\pi_v\times \eta_v,s) = P(p_v^{-s})^{-1}$, where $P \in \CC[X]$. This is the local $v$-$L$ factor of $\pi \times \eta$. The global arithmetic $L$-function is defined as 
\[ L(\pi\times\eta,s)=\prod_{v<\infty}L(\pi_v\times\eta_v,s), \qquad \real(s)>1\]
Let $\varphi \in \pi$ and suppose that $W_{\varphi} = \prod_{v}W_{v}^{\varphi}$, then the global integral $I(\varphi,\eta,s)$ and the $L$-functions $L(\pi\times\eta,s)$ are \emph{equal up to finitely many factors}, that is there exists a finite set of places $S$ of $F$ so that $S_{\infty}\cap S = \varnothing$ and  
\[I(\varphi,\eta,s) = L(\pi \times \eta,s)\prod_{v\in S}\frac{\Psi(W_v^{\varphi},\eta_v,s)}{L(\pi_v\times\eta_v,s)}\prod_{v\in S_{\infty}}\Psi(W_v^{\varphi},\eta_v,s). \] and note that by construction $\frac{\Psi(W_v^{\varphi},\eta_v,s)}{L(\pi_v\times\eta_v,s)}$, $v\in S$, is an entire function.

\section{Voronoi summation formula}\label{sec:voronoi}
The reference is \cite{IchinoTemplier}. See also \cite{2018VoronoiSF} for a more complete formula.

The formula obtained in \cite{IchinoTemplier} is deduced from an equality between linear functionals on the space of automorphic forms for $\gelle_n$, let $\mathcal{P}\varphi = \mathbb{P}\varphi(1)$. Also, they define 
\begin{equation}\label{def:definitionptilda}\tilde{P}\varphi = \int_{\A}\int_{F\setminus \A}\int_{F\setminus\A}\varphi\left(\begin{pmatrix}
    1&&u_{13}\\&1&u_{23}\\&&1
\end{pmatrix}\begin{pmatrix}
    1&\\x&1\\&&1
\end{pmatrix}\sigma_{23}\right)\psi(u_{23})\ \mathrm{d}u_{23}\mathrm{d}u_{13}\ \mathrm{d}x,\end{equation}
then in \emph{loc. cit.} it is shown that $P\varphi =(\tilde{P}\circ \iota) \varphi= \tilde{P}\varphi^{\iota}$. Inserting \eqref{eq:Fourierprojection} in the definition of $\mathcal{P}$ and in \eqref{def:definitionptilda} one obtains the following equality:
\begin{equation}\label{eq:lefthandsidevoronoi}
    \sum_{\gamma \in F^{\times}}W_{\varphi}(a_1(\gamma)) = \int_{\A}\sum_{\gamma \in F^{\times}}W_{\varphi}^{\iota}\left(a_1(\gamma)\begin{pmatrix}
        1&&\\x&1\\&&1
    \end{pmatrix}\sigma_{23}\right) \ \mathrm{d}x,
\end{equation}
where $W_{\varphi}^{\iota}(g) = W_{\varphi}(\sigma_{13}g^\iota)$. Note that the right-hand side is expressed a priori in terms of $W_{\varphi}^{\overline{\psi}}$, that is, the global Whittaker function of $\varphi^{\iota}$ with respect to $\overline{\psi}$, rather than $\psi$. One can easily compute that by the left $\gelle_3(F)$ invariance of $\varphi$ we have $W_{\varphi}^{\iota}(g) = W_{\varphi^{\iota}}^{\overline{\psi}}(g)$.
Now, different vector choices $\varphi$ will create different formulae. For example, for $\zeta \in \A$, $J_1,J_2\in \A^{\times}$, we have 
\begin{equation*}\mathcal{P}(\varphi^{u_{12}(\zeta)a(J_1,J_2)}) = \sum_{\gamma \in F^{\times}}W_{\varphi}(a(J_1,\gamma J_2))\psi(\gamma\zeta), \end{equation*}
where $\varphi^{g}$ denotes the vector obtained by right translating $\varphi$ by the group element $g$. 
In some cases, one can compute the right-hand side switching integral and sum. If $\varphi \in \pi^{\infty}$ and its global Whittaker function splits accordingly $W_{\varphi} = \prod_{v}W_v^{\varphi}$, then to compute the right-hand side of \eqref{eq:lefthandsidevoronoi} the task is to compute the functions defined in $F_v^{\times}$ by the local integral 
\begin{equation*}
   B_{\pi_v,\zeta,J_1,J_2}[W_v^{\varphi}](y)= \int_{F_v}({W_{v}^{\varphi}})^{\iota}\left(\begin{pmatrix}
        y&&\\x&1\\&&1
    \end{pmatrix}\sigma_{23}\begin{pmatrix}
        (J_1J_2)^{-1}&&\\-\zeta (J_1J_2)^{-1}&J_1^{-1}&\\&&1
    \end{pmatrix}\right) \ \mathrm{d}x,
\end{equation*}
for all places $v$, or over the archimedean places to give enough satisfactory analytic properties of those functions.
In \cite{IchinoTemplier} the above computation is done for the following cases: \begin{itemize}
    \item $v<\infty$, $W_v^{\varphi}$ is $\gelle_3(O_{F_v})$-invariant and $\vert{\zeta}\vert_v \leq 1$ and $\psi_v$ is unramified.
    \item $v< \infty$, $W_v^{\varphi}$ is $\gelle_3(O_{F_v})$-invariant and $\vert{\zeta}\vert_v >1$ and $\psi_v$ is unramified.  
\end{itemize}

 Let $W_v^{\circ}\in \mathcal{W}(\pi_v,\psi_v)$ be the newform (see Remark \ref{rmk:newform}) and let $W^{\circ}_{\fin}=\prod_{v<\infty}W^{\circ}_v$. For each $v\in S_{\infty}$ suppose that $W_v \in \mathcal{W}(\pi_v,\psi_v)$ so that $F_v^{\times}\ni y \mapsto W_v(a_1(y)) \in C_c^{\infty}(F_v^{\times})$. In particular, $W = W_{\fin}^{\circ} \times \prod_{v\in S_{\infty}}W_v \in \mathcal{W}(\pi,\psi)$ in the global Whittaker model of $\pi$. 
 The following version of the Ichino-Templier Voronoi summation formula can be found in \cite[Proposition 4.2]{Qi2024} and deals with the places where $\psi_v$ is ramified.

\begin{proposition}Let $S= S_{\infty}\cup\{v<\infty,\ \pi_v \text{ is ramified}\}$. Let $J \in (\A^{\times})^S$ and $\zeta \in \A^{S}$. Define $R=\{v<\infty  \ |\ \vert{\zeta J_2^{-1}}\vert_v>1\}$.
Then the following equality holds
\begin{align*}
    &\sum_{\gamma \in F^{\times}}W^{\circ}_{\operatorname{fin}}(a(\mathfrak{D}^{-1},\gamma J))\prod_{v \in S_{\infty}}W_v(a_1(\gamma)) \psi(\gamma\zeta) =\\ & \frac{\vert{\zeta}\vert_{R}\chi^\pi_{R}(J(\mathfrak{D}\zeta)^{-1}) \vert{J}\vert^{S\cup R} }{{\chi^{\pi}}^S(\mathfrak{D})(\vert \mathfrak{D}\vert ^S)^{\frac{1}{2}}}\sum_{\gamma \in F^{\times}}K_{R}(\gamma,\zeta, \mathfrak{D}^{-1},J, (W^{\circ}_{R})^{\iota})\prod_{v\in S, v<\infty}B_{\pi_v}[W_v^{\circ}](\gamma)\\&\hspace{6.1cm}\times(W^{\circ R\cup S})^{\iota}(a_1(\mathfrak{D}^{-1},\gamma\mathfrak{D}J^{-1}))\prod_{v\in S_{\infty}}B_{\pi_v}[W_v](\gamma),
\end{align*}
where $K_{R}(\gamma,\zeta,\mathfrak{D}^{-1},J,(W_{R}^{\circ})^{\iota})=\prod_{v\in R}K_v(\gamma,\zeta,\mathfrak{D}^{-1},J,(W_v^{\circ})^{\iota})$
and \begin{align*}
    &K_v(\gamma,\zeta,\mathfrak{D}^{-1},J,(W_v^{\circ})^{\iota}) =\\& 
   \sum_{l=0}^{-v(\zeta(\mathfrak{D}J_1J_2)^{-1})}(W_v^{\circ})^{\iota}(a(\varpi_v^{-l}\mathfrak{D}^{-1}J\zeta^{-1},\gamma\varpi_v^{2l}\mathfrak{D}\zeta ^{-1}))S(\mathfrak{D}^{-1}_v,-\gamma(\mathfrak{D}\zeta^{-1})_v;l,\psi_v)
\end{align*}
in both cases the Kloosterman sum $S(\cdot,\cdot,l;\psi_v)$ (see Section \ref{sec:gausskloosterman}) for $l=0$ must be understood as $1$.
\end{proposition}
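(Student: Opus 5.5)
The plan is to deduce the statement from the Ichino--Templier identity \eqref{eq:lefthandsidevoronoi} applied to the right translate $\varphi' = \varphi^{u_{12}(\zeta)a(\mathfrak{D}^{-1}\cdot, J)}$, where $\varphi$ is the automorphic form whose global Whittaker function is the pure tensor $W = W^{\circ}_{\fin}\times\prod_{v\in S_\infty}W_v$. The left-hand side of \eqref{eq:lefthandsidevoronoi} for $\varphi'$ is, by the computation of $\mathcal{P}(\varphi^{u_{12}(\zeta)a(J_1,J_2)})$ recalled above, exactly
\[
\sum_{\gamma}W^{\circ}_{\fin}(a(\mathfrak{D}^{-1},\gamma J))\prod_{v\in S_\infty}W_v(a_1(\gamma))\psi(\gamma\zeta).
\]
Here the archimedean components of $J$ and $\zeta$ are trivial, since $J\in(\A^\times)^S$ and $\zeta\in\A^S$. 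On the right-hand side, the dual Whittaker function factors as a product over places. After justifying the interchange of $\int_\A$ and $\sum_\gamma$, the right-hand side becomes $\sum_\gamma\prod_v B_{\pi_v,\zeta_v,J_{1,v},J_{2,v}}[W_v](\gamma)$. The justification uses the compact support of $y\mapsto W_v(a_1(y))$ at infinite places and the support properties of the newforms from the remark after Remark~\ref{rmk:newform}. So the proof reduces to a place-by-place computation of the local transforms $B$.

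I then split the places into four types.

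\begin{enumerate}
\item For $v\in S$, where $\zeta_v=0$ and $J_v=1$ (or $J_v$ is absorbed), the local transform is left as the abstract $B_{\pi_v}[W_v]$. This covers the archimedean places and the finite places where $\pi_v$ is ramified, and nothing needs to be computed.
\item For $v\notin S\cup R$, $\pi_v$ is unramified and $|\zeta J_2^{-1}|_v\le 1$. After conjugating by $a(\varpi_v^{v(\mathfrak{D})},\varpi_v^{v(\mathfrak{D})})$ to pass to the unramified character $\psi_v^\circ$, I am in Ichino--Templier's first case. There the $x$-integral collapses to the volume of $O_v$, which gives $|\mathfrak{D}|_v^{1/2}$. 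It produces $(W_v^\circ)^{\iota}(a(\mathfrak{D}^{-1},\gamma\mathfrak{D}J^{-1}))$, together with a factor $|J|_v$ and the central character value $\chi^\pi_v(\mathfrak{D})^{-1}$ coming from the diagonal shift.
\item For $v\in R$, I use Ichino--Templier's second case, again after the same conjugation. I split the $x$-integral according to $v(x)$. For $|x|_v$ large I use the Iwasawa decomposition
\[
\begin{pmatrix}1&\\x&1\end{pmatrix}=\begin{pmatrix}x^{-1}&1\\&x\end{pmatrix}k.
\]
The unipotent part then yields the phase $\psi_v(\ast x^{-1}+\ast x)$. Integrating over each shell $x\in\varpi^{-l}O^\times/O$ gives the Kloosterman sum $S(\mathfrak{D}_v^{-1},-\gamma(\mathfrak{D}\zeta^{-1})_v;l,\psi_v)$, weighted by $(W_v^\circ)^\iota$ evaluated at the displayed diagonal element. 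The range of $l$ is truncated at $-v(\zeta(\mathfrak{D}J_1J_2)^{-1})$ because the Whittaker vanishing condition cuts off larger $l$. The shell $l=0$ gives the term with $S=1$. The Jacobians of these changes of variable produce $|\zeta|_v$ and $\chi^\pi_v(J(\mathfrak{D}\zeta)^{-1})$.
\end{enumerate}

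Multiplying all local contributions and collecting the constants gives the stated prefactor.

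I expect the main obstacle to be the bookkeeping at places $v\mid\mathfrak{D}$ with $v\in R$ or $v\notin S$. There $\psi_v$ is ramified, so Ichino--Templier's formulas do not apply directly. I would handle this uniformly by writing $W_v^\circ(g)=W_v^{\circ,\mathrm{unr}}(a(\varpi^{v(\mathfrak{D})},\varpi^{v(\mathfrak{D})})g)$, with the inner function living in $\mathcal{W}(\pi_v,\psi_v^\circ)$. I would then commute this diagonal matrix through $\iota$, $\sigma_{23}$ and the lower unipotent matrix, rescaling $x$ by $\varpi^{\pm v(\mathfrak{D})}$. This reduces to the unramified computation with shifted arguments, and it accounts for the $\mathfrak{D}$, $\mathfrak{D}^{-1}$ and $\chi^\pi(\mathfrak{D})$ factors. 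The coprimality of $\mathfrak{C}(\pi)$ and $\mathfrak{D}$ guarantees that no place is simultaneously ramified for $\pi_v$ and $\psi_v$, so this reduction is always available. Since the statement is quoted from \cite[Proposition 4.2]{Qi2024}, I would cross-check the normalizations of the constants against that source.
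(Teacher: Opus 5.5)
Your outline is the standard derivation and matches what the paper does: it simply imports the statement from \cite[Proposition 4.2]{Qi2024}, after setting up exactly your framework — the identity \eqref{eq:lefthandsidevoronoi} for the right translate by $u_{12}(\zeta)a(\mathfrak{D}^{-1},J)$, factorization into the local transforms $B_{\pi_v,\zeta,J_1,J_2}$, Ichino--Templier's two unramified computations, and reduction of the places $v\mid\mathfrak{D}$ to $\psi_v^\circ$ through the shift of Remark~\ref{rmk:newform}. Carried out this way, your item 2 reproduces the stated prefactor: the factor $|\mathfrak{D}|_v^{-1}|J|_v$ from the substitution $x'=\varpi_v^{v(\mathfrak{D})}J_v^{-1}x$, times $\operatorname{vol}(O_v)=|\mathfrak{D}|_v^{1/2}$, times $\chi^\pi_v(\mathfrak{D})^{-1}$ from the central element $\varpi_v^{v(\mathfrak{D})}$; just take the interchange of $\int_{\A}$ and $\sum_\gamma$ from Ichino--Templier rather than from the support of $W_v(a_1(\cdot))$, since the dual side evaluates $W_v^{\iota}$ off the torus.
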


Our case of interest is $J=\mathfrak{D}^{-1}I$, for $I \in \mathcal{S}_{\cl}$; and also we want to state the result in terms of the newforms $U_v^{\circ}\in \mathcal{W}(\tilde{\pi}_v,\overline{\psi_v})$ instead of $(W_v^{\circ})^{\iota}$ for $v<\infty$:
\begin{corollary}\label{thm:voronoi1}Let $S= S_{\infty}\cup\{v<\infty,\ \pi_v \text{ is ramified}\}$ and assume that for all $v \in S$ we have $v(\mathfrak{D})=0$. Let $I \in (\A^{\times})^S$ and $\zeta\in \A^S$. Define $R=\{v<\infty \ |\ \vert{\zeta \mathfrak{D}I^{-1}\vert_v>1}\}$. Then the following summation formula holds
    \begin{align*}&\sum_{\gamma \in F^{\times}}W^{\circ}_{\operatorname{fin}}(a(\mathfrak{D}^{-1},\gamma \mathfrak{D}^{-1}I))\prod_{v \in S_{\infty}}W_v(a_1(\gamma)) \psi(\gamma\zeta)=\\&\epsilon(\pi)\frac{\vert{\zeta I\mathfrak{D}^{-1}}\vert_{R}N(\mathfrak{D})^{3/2}C(\pi)^{\frac{1}{2}}}{N(I)}\frac{\chi_{\pi_{R}}(I\mathfrak{D}^{-2}\zeta^{-1})\chi_{\pi}^R(\mathfrak{D}^2)}{{\chi^\pi}^S(\mathfrak{D})}\sum_{\gamma \in F^{\times}}K_{R}(\gamma,\zeta, \mathfrak{D}^{-1},\mathfrak{D}^{-1}I, U^{\circ}_R)\\&\qquad \times U^{\circ R}(a(\mathfrak{D}^{-1},\gamma \mathfrak{D}^2\mathfrak{C}(\pi)I^{-1}))\prod_{v\in S_{\infty}}B_{\pi_v}[W_v](\gamma)
    \end{align*}
\end{corollary}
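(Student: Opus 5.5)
This corollary should follow by specializing the preceding proposition to $J=\mathfrak{D}^{-1}I$ and then rewriting the dual side in terms of the newforms $U_v^\circ\in\mathcal{W}(\tilde\pi_v,\overline{\psi_v})$.

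First, substitute $J=\mathfrak{D}^{-1}I$. Then $R=\{v<\infty : |\zeta J_2^{-1}|_v>1\}$ becomes $\{v : |\zeta\mathfrak{D}I^{-1}|_v>1\}$. The left-hand side is exactly that of the proposition, since $\gamma J=\gamma\mathfrak{D}^{-1}I$. On the right-hand side, the prefactor $|\zeta|_R\,\chi^\pi_R(J(\mathfrak{D}\zeta)^{-1})\,|J|^{S\cup R}$ becomes
\[
|\zeta|_R\,\chi^\pi_R(I\mathfrak{D}^{-2}\zeta^{-1})\,|\mathfrak{D}^{-1}I|^{S\cup R}.
\]
Since $v(\mathfrak{D})=0$ for $v\in S$, and $I$ is a finite idèle, the factor $|\mathfrak{D}^{-1}I|^{S\cup R}$ equals $N(\mathfrak{D})N(I)^{-1}$ multiplied by a correction $|\mathfrak{D}^{-1}I|_R^{-1}$. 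Combining this correction with $|\zeta|_R$ produces $|\zeta I\mathfrak{D}^{-1}|_R$ up to the $R$-part. The factor $(|\mathfrak{D}|^S)^{-1/2}=N(\mathfrak{D})^{1/2}$ then accounts for the exponent $N(\mathfrak{D})^{3/2}$.

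The substantive step is replacing $(W_v^\circ)^\iota$ by $U_v^\circ$ at each finite place. At unramified $v\notin S$, the function $(W_v^\circ)^\iota(g)=W_v^\circ(\sigma_{13}g^\iota)$ is the spherical vector of $\tilde\pi_v$ in the $\overline{\psi_v}$-model, but it is built from the translated newform of Remark \ref{rmk:newform}. I would compute that $\iota$ and conjugation by $\sigma_{13}$ send $a(\varpi^{d},\varpi^{d})$ to a diagonal element equal to $a(\varpi^{d},\varpi^{d})$ times the central element $\varpi^{-2d}$. Here $d=v(\mathfrak{D})$. The central element contributes $\chi_{\pi_v}(\varpi^{2d})$, which gives the factor $\chi^R_\pi(\mathfrak{D}^2)$ (at $v\in R$ this character is absorbed into $K_R$). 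This yields
\[
(W_v^\circ)^\iota=\chi_{\pi_v}(\mathfrak{D}^2)\,U_v^\circ
\]
on diagonal arguments, and after rearranging the arguments gives $U^{\circ R}(a(\mathfrak{D}^{-1},\gamma\mathfrak{D}^2 I^{-1}))$.

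At ramified $v\in S$ we have $v(\mathfrak{D})=0$ by hypothesis, so $\psi_v$ is unramified. There I would use the local functional equation of Jacquet--Piatetski-Shapiro--Shalika for newforms, as in \cite{conducteurgln} and \cite{matringe2012essential}. The local Bessel transform $B_{\pi_v}[W_v^\circ](\gamma)$ is the newform of $\tilde\pi_v$ evaluated at $a_1(\gamma\varpi^{a(\pi_v)})$, multiplied by $\epsilon(\pi_v)\,p_v^{a(\pi_v)/2}$. Taking the product over $v$ gives $\epsilon(\pi)\,C(\pi)^{1/2}$ and the shift by $\mathfrak{C}(\pi)$ in the argument. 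I expect this identification to be the main obstacle, because it requires carefully computing the Ichino--Templier local integral for a non-spherical newform via its Mellin transform. My first approach would be to compare Mellin transforms against unramified characters: both sides reduce to $L$-factors, namely $L(\tilde\pi_v\times\eta,1-s)$ versus $\gamma$-factor$\times L(\pi_v\times\eta,s)$. Since the newform is determined by these Mellin transforms, this identifies the two sides.

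Finally, $\chi^S_\pi(\mathfrak{D})^{-1}$ carries over from the proposition unchanged. In $K_R$ the same substitution $(W_v^\circ)^\iota\to U_v^\circ$ is made term by term, with its central character absorbed into $\chi_{\pi_R}$. Collecting all the constants gives the stated formula.
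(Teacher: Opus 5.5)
Your route is the paper's: specialize the preceding proposition to $J=\mathfrak{D}^{-1}I$, then convert $(W_v^{\circ})^{\iota}$ into $U_v^{\circ}$ at unramified places via the central character. Your computation $\sigma_{13}\,a(\varpi^{d},\varpi^{d})^{\iota}\sigma_{13}^{-1}=\varpi^{-2d}a(\varpi^{d},\varpi^{d})$ is what underlies $(W_v^{\circ})^{\iota}=\overline{\chi^{\pi}_{v}}(\mathfrak{D}^{-2})U_v^{\circ}$. At the ramified places you identify $B_{\pi_v}[W_v^{\circ}]$ through Mellin transforms, which is Proposition~\ref{prop:computation-v-adic-bessel-transform}.

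However, the ramified step does not close as you state it. Mellin transforms against unramified characters determine a function on $F_v^{\times}$ only up to its average over $O_v^{\times}$. That is enough for $U_v^{\circ}(a_1(\cdot\,\varpi_v^{a(\pi_v)}))$, which is $O_v^{\times}$-invariant. But the function you must pin down is $B_{\pi_v}[W_v^{\circ}]$, and nothing in your argument shows it is $O_v^{\times}$-invariant. So ``the newform is determined by these Mellin transforms'' is a statement about the wrong side.

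The missing input is the ramified characters $1\neq\eta\in\widehat{O_v^{\times}}$. In the identity characterizing the Bessel transform, the right-hand side contains $\int_{F_v^{\times}}W_v^{\circ}(a_1(y))\eta(y)|y|_v^{-s}\,\mathrm{d}^{\times}y$. This vanishes because $W_v^{\circ}$ is right-invariant under $a_1(O_v^{\times})\subset\gelle_2(O_v)$. Hence the Mellin transform of $B_{\pi_v}[W_v^{\circ}]$ against $\overline{\eta}$ vanishes, and so does that of $U_v^{\circ}(a_1(\cdot\,\varpi_v^{a(\pi_v)}))$. Only with all $\eta\in\widehat{O_v^{\times}}$ and all $s$ does Mellin inversion give the pointwise identity; this is exactly how the paper's appendix argues. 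Alternatively, you can check directly that $B_{\pi_v}[W]$ is $O_v^{\times}$-invariant whenever $W$ is right $a_1(O_v^{\times})$-invariant, by moving $a_1(u)$ past the lower unipotent and $\sigma_{23}$.

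There are also two smaller bookkeeping points.

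\emph{The absolute-value factor.} Since $|\cdot|^{S\cup R}$ is the product over $v\notin S\cup R$, your substitution actually gives $|\zeta|_R\,|\mathfrak{D}^{-1}I|^{S\cup R}=N(\mathfrak{D})N(I)^{-1}|\zeta\mathfrak{D}I^{-1}|_R$, not $|\zeta I\mathfrak{D}^{-1}|_R$. The two differ by $|I\mathfrak{D}^{-1}|_R^{2}$, which is bounded in terms of $I$ and $\mathfrak{D}$. In Section~\ref{sec:Bounds for FT} the prefactor is in any case replaced by $M\cdot N(\mathfrak{z}_{k,l,\delta})$, with $N(\mathfrak{z}_{k,l,\delta})=|\zeta\mathfrak{D}I^{-1}|_R$.

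\emph{The central-character factor.} The factor $\chi_{\pi_R}(\mathfrak{D}^{2})$ produced by replacing $(W_v^{\circ})^{\iota}$ with $U_v^{\circ}$ inside $K_R$ is not absorbed into $\chi_{\pi_R}(I\mathfrak{D}^{-2}\zeta^{-1})$. That factor is just the specialization of $\chi^{\pi}_R(J(\mathfrak{D}\zeta)^{-1})$, so $\chi_{\pi_R}(\mathfrak{D}^{2})$ is an extra factor of modulus one.

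Neither point affects how the formula is used later, but record them rather than forcing agreement with the displayed constant.
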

\begin{proof}
    If $\pi_v$ is unramified then the elements $(W^{\circ}_v)^{\iota}, U_v^{\circ}\in \mathcal{W}(\tilde{\pi}_v,\overline{\psi_v})$ are related by $$(W^{\circ}_v)^{\iota} = \overline{\chi^\pi_{v}}(\mathfrak{D}^{-2})U_v^{\circ}.$$ 
    If $\pi_v$ is ramified (by our assumption, then $\psi_v$ is unramified), then Proposition \ref{prop:computation-v-adic-bessel-transform} shows that \[B_{\pi_v}[W_v^{\circ}](\gamma) = \epsilon(\pi_v,\psi_v)q_v^{\frac{a(\pi_v)}{2}}U_v^{\circ}(a_1(\gamma\varpi_v^{a(\pi_v)})).\]
\end{proof}

\subsection{Archimedean Bessel transforms}Let $F$ be $\R$ or $\CC$. Let $V\in C_c^{\infty}(F^{\times})$. Let $\rho$ denote an admissible irreducible unitary representation of $\gelle_3(F)$.  
Then the $\rho$-Bessel transform $B_{\rho}[V]$ of $V$ is defined by the following uniqueness property (\cite{IchinoTemplier}): 
\begin{equation*}
     \int_{F^{\times}}B_{\rho}[{V}](y)\eta(y)^{-1}\vert{y}\vert_v^{s-1}\mathrm{d}^{\times}y =  \gamma(1-s,\rho\times \eta,\psi)\int_{F^{\times}}V(y)\eta(y)\vert{y}\vert_v^{-s}\ \mathrm{d}^{\times}y,
\end{equation*}
for all $\eta \in \widehat{F^{\times}}$ and all $s\in \CC$ with real part sufficiently big.
In the case where $V$ is $C_c^{\infty}(F^{\times})$ (or say, in the Schwartz space of $F^{\times}$) we can explicitly compute $B_{\rho}[V]$ using Mellin inversion.
For $F=\R$ any character of $\R^\times$ is of the form $y \in \R^{\times} \mapsto \vert{y}\vert^{it} \operatorname{sgn}(y)^{j}  $, therefore if $V \in C_c^{\infty}(\R^{\times})$ we have that 
\[B_{\rho}[V](y) = \sum_{j\in\{0,1\}} \operatorname{sgn}(y)^j\int_{(\sigma)}\gamma(1-s,\rho\operatorname{sgn}^{j},\psi)\vert{y}\vert^{1-s}\int_{\R^{\times}}V(y)\operatorname{sgn}(y)^j\vert{y}\vert^{-s}\ \mathrm{d}s,\]
for $\sigma=\real(s)$ big enough.

Let now $F=\CC$. Any character $\eta$ of $\CC^{\times}$ can be written as $\eta(re^{i\theta}) = r^{it}e^{ik\theta}$ for unique $t\in \R$ and $k\in \Z$. 

Then we have for $\sigma = \real(s)$ big enough:
\begin{equation*}
    B_{\rho}[V](re^{i\theta}) = \sum_{k\in \Z}e^{ik\theta}\int_{(\sigma)}\gamma(1-s,\rho_k,\psi)r^{2-2s}\mathcal{M}(V,k)(-2s) \ \mathrm{d}s,
\end{equation*}
where $\mathcal{M}(V,k)(s) = \frac{1}{2\pi}\int_{0}^{\infty}\int_{-\pi}^{\pi}V(re^{it})r^{s}e^{-ikt}\ \mathrm{d}^{\times}r \mathrm{d}\theta$ and if $\underline{\alpha} = \{\alpha_1,\alpha_2,\alpha_3\}$ denotes the multiset of archimedean paramereters of $\rho$, then $\rho_k$ is the representation with Langlands parameters $\underline{\alpha + \frac{\vert{k}\vert}{2}} = \{\alpha+\frac{\vert{k}\vert}{2},\alpha \in \underline{\alpha}\}.$

The $\gamma$ factors are of the following shape: we denote $$\Gamma_{\R}(s) = \pi^{-s/2}\Gamma(s/2)\quad \text{and}\quad \Gamma_{\CC}(s) = 2\pi^{-s}\Gamma(s).$$ Let $\underline{\alpha}=\{\alpha_1,\alpha_2,\alpha_3\}$ denote the multiset of archimedean parameters of $\rho$. Then we have
\begin{align*}\gamma(1-s,\rho,\psi)=\varepsilon(\rho,\psi)\prod_{\alpha \in \underline{\alpha}}\frac{\Gamma_{F}(s+\overline{\alpha})}{\Gamma_{F}(1-s+\alpha)},\end{align*}
where $\varepsilon(\rho,\psi)$ has absolute value $1$. For $F=\R$ and $j\in \{0,1\}$
the Langlands parameters of $\operatorname{sgn}^\epsilon\rho$ are $\underline{\alpha+j} =\{\alpha+j, \alpha \in \underline{\alpha}\}$.
In particular, for each $k\in \Z$, the function $\gamma(1-s,\rho_k,\psi)$ (respectively, $\gamma(1-s,\rho\operatorname{sgn},\psi)$) is holomorphic for $\real(s)>\max_{\alpha \in \underline{\alpha}}\vert{\real(\alpha)}\vert$.

In the real case by classical techniques of countour shifting, one can show, see \cite[Prosposition 3.5]{kowalski2014fourier} for instance, that \begin{align}\label{eq:decayingpropertiesbessel}     &B_{\rho}[V](y) \ll_{A,V,\rho} (1+\vert{y}\vert)^{-A}\qquad \forall A>0, \nonumber\\ \\&\text{and for } 0< \vert{y}\vert \leq 1\nonumber\\ \nonumber\\&B_{\rho}[V](y) \ll_{\epsilon} \max_{\alpha \in \underline{\alpha}}\vert{y}\vert^{1-\real(\alpha)-\epsilon}\nonumber\end{align}

A consequence of the bounds towards the Ramanujan conjecture (see \cite{ramanujannumberfield}) is that $\vert{\real(\alpha)}\vert < 5/14$ and so for $0<\vert{y}\vert \leq 1$:
\[B_{\rho}[V](y) \ll \vert{y}\vert^{1-Q}\]
for any fixed $Q>5/14$.

We would like to have a family of functions $\omega$ such that the bound \eqref{eq:decayingpropertiesbessel} can be stated uniformly for this family in a similar way as in \cite{nelsonholowinsky}. Also, in the complex case we need to take into account that the sum over $k\in \Z$ is infinite.

\begin{definition}\label{def:inert}
    We call a function $V\in C_c^{\infty}(\mathbb{C}^{\times})$ \emph{inert} if \[V(re^{i\theta}) \neq 0 \Rightarrow r \asymp 1\] and\begin{equation}\label{eq:inert}(r\partial_r)^n\partial_{\theta}^mV(re^{i\theta}) \ll_{n,m} 1.\end{equation}
A real function $V\in C_c^{\infty}(\R^{\times})$ is inert if 
\[V(x) \neq 0 \Rightarrow x \asymp 1\]
and \begin{equation*}
    \vert{(x\partial_x^nV(x)}\vert \ll_n 1
\end{equation*}
\end{definition}

\begin{lemma}
If $V \in C_c^{\infty}(\CC^{\times})$ is inert, then for any $m,n \in \Z_{\geq 0}$ and $A>0$ we have
\begin{equation}\label{eq:decaybesselforinert}(r\partial_r)^n\partial_{\theta}^mB_{\rho}[V](re^{i\theta}) \ll_{m,n,A}  \min(r^{2(1-Q)},r^{-A}).\end{equation}
If $V\in C_c^{\infty}(\R^{\times})$ is inert, then for any $m,n \in \Z_{\geq0 }$ and $A>0$ we have 
\begin{equation}\label{eq:decaybesselinert2}
    (x\partial_x)^nV(x) \ll_{n,A} \min(\vert x\vert^{1-Q},\vert x \vert^{-A}).
\end{equation}
\end{lemma}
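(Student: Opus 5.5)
We prove the bound via the Mellin-inversion representation of $B_\rho[V]$, shifting the contour left for small $r$ and right for large $r$. We treat the complex case; the real case is the same argument with the finite sum over $j\in\{0,1\}$ replacing the sum over $k$. (In the real statement the left-hand side is presumably meant to be $(x\partial_x)^nB_\rho[V](x)$.)

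\emph{Derivatives.} Write
\[B_\rho[V](re^{i\theta})=\sum_{k}e^{ik\theta}\int_{(\sigma)}\gamma(1-s,\rho_k,\psi)\,r^{2-2s}\,\mathcal M(V,k)(-2s)\,\mathrm ds .\]
Applying $(r\partial_r)^n\partial_\theta^m$ under the sum and integral only multiplies the integrand by $(2-2s)^n(ik)^m$. So it suffices to bound
\[\sum_k |k|^m\int_{(\sigma)} |s|^n\,\bigl|\gamma(1-s,\rho_k,\psi)\bigr|\,\bigl|\mathcal M(V,k)(-2s)\bigr|\, r^{2-2\sigma}\,|\mathrm ds|\]
for a suitable $\sigma$, together with absolute convergence, which justifies the interchanges.

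\emph{Inertness.} Since $V$ is supported on $r\asymp1$ and satisfies \eqref{eq:inert}, integrating by parts in $\theta$ (say $M$ times) and in $\log r$ (say $N$ times) gives
\[\mathcal M(V,k)(-2s)\ll_{M,N,\sigma}(1+|k|)^{-M}(1+|s|)^{-N}\]
uniformly for $\real(s)$ in compact sets. The function $\mathcal M(V,k)(-2s)$ is entire in $s$.

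\emph{Gamma factors.} The archimedean parameters of $\rho_k$ are $\alpha+\tfrac{|k|}{2}$, so
\[\gamma(1-s,\rho_k,\psi)=\varepsilon\prod_{\alpha\in\underline\alpha}\frac{\Gamma_\CC\bigl(s+\overline\alpha+\tfrac{|k|}2\bigr)}{\Gamma_\CC\bigl(1-s+\alpha+\tfrac{|k|}2\bigr)} .\]
By Stirling, in any vertical strip avoiding the poles this is $\ll (1+|s|+|k|)^{3(2\real s-1)}$. The numerator has poles only where $\real(s)\le -\real(\overline\alpha)-|k|/2$. Using the bound $|\real\alpha|<5/14<Q$ towards Ramanujan, the whole gamma factor is holomorphic on $\real(s)>Q'$ for every $k$, for any fixed $Q'$ with $5/14<Q'<Q$, uniformly in $k$.

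\emph{Small and large $r$.} For $r\le1$ take $\sigma=Q$. Each $k$-term is then bounded by $(1+|k|)^{O(1)-M}$ times an integral converging thanks to the rapid decay in $s$, so the sum is $\ll r^{2-2Q}$. For $r\ge1$, shift $\sigma$ to a large value $A/2+1$. No poles are crossed on the right, and the polynomial growth of the gamma factor in $s$ and $k$ is absorbed by choosing $M,N$ large, so the sum is $\ll r^{2-2\sigma}\ll r^{-A}$. Combining the two ranges gives $\min(r^{2(1-Q)},r^{-A})$.

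\emph{Main obstacle.} The delicate step is uniformity in $k$. We must check that the Stirling bound for the gamma ratio grows only polynomially in $|k|$ on the chosen vertical line, so that the rapid decay of $\mathcal M(V,k)$ in $k$ makes the $k$-sum converge. For $\real(s)$ fixed and $|k|$ large, the ratio $\Gamma_\CC(s+\beta+|k|/2)/\Gamma_\CC(1-s+\beta'+|k|/2)$ is of size $|k|^{2\real(s)-1+O(1)}$, which is polynomial, so this works.
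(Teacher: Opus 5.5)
Your proof is correct and follows the paper's argument. Both use Mellin inversion, a bound on $\gamma(1-s,\rho_k,\psi)$ via Stirling that is polynomial in $|k|$ and $|\imag(s)|$, rapid decay of $\mathcal M(V,k)(-2s)$ from inertness, and contour shifts to $\real(s)=Q$ and to a large real part. The differences are cosmetic: you quote the uniform asymptotic $\Gamma(z+a)/\Gamma(z+b)\asymp|z|^{a-b}$ where the paper bounds the argument term by hand in three ranges, and you write out the derivative factors $(2-2s)^n(ik)^m$ explicitly, correctly reading the real case as a statement about $B_\rho[V]$.
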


\begin{proof}We prove only the complex case, the real one is easier. Since the real part $k$ also varies, we need to be careful in the use of the Stirling approximation formula: it says that for fixed $\sigma$, $\gamma$ will grow at most polynomially as  $\operatorname{im}(s)\to \infty$. The bound is in fact also polynomial in $k$ (once $\real(s)$ is fixed) as the following shows: By Stirling's approximation formula we have 
    \begin{align*}
    \frac{\Gamma_{\CC}\left(s+\overline{\alpha}+\frac{\vert{k}\vert}{2}\right)}{\Gamma_{\CC}\left(1-s+\alpha+\frac{\vert{k}\vert}{2}\right)} & \asymp e^{-2\real(s)}\pi^{1-2s}w_{s,\alpha,k}\left\vert{\frac{s+\alpha+\frac{\vert{k}\vert}{2}}{1-s+\alpha+\frac{\vert{k}\vert}{2}}}\right\vert^{\frac{\vert{k}\vert}{2}}\\&\times\left\vert{\left(1-s+\alpha+\frac{\vert{k}\vert}{2}\right)\left(s+\overline{\alpha}+\frac{\vert{k}\vert}{2}\right)}\right\vert^{\real(s)}\\&\times e^{-\operatorname{im}(s+\overline{\alpha})arg(s,\alpha,k)},
    \end{align*}
    where $w_{s,\alpha,k}=\left(\frac{1-s+\alpha+\frac{\vert{k}\vert}{2}}{s+\overline{\alpha}+\frac{\vert{k}\vert}{2}}\right)^{\frac{1}{2}-\real(\alpha)}$ and $arg(s,\alpha,k) = \operatorname{arg}(s+\overline{\alpha}+\frac{\vert{k}\vert}{2})+\operatorname{arg}(1-s+\alpha+\frac{\vert{k}\vert}{2})$.
  
    We need to make sure $arg(s,\alpha,k)$ is very small whenever $\imag(s)$ or $\vert{k}\vert$ is big. Assuming $\vert{k}\vert > 2(1-\real(s)+\real(\alpha))$, which eventually happens as the right-hand side can be fixed, we have \begin{align}\label{eq:arg}&arg(s,\alpha,k) =\nonumber\\& \arctan\left(\frac{\imag(s-\alpha)}{\real(s)+\real(\alpha)+\frac{\vert{k}\vert}{2}}\right)-\arctan\left(\frac{\imag(s-\alpha)}{1-\real(s)+\real(\alpha)+\frac{\vert{k}\vert}{2}}\right).\end{align}
    \begin{itemize}
        \item In the range $\vert{\imag(s-\alpha)}\vert> \vert{k}\vert$,  we can write \eqref{eq:arg} as 
        \begin{align*}&\sum_{n=0}^{\infty}\frac{(-1)^n}{(2n+1)}\left(\frac{(1-\real(s-\alpha)+\frac{\vert{k}\vert}{2})^{2n+1}-(\real(s+\alpha)+\frac{\vert{k}\vert}{2})^{2n+1}}{(\imag(s-\alpha))^{2n+1}}\right) \\ 
        &\ll \frac{1}{\vert{\imag(s-\alpha)}\vert}\sum_{n=0}^{\infty}\left\vert{\frac{k}{\imag(s-\alpha)}}\right\vert^{2n} = O(\frac{1}{\vert{\imag(s)}\vert}) \end{align*}
        \item The range $\vert{\imag(s-\alpha)}\vert<\frac{\vert{k}\vert}{2}$, we can write \eqref{eq:arg} as 
        \begin{align*}
            &\sum_{n=0}^{\infty}\frac{(-1)^n}{(2n+1)}\left(\frac{(\imag(s-\alpha))^{2n+1}}{(1-\real(s-\alpha)+\frac{\vert{k}\vert}{2})^{2n+1}}-\frac{(\imag(s-\alpha))^{2n+1}}{(\real(s+\alpha)+\frac{\vert{k}\vert}{2})^{2n+1}}\right)\\
            &\ll \frac{1}{\vert{k}\vert}\sum_{n=0}^{\infty}\left\vert{\frac{\imag(s-\alpha)}{k/2}}\right\vert^{2n+1} = O(\frac{1}{\vert{k}\vert}) 
        \end{align*}
        \item Finally in the range $\frac{\vert{k}\vert}{2}\leq \vert{\imag(s-\alpha)}\vert\leq \vert{k}\vert$ we use the fact that $\arctan$ is Lipschitz continuous to say that \eqref{eq:arg} is less or equal to
        \begin{align*}
            &\ll\left\vert{\frac{\imag(s-\alpha)}{\real(s)+\real(\alpha)+\frac{\vert{k}\vert}{2}}-\frac{\imag(s-\alpha)}{1-\real(s)+\real(\alpha)+\frac{\vert{k}\vert}{2}}}\right\vert
            \\& \ll \left\vert{\frac{\imag(s-\alpha)}{k^2}}\right\vert \ll \frac{1}{\vert{\imag(s-\alpha)}\vert}
        \end{align*}
         \end{itemize}
        In any case we see that $e^{-\imag(s-\alpha)arg(s,\alpha,k)} = O(1)$. We deduce that for fixed $\real(s)$ there exists $C>0$ (possibly depending on $\real(s)$ and the set of archimedean parameters of $\pi$) so that 
        \[\gamma(1-s,\rho_k,\psi) \ll \left((1+\vert{k}\vert)(1+\vert{t}\vert)\right)^C. \]
        Shifting the countour of integration to $Q$ we have 
        \begin{align*}
            &B_{\rho}[V](re^{i\theta}) \\ &\ll r^{2-2Q}\sum_{k\in \Z}\int_{-\infty}^{\infty}\left\vert{\gamma(1-Q-it,\rho_k,\psi) \mathcal{M}(V,k)(-2(Q+it))}\right\vert \ \mathrm{d}t \\ &\ll r^{2(1-Q)}\sum_{k\in\Z}\int_{-\infty}^{\infty}\left((1+\vert{k}\vert)(1+\vert{t}\vert)\right)^C\vert{\mathcal{M}(V,k)(-2(Q+it))}\vert \ \mathrm{d}t \ll r^{2(1-Q)},
        \end{align*}
        where the last inequality follows from \eqref{eq:inert}.
        Similarly, for the bound $B_{\pi}[V](re^{i\theta}) \ll_A r^{-A}$ one shifts the countour to $\real(s) = \frac{A}{2}$.  
       
    \end{proof}
        \begin{remark}\label{rmk:complexbesseltransform}
        Zhi Qi has studied extensively the complex Bessel transforms, see for example, \cite{qi2021theory}, \cite{Qi2019}, \cite{Qi2024}. The analysis one can do with the Stirling's approximation is very limited in the complex case, although sufficient for this paper. If one wants to study subconvexity in the $t$-aspect or simultaneously in the $N(\mathfrak{q})$-aspect and $t$-aspect one has certainly to use Zhi Qi's results.  
    \end{remark}

\section{The key identity}\label{sec:keyidentity}In this section we discuss an extension of the key identity used in \cite{nelsonholowinsky}, but first we fix the relevant data: let $\mathfrak{q}\subset O_F$ be a 
prime ideal and  assume that $\mathfrak{q}$ is coprime to each $I\in \mathcal{S}_{\cl}$ (see subsection \ref{subsec:2.choice of fundamental domain}) and to the different ideal of $F$. Let $\alpha_{\mathfrak{q}} \in O_F$ be so that \begin{equation}\label{def:alpha}(\alpha_{\mathfrak{q}})^{-1}\mathfrak{q} \in \mathcal{S}_{\cl}.\end{equation} Then $$N(\mathfrak{q})\leq N(\alpha_{\mathfrak{q}}) \leq  \max_{I\in \mathcal{S}_{\cl}}N(I^{-1})N(\mathfrak{q}),$$ that is $N(\alpha_\mathfrak{q}) \asymp N(\mathfrak{q})$ and $v_{\mathfrak{q}}(\alpha_\mathfrak{q})=1$. By Lemma \ref{lem:choicegenerator} (assuming $N(\mathfrak{q})$ is large enough), we may multiply $\alpha_{\mathfrak{q}}$ by a unit to assume that $$N(\mathfrak{q})^{\frac{1}{d_v}-\epsilon }\ll \vert{\alpha_{\mathfrak{q}}}\vert_v \ll N(\mathfrak{q})^{\frac{1}{d_v}+\epsilon}$$ for each $v\in S_{\infty}$.

Let $\chi\in \widehat{F^{\times}\setminus\A^{\times}}$ be a finite order character of conductor $\mathfrak{q}$. For each $v\in S_{\infty}$ let $\Delta_v \in \R_{>0}$ be a number satisfying $N(\mathfrak{q})^{\epsilon}< \Delta_v < N(\mathfrak{q})^{100}$ and so that $$\prod_{v\in S_{\infty}}\vert{\Delta_v}\vert_v < N(\mathfrak{q})^{1-\epsilon}.$$ Let $\Delta\in F_{\infty}^{\times}\hookrightarrow \A^{\times}$ be the corresponding idèle $\Delta_{\infty} = (\Delta_v)_{v\in S_{\infty}}$ and $\Delta_v = 1$ for $v<\infty$.
Let $\phi \in \mathcal{A}(\gelle_3)_F$ be any automorphic form.
Using the fundamental domain \eqref{eq:fundamentaldomain2} we write:
\begin{align*}
    I_t(\phi,\chi) &= \sum_{I \in \mathcal{S}_{\cl}}\chi(I) \int_{\widehat{O}^{\times}E}\mathbb{P}\phi(a_1(Iyz_t))\chi(y) \ \mathrm{d}^{\times}y.
    \end{align*}
Let $F^{\mathfrak{q}} \subset F^{\times}$ be the subring of elements that are coprime to $\mathfrak{q}$ and are \emph{totally positive}. Let $F^{\mathfrak{q},1}\subset F^{\mathfrak{q}}$ be the subring of elements that are $\equiv 1 \mods{q}$. Then $$F^{\mathfrak{q}}/F^{\mathfrak{q},1} \simeq (O_{v_\mathfrak{q}}/\mathfrak{q})^{\times} \simeq (O_F/\mathfrak{q})^{\times}.$$ We write     
  \begin{align*}
      I_t(\phi,\chi)&= \sum_{I \in \mathcal{S}_{\cl}}\chi(I)\sum_{a \in F^{\mathfrak{q}}/F^{\mathfrak{q},1}}\chi(\iota_\mathfrak{q}(a))\int_{\widehat{O}^{\times}_1(\mathfrak{q})E}\mathbb{P}\phi(a_1(I\iota_{\mathfrak{q}}(a)yz_t))\chi(y_{\infty}) \ \mathrm{d}^{\times}y.
\end{align*}
For $\eta \in \widehat{F^{\mathfrak{q}}/F^{\mathfrak{q},1}}$, we denote by $I_t(\phi,\eta,\chi|_{\cl})$ the following sum.
\begin{equation*}
    I_t(\phi,\eta,\chi|_{\cl}) = \sum_{I \in \mathcal{S}_{\cl}}\chi(I)\sum_{a \in F^{\mathfrak{q}}/F^{\mathfrak{q},1}}\eta(a)\int_{\widehat{O}^{\times}_1(\mathfrak{q})E}\mathbb{P}\phi(a_1(I\iota_{\mathfrak{q}}(a)yz_t))\chi(y_{\infty}) \ \mathrm{d}^{\times}y.
\end{equation*}
Consider the following first-moment-looking sum.
\begin{equation}\label{eq:Ffirst}
    \sum_{\delta \in F}\sum_{\eta \in \widehat{F^{\mathfrak{q}}/F^{\mathfrak{q},1}}} \eta(\delta)\overline{\chi}(\iota_{\mathfrak{q}}(\delta))I_t(\phi,\eta,\chi|_{\cl})V(\delta \Delta^{-1}),
\end{equation}
where we choose $V_0 = \prod_v{V_0}_v \in C_c^{\infty}(\A)$ so that for $v< \infty$:\begin{equation*}{V_0}_v=\begin{cases}
     \mathbb{1}_{O_v}  &v(\mathfrak{C}(\pi))=0,\\
     \mathbb{1}_{O_v^{\times}} &v(\mathfrak{C}(\pi))>0\\
    \end{cases}\end{equation*}
   while for $v\in S_{\infty}$ we select ${V_0}_v\in C_c^{\infty}(F_v)$ so that it satisfies the following properties:
   \begin{itemize}\item ${V_0}_v \text{ is positive real valued}$,
   \item $
   {V_0}_v(u)\neq 0 \Rightarrow \vert{u}\vert_v \asymp 1 \text{ and } u>0 \text{ if } v \text{ real}$,
   \item If $v$ is real, then for all $m,n \in \Z$: $\Vert{x^n\partial_x^m{V_0}_v}\Vert_{\infty}\ll_{m,n} 1$.
   \item If $v$ is complex, then for all $m_1,m_2,n_1,n_2 \in \Z$: $$\Vert{\real(z)^{m_1}\imag(z)^{n_1}\partial_{\real(z)}^{m_2}\partial^{n_2}_{\imag(z)}{V_0}_v}\Vert_\infty\ll_{m_1,m_2,n_1,n_2} 1.$$
   \end{itemize}
   In addition, we normalize $V_0$ so that $\widehat{V_0}(0) = 1$. Since $\vert{\Delta}\vert < N(\mathfrak{q})^{1-\epsilon}$, we have, with this choice of $V_0$, that: \[V_0(\delta\Delta^{-1})\neq0 \Rightarrow \delta \in F^{\mathfrak{q}}\cap O_F. \] 
    We apply Poisson summation formula to the $\delta$ sum in \eqref{eq:Ffirst}:
\begin{align*}
    &\sum_{a \in F^{\mathfrak{q}}/F^{\mathfrak{q},1}}\eta(a)\overline{\chi}(\iota_{\mathfrak{q}}(a))\sum_{\delta \in F^{q,1}}V_0(\delta a\Delta^{-1}) \nonumber=\\&\sum_{a\in F^{\mathfrak{q}}/F^{\mathfrak{q},1}}\eta(a)\overline{\chi}(\iota_{\mathfrak{q}}(a))\sum_{\delta \in F}\prod_{\substack{v<\infty\\v\neq v_{\mathfrak{q}}}}\mathbb{1}_{O_v}(\delta)\times \mathbb{1}_{-a+\varpi_{\mathfrak{q}}O_{v_\mathfrak{q}}}(\delta)\times \prod_{v\in S_{\infty}}{V_0}_v(\delta \Delta^{-1})
    \nonumber=\\ &\frac{\vert{\Delta}\vert}{N(\mathfrak{q})}\sum_{\substack{a\in F^{\mathfrak{q}}/F^{\mathfrak{q},1}}}\eta(a)\overline{\chi}(\iota_{\mathfrak{q}}(a))\psi_{v_{\mathfrak{q}}}\left(a\frac{\delta }{\varpi_{\mathfrak{q}}}\right)\sum_{\delta \in F}\widehat{V_0}\left(\delta\varpi_{\mathfrak{q}}  \Delta \right).
\end{align*}
Hence, we see that \eqref{eq:Ffirst} is also equal to 
\begin{equation}\label{eq:dualF}
\frac{\vert{\Delta}\vert}{N(\mathfrak{q})}\sum_{\delta \in F}\widehat{V_0}\left(\delta  \varpi_{\mathfrak{q}}\Delta\right)\sum_{\substack{a\in F^{\mathfrak{q}}/F^{\mathfrak{q},1}}}\eta(a)\overline{\chi}(\iota_{\mathfrak{q}}(a))\psi_{v_{\mathfrak{q}}}\left(a\frac{\delta }{\varpi_{\mathfrak{q}}}\right)I_t(\phi,\eta,\chi|_{\cl})
\end{equation}
We now take the sum over $\eta\in \widehat{F^{\mathfrak{q}}/F^{\mathfrak{q},1}}$ in both \eqref{eq:Ffirst} and \eqref{eq:dualF}, opening $I_t(\phi,\eta,\chi|_{\cl})$, and we get the following equality. 
\begin{align*}
    &\sum_{\delta \in F}V_0(\delta \Delta^{-1})\chi(\iota_{\mathfrak{q}}(\delta^{-1}))\sum_{I \in \mathcal{S}_{\cl}}\chi(I)\int_{\widehat{O}^{\times}_1(\mathfrak{q})E}\mathbb{P}\phi(a_1(I\iota_\mathfrak{q}(\delta^{-1})yz_t)\chi(y_{\infty})\ \mathrm{d}^{\times}y = \\&\frac{\vert{\Delta}\vert}{N(\mathfrak{q})^{1/2}}\sum_{\delta \in F}\widehat{V_0}\left(\delta\varpi_\mathfrak{q}\Delta\right) 
    \sum_{a\in (F^{\mathfrak{q}}/F^{\mathfrak{q},1}) 
    }\psi_{v_{\mathfrak{q}}}\left(\frac{a\delta}{\varpi_{\mathfrak{q}}}\right)\chi(\iota_{\mathfrak{q}}(a^{-1}))\\&\times \sum_{I\in \mathcal{S}_{\cl}}\chi(I)\int_{\widehat{O}^{\times}_1(\mathfrak{q})E}\mathbb{P}\phi(a_1(I\iota_\mathfrak{q}(a^{-1})yz_t))\chi(y_{\infty})\ \mathrm{d}^{\times}y
\end{align*}
Notice that the $\delta$-sum on the right-hand side runs over all the $\delta \in F\cap \widehat{O}$. If, in addition, $\delta \in F\cap \mathfrak{q}O_{v_{\mathfrak{q}}}$, then $\psi_{v_{\mathfrak{q}}}\left(\frac{a\delta }{\varpi_{\mathfrak{q}}} \right)=1$ and the inner sums on right hand side form again   
$ I_t(\phi,\chi)$. 
By repeated integration by parts, we have
\begin{align*}
    \widehat{V_0}_v(x\Delta_v) &\ll \min(\frac{1}{\vert{x\Delta_v}\vert^m}\widehat{\partial_x^mV_0}(x\Delta_v),\Vert{{V_0}_v}\Vert_1) \qquad v \text{ real},\\
    \widehat{V_0}_v(z\Delta_v) &\ll \min(\frac{1}{\vert{2\real(z)\Delta_v}\vert^m}\frac{1}{\vert{2\imag(z)\Delta_v}\vert^n}\widehat{\partial^m_{\real(z)}\partial^n_{\imag(z)}{V_0}_v}(z\Delta_v),\Vert{{V_0}_v}\Vert_1)\quad v \text{ complex}.
\end{align*}
In particular, $\widehat{V_0}(x\Delta)$ is negligible small whenever $\vert{x}\vert_v\gg \Delta_v^{-1}N(\mathfrak{q})^{\epsilon}$ for some $v \in S_{\infty}$.
Also, for $v<\infty$ we have 
\begin{equation*}
   \widehat{V_0}_v =\begin{cases} \vert \mathfrak{D}\vert_v^{1/2}
    \mathbb{1}_{\mathfrak{D}^{-1}O_v} &v(\mathfrak{C}(\pi))=0,\\
    -\vert \mathfrak{D}\vert_v^{1/2}\mathbb{1}_{\varpi_{v}^{-(v(\mathfrak{D})+1)}O_v^{\times}}
    &v(\mathfrak{C}(\pi))>0.\end{cases}
\end{equation*}
 Therefore, if $\delta \in F$ is so that $\widehat{V_0}\left(\delta\varpi_\mathfrak{q}\Delta\right)\neq 0$, then $\delta\in \prod_{v:v(\mathfrak{D})>0}\mathfrak{p}_v^{-(v(\mathfrak{D})+1)} \cap \mathfrak{q}O_{v_\mathfrak{q}}$. If in addition $\delta$ is non-zero, we deduce that $N(\delta) \gg N(\mathfrak{q})$ and so $\vert{\delta}\vert_v \gg \Delta_v^{-1}N(\mathfrak{q})^{\epsilon}$ for some $v\in S_{\infty}$, that is ${V_0}_v(\delta\Delta_v)$ is negligible small and so is $\widehat{V_0}\left(\delta \varpi_{\mathfrak{q}}\Delta\right)$.
We deduce the following \emph{key identity}: 
\begin{align}\label{eq:keyidentity}
&N(\mathfrak{q})^{-\frac{1}{2}}I_t(\phi,\chi)= 
    \sum_{I \in \mathcal{S}_{\cl}}\int_{\widehat{O}^{\times}_1(\mathfrak{q})E}\mathcal{F}_t^{\phi,\chi}(y,I) - \mathcal{O}_t^{\phi,\chi}(y,I)\   \mathrm{d}^{\times}y + O(N(\mathfrak{q})^{-100}),
\end{align}
where 
\begin{align}\label{eq:ftphi}
    \mathcal{F}^{\phi,\chi}_t(y,I)&= \frac{N(\mathfrak{q})^{\frac{1}{2}}}{\vert{\Delta}\vert}\chi(I)\sum_{\delta \in F}\overline{\chi}(\iota_{\mathfrak{q}}(\delta))V_0(\delta \Delta^{-1})\mathbb{P}\phi(a_1(I\iota_{\mathfrak{q}}(\delta^{-1})yz_t))\chi(y_{\infty}) 
\end{align} 
and 
\begin{align}\label{eq:otphi}
     \mathcal{O}^{\phi,\chi}_t(y,I)&=\frac{1}{N(\mathfrak{qD})^{\frac{1}{2}}}\sum_{\substack{\delta \in F^{\times}\\\forall v \in S_{\infty}:\\\vert{\delta}\vert_v\ll N(\mathfrak{q})^{\epsilon}\vert{\Delta}\vert_v^{-1} }}\widehat{V_0}\left(\delta\varpi_\mathfrak{q}\Delta\right)\sum_{a\in F^{\mathfrak{q}}/F^{\mathfrak{q},1}}\psi_{v_{\mathfrak{q}}}\left(\frac{a\delta}{\varpi_{\mathfrak{q}}}\right)\chi(\iota_{\mathfrak{q}}(a^{-1})) \nonumber\\&\qquad\times\mathbb{P}\phi(a_1(I\iota_\mathfrak{q}(a^{-1})yz_t))\chi(y_{\infty}),
\end{align}
We define $a_{\delta} = \frac{1}{\vert{\Delta}\vert}\overline{\chi}(\iota_{q}(\delta))V_0(\delta\Delta^{-1})$.
\begin{lemma}
    We have $\vert{a_{\delta}}\vert \ll N(\mathfrak{q})^{\epsilon}\vert{\Delta}\vert^{-1}$ and $\sum_{\delta \in F}\vert{a}_{\delta}\vert \asymp 1$.
\end{lemma}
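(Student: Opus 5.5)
The first bound is pointwise; the second is a lattice-point count for $F\subset F_\infty$. Since $\chi$ has finite order, $\vert\overline{\chi}(\iota_\mathfrak{q}(\delta))\vert\le 1$. Each ${V_0}_v$ is bounded, and $V_0$ is, up to the harmless constants $\vert\mathfrak{D}\vert_v^{1/2}$ coming from the normalization $\widehat{V_0}(0)=1$, a product of indicator functions at the finite places. Hence $V_0(\delta\Delta^{-1})\ll 1$ uniformly in $\delta$, so $\vert a_\delta\vert\ll\vert\Delta\vert^{-1}$. This is stronger than the claimed bound with the $N(\mathfrak{q})^\epsilon$ factor.

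For the sum, I would use the support conditions. At finite places, $V_0(\delta\Delta^{-1})\neq 0$ forces $\delta\in O_F$, because $\Delta$ is trivial at finite places. At archimedean places it forces $\vert\delta\vert_v\asymp\Delta_v$ for every $v\in S_\infty$ (and $\delta_v>0$ at real $v$). So
\[\sum_{\delta}\vert a_\delta\vert\ll\vert\Delta\vert^{-1}\,\#\{\delta\in O_F:\ \vert\delta\vert_v\asymp\Delta_v\ \forall v\in S_\infty\}.\]
The set $\{x\in F_\infty:\vert x\vert_v\asymp\Delta_v\}$ is a product of annuli and intervals of volume $\asymp\prod_v\vert\Delta_v\vert_v=\vert\Delta\vert$. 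Here $\vert\Delta_v\vert_v$ means $\Delta_v^2$ at complex places, consistent with the normalization $\mathrm{d}^\times x=\mathrm{d}x/\vert x\vert$. The lattice $O_F\subset F_\infty$ has covolume $\asymp\vert\mathfrak{D}\vert^{-1/2}\asymp 1$.

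Every $\Delta_v\ge N(\mathfrak{q})^\epsilon$ is large, so the box is much larger than a fundamental domain in every direction, and the count is $\vol/\mathrm{covol}+O(\text{boundary})$. The boundary term is $O\bigl(\vert\Delta\vert\max_v\Delta_v^{-1}\bigr)$, since each side length is $\gg\Delta_v$. Thus the count is $\asymp\vert\Delta\vert$, which gives the upper bound $\sum\vert a_\delta\vert\ll 1$.

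For the lower bound I would either use the same counting argument, with a region on which ${V_0}_v\gg 1$ (say the set where ${V_0}_v\ge\tfrac12\max{V_0}_v$; this contains a fixed box since ${V_0}_v$ is fixed, smooth and positive), or argue by Poisson summation. The Poisson route goes as follows. Since $\vert\overline{\chi}(\iota_\mathfrak{q}(\delta))\vert=1$ for $\delta$ coprime to $\mathfrak{q}$, which holds on the support because $N(\delta)\asymp\vert\Delta\vert<N(\mathfrak{q})$ and $\mathfrak{q}$ is prime, we get $\sum\vert a_\delta\vert=\vert\Delta\vert^{-1}\sum_{\delta\in F}V_0(\delta\Delta^{-1})$, using that $V_0\ge 0$. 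Poisson summation turns this into $\sum_{\delta}\widehat{V_0}(\delta\Delta)$. The $\delta=0$ term is $\widehat{V_0}(0)=1$, and the other terms are negligible by the integration-by-parts decay already recorded, because $\delta\in\mathfrak{D}^{-1}$ (times bounded denominators) is nonzero and $\Delta_v\ge N(\mathfrak{q})^\epsilon$. This second route gives $\sum\vert a_\delta\vert=1+O(N(\mathfrak{q})^{-100})$, which yields both bounds at once, and it is the one I would use.

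The only delicate point is the dual-side estimate: $\widehat{V_0}(\delta\Delta)$ must be negligible for $\delta\neq0$ even when some $\vert\delta\vert_v$ is tiny. This holds because a nonzero $\delta$ in a fixed fractional ideal has $N(\delta)\gg 1$, so some $\vert\delta\vert_v\gg 1$, making $\vert\delta\Delta\vert_v\gg N(\mathfrak{q})^\epsilon$ at that place. Since $\widehat{V_0}$ is a product with every factor bounded, that one factor's rapid decay suffices. To make the sum over such $\delta$ converge, I would use the decay in all coordinates simultaneously, combined with Lemma~\ref{lem:countingunits}-type counting of units.
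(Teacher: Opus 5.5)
Your Poisson route is exactly the paper's proof. The pointwise bound follows from boundedness of $V_0$, and
\[
\sum_\delta|a_\delta| \;=\; |\Delta|^{-1}\sum_\delta V_0(\delta\Delta^{-1}) \;=\; \widehat{V_0}(0)+\sum_{\delta\neq 0}\widehat{V_0}(\delta\Delta),
\]
where the tail is negligible. The paper passes over your point that the tail must be summed using decay in all archimedean coordinates at once, rather than merely noting that each term is small; a plain lattice sum over the fixed fractional ideal settles it, and no unit counting is needed.
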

\begin{proof}
    The first statement is clear. For the second, we have 
    \[\sum_{\delta}\vert{a_{\delta}}\vert = \frac{1}{\Delta}\sum_{\delta}V_0(\delta \Delta^{-1}) = \sum_{\delta}\widehat{V_0}(\delta\Delta) = \widehat{V_0}(0) + \sum_{\delta\neq 0}\widehat{V_0}(\delta\Delta). \]
    The tail $\sum_{\delta\neq 0}\widehat{V_0}(\delta\Delta)$ is negligible small, since for $\delta \neq 0$ so that $\widehat{V_0}(\delta\Delta )\neq0$, we have $\vert{\delta\Delta}\vert_v\gg N(\mathfrak{q})^{\epsilon}$ for some $v\in S_{\infty}$.
\end{proof}

\begin{remark}[Heuristic]\label{rmk:heuristic}Note that the set of $\delta$ contributing in the $\mathcal{F}_t$ basically (and it does if one wants to choose the archimedean functions more precisely) injects  into $(O/\mathfrak{q})^{\times} = F^{\mathfrak{q}}/F^{\mathfrak{q},1}$. Let $\phi \in \pi^\infty$ and consider the right translation $\phi^{u_{12}(\zeta_\mathfrak{q})}$, where we recall Section \ref{sec:Notation and basic set-up} for the definitions of $\zeta_\mathfrak{q}$ and $u_{12}(\zeta_\mathfrak{q})$. We write $\mathbb{P}\phi^{u_{12}(\zeta_\mathfrak{q})}$ using the Fourier-Whittaker expansion  
\[\mathbb{P}\phi^{u_{12}(\zeta_\mathfrak{q})}(a_1(I\iota_\mathfrak{q}(\delta^{-1})yz_t)) = \frac{1}{t^{1/2}}\sum_{\gamma \in F^\times}\psi(\gamma\delta^{-1}\zeta_{\mathfrak{q}})W_{\phi}(a_1(\gamma I\iota_\mathfrak{q}(\delta^{-1})yz_t)). \]
A trivial bound for $\mathbb{P}\phi^{u_{12}(\zeta_\mathfrak{q})}(a_1(I\iota_\mathfrak{q}(\delta^{-1})yz_t))$ can be obtained by Cauchy-Schwarz (a Rankin-Selberg-type bound). To obtain better bounds, we can exploit the oscillation coming from $\psi$. To do this, we use the fact that $\delta$ has size $N(\delta) \asymp \vert\Delta\vert < N(\mathfrak{q})$ and additive reciprocity to approximate $\psi(\gamma\delta^{-1}\zeta_\mathfrak{q})\sim \psi(\gamma\alpha_\mathfrak{q}^{-1}\zeta_\delta)$, for some $\alpha_\mathfrak{q} \in \mathfrak{q}$, so that a use of Voronoi summation formula leads to savings.

From \eqref{eq:keyidentity} one can see alternatively that
\[\mathcal{O}_t^{\phi,\chi} = \sum_{\delta}\sum_{\substack{\eta\in \widehat{F^\mathfrak{q}/F^{\mathfrak{q},1}}\\\eta\neq \chi|_{F^{\mathfrak{q}}/F^{\mathfrak{q},1}}}}\eta(\delta)\overline{\chi}(\iota_\mathfrak{q}(\delta))I_t(\phi,\eta,\chi|_{\cl})V_0(\delta\Delta^{-1})+O(N(\mathfrak{q})^{-100}).\]
However, to have power of $N(\mathfrak{q})$ cancellation for $\mathcal O_t^{\phi,\chi}$, we need $\vert \Delta \vert > N(\mathfrak{q})^{1+\kappa}$ for some $\kappa>0$, whereas for the $\mathcal{F}_{t}^{\phi,\chi}$ side we actually need $\vert{\Delta}\vert < N(\mathfrak{q})^{1-\kappa}$. This will be achieved by introducing an amplification procedure, this is done in section \ref{sec:amplification}.

\end{remark}

Finally, given a parameter $T>0$, an inert function $g\in C_c^{\infty}(\R_{>0})$ and an automorphic form $\phi \in \mathcal{A}_{\gelle_3/F}$ we define 
\begin{align*}\mathcal{F}^{\phi,\chi}_T &= \int_{\R_{>0}}g\left(\frac{t}{T}\right) \sum_{I\in \mathcal{S}_{\cl}}\int_{\widehat{O}_1^{\times}(\mathfrak{q})E}\mathcal{F}_t^{\phi,\chi}(y,I)\ \mathrm{d}^{\times}y\ \mathrm{d}^{\times}t\\
\mathcal{O}_T^{\phi,\chi}&=\int_{\R_{>0}}g\left(\frac{t}{T}\right) \sum_{I\in \mathcal{S}_{\cl}}\int_{\widehat{O}_1^{\times}(\mathfrak{q})E}\mathcal{O}_t^{\phi,\chi}(y,I)\ \mathrm{d}^{\times}y\ \mathrm{d}^{\times}t.
\end{align*}

\section{Approximate functional equation: choice of test vector}\label{sec:approximatefunctionalequation}
Let, in this section and for the rest of the paper, $\phi_0 \in \pi^{\infty}$ denote an global newform so that its global whittaker functions $W_{\phi_0}$ splits as a product $$W_{\phi_0}(g) = \prod_{v<\infty}W^{\circ}_{v}(g_v) \times \prod_{v\in S_{\infty}}W_v(g_v),$$ where for all $v\in S_{\infty}$ the function $y\in F_v^{\times}\mapsto W_v(a_1(y))$ is inert (see Definition \ref{def:inert}). Recall that for $v<\infty$ we denote by $W^{\circ}_v \in \mathcal{W}(\pi_v,\psi_v)$ the new-vector (see Subsection \ref{subsect:newforms}).  

In addition, we assume that for all $v\in S_\infty$ \begin{equation}\label{eq:assumption-on-archimdean}
    \int_{F_v^\times}W_v(a_1(y))\vert y \vert_v^{-1/2}\ \mathrm{d}^\times y = \int_{F_v^\times}W_v(a_1(y^{-1}))\vert y\vert_v^{-1/2}\ \mathrm{d}^\times y = 1. 
\end{equation}

\subsection{Shift of newforms and definitive choice of vector in $\pi$}\label{subsect:snca}
Recall that $\zeta_\mathfrak{q}\in \A$ is the adèle so that for any place $v$ of $F$, $$(\zeta_\mathfrak{q})_v=\begin{cases}
    \varpi_\mathfrak{q}^{-1}& v=v_\mathfrak{q}\\0&\text{otherwise},
\end{cases},$$
that $$u_{12}(\zeta_\mathfrak{q}) = \begin{pmatrix}
    1&\zeta_\mathfrak{q}&\\&1&\\&&1
\end{pmatrix}\in \gelle_3(\A)$$
and that $\phi_0^{u_{12}(\zeta_\mathfrak{q})}$ is the right-translation of $\phi_0$ by $u_{12}(\zeta_\mathfrak{q})$.

The next Lemma is the analog of \cite[Lemma 11.8]{sparse-equidistribution-venkatesh}.

\begin{lemma}
    \label{prop:choiceofphi} 
Let $\phi_0$ be as in the beginning of the section and denote $W_{\phi_0}$ its associated Whittaker function.
Let $\mathfrak{f} \subset O_F$ be an integral ideal and $\eta \in \widehat{F^{\times}\setminus\A^{\times}}$ be a finite order character with finite conductor $\mathfrak{f}$. Then for any $s\in \CC$, we have
        \[ I(\phi_0^{u_{12}(\zeta_{\mathfrak{f}})},\eta,s) = G(\eta,\psi(\zeta_{\mathfrak{f}}\cdot)) L^{(\mathfrak{f})}(\pi \otimes \eta,s)\prod_{v\in S_{\infty}} \int_{F_{v}^{\times}}W_{v}(a_1(y))\vert{y}\vert_{v}^{s-1}\ \mathrm{d}^{\times}y,  \]
        where $G(\eta,\psi(\zeta_{\mathfrak{f}}\cdot) = \prod_{v|\mathfrak{f}}G(\eta_v,\psi_v(\varpi_v^{-v(\mathfrak{f})}\cdot))$ and $\vert{G(\eta,\psi(\zeta_{\mathfrak{f}}\cdot))}\vert  =  N(\mathfrak{f})^{-1/2}$.
    In particular,
    \[L^{(\mathfrak{f})}(\pi\otimes \eta,\tfrac{1}{2}) = G(\eta,\psi(\zeta_\mathfrak{f}\cdot))^{-1}I(\phi_0^{u_{12}(\zeta_f)},\eta,\tfrac{1}{2}).\]
\end{lemma}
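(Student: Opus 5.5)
The plan is to unfold the period into an Euler product and evaluate each local factor separately. For $\real(s)>1$ the Fourier--Whittaker expansion \eqref{eq:Fourierprojection} together with the standard unfolding gives
\[I(\phi_0^{u_{12}(\zeta_\mathfrak{f})},\eta,s)=\int_{\A^\times}W_{\phi_0}(a_1(y)u_{12}(\zeta_\mathfrak{f}))\eta(y)\vert y\vert^{s-1}\,\mathrm{d}^\times y .\]
Since $W_{\phi_0}$ is factorizable and $u_{12}(\zeta_\mathfrak{f})$ is trivial away from the places dividing $\mathfrak{f}$, this integral is a product of local integrals $\Psi(W_v^{u_{12}((\zeta_\mathfrak{f})_v)},\eta_v,s)$. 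The identity then extends to all $s\in\CC$ by analytic continuation: the left side is entire, and the right side is entire as well, being a product of the Gauss sum, the partial $L$-function (whose completed form is entire for cuspidal $\pi$ twisted by $\eta$) and archimedean integrals of compactly supported functions.

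The local factors fall into three cases.
\begin{itemize}
\item At archimedean places $\eta_v$ is trivial on the identity component, since $\eta$ has finite order. I expect this leaves exactly $\int_{F_v^\times}W_v(a_1(y))\vert y\vert_v^{s-1}\,\mathrm{d}^\times y$; if not, the residual sign or argument character is absorbed into the choice of $W_v$.
\item At finite $v\nmid\mathfrak{f}$, $\eta_v$ is unramified. Remark~\ref{rmk:newform} gives $L(\pi_v\times\eta_v,s)$, up to the normalisation at places dividing $\mathfrak{D}$. Those places are handled by the translate $a(\varpi_v^{v(\mathfrak{D})},\varpi_v^{v(\mathfrak{D})})$, which contributes $\eta_v(\mathfrak{D})$-type factors of absolute value $1$ and powers of $\vert\mathfrak{D}\vert_v$; I would track these or absorb them into the constant. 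The product over these places is $L^{(\mathfrak{f})}(\pi\otimes\eta,s)$.
\item At $v\mid\mathfrak{f}$, $\eta_v$ is ramified with $a(\eta_v)=v(\mathfrak{f})$. Lemma~\ref{lem:shiftnewform} applies with $\psi=\psi_v$ and $a(\psi_v)=0$ (since $\mathfrak{f}$ is coprime to $\mathfrak{D}$ in our application), so that $u_{12}(\varpi_v^{-v(\mathfrak{f})})$ matches $(\zeta_\mathfrak{f})_v$. This gives $G(\eta_v,\psi_v(\varpi_v^{-v(\mathfrak{f})}\cdot))W_v^\circ(I_3)=G(\eta_v,\psi_v(\varpi_v^{-v(\mathfrak{f})}\cdot))$.
\end{itemize}

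The hypotheses of Lemma~\ref{lem:shiftnewform} need checking for $W_v^\circ$. Right $a_1(O_v^\times)$-invariance follows from right $\gelle_2(O_v)$-invariance. The vanishing $W_v^\circ(a_1(y))=0$ for $y\notin O_v$ follows from the last remark of Subsection~\ref{subsect:newforms}, which gives vanishing when $l_2>l_1$ for $\mathrm{diag}(\varpi^{l_1},1,1)$, that is, when $l_1<0$.

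For the absolute value of the Gauss sum I would use Lemma~\ref{lem:gausssum}(2). The conductor of $\psi_v(\varpi_v^{-r}\cdot)$ is $r=a(\eta_v)$ and $\mathrm{d}^\times x$ gives $O_v^\times$ volume $1$. This yields $\vert G\vert^2=\operatorname{vol}(1+\mathfrak{p}^r)\cdot(\text{const})\asymp N(\mathfrak{p}_v)^{-r}$, and since $v\nmid\mathfrak{D}$ the constant is exactly $1$ after unwinding $c=(1-p^{-1})^{-1}$ and $\operatorname{vol}(1+\mathfrak{p}^r)=(p-1)^{-1}p^{1-r}$. The final identity at $s=\tfrac12$ then follows from the normalization \eqref{eq:assumption-on-archimdean} and the nonvanishing of $G$.

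I expect the main obstacle to be the bookkeeping of the $\mathfrak{D}$-shifts and the archimedean $\eta_v$ twist, rather than any conceptual step.
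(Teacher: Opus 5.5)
Your route is the paper's: unfold into an Euler product, apply Lemma~\ref{lem:shiftnewform} at $v\mid\mathfrak{f}$ (your check of its hypotheses is correct), use the new-vector property at the other finite places, continue analytically, and finish with Lemma~\ref{lem:gausssum} and \eqref{eq:assumption-on-archimdean}. However, the Gauss sum arithmetic does not give exactly $N(\mathfrak{p}_v)^{-r}$. With $\operatorname{vol}(O_v^\times,\mathrm{d}^\times x)=1$ and $v\nmid\mathfrak{D}$ you have $c=(1-N(\mathfrak{p}_v)^{-1})^{-1}$, $\operatorname{vol}(O_v,\mathrm{d}x)=1$ and $\operatorname{vol}(1+\mathfrak{p}_v^r,\mathrm{d}^\times x)=N(\mathfrak{p}_v)^{1-r}/(N(\mathfrak{p}_v)-1)$. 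Their product is $N(\mathfrak{p}_v)^{-r}(1-N(\mathfrak{p}_v)^{-1})^{-2}$. Equivalently, $G_v$ is a classical Gauss sum of modulus $N(\mathfrak{p}_v)^{r/2}$ divided by $\#(O_v/\mathfrak{p}_v^r)^\times$. So $\vert G(\eta,\psi(\zeta_\mathfrak{f}\cdot))\vert=N(\mathfrak{f})^{-1/2}\prod_{\mathfrak{p}\mid\mathfrak{f}}(1-N(\mathfrak{p})^{-1})^{-1}$. The exact equality in the statement, which the paper's proof simply asserts, holds only up to this factor; in the application that factor is $1+O(N(\mathfrak{q})^{-1})$, so nothing downstream is affected.

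The more serious slip is at $v\mid\mathfrak{D}$. There $W_v^\circ\in\mathcal{W}(\pi_v,\psi_v)$ is the honest new-vector of $\mathcal{W}(\pi_v,\psi_v^\circ)$ left-translated by $a(\varpi_v^{d},\varpi_v^{d})=\operatorname{diag}(\varpi_v^{2d},\varpi_v^{d},1)$, where $d=v(\mathfrak{D})$. This translation also moves the middle diagonal entry, so it is not a change of variables in $y$. It therefore does not just produce unimodular constants and powers of $\vert\mathfrak{D}\vert_v$. For example, take $d=1$ and $\pi_v$ unramified, and let $e_j$ be the elementary symmetric functions of the Satake parameters and $X=\eta_v(\varpi_v)N(\mathfrak{p}_v)^{-s}$. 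Shintani's formula then gives $\int_{F_v^\times}W_v^\circ(a_1(y))\eta_v(y)\vert y\vert_v^{s-1}\,\mathrm{d}^\times y=N(\mathfrak{p}_v)^{-2}\bigl(e_2X^{-1}-e_3\bigr)L(\pi_v\otimes\eta_v,s)$, and the extra factor genuinely depends on $s$ and $\eta_v$.

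Remark~\ref{rmk:newform} points to the fix. The $L$-factor arises from evaluating at $a(\varpi_v^{-d},y\varpi_v^{-d})=a_1(y)\,a(\varpi_v^{-d},\varpi_v^{-d})$, so you should replace $\phi_0$ by its right translate by $a(\mathfrak{D}^{-1},\mathfrak{D}^{-1})$ at the places $v\mid\mathfrak{D}$. These places are disjoint from those dividing $\mathfrak{f}$, so the translation commutes with $u_{12}(\zeta_\mathfrak{f})$. After it, the local factor at each $v\mid\mathfrak{D}$ is exactly $L(\pi_v\otimes\eta_v,s)$, with no extra constants. This is the normalization the paper tacitly uses later, via $W^\circ_{\mathrm{fin}}(a(\mathfrak{D}^{-1},\cdot))$ in Sections~\ref{sec:voronoi} and~\ref{sec:Bounds for FT}, although its proof of this lemma does not address the point either.
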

\begin{proof}
    
    For $\operatorname{re}(s)>1$ we have using Lemma \ref{lem:shiftnewform} and the properties of non-archimedean new-vectors \begin{align*}
    I(\phi_0^{u_{12}(\zeta_{\mathfrak{f}})},\eta,s) =&  
          \prod_{v<\infty} \int_{F_v^{\times}}\psi_v({\zeta_{\mathfrak{f}}y})W^{\circ}_{v}(a_{1}(y))\eta_v(y)\vert{y}\vert_v^{s-1}\mathrm{d}^{\times}y\  \\&\times\prod_{v\in S_{\infty}}\int_{F_v^{\times}}W^{\circ}_{v}(a_{1}(y))\eta_v(y)\vert{y}\vert_v^{s-1}\mathrm{d}^{\times}y
         \\ =&L^{(\mathfrak{f})}(\pi\times\eta,s)G(\eta,\psi(\zeta_{\mathfrak{f}}\cdot))\prod_{v\in S_{\infty}}\int_{F_{v}^{\times}}W_{v}(a_{1}(y))\vert{y}\vert_v^{s-1}\mathrm{d}^{\times}y.
    \end{align*}
    Both left and right-hand sides have meromorphic continuation for $s\in \CC$, so they agree everywhere. We conclude using Lemma \ref{lem:gausssum} and recalling the normalization \eqref{eq:assumption-on-archimdean}.
    \end{proof}

\subsection{Approximate functional equation}
The next step is to state an approximate functional equation. In our context, this amounts in approximating the integral in the central direction. Recall that we write for $\phi \in \pi^{\infty}$, $t>0$ and $\eta\in \widehat{F^{\times}\setminus\A^{\times}}$, \[I_t(\phi,\eta) = \int_{F^{\times}\setminus\A^{(1)}}\mathbb{P}\phi(a_1(yz_t))\eta(y)  \mathrm{d}^{\times}y, \]
so that 
\[I(\phi,\eta,1/2) = \int_{0}^{\infty}I_t(\phi,\eta)\ \mathrm{d}^{\times}t\]
and more generally
\[I(\phi,\eta,s+1/2) = \int_0^{\infty}I_t(\phi,\eta)t^s\ \mathrm{d}^{\times}t.\]
In particular, by an approximate functional equation, we mean a statement of the following form:
\begin{align*}I(\phi,\eta,1/2) &\sim \int_{\substack{t>0\\ t \asymp N(\mathfrak{f})^{-3/2}}}I_{t}(\phi,\eta) \ \mathrm{d}^{\times}y\ \mathrm{d}^{\times}t, \end{align*}
for any reasonable $\phi$. The sign $\sim$ is to be qualitatively interpreted as an approximation, but without a precise meaning yet.
We do this only on $\phi = \phi_0^{u_{12}(\zeta_{\mathfrak{f}})},$
where $\mathfrak{f}\subset O_F$ is the finite conductor of $\eta$.

The next Lemma is classical.

\begin{lemma}\label{lem:decaying}
    Let $h\colon (0,\infty)\to \CC$ be a smooth function such that $h(y)\neq 0 \Rightarrow y \ll 1$ and $(y\partial_y)^mh(y)\ll_m 1$ for any $m\in \Z_{\geq 0}$. 
    Then for any $\sigma>0$ and any polynomial $P \in \CC[X]$ it holds that 
    \[\int_{(\sigma)}\vert{\mathcal{M}(h,s)P(s)}\vert \ \mathrm{d}s \ll_{\sigma,\deg P} 1, \]
    where the implicit constant may depend on the degree of $P$ and on $\sigma$. Similarly, if $h(y)\neq 0 \Rightarrow y\gg 1$ and $(y\partial_y)^mh(y)\ll_m 1$, for any $m\in \Z_{\geq 0}$, then for any $\sigma < 0$ and any polynomial $P\in \CC[X]$ it holds that 
    \[\int_{(\sigma)}\vert{\mathcal{M}(h,s)P(s)}\vert \ \mathrm{d}s \ll_{\sigma,\deg P} 1, \]
    where the implicit constant may depend on the degree of $P$ and on $\sigma$. 
\end{lemma}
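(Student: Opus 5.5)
The plan is to use Mellin inversion together with repeated integration by parts. Write $\mathcal{M}(h,s)=\int_0^\infty h(y)y^{s}\,\mathrm{d}^\times y$. First I will check that this converges absolutely on the line $\real(s)=\sigma$. In the first case $h$ is bounded (take $m=0$) and supported on $y\ll 1$, so for $\sigma>0$ we get $\int_0^{C}y^{\sigma}\,\mathrm{d}^\times y\ll_\sigma 1$. This gives the uniform bound $\vert\mathcal{M}(h,s)\vert\ll_\sigma 1$. The second case is symmetric: $h$ is supported on $y\gg 1$, $\sigma<0$, and $\int_c^\infty y^{\sigma}\,\mathrm{d}^\times y\ll_\sigma 1$. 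Alternatively, one can substitute $y\mapsto y^{-1}$, which sends $s\mapsto -s$ and preserves the hypotheses on $(y\partial_y)^m h$, and so reduces to the first case.

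Next, to get decay in $\imag(s)$, I will integrate by parts in the variable $\log y$. Since $y^{s}=s^{-1}(y\partial_y)y^{s}$, we have
\[\mathcal{M}(h,s)=\frac{(-1)^m}{s^m}\int_0^\infty (y\partial_y)^m h(y)\,y^{s}\,\mathrm{d}^\times y .\]
The boundary terms vanish for the following reasons. At the end where $h$ is supported, $h$ vanishes identically on a neighbourhood. At the other end, $y\to0$ in the first case and $y\to\infty$ in the second, each $(y\partial_y)^jh$ is bounded while $y^{\sigma}\to 0$ because of the sign of $\sigma$. Applying the convergence bound from the first step to $(y\partial_y)^m h$, which satisfies the same support condition and boundedness, gives
\[\mathcal{M}(h,s)\ll_{\sigma,m}(1+\vert s\vert)^{-m}\]
on the line $\real(s)=\sigma$. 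For small $\vert s\vert$ we simply use the $m=0$ bound.

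Finally, for a polynomial $P$ of degree $d$, we have $\vert P(s)\vert\ll_P(1+\vert s\vert)^d$ on the line. Taking $m=d+2$ then gives
\[\int_{(\sigma)}\vert\mathcal{M}(h,s)P(s)\vert\,\vert\mathrm{d}s\vert\ll\int_{\R}(1+\vert t\vert)^{-2}\,\mathrm{d}t\ll 1.\]
The implicit constant depends on the coefficients of $P$ as well as on its degree; in the applications $P$ is fixed, so I will read ``$\ll_{\sigma,\deg P}$'' in that sense. The only real point to watch is the vanishing of the boundary terms at $0$ (respectively $\infty$), and that is exactly where the sign condition on $\sigma$ is needed.
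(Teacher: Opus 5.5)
Your proof is correct and follows essentially the same route as the paper. You integrate by parts $m=\deg P+2$ times to get $\mathcal{M}(h,s)=\pm s^{-m}\mathcal{M}((y\partial_y)^m h,s)$, bound the latter using the support and boundedness of $(y\partial_y)^m h$, integrate $\vert P(s)\vert\,\vert s\vert^{-\deg P-2}$ along the line (harmless since $\vert s\vert\geq\vert\sigma\vert>0$ there), and reduce the second case to the first via $y\mapsto y^{-1}$. Your added points are all accurate and tighten the paper's terse write-up: the vanishing of the boundary terms, the sign $(-1)^m$, and that the implicit constant actually depends on the coefficients of $P$, not only on its degree.
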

\begin{proof} Suppose $P$ is non-zero, otherwise the statement is trivially verified.
     By integration by parts we have, for $s\neq 0$ and $m\in \Z_{\geq 0}$ that 
    $\mathcal{M}(h,s) = \frac{1}{s^m}\mathcal{M}((y\partial_y)^mh,s),$ hence  
    \begin{align*}
        &\int_{(\sigma)}\vert{\mathcal{M}(h,s)P(s)}\vert\ \mathrm{d}s\\ &\ll \int_0^{\infty}\vert{(y\partial_y)^{\deg P + 2}h(y)}y^{\sigma-1}\vert \int_{(\sigma)}\left\vert{\frac{P(s)}{s^{\deg P + 2}}}\right\vert\  \mathrm{d}s\ \mathrm{d}y\ll 1.
    \end{align*}
    The second statement follows by the first with the change of variables $y\to y^{-1}$, or also by direct computation as above.
\end{proof}

 The following is an analog of \cite[Lemma 11.9]{sparse-equidistribution-venkatesh} for $\gelle_3$ periods, and it is sometimes referred to as the geometric version of the approximate functional equation, even though it does not use directly the functional equation of $L(\pi\otimes\eta,s)$, but rather the convexity bound (which of course is classically deduced from the functional equation, at least on one side).
\begin{lemma}[Geometric approximate functional equation]\label{lem:convexitybound} 
Let $\mathfrak{f}\subset O_F$ be an integral ideal and $\eta \in \widehat{F^{\times}\setminus\A^{\times}}$ a finite order character of conductor $\mathfrak{f}$. Let $\kappa \in (0,\frac{1}{2})$. Then there exists a smooth function $h_0 \colon (0,\infty)\to [0,1]$ such that \begin{align*}h_0(t) \neq 0 &\Rightarrow N(\mathfrak{f})^{-(3/2+\kappa)}\ll t\ll N(\mathfrak{f})^{-(3/2-\kappa)}\\
\forall j\in \Z_{\geq 0}&: \vert t^j h_0^{(j)}(t)\vert\ll_j 1. \end{align*} and that
\begin{equation*}\label{eq:geometricapprox}
        I(\phi_0^{u_{12}(\zeta_{\mathfrak{f}})},\eta,1/2) = \int_{0}^{\infty}h_0(t)I_t(\phi_0^{u_{12}(\zeta_{\mathfrak{f}})},\eta)\ \mathrm{d}^{\times}t + O(N(\mathfrak{f})^{1/4-\kappa/2+\epsilon}) 
    \end{equation*}
\end{lemma}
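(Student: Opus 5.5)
We work throughout with $\phi=\phi_0^{u_{12}(\zeta_{\mathfrak{f}})}$ and Mellin analysis in $t$. Recall $I(\phi,\eta,s+1/2)=\int_0^\infty I_t(\phi,\eta)t^{s}\,\mathrm{d}^\times t$, so $t\mapsto I_t(\phi,\eta)$ is the inverse Mellin transform of $s\mapsto I(\phi,\eta,s+1/2)$. Fix a smooth partition $1=h_-(t)+h_0(t)+h_+(t)$ on $(0,\infty)$. Here $h_-$ is supported on $t\ll N(\mathfrak{f})^{-(3/2+\kappa)}$, $h_+$ on $t\gg N(\mathfrak{f})^{-(3/2-\kappa)}$, and $h_0$ is the complementary window. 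All three are built from one fixed smooth function rescaled at $N(\mathfrak{f})^{-3/2\pm\kappa}$, so the derivative bounds $t^jh^{(j)}\ll_j 1$ hold. It then suffices to show
\[\int_0^\infty h_\pm(t)I_t(\phi,\eta)\,\mathrm{d}^\times t\ll N(\mathfrak{f})^{1/4-\kappa/2+\epsilon}.\]

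\textbf{Step 1 (Mellin representation).} Write each tail as a contour integral. By Mellin inversion and Parseval,
\[\int_0^\infty h(t)I_t\,\mathrm{d}^\times t=\frac{1}{2\pi i}\int_{(\sigma)}\mathcal{M}(h,-s)\,I(\phi,\eta,\tfrac12+s)\,\mathrm{d}s,\]
with the sign of $\sigma$ dictated by the support of $h$. Lemma \ref{prop:choiceofphi} identifies
\[I(\phi,\eta,\tfrac12+s)=G(\eta,\psi(\zeta_{\mathfrak{f}}\cdot))\,L^{(\mathfrak{f})}(\pi\otimes\eta,\tfrac12+s)\prod_{v\in S_\infty}\int W_v(a_1(y))|y|_v^{s-1/2}\,\mathrm{d}^\times y,\]
where $|G|=N(\mathfrak{f})^{-1/2}$. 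Since $W_v(a_1(\cdot))$ is inert, the archimedean factors decay rapidly in $|\mathrm{Im}\, s|$, uniformly on vertical strips. Because $h_\pm$ are dilates of fixed functions, $\mathcal{M}(h_\pm,-s)=N(\mathfrak{f})^{(3/2\mp\kappa)s}\mathcal{M}(h^{(0)}_\pm,-s)$. Lemma \ref{lem:decaying} then controls the $s$-integrals against any polynomial growth of $L$.

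\textbf{Step 2 (the $h_+$ tail, large $t$).} Shift the contour to $\mathrm{Re}(s)=\sigma$ with $\sigma<0$ slightly, taking $\sigma=-\epsilon'$ just to the left of the central line. The convexity bound gives $L(\pi\otimes\eta,\tfrac12+s)\ll N(\mathfrak{f})^{3/4-3\sigma/2+\epsilon}(1+|s|)^{C}$ on such lines. This follows from Phragmén–Lindelöf between $\mathrm{Re}\,s=1/2+\epsilon$, where the Dirichlet series converges, and $-1/2-\epsilon$, where the functional equation applies with conductor $\asymp N(\mathfrak{f})^3$ up to the fixed $\mathfrak{C}(\pi)$. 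The partial Euler factor at $\mathfrak{f}$ is harmless. The tail is therefore bounded by
\[N(\mathfrak{f})^{-1/2}\,N(\mathfrak{f})^{(3/2-\kappa)\sigma}\,N(\mathfrak{f})^{3/4-3\sigma/2+\epsilon}=N(\mathfrak{f})^{1/4-\kappa\sigma+\epsilon}.\]
This is not yet good enough at $\sigma$ near $0$. The right move is to use the support of $h_+$ ($t$ large) by moving toward $\mathrm{Re}\, s<0$ far enough. Taking $\sigma=-1/2$, we use the functional equation \eqref{eq:fctequations} to rewrite $I(\phi,\eta,\tfrac12+s)=\tilde I(\phi^\iota,\bar\eta,\tfrac12-s)$, with $\tilde L$ absolutely bounded at $\mathrm{Re}=1+\epsilon$ and the full conductor power $N(\mathfrak{f})^{3/2\cdot(-\sigma)}$ appearing. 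Balancing exponents gives the saving $N(\mathfrak{f})^{-\kappa|\sigma|}$ relative to $N(\mathfrak{f})^{1/4}$. Choosing $|\sigma|$ appropriately (interpolating via convexity at $\sigma=-1/2$) should yield the claimed $N(\mathfrak{f})^{1/4-\kappa/2+\epsilon}$.

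\textbf{Step 3 (the $h_-$ tail, small $t$).} Shift to $\sigma=+1/2+\epsilon$, where $L$ is absolutely convergent and $\ll 1$. The factor $N(\mathfrak{f})^{(3/2+\kappa)\cdot(-\sigma)}$, combined with $|G|=N(\mathfrak{f})^{-1/2}$, gives a bound $\ll N(\mathfrak{f})^{-1/2-(3/2+\kappa)/2}$, which is far better than needed. Alternatively, one can take $\sigma=1/2$ with convexity and get $N(\mathfrak{f})^{1/4-\kappa/2+\epsilon}$ directly. Note that the sign conventions of the Mellin variable must be tracked so that the decay is in the right direction.

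\textbf{Main obstacle.} The delicate point is the exponent bookkeeping: deciding which side needs the functional equation, and checking that the Gauss-sum normalization $N(\mathfrak{f})^{-1/2}$ together with the conductor $N(\mathfrak{f})^3$ yields exactly $1/4-\kappa/2$. A second issue is verifying that the ramified and archimedean local factors, including the $\epsilon$-factor of size $N(\mathfrak{f})^{3/2\cdot}$ in the $\tilde I$ side, are uniformly bounded. For the latter I would compute $\tilde I(\phi^\iota,\dots)$ via the new-vector properties, as in Lemma \ref{prop:choiceofphi}.
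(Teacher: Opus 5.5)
\textbf{There is a genuine gap: you have the two half-planes swapped.} Your setup is right: Parseval, Lemma \ref{prop:choiceofphi} to turn $I$ into $G\cdot L^{(\mathfrak f)}$ (valid for all $s$ by continuation), convexity, and cut-offs at $N(\mathfrak f)^{-3/2\pm\kappa}$. The problem is which contour each tail may use. In your normalization $\int h\,I_t\,\mathrm{d}^\times t=\frac{1}{2\pi i}\int_{(\sigma)}\mathcal M(h,-s)\,I(\phi,\eta,\tfrac12+s)\,\mathrm{d}s$.
\begin{itemize}
\item The large-$t$ cut-off $h_+$ is $\equiv 1$ near $\infty$, so $\mathcal M(h_+,-s)$ exists only for $\real s>0$.
\item The small-$t$ cut-off $h_-$ is $\equiv 1$ near $0$, so $\mathcal M(h_-,-s)$ exists only for $\real s<0$.
\end{itemize}
This is exactly the hypothesis of Lemma \ref{lem:decaying}.

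In Step 2 you move the $h_+$-integral to $\sigma<0$. Shifting across $s=0$ picks up the pole of $\mathcal M(h_+,-s)$, whose residue is $\pm I(\phi,\eta,\tfrac12)$ itself, so the argument is circular. Your own bookkeeping already shows the failure: $N(\mathfrak f)^{1/4-\kappa\sigma}=N(\mathfrak f)^{1/4+\kappa|\sigma|}$ is a loss that grows as you move left, not a saving. At $\sigma=-\tfrac12$ you get $N(\mathfrak f)^{-1/2}N(\mathfrak f)^{-(3/2-\kappa)/2}N(\mathfrak f)^{3/2}=N(\mathfrak f)^{1/4+\kappa/2}$.

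In Step 3 you place the $h_-$-integral on $\sigma=\tfrac12+\epsilon$, where $\mathcal M(h_-,-s)$ diverges. You also flip the scaling factor there. By your own formula $\mathcal M(h_-,-s)=N(\mathfrak f)^{(3/2+\kappa)s}\mathcal M(h_-^{(0)},-s)$, on that line the factor is $N(\mathfrak f)^{+(3/2+\kappa)/2}$, not $N(\mathfrak f)^{-(3/2+\kappa)/2}$. The ``far better than needed'' bound should have been a warning sign.

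\textbf{The fix, which is exactly the paper's proof.} The functional equation is needed on the \emph{small}-$t$ side, not the large-$t$ side.
\begin{itemize}
\item Put the large-$t$ tail on $\real s=+\tfrac12$. There $L(\pi\otimes\eta,\cdot)$ sits on the line $\real=1$ and is $\ll N(\mathfrak f)^{\epsilon}$. This gives $N(\mathfrak f)^{-1/2}\cdot N(\mathfrak f)^{(3/2-\kappa)/2}=N(\mathfrak f)^{1/4-\kappa/2}$.
\item Put the small-$t$ tail on $\real s=-\tfrac12$. There $L$ sits on the line $\real=0$, and the convexity bound $\ll N(\mathfrak f)^{3/2+\epsilon}$ gives $N(\mathfrak f)^{-1/2}\cdot N(\mathfrak f)^{-(3/2+\kappa)/2}\cdot N(\mathfrak f)^{3/2}=N(\mathfrak f)^{1/4-\kappa/2}$.
\end{itemize}
Your interpolation $N(\mathfrak f)^{1/4-\kappa\sigma}$ is correct; it just has to be applied to $h_+$ with $\sigma\in(0,\tfrac12]$, and it is optimal at $\sigma=\tfrac12$.

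No separate computation of $\tilde I(\phi^\iota,\dots)$ via new-vectors is needed. Lemma \ref{prop:choiceofphi} holds for all $s$, so the convexity bound for $L$ on $\real=0$ is enough.
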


\begin{proof}
 Let $h\colon (0,\infty)\to [0,1]$ be a smooth function so that $h(x) = 0$ for $x \gg 1$ and $h(x) = 1 $ for $x \in (0,1]$. In addition, we assume that $(y\partial_yh)^m \ll_m 1$ for any $m\in \Z_{\geq 0}$. For any $M> 0$ denote $h_{M}(t) = h\left(\frac{t}{M}\right)$ ($t>0$). Denote for the proof, $\phi = \phi_0^{u_{12}(\zeta^\mathfrak{f})}$. Let $A = A_{\mathfrak{f}}>0$ and $B=B_{\mathfrak{f}}>0$ be two quantities depending on $\mathfrak{f}$ that we will choose later.
    We have, by Parseval, for any $\sigma >0$
    \begin{equation*}\label{eq:parseval}
        \int_{0}^{\infty}h_A(t)I_t(\phi,\eta,1/2)\ \mathrm{d}^{\times}t = \frac{1}{2\pi i}\int_{(\sigma)}A^{s}\mathcal{M}(h,s)I(\phi,\eta,-s+1/2) \ \mathrm{d}s,
    \end{equation*}
    At $\real(s) = 1/2$ we have the convexity bound 
     \[L(\pi \times \eta, -s+\tfrac{1}{2}) \ll (1+\vert{s}\vert)^{\frac{3d}{2}+\epsilon} C(\pi\times \eta)^{1/2+\epsilon} \leq (1+\vert s \vert)^{\frac{3d}{2}+\epsilon}C(\pi)^{1/2+\epsilon}N(\mathfrak{f})^{3/2+\epsilon},  \]
    which together with Lemmas \ref{prop:choiceofphi} and \ref{lem:decaying} gives
    \begin{equation*}
        \int_{(1/2)}A^{s}\mathcal{M}(h,s)I(\phi,\eta,-s+1/2) \ \mathrm{d}s \ll_{\epsilon,h} A^{1/2}C(\pi)^{1/2+\epsilon}N(\mathfrak{f})^{1+\epsilon}.
    \end{equation*}
     Hence, choosing $A = N(\mathfrak{f})^{-3/2-\kappa}$, we get 
     \begin{equation*}
          \int_{(1/2)}A^{s}\mathcal{M}(h,s)I(\phi,\eta,-s+1/2) \ \mathrm{d}s \ll_{\epsilon} C(\pi)^{1/2+\epsilon}N(\mathfrak{f})^{1/4-\frac{\kappa}{2}+\epsilon}.
     \end{equation*}for $\kappa >0$ to be decided, we have a subconvex bound. 
    Similarly, we have for $\sigma <0$
    \begin{equation*}
         \int_{0}^{\infty}(1-h_B(t))I_t(\phi,\eta,1/2)\ \mathrm{d}^{\times}t = \frac{1}{2\pi i}\int_{(\sigma)}B^{s}\mathcal{M}(1-h,s)I(\phi,\eta,-s+1/2)\ \mathrm{d}s,
    \end{equation*}
     The convexity bound for $\real(s) = -1/2$ reads
    \[ L(\pi \times \eta,s-\tfrac{1}{2})  \ll (1+\vert{s}\vert)^{\epsilon} C(\pi\times \eta)^{\epsilon}, \] which together with Lemmas \ref{prop:choiceofphi} and \ref{lem:decaying} gives $$\int_{(-1/2)}B^{s}\mathcal{M}(1-h,s)I(\phi,\eta,-s+1/2)\ \mathrm{d}s\ll B^{-1/2}C(\pi)^\epsilon N(\mathfrak{ f})^{-1/2+\epsilon}.$$  We can choose $B= N(\mathfrak{f})^{-3/2+\kappa}$ and obtain 
     \begin{equation*}
         \int_{0}^{\infty}(1-h_B(t))I_t(\phi,\eta,1/2)\ \mathrm{d}^{\times}t \ll C(\pi)^{\epsilon}N(\mathfrak{f})^{1/4-\kappa/2+\epsilon}
    \end{equation*}
    Hence, we choose $h_0\coloneqq h_B- h_A$ with $B= N(\mathfrak{f})^{-3/2+\kappa}$ and $A=N(\mathfrak{f})^{-3/2-\kappa}$.
    
\end{proof}

Using the functional equation directly, we can further shrink the range we are considering at the cost of having two sums. The proof is virtually the same as \cite[Theorem 16]{MR4370537}.
For simplicity, we will assume that $\mathfrak{C}(\pi)$ and $\mathfrak{f}$ are coprime; in particular, this implies that $$L_v(\pi\times \eta,s) = 1 = L_v(\tilde{\pi}\times \overline{\eta},s),$$ whenever $v|\mathfrak{f}$. Recall from subsection \ref{subsect:newforms}, that for any $v< \infty$ we denote by $U_v^{\circ} \in \mathcal{W}(\pi_v,\overline{\psi}_v)$ the newform of $\tilde{\pi}_v$.

\begin{proposition}[Approximate functional equation]\label{prop:approximatefunctionalequation}
Let $\mathfrak{f} \subset O_F$ be an integral ideal and $\eta \in \widehat{F^{\times}\setminus\A^{\times}}$ be a finite order character of conductor $\mathfrak{f}$.
Let $\frac{1}{2}>\kappa > \epsilon > 0$. Suppose that $\mathfrak{C}(\pi)$ and $\mathfrak{f}$ are coprime. 

For $v\in S_{\infty}$ let $U_v \in \mathcal{W}(\tilde{\pi}_v,\overline{\psi}_v)$ be so that $U_v(a_1(y))= W_v(a_1(y^{-1}))$. Let $\prod_{v<\infty}U^{,\circ}_v \times \prod_{v\in S_{\infty}}U_v = U_{\phi_0^{\vee}}$ be the corresponding global Whittaker function in $\mathcal{W}(\tilde{\pi},\overline{\psi})$ and $\phi_0^{\vee}(g) = \sum_{\gamma \in U_2(F)\setminus \gelle_3(F)}U_{\phi_0^{\vee}}(\gamma g)$ the corresponding vector in $\tilde{\pi}^{\infty}$. 

Then there exist smooth functions $h_0\colon (0,\infty)\to [0,1]$ such that 
\begin{align*}h_0(t) \neq 0 &\Rightarrow N(\mathfrak{f})^{-(3/2+\kappa)}\ll t\ll N(\mathfrak{f})^{-(3/2-\kappa)}\\
\forall j\in \Z_{\geq 0}&: \vert t^j h_0^{(j)}(t)\vert\ll_j 1. \end{align*} 
so that
\begin{align*}\label{eq:approximatefunctionalequation}
    I(\phi_0^{u_{12}(\zeta_\mathfrak f)},\eta,1/2) \leq \ & \int_{0}^{\infty}h_0(t)I_t(\phi_0^{u_{12}(\zeta_\mathfrak f)},\eta)\ \mathrm{d}^{\times}t+ \int_{0}^{\infty}k_0(t)I_t((\phi_0^{\vee})^{u_{12}(\zeta_\mathfrak f)},\overline{\eta})\ \mathrm{d}^{\times}t \\&+ O(N(\mathfrak{f})^{1/4-\kappa/2+\epsilon}),
\end{align*}
where \[k_0(t) = \frac{1}{2\pi i}\int_{(\sigma)}\mathcal M(h_0,s)\gamma_{\infty}\left(\frac{1}{2}-s,\pi \times \eta\right)(C(\pi)N(\mathfrak{f})^3t)^{s} \]
satisfies $t^jk_0^{(j)}(t) \ll 1$ and $k_0(t) \ll_{\sigma} (N(\mathfrak{f})^{\frac{3}{2}}t)^\sigma$ for every $\sigma >0$. 
\end{proposition}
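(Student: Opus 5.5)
We follow the standard smooth-splitting argument (as in \cite[Theorem 16]{MR4370537}), run inside the Mellin picture already used in Lemma \ref{lem:convexitybound}. Write $\phi=\phi_0^{u_{12}(\zeta_\mathfrak f)}$ and take $h_0$ to be the function constructed in the proof of Lemma \ref{lem:convexitybound}, $h_0=h_B-h_A$ with $A=N(\mathfrak f)^{-3/2-\kappa}$ and $B=N(\mathfrak f)^{-3/2+\kappa}$. We then split
\[
I(\phi,\eta,\tfrac12)=\int_0^\infty h_0(t)I_t(\phi,\eta)\,\mathrm d^\times t+\int_0^\infty h_A(t)I_t(\phi,\eta)\,\mathrm d^\times t+\int_0^\infty(1-h_B(t))I_t(\phi,\eta)\,\mathrm d^\times t .
\]
The $h_A$-piece is already $O(N(\mathfrak f)^{1/4-\kappa/2+\epsilon})$ by the contour computation at $\real(s)=1/2$ in the proof of Lemma \ref{lem:convexitybound}. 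So the remaining task is to convert the large-$t$ piece $(1-h_B)$ into a dual expression involving $\phi_0^\vee$.

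\emph{Step 1 (Mellin side).} By Parseval, the $(1-h_B)$-piece equals
\[
\frac{1}{2\pi i}\int_{(\sigma)}B^s\mathcal M(1-h,s)\,I(\phi,\eta,\tfrac12-s)\,\mathrm ds .
\]
By Lemma \ref{prop:choiceofphi}, $I(\phi,\eta,\tfrac12-s)=G(\eta,\psi(\zeta_\mathfrak f\cdot))\,L(\pi\otimes\eta,\tfrac12-s)\,\prod_{v\in S_\infty}\Psi_v(\tfrac12-s)$. Here coprimality of $\mathfrak f$ and $\mathfrak C(\pi)$ gives $L_v=1$ at $v\mid\mathfrak f$, so $L^{(\mathfrak f)}=L$. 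Apply the global functional equation
\[
L(\pi\otimes\eta,\tfrac12-s)=\varepsilon(\pi\otimes\eta)\,\bigl(C(\pi)N(\mathfrak f)^3\bigr)^{s}\,\gamma_\infty\,L(\tilde\pi\otimes\bar\eta,\tfrac12+s).
\]
In this step, the ratio of archimedean local integrals $\Psi_v$, together with the relation $U_v(a_1(y))=W_v(a_1(y^{-1}))$, is absorbed into the gamma factor. Next, apply Lemma \ref{prop:choiceofphi} again to $\phi_0^\vee$ and $\bar\eta$, using that the Gauss sums for $\eta$ and $\bar\eta$ differ by a unimodular factor compatible with the root number (we have $|G|=N(\mathfrak f)^{-1/2}$ in both cases). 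This rewrites the integrand as
\[
\mathcal M(h_0\text{-type},s)\,\gamma_\infty(\tfrac12-s,\pi\times\eta)\,\bigl(C(\pi)N(\mathfrak f)^3\bigr)^s\,I\bigl((\phi_0^\vee)^{u_{12}(\zeta_\mathfrak f)},\bar\eta,\tfrac12+s\bigr),
\]
up to a unimodular constant.

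\emph{Step 2 (returning to the $t$-side).} Shift the contour to $\real(s)=\sigma>0$. Expanding $I((\phi_0^\vee)^{u_{12}(\zeta_\mathfrak f)},\bar\eta,\tfrac12+s)=\int I_t(\cdots)t^s\,\mathrm d^\times t$ (absolutely convergent) and swapping the order of integration gives $\int k(t)\,I_t((\phi_0^\vee)^{u_{12}(\zeta_\mathfrak f)},\bar\eta)\,\mathrm d^\times t$, with $k$ given by the stated Mellin integral. To land exactly on the $k_0$ attached to $h_0$ (rather than to $1-h_B$), we organise the decomposition the other way round: we write $1-h_B=(h_{B'}-h_B)+(1-h_{B'})$ and use that the tail far beyond the self-dual point is dual to a small-$t$ tail, which contributes $O(N(\mathfrak f)^{1/4-\kappa/2+\epsilon})$. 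This bookkeeping is the same as in \cite{MR4370537}, and it is why the statement is only an inequality up to this error.

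\emph{Step 3 (properties of $k_0$).} By Stirling, $\gamma_\infty(\tfrac12-s,\pi\times\eta)$ grows at most polynomially in $|\imag s|$ on vertical lines, uniformly because $\eta$ has finite order; the complex places are handled as in the Bessel lemma. Lemma \ref{lem:decaying} then gives
\[
k_0(t)\ll_\sigma (C(\pi)N(\mathfrak f)^3t)^\sigma .
\]
Since $N(\mathfrak f)^{3}t=N(\mathfrak f)^{3/2}\cdot N(\mathfrak f)^{3/2}t$, this is exactly the stated bound once the harmless factor is absorbed. For the derivative bounds, $t\partial_t$ acts as multiplication by $s$ under the Mellin transform, and the extra polynomial factor is again covered by Lemma \ref{lem:decaying}.

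The main obstacle is Step 1: matching the archimedean local integrals with $\gamma_\infty$ via the normalisation \eqref{eq:assumption-on-archimdean}, and checking that the non-archimedean Gauss-sum and root-number factors combine to a unimodular constant. This has to be verified place by place using the coprimality of $\mathfrak f$ and $\mathfrak C(\pi)$.
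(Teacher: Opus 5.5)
There is a genuine gap. You dualize the wrong piece of the $t$-integral, and as a result the decay bound claimed in Step 3 is false for your $h_0$.

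With $h_0=h_B-h_A$, both tails $\int h_A I_t$ and $\int (1-h_B) I_t$ are already $O(N(\mathfrak f)^{1/4-\kappa/2+\epsilon})$ by Lemma \ref{lem:convexitybound}; its proof treats both, on the lines $\real(s)=1/2$ and $\real(s)=-1/2$. So nothing is left to dualize there. Moreover, dualizing $1-h_B$ (or $h_{B'}-h_B$) produces a kernel built from that function, not from $h_0$, so the bookkeeping of Step 2 does not yield the stated $k_0$. If you instead literally attach $k_0$ to your $h_0$, your own Step 1 computation shows that $\int k_0(t)I_t((\phi_0^\vee)^{u_{12}(\zeta_\mathfrak f)},\overline\eta)\,\mathrm d^\times t$ is a unimodular multiple of $\int h_0(t)I_t(\phi,\eta)\,\mathrm d^\times t$. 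The same quantity would then be counted twice.

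Step 3 also fails. Lemma \ref{lem:decaying} applied to $h_0$ gives only $k_0(t)\ll_\sigma (C(\pi)N(\mathfrak f)^3t)^\sigma$. The extra factor $N(\mathfrak f)^{3\sigma/2}$ is a power of the conductor that grows with $\sigma$, not a harmless constant. In fact $h_0\equiv 1$ around $t\asymp N(\mathfrak f)^{-3/2+\kappa/2}$, so its dual kernel is not negligible around $t\asymp N(\mathfrak f)^{-3/2-\kappa/2}$. But at those $t$ the bound $(N(\mathfrak f)^{3/2}t)^\sigma$ would force negligibility. This decay is exactly what is used later to confine the dual side to $T\gg N(\mathfrak f)^{-3/2-\epsilon}$ in \eqref{eq:significant-range}. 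With your choice, nothing is gained beyond Lemma \ref{lem:convexitybound}.

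The missing idea is to cut the window at the self-dual point $D=N(\mathfrak f)^{-3/2}$:
\begin{itemize}
\item Write $h_B-h_A=(h_B-h_D)+(h_D-h_A)=:h_0+h_1$.
\item Keep $h_0$, so the original side only sees $t\gg N(\mathfrak f)^{-3/2}$, and discard the two tails via Lemma \ref{lem:convexitybound}.
\item Dualize only the lower piece $h_1$ through \eqref{eq:fctequations}. Your Step 1 (global functional equation plus Lemma \ref{prop:choiceofphi} applied twice) is an acceptable way to do this.
\end{itemize}
The kernel is then built from $\mathcal M(h_1,s)$; the $\mathcal M(h_0,s)$ in the statement is a slip, and a unimodular constant $\theta$ appears. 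The decay comes from rescaling: $\mathcal M(h_1,s)=D^s\mathcal M(h_1(D\,\cdot),s)$, where $h_1(D\,\cdot)=h-h(\tfrac{D}{A}\,\cdot)$ vanishes for $y\gg 1$ and satisfies $(y\partial_y)^m$-bounds uniform in $\mathfrak f$. Lemma \ref{lem:decaying} then gives $k_0(t)\ll_\sigma (C(\pi)N(\mathfrak f)^3Dt)^\sigma=(C(\pi)N(\mathfrak f)^{3/2}t)^\sigma$.
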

\begin{proof} Let $B= N(\mathfrak{f})^{-(3/2-\kappa)}$ and $A  = N(\mathfrak{f})^{-(3/2+\kappa)}$ and let $h = h_B-h_A$ be defined as in the proof of Lemma \ref{lem:convexitybound}.
    Let $D= N(\mathfrak{f})^{-3/2}$, then $B>D>A$. We set $h_0= h_B- h_D$ and $h_1 = h_D-h_A$, so that $h = h_0+h_1$. Write $\phi = \phi_0^{u_{12}(\zeta_\mathfrak{f})}$. Then by \eqref{eq:fctequations}
    \begin{align*}\int_{0}^{\infty}h_1(t)I_t(\phi,\eta)\ \mathrm{d}^{\times}t &= \frac{1}{2\pi i}\int_{(\sigma)}\mathcal{M}(h_1,s)I(\phi,\eta,-s+1/2) \ \mathrm{d}s \\ &=\frac{1}{2\pi i}\int_{(\sigma)}\mathcal{M}(h_1,s)\tilde{I}(\phi^{\iota},\overline{\eta},s+1/2) \ \mathrm{d}s.\end{align*}
    For $\real(s)>1/2$ we have 
    \begin{align*}
        &\tilde{I}(\phi^{\iota},\overline{\eta},s+1/2)= \prod_v\tilde{\Psi}_v(s+1/2,\rho(\sigma_{23})({W}^{\phi}_v)^{\iota},\overline{\eta_v})\\
        \\&= \theta(C(\pi)N(\mathfrak{f})^3)^{s} \prod_{v\in S_{\infty}}\gamma_{v}(1/2-s,\pi\times\eta) I((\phi_0^{\vee})^{u_{12}(\zeta_\mathfrak f)},\overline{\eta},s+1/2),
    \end{align*}
    where $\theta =\frac{G(\eta,\psi(\zeta_\mathfrak{f}\cdot))}{G(\overline{\eta},\overline{\psi}(\zeta_\mathfrak{f}\cdot))}\varepsilon(\pi\eta)$ is a number of absolute value $1$. The identity connecting the left-hand side and the last line is valid for any $s$. In particular, we get 
  \[\int_0^{\infty}h_1(t)I_t(\phi,\eta)\ \mathrm{d}^{\times}t = \int_{0}^{\infty}k_0(t)I_t((\phi_0^{\vee})^{u_{12}(\zeta_\mathfrak f)},\overline{\eta})\ \mathrm{d}^{\times}t \]
  with
   \begin{align*}k_0(t)=
   \frac{\theta}{2\pi i}\int_{(\sigma)}\mathcal{M}(h_1,s)\gamma_{\infty}\left(\frac{1}{2}-s,\pi \times \eta\right)(C(\pi)N(\mathfrak{f})^3t)^s \ \mathrm{d}s.\end{align*}
  We now study the properties of $k_0(t)$. Let $h_{1,D^{-1}}(t) = h_1(Dt)$. We have \[k_0(t) = \frac{1}{2\pi i}\int_{(\sigma)}\mathcal{M}(h_{1,D^{-1}},s)\gamma_{\infty}\left(\frac{1}{2}-s,\pi \times \eta\right)(C(\pi)DN(\mathfrak{f})^3t)^{s}\ \mathrm{d}s.\]
  Notice that $h_{1,D^{-1}}(y)  = h(y)-h(\frac{D}{A}y)$, hence $h_{1,D^{-1}}(y)\neq0 \Rightarrow y\ll 1$ and  $(y\partial_y)^mh_{2,D^{-1}}(y) \ll_m 1$ for any non-negative integer $m$. Hence, for any $\sigma>0$ we see that 
  \begin{align*}k_2(t) &\ll (C(\pi)DN(\mathfrak{f})^3t)^{\sigma}\int_{(\sigma)}\left\vert{\mathcal{M}(h_{1,D^{-1}},s)\gamma_{\infty}(\frac{1}{2}-s,\pi \times \eta)}\right\vert \ \mathrm{d}s\\ &\ll_{\sigma} (N(\mathfrak{f})^{\frac{3}{2}}t)^{\sigma}.\end{align*} Where in the last line we used Lemma \ref{lem:decaying}.  that $\gamma_{\infty}(1/2-s,\pi\times\eta)$ is polynomially bounded for fixed real part $\sigma$. Also, $\gamma_{\infty}(1/2-s,\pi\times\eta)$ does not have poles for $\real(s) = 0$, hence we see that $k_1(t) \ll 1$ for every $t >0$. 
\end{proof}

\subsection{Rankin-Selberg bound and dyadic partition of unity}
The classical Rankin-Selberg bound is the following statement:
\begin{equation}\label{eq:classicrankinselberg}
    \sum_{N(\mathfrak{a})N(\mathfrak{b})^2\ll X}\left\vert{W_{\fin}^{\circ}\begin{pmatrix}\mathfrak{ab}\mathfrak{D}^{-2}&&\\&\mathfrak{b}\mathfrak{D}^{-1}&\\&&1\end{pmatrix}}\right\vert^2N(\mathfrak{a}\mathfrak{b})^2 \ll X^{1+\epsilon}.
\end{equation}

Similarly as \cite[(2.5)]{nelsonholowinsky} we have
\begin{lemma}\label{lem:rankinweneed}
    For $M,N>0$ we have
    \begin{equation}\label{eq:Rankinselbergseparate}\sum_{N(\mathfrak{f})\leq M}\sum_{N(\mathfrak{b})\leq N}\vert{W_{\operatorname{fin}}^{\circ}(a(\mathfrak{aD}^{-1},\mathfrak{bD}^{-1})}\vert^2N(\mathfrak{ab})^2 \ll (MN)^{1+\epsilon} \end{equation}
\end{lemma}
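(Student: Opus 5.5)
The plan is to deduce \eqref{eq:Rankinselbergseparate} from the classical Rankin--Selberg bound \eqref{eq:classicrankinselberg} by a dyadic decomposition in $\mathfrak{b}$, using that the weight $N(\mathfrak{a}\mathfrak{b})^2$ is adapted to the ``mixed'' norm $N(\mathfrak{a})N(\mathfrak{b})^2$. (In the statement the first summation variable should read $\mathfrak{a}$.) The two ranges $N(\mathfrak{a})\le M$, $N(\mathfrak{b})\le N$ do not form a region of the shape $N(\mathfrak{a})N(\mathfrak{b})^2\ll X$. Applying \eqref{eq:classicrankinselberg} with $X=MN^2$ once is too lossy: it yields $(MN^2)^{1+\epsilon}$, not $(MN)^{1+\epsilon}$. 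We therefore exploit the extra factor $N(\mathfrak{b})^{-1}$ hidden in the weight.

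\textbf{Dyadic decomposition.} Split the $\mathfrak{b}$-sum dyadically, $N(\mathfrak{b})\in(B/2,B]$ with $B=2^j\le 2N$. On such a block, $N(\mathfrak{a})N(\mathfrak{b})^2\le MB^2$. The weight $N(\mathfrak{a}\mathfrak{b})^2$ is fixed, so \eqref{eq:classicrankinselberg} alone gives only $(MB^2)^{1+\epsilon}$ per block. Its proof, however, actually controls
\[
\sum_{N(\mathfrak{a})N(\mathfrak{b})^2\le X}\vert W(\cdots)\vert^2 N(\mathfrak{a})N(\mathfrak{b})^2\ll X^{1+\epsilon},
\]
because the Rankin--Selberg Dirichlet series $L(\pi\times\tilde\pi,s)$ has coefficients $\sum_{N(\mathfrak{a})N(\mathfrak{b})^2=n}\vert\lambda_\pi(\mathfrak{a},\mathfrak{b})\vert^2$ with $\vert W^\circ_{\fin}(a(\mathfrak{aD}^{-1},\mathfrak{bD}^{-1}))\vert\asymp\vert\lambda\vert/N(\mathfrak{a})N(\mathfrak{b})$ (Shintani/Matringe). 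The natural normalized statement is therefore
\[
\sum_{N(\mathfrak{a})N(\mathfrak{b})^2\le X}\vert\lambda(\mathfrak{a},\mathfrak{b})\vert^2\ll X^{1+\epsilon},
\]
with $\vert W\vert^2N(\mathfrak{ab})^2=\vert\lambda\vert^2$. I would first make this identification precise, checking the normalization of \eqref{eq:classicrankinselberg} at ramified places and at places dividing $\mathfrak{D}$ via Remark~\ref{rmk:newform}.

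\textbf{Main obstacle: the factorization.} The mixed-norm bound alone still loses a factor of $N$, so the key input is the Hecke relation $\lambda(\mathfrak{a},\mathfrak{b})=\sum_{\mathfrak{d}\mid(\mathfrak{a},\mathfrak{b})}\mu(\mathfrak{d})\lambda(\mathfrak{a}/\mathfrak{d},1)\lambda(1,\mathfrak{b}/\mathfrak{d})$, valid away from $\mathfrak{C}(\pi)$, with bounded modification at the finitely many ramified places. By Cauchy--Schwarz over the $N(\mathfrak{q})^{\epsilon}$-many divisors,
\[
\sum_{\mathfrak{a},\mathfrak{b}}\vert\lambda(\mathfrak{a},\mathfrak{b})\vert^2\ll (MN)^{\epsilon}\sum_{\mathfrak{d}}\sum_{N(\mathfrak{a}')\le M/N(\mathfrak{d})}\vert\lambda(\mathfrak{a}',1)\vert^2\sum_{N(\mathfrak{b}')\le N/N(\mathfrak{d})}\vert\lambda(1,\mathfrak{b}')\vert^2 .
\]
Each inner sum is $\ll X^{1+\epsilon}$ by the one-variable Rankin--Selberg bound, obtained from \eqref{eq:classicrankinselberg} restricted to $\mathfrak{b}=1$ (respectively $\mathfrak{a}=1$, using $\lambda(1,\mathfrak{b})=\overline{\lambda(\mathfrak{b},1)}$, i.e.\ the bound for $\tilde\pi$ with $X=N$). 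The sum over $\mathfrak{d}$ of $(MN/N(\mathfrak{d})^2)^{1+\epsilon}$ converges. This gives $(MN)^{1+\epsilon}$, with polynomial dependence on $C(\pi)$ and on $F$ inherited from the Rankin--Selberg bound.
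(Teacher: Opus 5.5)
Your final argument is correct and uses the same key input as the paper: the Hecke relation, Cauchy--Schwarz over the $\ll (MN)^{\epsilon}$ divisors of $(\mathfrak a,\mathfrak b)$ (not $N(\mathfrak q)^{\epsilon}$), and the two one-variable Rankin--Selberg bounds. The passage through $\tilde\pi$ in your parenthetical is genuinely needed, since setting $\mathfrak a=1$ in \eqref{eq:classicrankinselberg} for $\pi$ itself only gives $\sum_{N(\mathfrak b)\le Y}\vert\lambda(1,\mathfrak b)\vert^2\ll Y^{2+\epsilon}$.

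The paper organizes the argument differently. It first decomposes dyadically and bounds an $M'\times N'$ box by $M'^{-1}\sum N(\mathfrak a)\vert\lambda(\mathfrak a,\mathfrak b)\vert^2$. Then, following Munshi, it proves the weighted mixed-norm estimate $\sum_{N(\mathfrak b)N(\mathfrak a)^2\le X}N(\mathfrak a)\vert\lambda(\mathfrak a,\mathfrak b)\vert^2\ll X^{1+\epsilon}$ with $X=M'^2N'$, which also costs a partial summation for $\sum\vert\lambda(\mathfrak a,1)\vert^2/N(\mathfrak a)$. That estimate and the lemma are equivalent up to dyadic decomposition, so your direct treatment of the rectangle is simply the more economical route.

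This weighted estimate, with extra weight on the \emph{squared} variable, is what your opening paragraph was reaching for, but there you have it backwards. In your normalization $\vert W\vert^2N(\mathfrak a)N(\mathfrak b)^2=\vert\lambda(\mathfrak a,\mathfrak b)\vert^2/N(\mathfrak a)$, so the bound you say the Rankin--Selberg argument ``actually controls'' is weaker than \eqref{eq:classicrankinselberg}, not a refinement. Since your final argument never uses it, that paragraph should be deleted.
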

\begin{proof} The proof follows \emph{loc. cit.} Denote $W_{\fin}^{\circ}(a(\mathfrak{aD^{-1}},\mathfrak{bD}^{-1}))N(\mathfrak{ab}) = \lambda(\mathfrak{a},\mathfrak{b})$.
 The left-hand side of \eqref{eq:Rankinselbergseparate} is bounded by 
  \[ \ll (MN)^{\epsilon}\sup_{M'\leq M,N'\leq N}M'^{-1}\sum_{\substack{M'/2\leq N(\mathfrak{a})\leq M'\\N'/2\leq N(\mathfrak{b})\leq N'}}N(\mathfrak{a})\vert{\lambda(\mathfrak{a},\mathfrak{b})}\vert^2.\]
  Similarly as in \cite[Lemma 2]{Munshi1} we use the Hecke relations  
  \[\lambda(\mathfrak{m},\mathfrak{n}) = \chi^{\pi}_{\fin}(\mathfrak{d})\sum_{\mathfrak{d}|(\mathfrak{m},\mathfrak{n})}\lambda(\mathfrak{md}^{-1},1)\lambda(1,\mathfrak{nd}^{-1}). \]
  to see that 
  \begin{align*}
      \sum_{\substack{N(\mathfrak{b})N(\mathfrak{a})^2\leq X}}N(\mathfrak{a})\vert{\lambda(\mathfrak{a},\mathfrak{b})}\vert^2 &\ll \sum_{N(\mathfrak{a})\leq X^{1/2}}N(\mathfrak{a})\sum_{\mathfrak{d}|\mathfrak{a}}\vert{\lambda(\mathfrak{ad}^{-1},1)}\vert^2\sum_{N(\mathfrak{b})\leq \frac{X}{N(\mathfrak{d})N(\mathfrak{a})^2}}\vert{\lambda(1,\mathfrak{b})}\vert^2\\&\ll X^{1+\epsilon}\sum_{N(\mathfrak{d})\leq X^{1/2}}\frac{1}{N(\mathfrak{d})^2}\sum_{N(\mathfrak{a})\ll \frac{X^{1/2}}{N(\mathfrak{d})}}\frac{\vert{\lambda(\mathfrak{a},1)}\vert^2}{N(\mathfrak{a})}\ll X^{1+\epsilon}.
  \end{align*}
  Hence, combining this last bound with the first one, we prove the claim.
\end{proof}
From this bound we can deduce that for any integral ideal $\mathfrak{f}$, and $\eta\in \widehat{F^{\times}\setminus \A^{\times}}$ a finite order character, $0<t\ll 1$ and any $\epsilon >0$ we have 
\begin{equation*}\label{eq:7rankinselberg}
    I_t(\phi,\eta) \ll_{\epsilon} N(\mathfrak{f})^{-1/2}t^{-1/2-\epsilon}, \qquad \phi = \phi_0^{u_{12}(\zeta_\mathfrak{f})} \text{ or } \phi = (\phi_0^{\vee})^{u_{12}(\zeta_\mathfrak{f})}.
\end{equation*}
In fact, opening $I_t(\phi^{u_{12}(\zeta_\mathfrak{f})},\eta)$ and applying  Cauchy-Schwarz inequality we get \begin{align*}\vert{I_t}(\phi,\eta)\vert^2&\ll t^{-1}N(\mathfrak{f})\sum_{I \in \mathcal{S}_{\cl}}\sum_{\gamma \in F^{\times}}\vert{W_{\fin}^{\circ}(a_1(\gamma I))}\vert^2\left\vert{\int_{\widehat{O}_1^\times(\mathfrak{f})E}W_{\infty}(a_1(\gamma z_{N(I)^{-1}t} y))\eta(y_{\infty}) \mathrm{d}^{\times}y}\right\vert
\\&\sum_{I\in \mathcal{S}_{\cl}}\sum_{\substack{\gamma \in F^{\times}\\(\gamma) I\subset O_F}}\left\vert{\int_{\widehat{O}_1^\times(\mathfrak{f})E}W_{\infty}(a_1(\gamma z_{N(I)^{-1}t}y))\eta(y_\infty)  \ \mathrm{d}^{\times}y}\right\vert.\end{align*}
If the integrand $W_{\infty}(a_1(\gamma z_{N(I)^{-1}t}y))\neq 0$, then for all $v\in S_{\infty}$ it holds that $\vert{\gamma}\vert_v \asymp (N(I)t^{-1})^{\frac{1}{d_v}}$. By trivially estimating the contribution from the archimedean integral we get then 
\begin{equation*}
    \vert{I_t(\phi,\eta)}\vert^2 \ll t^{-2-\epsilon}N(\mathfrak{f})^{-1}\sum_{\substack{\mathfrak{a}\subset O_F\\N(\mathfrak{a})\asymp t^{-1}}}\vert{W^{\circ}_{\fin}(a_1(\mathfrak{a})}\vert^2  \ll N(\mathfrak{f})^{-1}t^{-1-\epsilon}.
\end{equation*}

\begin{proposition}[Dyadic partition of unity]Let $\mathfrak{f} \subset O_F$ be an integral ideal and $\eta \in \widehat{F^{\times}\setminus\A^{\times}}$ be a finite order character of conductor $\mathfrak{f}$.
Then there exists an inert function $g\in C_c^{\infty}(\R_{>0})$ so that 
\begin{align*}&I(\phi_0^{u_{12}(\zeta_\mathfrak{f})},\eta,1/2) \ll \log(N(\mathfrak{f}))\\&\sup_{N(\mathfrak{f})^{-\frac{3}{2}-\epsilon}\ll T\ll N(\mathfrak{f})^{-\frac{3}{2}+\kappa}}\left\vert\int_{0}^{\infty}g\left(\frac{t}{T}\right)h_0(t)I_t(\phi_0^{u_{12}(\zeta_f)},\eta)\ \mathrm{d}^{\times}t \right\vert+ \\&\sup_{N(\mathfrak{f})^{-\frac{3}{2}-\epsilon}\ll T\ll N(\mathfrak{f})^{-\frac{3}{2}+\kappa}} \left\vert \int_{0}^{\infty}g\left(\frac{t}{T}\right)k_0(t)I_{t}((\phi_0^{\vee})^{u_{12}(\zeta_\mathfrak{f})},\overline{\eta})\ \mathrm{d}^{\times}t\right\vert +O(N(\mathfrak{f})^{1/4-\kappa/2+\epsilon})  
\end{align*}
\end{proposition}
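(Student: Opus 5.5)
We start from Proposition \ref{prop:approximatefunctionalequation}. It gives
\[I(\phi_0^{u_{12}(\zeta_\mathfrak{f})},\eta,1/2)\le \int_0^\infty h_0(t)I_t(\phi,\eta)\,\mathrm{d}^\times t+\int_0^\infty k_0(t)I_t(\phi^\vee,\overline\eta)\,\mathrm{d}^\times t+O(N(\mathfrak{f})^{1/4-\kappa/2+\epsilon}),\]
where $\phi=\phi_0^{u_{12}(\zeta_\mathfrak{f})}$ and $\phi^\vee=(\phi_0^\vee)^{u_{12}(\zeta_\mathfrak{f})}$. The plan is to insert a smooth dyadic partition of unity in $t$ into each integral, discard the ranges of $t$ that contribute at most $O(N(\mathfrak{f})^{1/4-\kappa/2+\epsilon})$, and count the remaining dyadic pieces.

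\textbf{Partition of unity.} Fix an inert $g\in C_c^\infty(\R_{>0})$ supported in $[1/2,2]$ with
\[\sum_{j\in\Z}g(t/2^j)=1\quad\text{for all } t>0.\]
Such a $g$ exists by the standard construction, taking a difference of dilates of a bump function and normalizing.

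\textbf{The $h_0$-term.} Since $h_0$ is supported in $N(\mathfrak{f})^{-3/2-\kappa}\ll t\ll N(\mathfrak{f})^{-3/2+\kappa}$, in fact in $[D,B]$ with $D=N(\mathfrak{f})^{-3/2}$ by construction of $h_0=h_B-h_D$, only $O(\kappa\log N(\mathfrak{f}))$ values $T=2^j$ contribute. Each of these lies in the stated range of $T$. Bounding the finite sum by its number of terms times the supremum gives the first term on the right.

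\textbf{The $k_0$-term.} This term has no compact support in $t$, so both tails must be handled.

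For large $t$, i.e.\ $t\ge N(\mathfrak{f})^{-3/2+\kappa}$, we use $k_0(t)\ll 1$ together with the Rankin--Selberg bound $I_t\ll N(\mathfrak{f})^{-1/2}t^{-1/2-\epsilon}$. The key step is that the integral of $N(\mathfrak{f})^{-1/2}t^{-1/2-\epsilon}$ over $t\ge N(\mathfrak{f})^{-3/2+\kappa}$ converges at infinity and is dominated by its lower endpoint:
\[N(\mathfrak{f})^{-1/2}\bigl(N(\mathfrak{f})^{-3/2+\kappa}\bigr)^{-1/2-\epsilon}\ll N(\mathfrak{f})^{1/4-\kappa/2+\epsilon}.\]
For $t\gg1$ the Rankin--Selberg estimate was only stated for $t\ll1$. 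There I would instead use the rapid decay of the cusp form $\mathbb P\phi$ in the $a_1$-direction: $W_\infty$ is inert with compact support, so the $\gamma$-sum is empty unless $N(\gamma)\asymp t^{-1}$, and a nonzero integral $\gamma$ cannot have $N(\gamma)\ll 1/t$ once $t$ is large.

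For small $t$, i.e.\ $t\le N(\mathfrak{f})^{-3/2-\epsilon}$, we use $k_0(t)\ll_\sigma (N(\mathfrak{f})^{3/2}t)^\sigma$ with, say, $\sigma=1$. Together with $I_t\ll N(\mathfrak{f})^{-1/2}t^{-1/2-\epsilon}$, the integrand becomes
\[N(\mathfrak{f})^{1}\,t^{1/2-\epsilon},\]
which is integrable at $0$. The integral over $t\le N(\mathfrak{f})^{-3/2-\epsilon}$ is then $\ll N(\mathfrak{f})^{1/4-\epsilon'}$. To reach the required $O(N(\mathfrak{f})^{1/4-\kappa/2+\epsilon})$ I would take $\sigma$ large and the cutoff at $N(\mathfrak{f})^{-3/2-\epsilon}$; a smaller cutoff is harmless since the piece is still bounded by a convergent power. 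I expect some adjustment of exponents to be needed at this step, reconciling the $\epsilon$ in the lower endpoint with $\kappa$.

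In the middle range $N(\mathfrak{f})^{-3/2-\epsilon}\ll t\ll N(\mathfrak{f})^{-3/2+\kappa}$ there are $O(\log N(\mathfrak{f}))$ dyadic pieces. Summing them and bounding by the supremum gives the second term.

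\textbf{Main obstacle.} The delicate part is the tail bookkeeping: checking that both tails of the $k_0$ integral (and any boundary dyadic pieces that straddle the cutoffs, which are absorbed the same way) fit inside $O(N(\mathfrak{f})^{1/4-\kappa/2+\epsilon})$, and justifying the bound on $I_t$ for $t\gg1$ from the support of the archimedean Whittaker functions.
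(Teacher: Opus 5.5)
Your proposal is correct and follows essentially the same argument as the paper: a dyadic partition in $t$. In the $k_0$-term, the pieces with $T\ll N(\mathfrak{f})^{-3/2-\epsilon}$ are killed by $k_0(t)\ll_\sigma (N(\mathfrak{f})^{3/2}t)^\sigma$ with $\sigma$ large together with the Rankin--Selberg bound $I_t\ll N(\mathfrak{f})^{-1/2}t^{-1/2-\epsilon}$, and the pieces with $T\gg N(\mathfrak{f})^{-3/2+\kappa}$ are bounded using $k_0\ll 1$ and the same Rankin--Selberg bound.

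You are in fact more careful than the paper in three places. You use $h_0=h_B-h_D$ to get $t\ge N(\mathfrak{f})^{-3/2}$ on its support, and you note that $I_t$ vanishes for $t\gg 1$; the paper addresses neither point. Your tail exponent $N(\mathfrak{f})^{1/4-\kappa/2+\epsilon}$ is also the right one, whereas the paper's displayed $3/4$ is a slip.
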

\begin{proof} We apply a dyadic partition of unity with an inert function $g\colon (0,\infty)\to \R$.

Let $0<T< N(\mathfrak f)^{-3/2-\epsilon}$, then for any $\sigma >0$
\[\int_{0}^{\infty}g\left(\frac{t}{T}\right)k_0(t)I_t((\phi_0^{\vee})^{u_{12}(\zeta_\mathfrak{f})},\overline \eta)\ \mathrm{d}^{\times}t\ll_{\sigma} (N(\mathfrak{f})^{3/2}T)^{\sigma}T^{-1/2+\epsilon}N(\mathfrak{f})^{-1/2}, \]
which is, choosing $\sigma$ very big, as small as we want. Let $T>N(\mathfrak{q})^{-3/2+\kappa}$. Then we have
\[\int_{0}^{\infty}g\left(\frac{t}{T}\right)k_0(t)I_t((\phi_0^{\vee})^{u_{12}(\zeta_\mathfrak{f})},\overline \eta) \ \mathrm{d}^{\times}t \ll T^{-1/2+\epsilon}N(\mathfrak{f})^{-1/2} \leq N(\mathfrak{f})^{3/4-\frac{\kappa}{2}+\epsilon}. \]
\end{proof}
Next, consider our case of interest where $\mathfrak{f} = \mathfrak{q}$ is a prime ideal and $\eta = \chi$.
Using the notation of Section \ref{sec:keyidentity} we set
\begin{align*}\mathcal{F}_t^{\chi}(y,I) = \mathcal{F
}_t^{\phi_0^{u_{12}(\zeta_\mathfrak q)},\chi}(y,I),\ \mathcal{F}_T^{\chi} = \mathcal{F
}_T^{\phi_0^{u_{12}(\zeta_\mathfrak q)},\chi}, \\\ \mathcal O_t^{\chi}(y,I) = \mathcal{O}_t^{\phi_0^{u_{12}(\zeta_\mathfrak q)},\chi}(y,I), \ \mathcal{O}_T^{\chi} = \mathcal{O}_T^{\phi_0^{u_{12}(\zeta_\mathfrak q)},\chi}. \end{align*}
and similarly for the dual function 
\begin{align*}\mathcal{F}_t^{\vee,\chi}(y,I) = \mathcal{F
}_t^{(\phi^\vee_0){u_{12}(\zeta_\mathfrak q)},\chi}(y,I),\ \mathcal{F}_T^{\vee,\chi} = \mathcal{F
}_T^{(\phi^\vee_0)^{u_{12}(\zeta_\mathfrak q)},\chi},\\ \ \mathcal O_t^{\vee,\chi}(y,I) = \mathcal{O}_t^{(\phi^\vee_0)^{u_{12}(\zeta_\mathfrak q)},\chi}(y,I), \ \mathcal{O}_T^{\vee,\chi} = \mathcal{O}_T^{(\phi^{\vee}_0)^{u_{12}(\zeta_\mathfrak q)},\chi}. \end{align*}
We say that $T>0$ is in the significant range if \begin{equation}\label{eq:significant-range}N(\mathfrak{q})^{-\frac{3}{2}-\epsilon}\ll T\ll N(\mathfrak{q})^{-\frac{3}{2}+\kappa}.\end{equation}
To recap, in this section we proved that we can reformulate the subconvexity bound in terms of $\mathcal{F}_T^{\chi}$, $\mathcal{F}_T^{\vee,\chi}$, $\mathcal{O}_T^{\chi}$ and $\mathcal{O}_T^{\vee,\chi}$.
\begin{proposition}\label{prop:splitFandO}
  Let $\mathfrak{q} \subset O_F$ be a prime ideal coprime to $\mathfrak{C}(\pi)$ and $\chi \in \widehat {F^{\times}\setminus A^\times}$ a finite order character of conductor $\mathfrak{q}$. Then 
\begin{align*}
    L\left(\pi \otimes \chi,\frac{1}{2}\right) \ll N(\mathfrak{q})\bigg(&\operatorname{sup}_{N(\mathfrak{   q})^{-\frac{3}{2}-\epsilon}\ll T\ll N(\mathfrak{q})^{-\frac{3}{2}+\kappa}}\vert \mathcal{F}_T^{\chi} \vert + \vert \mathcal{O}_T^{\chi} \vert + \vert\mathcal{F}_T^{\vee,\chi}\vert + \vert \mathcal{O}_T^{\vee,\chi}\vert\bigg) . 
\end{align*}
\end{proposition}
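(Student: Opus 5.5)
This bound follows by chaining the results of Section \ref{sec:approximatefunctionalequation} with the key identity \eqref{eq:keyidentity}. We take $\mathfrak{f}=\mathfrak{q}$ and $\eta=\chi$. Since $\mathfrak{q}$ is prime and coprime to $\mathfrak{C}(\pi)$, the partial $L$-function $L^{(\mathfrak{q})}(\pi\otimes\chi,s)$ coincides with $L(\pi\otimes\chi,s)$: the local factor at $v_\mathfrak{q}$ is $1$ because $\chi_{v_\mathfrak{q}}$ is ramified and $\pi_{v_\mathfrak{q}}$ is unramified. Lemma \ref{prop:choiceofphi} then gives
\[L(\pi\otimes\chi,\tfrac12)=G(\chi,\psi(\zeta_\mathfrak{q}\cdot))^{-1}I(\phi_0^{u_{12}(\zeta_\mathfrak{q})},\chi,\tfrac12).\]
Here $|G|^{-1}=N(\mathfrak{q})^{1/2}$, and the archimedean factor equals $1$ by the normalization \eqref{eq:assumption-on-archimdean}.

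Next we apply Proposition \ref{prop:approximatefunctionalequation} to $I(\phi_0^{u_{12}(\zeta_\mathfrak{q})},\chi,\tfrac12)$, and then the dyadic partition of unity proposition. This bounds the quantity by $\log N(\mathfrak{q})$ times the suprema, over $T$ in the significant range \eqref{eq:significant-range}, of $\int g(t/T)h_0(t)I_t(\phi_0^{u_{12}(\zeta_\mathfrak{q})},\chi)\,\mathrm{d}^\times t$ and of the dual integral involving $k_0$ and $(\phi_0^\vee)^{u_{12}(\zeta_\mathfrak{q})}$, up to an error $O(N(\mathfrak{q})^{1/4-\kappa/2+\epsilon})$. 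Multiplying by $N(\mathfrak{q})^{1/2}$, this error becomes $N(\mathfrak{q})^{3/4-\kappa/2+\epsilon}$. That is below the convexity exponent and is tacitly absorbed: the statement is meant to be used together with the final bounds, and the $\ll$ there should be read as modulo such admissible error terms, or equivalently with the factor $N(\mathfrak{q})^{\epsilon}$ that the $\log$ also requires.

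For each $t$ we insert the key identity \eqref{eq:keyidentity}, which gives $I_t=N(\mathfrak{q})^{1/2}\sum_I\int(\mathcal F_t-\mathcal O_t)+O(N(\mathfrak{q})^{-100})$, with $\phi=\phi_0^{u_{12}(\zeta_\mathfrak{q})}$ on the main side and $\phi=(\phi_0^\vee)^{u_{12}(\zeta_\mathfrak{q})}$ on the dual side. Integrating against $g(t/T)$ produces $\mathcal F_T^{\chi}-\mathcal O_T^{\chi}$ and their dual analogues. The extra factor $N(\mathfrak{q})^{1/2}$, combined with the Gauss sum factor $N(\mathfrak{q})^{1/2}$, yields the overall $N(\mathfrak{q})$ in the statement.

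The one point needing care is the presence of the weights $h_0(t)$ and $k_0(t)$, whereas $\mathcal F_T,\mathcal O_T$ are defined with $g(t/T)$ alone. For $h_0$ this is harmless: $h_0(t)g(t/T)$ is again an inert function of $t/T$, because $h_0$ satisfies $t^jh_0^{(j)}\ll_j 1$, so one redefines $g$ accordingly. For $k_0$, the bounds $t^jk_0^{(j)}(t)\ll 1$ from Proposition \ref{prop:approximatefunctionalequation} show in the same way that $g(t/T)k_0(t)$ is an inert function of $t/T$ with bounded derivatives. Since the later bounds on $\mathcal F_T,\mathcal O_T$ use only inertness of $g$, the suprema are dominated by the stated quantities. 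The only other point is the $O(N(\mathfrak{q})^{-100})$ error after integration: it contributes $O(N(\mathfrak{q})^{-90})$ and is negligible.
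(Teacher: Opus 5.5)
Your proposal is correct and is exactly the argument the paper intends. The paper gives no separate proof and states the proposition as a recap: it chains Lemma~\ref{prop:choiceofphi}, Proposition~\ref{prop:approximatefunctionalequation}, the dyadic partition of unity and the key identity~\eqref{eq:keyidentity}, with the Gauss-sum factor $N(\mathfrak{q})^{1/2}$ and the key-identity factor $N(\mathfrak{q})^{1/2}$ combining to give $N(\mathfrak{q})$. Your remarks are also accurate on three points:
\begin{itemize}
\item The literal statement drops the $\log N(\mathfrak{q})$ factor and the $O(N(\mathfrak{q})^{3/4-\kappa/2+\epsilon})$ error. This is harmless, since the error has the same size as the final bound for $N(\mathfrak{q})\mathcal{F}_T^{\chi}$.
\item The weights $h_0$ and $k_0$ can be absorbed into the inert function $g$.
\item You could add that on the dual side the character is really $\overline{\chi}$. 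This changes nothing, because the key identity and the later bounds hold for any finite-order character of conductor $\mathfrak{q}$.
\end{itemize}
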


\section{Amplification}\label{sec:amplification}
To optimize a later use of the Cauchy-Schwartz inequality, as in \cite[Section 3.3]{nelsonholowinsky}, we introduce two sets of prime ideals: let $K = (K_v)_v, L= (L_v)_v \in \R_{>0}^{d_{\infty}}\subset \A^{\times}$ be so that \[N(\mathfrak{q})^{\epsilon} \leq \vert{K}\vert,\vert{L}\vert \leq N(\mathfrak{q})^{1-\epsilon}.\]
We define
\begin{align*}
    &\mathcal{K} = \{ \mathfrak{k} \in  \operatorname{spec}(O_F)\ |\ \mathfrak{k} \text{ principal}, N(\mathfrak{k})\asymp \vert{K}\vert\}\\   &\mathcal{L} = \{\mathfrak{l} \in \operatorname{spec}(O_F)\ | \ \mathfrak{l} \text{ principal},  N(l)\asymp \vert{L}\vert\}.
\end{align*} 
Assume that either $\vert{K}\vert \gg N(\mathfrak{q})^{\epsilon}\vert{L}\vert$ or $\vert{L}\vert\gg N(\mathfrak{q})^{\epsilon}\vert{K}\vert$, and so $\mathcal{L}\cap \mathcal{K} = \varnothing$. 

For each $\mathfrak{k}\in \mathcal{K}$ and $\mathfrak{l} \in \mathcal{L}$ we use Lemma \ref{lem:choicegenerator} to choose generators $k,l$ so that 
\begin{align*}\vert{K_v}\vert_v N(\mathfrak{q})^{-\epsilon}&\ll\vert{k}\vert_v \ll \vert{K_v}\vert_v N(\mathfrak{q})^{\epsilon}, \\
\vert{L_v}\vert_v N(\mathfrak{q})^{-\epsilon}&\ll\vert{l}\vert_v \ll \vert{L_v}\vert_vN(\mathfrak{q})^{\epsilon}, \qquad \forall v \in S_{\infty}.\end{align*}

Let $(b_k)_{k\in F^{\times}}$ and $(c_l)_{l\in F^{\times}}$ be sequences in $\CC$ so that if $b_k \neq 0$ (resp. $c_l \neq 0$), then $k$ is one of the generators of one of the $\mathfrak{k} \in \mathcal{K}$ we have chosen above (resp. $l$ is one of the generators we fixed above). Also, assume that $b_k \ll N(\mathfrak{q})^{\epsilon}\vert{K}\vert^{-1}, c_l\ll N(\mathfrak{q})^{\epsilon}\vert{L}\vert^{-1}$ and
\[\sum_{k}b_k\overline{\chi}(\iota_{\mathfrak{q}}(k)) =  1 = \sum_{l}c_l\chi(\iota_{\mathfrak{q}}(l)).\]
We will study the amplified versions of $\mathcal{F}^{\chi}_t$ \eqref{eq:ftphi} and $\mathcal{O}^{\chi}_t$ \eqref{eq:otphi}, for $y\in \widehat{O}^\times_1(\mathfrak{q}) E$ (see \eqref{eq:definitionE}) and $I \in \mathcal{S}_{\cl}$ (see Subsection \ref{subsec:2.choice of fundamental domain}) we have
\begin{align}
    \mathcal{F}_t^{\chi}(y,I) 
    &={N(\mathfrak{q})^{\frac{1}{2}}}\sum_{\delta,k,l}a_{\delta}b_kc_l\mathbb{P}\phi_0^{u_{12}(\zeta_{\mathfrak{q}})}(a_1(I\iota_{\mathfrak{q}}(k(l\delta)^{-1})yz_t)\chi(y_{\infty})  \\
    \mathcal{O}^{\chi}_t(y,I)&=\frac{1}{N(\mathfrak{q})^{\frac{1}{2}}}\sum_{\substack{\delta,k,l\\ \forall v \in S_{\infty}:\\\vert{\delta}\vert_v \ll \vert{\Delta}\vert_vN(\mathfrak{q})^{\epsilon}}}b_kc_l\widehat{V}\left(\delta\varpi_\mathfrak{q}\Delta\right)\sum_{a\in F^{\mathfrak{q}}/F^{\mathfrak{q,1}}}\psi_{v_{\mathfrak{q}}}\left(\frac{a\delta}{\varpi_{\mathfrak{q}}}\right)\chi(\iota_{\mathfrak{q}}(k(al)^{-1}))\nonumber\\&\times\mathbb{P}\phi_0^{u_{12}(\zeta_{\mathfrak{q}})}(a_1(I\iota_{\mathfrak{q}}(k(l\delta)^{-1})yz_t)\chi(y_{\infty}) 
\end{align}
and prove the following two propositions.

\begin{proposition}\label{prop:boundF}
     Let $T$ be in the significant range \eqref{eq:significant-range} and suppose that for all $v\in S_{\infty}$ we have \begin{equation}\label{eq:choicedelta_v}K_vT^{-\frac{1}{d_v}}(\vert \alpha_\mathfrak{q}\vert_v\Delta_vL_v)^{-1} \ll N(\mathfrak{q})^{-4\epsilon}.\end{equation} Then 
    \[\mathcal{F}^{\chi}_T \ll N(\mathfrak{q})^{-1/2+\epsilon}\vert{\Delta L}\vert^{3/2}T^{1/2}\left(\frac{1}{\vert{LK}\vert} + \frac{1}{\vert\Delta\vert^{1/2} \vert L\vert}+\frac{T^{-1}}{\vert\Delta\vert^{5/2}\vert L\vert ^2}\right)^{1/2}. \]
    
\end{proposition}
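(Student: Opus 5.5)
We follow \cite[Section 4]{nelsonholowinsky}. We treat $\mathcal{F}^{\chi}_T$ as a sum over the triples $(\delta,k,l)$. For each triple we use additive reciprocity to rewrite the phase, apply the Voronoi formula of Corollary \ref{thm:voronoi1} to the dual modulus $l\delta$, and then apply Cauchy--Schwarz to obtain a correlation of Kloosterman sums, which Lemma \ref{lem:4.3} controls.

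\textbf{Step 1: reciprocity.} Expand $\mathbb{P}\phi_0^{u_{12}(\zeta_\mathfrak{q})}(a_1(I\iota_\mathfrak{q}(k(l\delta)^{-1})yz_t))$ by \eqref{eq:Fourierprojection}. This gives a sum over $\gamma\in F^\times$ of
\[\psi\bigl(\gamma\,\iota_\mathfrak{q}(k(l\delta)^{-1})\zeta_\mathfrak{q}\bigr)\,W_{\phi_0}\bigl(a_1(\gamma I\iota_\mathfrak{q}(k(l\delta)^{-1})yz_t)\bigr).\]
Change variables by the global element $k(l\delta)^{-1}\in F^\times$, so that the idèle $\iota_\mathfrak{q}(\cdot)$ is moved away from $\mathfrak{q}$. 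The phase then becomes a $\mathfrak{q}$-local character evaluated at $\gamma k/(l\delta\alpha_\mathfrak{q})$. By the reciprocity $\psi(F)=1$, this equals $\psi_{l\delta}(-\gamma k\alpha_\mathfrak{q}^{-1}/(l\delta))$ times the archimedean factor $\psi_\infty(\gamma k/(l\delta\alpha_\mathfrak{q}))$.

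The archimedean factor is non-oscillating in the relevant range. There $|\gamma|_v\asymp (T^{-1})^{1/d_v}$ up to the sizes of $k,l,\delta$, and $|\delta|_v\asymp\Delta_v$. Hypothesis \eqref{eq:choicedelta_v} says precisely that $|\gamma k/(l\delta\alpha_\mathfrak{q})|_v\ll N(\mathfrak{q})^{-4\epsilon}$. Hence this factor can be absorbed into a new inert archimedean Whittaker function, up to a Taylor expansion with negligible error. We are left with sums of the form
\[\sum_\gamma W^\circ_{\fin}\,W'_\infty\;\psi\bigl(\gamma\,\zeta_{l\delta}\,k\alpha_\mathfrak{q}^{-1}\bigr).\]

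\textbf{Step 2: Voronoi.} Apply Corollary \ref{thm:voronoi1} with $\zeta=\zeta_{l\delta}k\alpha_\mathfrak{q}^{-1}$ and $R$ the primes dividing $l\delta$; the primes dividing $\mathfrak{C}(\pi)$ are excluded via the choice of ${V_0}_v$. The dual side is a sum over $\gamma$ of $U^\circ$ coefficients at $a(\mathfrak{d}_1,\mathfrak{d}_2)$, where $\mathfrak{d}_1\mathfrak{d}_2^2$ has size about $N(l\delta)^3T$. It also carries the Kloosterman sums $K_R$, which come from the divisor decomposition $\mathfrak{d}\,|\,l\delta$, and Bessel transforms $B_{\pi_v}[W_v]$. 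By \eqref{eq:decaybesselforinert} the Bessel transforms truncate the dual sum to $|\gamma|_v\ll N(\mathfrak{q})^\epsilon (|l\delta|_v^{3}T)^{1/d_v}$. The prefactor $|\zeta|_R/N(I)\approx N(l\delta)^{-1}$ is responsible for the normalisation $T^{1/2}|\Delta L|^{3/2}$.

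\textbf{Step 3: Cauchy--Schwarz.} Keep the dual variables $(\mathfrak{d},\gamma)$ outside, with the Rankin--Selberg weight of Lemma \ref{lem:rankinweneed}. Put $\sum_{\delta,k,l}a_\delta b_k c_l\,Kl(\cdot)$ inside the square. Opening the square gives a diagonal/off-diagonal structure in the pairs $(k,l,\delta)$ and $(k',l',\delta')$. The $\gamma$-sum is then a correlation sum of Kloosterman sums over a smooth box. Lemma \ref{lem:4.3} bounds it by a main term, gaining from $N((\mathfrak{d}_1,\mathfrak{d}_2,\Omega))$ where $\Omega$ involves $k\alpha_\mathfrak{q}^{-1}(l\delta)^{-1}-k'\alpha_\mathfrak{q}^{-1}(l'\delta')^{-1}$, plus a dual term $N([\mathfrak{d}_1,\mathfrak{d}_2])^{1/2}$. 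The three terms in the final bracket then arise as follows:
\begin{enumerate}
\item the diagonal $k=k'$, $l\delta=l'\delta'$, which uses $|b_k|\ll |K|^{-1}$ and gives $1/|LK|$;
\item pairs with $l\neq l'$ but a large common gcd, controlled by the $\gcd$ factor of Lemma \ref{lem:4.3} and counting $\delta$ via Section \ref{sec:Counting units}, which gives $|\Delta|^{-1/2}|L|^{-1}$;
\item the Poisson dual term $N(\mathfrak{q})^\epsilon N([\mathfrak{d}_1,\mathfrak{d}_2])^{1/2}$, which after normalisation gives $T^{-1}|\Delta|^{-5/2}|L|^{-2}$.
\end{enumerate}
Taking the square root and the prefactor $N(\mathfrak{q})^{-1/2+\epsilon}$ yields the statement.

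\textbf{Main obstacle.} The hardest step is the off-diagonal bookkeeping in Step 3. One must show that $k l'\delta'\equiv k'l\delta$ modulo the relevant gcd forces $(k,l,\delta)=(k',l',\delta')$ up to units. This needs the disjointness of $\mathcal{K}$ and $\mathcal{L}$, primality of $k$ and $l$, and the fact that the sizes are below $N(\mathfrak{q})$. One must also verify the local hypotheses of Lemma \ref{lem:4.3}, namely $j_v$ and the $\omega_i$ in $\varpi^{-2l}O^\times$, at each place dividing $l\delta l'\delta'$, using Lemmas \ref{lem:correlationkloosterman} and II. Unit multiplicities are handled by Lemma \ref{lem:countingunits}.
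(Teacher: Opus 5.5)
Your Steps 1 and 2 follow the paper: reciprocity via $\psi(F)=1$, inertness of the twisted archimedean factor from \eqref{eq:choicedelta_v}, then Corollary \ref{thm:voronoi1}. One small correction there: the modulus is really $\mathfrak{z}_{k,l,\delta}\approx l\delta/(\delta,k)$, which drops when $k\mid\delta$.

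\textbf{The gap is in Step 3.} You put $\delta$ inside the Cauchy--Schwarz square, and with that choice the third term of the statement cannot come out.

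\begin{enumerate}
\item After opening the square you get pairs with $\delta\neq\delta'$ (non-associate). Their moduli $\mathfrak{z}_{k,l,\delta}$ and $\mathfrak{z}_{k',l',\delta'}$ have least common multiple of norm typically $\asymp|\Delta|^2|L|^2$.
\item The zero frequency of Lemma \ref{lem:4.3} vanishes for these pairs, by the second correlation lemma. The non-zero frequencies, however, contribute $N(\mathfrak{q})^{\epsilon}N([\mathfrak{z}_1,\mathfrak{z}_2])^{1/2}$.
\item We have $N(\mathfrak{z}_1\mathfrak{z}_2)^{3/2}/|\tilde T|\asymp t^{-1}$, and the weights $|a_\delta a_{\delta'}b_kb_{k'}c_lc_{l'}|$ sum to $O(N(\mathfrak{q})^{\epsilon})$. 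So this contributes $t^{-1}|\Delta||L|$, i.e. a bracket term $T^{-1}|\Delta|^{-2}|L|^{-2}$ instead of $T^{-1}|\Delta|^{-5/2}|L|^{-2}$.
\end{enumerate}

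This loss is not absorbed by the other terms. Take the paper's parameters $|L|=N(\mathfrak{q})^{1/9}$, $|K|=N(\mathfrak{q})^{5/18}$, $T^{-1}\asymp N(\mathfrak{q})^{3/2}$, $|\Delta|\asymp N(\mathfrak{q})^{2/3}$. The three stated terms are then $N(\mathfrak{q})^{-7/18}$, $N(\mathfrak{q})^{-8/18}$, $N(\mathfrak{q})^{-7/18}$, while your dual term is $N(\mathfrak{q})^{-1/18}$.

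Your accounting of the terms is also inconsistent with your own setup:
\begin{itemize}
\item With $\delta$ inside, the diagonal carries $\sum_\delta|a_\delta|^2\asymp|\Delta|^{-1}$ and gives $1/|\Delta LK|$, not $1/|LK|$.
\item The ``main obstacle'' you state is false there. For example, $\delta=kl'\delta_0$ and $\delta'=k'l\delta_0$ give identical moduli, Kloosterman arguments and Bessel weights.
\end{itemize}

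\textbf{The missing idea} is to keep $\delta$ outside the square, together with $(\gamma,\mathfrak{d})$, weighted by $|a_\delta|$. Rankin--Selberg and $\sum_\delta|a_\delta|\asymp 1$ then bound the first factor by $N(\mathfrak{q})^{\epsilon}$. Only $\sum_{k,l}b_kc_l N(\mathfrak{z}_{k,l,\delta})^{3/2}C(\gamma,k,l,\delta,\mathfrak{d})$ goes inside the square.

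Both Kloosterman sums in every correlation then share $\delta$, so $[\mathfrak{z}_1,\mathfrak{z}_2]$ divides $\delta[\mathfrak{l}_1,\mathfrak{l}_2]$ up to bounded factors. The three terms then come out as follows.
\begin{itemize}
\item The dual term becomes $t^{-1}|\Delta|^{1/2}|L|$.
\item The diagonal is $(k_1,l_1)=(k_2,l_2)$ for each fixed $\delta$, giving $1/|LK|$.
\item The off-diagonal is the zero-frequency term with the same $\delta$ and $\Omega\neq0$. It carries $N(\delta[\mathfrak{l}_1,\mathfrak{l}_2])^{-1/2}$ times $N((\cdot,\cdot,\Omega))^{1/2}$, the latter controlled by the divisor bound, giving $1/(|\Delta|^{1/2}|L|)$.
\end{itemize}

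The arithmetic input is only that, for fixed $\delta$, $\Omega=0$ if and only if $(k_1,l_1)=(k_2,l_2)$. This uses the primality of the $\mathfrak{k}_i,\mathfrak{l}_i$ and $\mathcal{K}\cap\mathcal{L}=\varnothing$.
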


\begin{proposition}\label{prop:boundO}
     Let $T$ be in the significant range \eqref{eq:significant-range} and suppose that 
     \[ \frac{\vert{KL}\vert}{\vert \Delta \vert} \ll N(\mathfrak{q})^{-3\epsilon}.\] Then we have
     \[\vert{\mathcal{O}_T^{\chi}}\vert \ll N(\mathfrak{q})^{-1/2+\epsilon}T^{-1/2}\vert{\Delta KL}\vert^{-1/2}.\]
\end{proposition}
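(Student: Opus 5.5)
The plan is to follow \cite[Section 5]{nelsonholowinsky}: pull the amplifier and the dual sum into a single coefficient attached to each $a\in F^{\mathfrak q}/F^{\mathfrak q,1}$, apply Cauchy--Schwarz in $a$, bound the period side by a Rankin--Selberg ($L^2$) estimate, and bound the coefficient side by opening the square. The condition $|KL|/|\Delta|\ll N(\mathfrak q)^{-3\epsilon}$ is what turns a congruence modulo $\mathfrak q$ into an equality.

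\emph{Step 1 (regroup).} In the amplified $\mathcal O_t^\chi(y,I)$ the translate is really $\iota_{\mathfrak q}(k(al)^{-1})$, matching the character $\chi(\iota_{\mathfrak q}(k(al)^{-1}))$. Since $k,l$ are units at $\mathfrak q$, I will substitute $a\mapsto akl^{-1}$ in $(O_F/\mathfrak q)^\times$. This gives
\[
\mathcal O_t^\chi(y,I)=\frac{1}{N(\mathfrak q)^{1/2}}\sum_{a}\chi(\iota_{\mathfrak q}(a^{-1}))\,\mathbb P\phi_0^{u_{12}(\zeta_{\mathfrak q})}(a_1(I\iota_{\mathfrak q}(a^{-1})yz_t))\chi(y_\infty)\, c(a),
\]
\[
c(a)=\sum_{\delta,k,l}b_kc_l\,\widehat{V_0}(\delta\varpi_{\mathfrak q}\Delta)\,\psi_{v_{\mathfrak q}}\Bigl(\frac{a\delta k l^{-1}}{\varpi_{\mathfrak q}}\Bigr).
\]
The coefficient $c(a)$ no longer involves $\pi$. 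I then integrate over $y\in\widehat O_1^\times(\mathfrak q)E$, sum over $I$, and integrate $g(t/T)\,\mathrm d^\times t$. Applying Cauchy--Schwarz over $(a,I,y,t)$ gives
\[
|\mathcal O_T^\chi|^2\ll N(\mathfrak q)^{-1}\Bigl(\int g(t/T)\sum_{I,a}\int|\mathbb P\phi(a_1(I\iota_{\mathfrak q}(a^{-1})yz_t))|^2\Bigr)\Bigl(\sum_a|c(a)|^2\Bigr),
\]
with the volume of $\widehat O_1^\times(\mathfrak q)$ tracked alongside.

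\emph{Step 2 (period side).} The first factor is $\int_{F^\times\setminus\A^{(1)}}|\mathbb P\phi(a_1(yz_t))|^2\,\mathrm d^\times y$ over the dyadic $t$-range. I will insert the expansion \eqref{eq:Fourierprojection}; the twist $\psi(\gamma\zeta_{\mathfrak q})$ has modulus one and disappears in $|\cdot|^2$. Unfolding with the fundamental domain $\mathcal E$ and using the support of the inert archimedean $W_v$, which forces $|\gamma|_v\asymp (N(I)t^{-1})^{1/d_v}$, followed by Lemma~\ref{lem:countingunits} for the units and \eqref{eq:classicrankinselberg}, yields $\ll T^{-1-\epsilon}$ up to the measure normalization $\mathrm{vol}(\widehat O^\times_1(\mathfrak q))\asymp N(\mathfrak q)^{-1}$. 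This is the same computation as the Rankin--Selberg bound for $I_t$ in Section~\ref{sec:approximatefunctionalequation}, only without the character.

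\emph{Step 3 (coefficient side, the main step).} I will open $\sum_a|c(a)|^2$ and complete the $a$-sum to all of $O_F/\mathfrak q$, at the cost of an error term coming from $a=0$ that I expect to be harmless. Orthogonality then gives $N(\mathfrak q)$ times the number of tuples with
\[
\delta_1 l_2k_1\equiv \delta_2 l_1k_2 \pmod{\mathfrak q}.
\]
Here $\widehat{V_0}(\delta\varpi_{\mathfrak q}\Delta)$ forces $\delta\in\mathfrak q$-adically shifted lattices with $|\delta/\alpha_{\mathfrak q}|_v\ll N(\mathfrak q)^\epsilon\Delta_v^{-1}$. Rescaling by $\alpha_{\mathfrak q}$, both sides are algebraic integers with archimedean sizes $\ll N(\mathfrak q)^{1+\epsilon}|K_vL_v|/\Delta_v$ at every place. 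Because $|KL|/|\Delta|\ll N(\mathfrak q)^{-3\epsilon}$, their difference lies in $\mathfrak q$ but has norm $<N(\mathfrak q)$, so it vanishes and the congruence becomes an equality $\delta_1l_2k_1=\delta_2l_1k_2$. The generators $k,l$ were fixed with controlled archimedean sizes via Lemma~\ref{lem:choicegenerator}, and the $\mathcal K,\mathcal L$ are disjoint sets of principal primes. Unique factorization of ideals, the divisor bound, and Lemma~\ref{lem:countingunits} for the unit ambiguity then bound the number of solutions by $N(\mathfrak q)^\epsilon$ times the diagonal count, namely $\#\{\delta\}\cdot\#\{k\}\cdot\#\{l\}$. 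With $\#\{\delta\}\ll N(\mathfrak q)^\epsilon N(\mathfrak q)/|\Delta|$ (roughly), $b_k\ll |K|^{-1}$, $c_l\ll |L|^{-1}$, $\#\mathcal K\ll|K|$ and $\#\mathcal L\ll|L|$, this gives
\[
\sum_a|c(a)|^2\ll N(\mathfrak q)^{\epsilon}\,N(\mathfrak q)\cdot\frac{N(\mathfrak q)}{|\Delta|\,|KL|}.
\]

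\emph{Step 4 (assemble).} Combining Steps 2 and 3, after the $N(\mathfrak q)$ normalizations from the key identity and from $\mathrm{vol}(\widehat O_1^\times(\mathfrak q))$ are matched, gives $|\mathcal O_T^\chi|^2\ll N(\mathfrak q)^{-1+\epsilon}T^{-1}|\Delta KL|^{-1}$, which is the claim.

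The main obstacle is Step 3. Three things must be controlled there: the unit ambiguity (infinitely many units, handled by the size constraints and Lemma~\ref{lem:countingunits}), the class-group representatives $I$ together with the ramified factor at primes dividing $\mathfrak D$ coming from $\widehat{V_0}$, and the precise $\alpha_{\mathfrak q}$-normalization, so that the archimedean sizes genuinely force equality. I would first run the whole argument with $F$ of class number one and $\mathfrak D$ coprime to everything, and then add the bookkeeping for the general case.
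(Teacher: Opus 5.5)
\textbf{There is a gap in Step 2, and the loose matching of normalizations in Step 4 hides the loss of a full factor $N(\mathfrak q)$ in $|\mathcal O_T^\chi|^2$.} Steps 1 and 3 are fine; completing the $a$-sum in Step 3 even costs nothing, by positivity.

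In your Cauchy--Schwarz, $\operatorname{vol}(\widehat O_1^\times(\mathfrak q))\asymp N(\mathfrak q)^{-1}$ enters only once, through the $c(a)$-factor. By Step 3 that factor is $\asymp N(\mathfrak q)^{-1}\sum_a|c(a)|^2\ll N(\mathfrak q)^{1+\epsilon}|\Delta KL|^{-1}$. So the proposition requires the period factor
\[A=\int g(t/T)\int_{F^\times\setminus\A^{(1)}}\bigl|\mathbb P\phi(a_1(yz_t))\bigr|^2\,\mathrm d^\times y\,\mathrm d^\times t\]
to satisfy $A\ll N(\mathfrak q)^{-1}T^{-1-\epsilon}$. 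But $A$ is an integral over the whole fundamental domain: $\asymp N(\mathfrak q)$ pieces $I\iota_{\mathfrak q}(a^{-1})\widehat O_1^\times(\mathfrak q)E$, of total volume $\asymp 1$. The computation you describe discards the twist because it has modulus one, counts the $\asymp T^{-1}$ contributing $\gamma$, and applies Rankin--Selberg. That is a pointwise bound $|\mathbb P\phi|^2\ll T^{-1-\epsilon}$. It yields only $A\ll T^{-1-\epsilon}$, hence $|\mathcal O_T^\chi|\ll N(\mathfrak q)^{\epsilon}T^{-1/2}|\Delta KL|^{-1/2}$. This is $N(\mathfrak q)^{1/2}$ too weak, which is fatal for the exponent. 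The analogy with the Rankin--Selberg bound for $I_t$ does not help: there the saving $N(\mathfrak q)^{-1/2}$ is the Gauss sum of Lemma~\ref{prop:choiceofphi}, produced by the character, and no character is present in $A$.

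\textbf{The missing idea is to keep the twist: it depends on $a$, and orthogonality in $a$ is exactly where the saving comes from.} For $y\in\widehat O_1^\times(\mathfrak q)E$ you have
\[\mathbb P\phi(a_1(I\iota_{\mathfrak q}(a^{-1})yz_t))=t^{-1/2}\sum_\gamma c_\gamma\,\psi_{v_{\mathfrak q}}(\gamma a^{-1}\varpi_{\mathfrak q}^{-1}),\qquad c_\gamma=W_{\phi_0}(a_1(\gamma Iy_\infty z_t)),\]
with $c_\gamma$ independent of $a$. Since $\psi_{v_{\mathfrak q}}$ is unramified, completing the $a$-sum gives
\[\sum_{a}\Bigl|\sum_\gamma c_\gamma\,\psi_{v_{\mathfrak q}}\Bigl(\frac{\gamma a^{-1}}{\varpi_{\mathfrak q}}\Bigr)\Bigr|^2\le N(\mathfrak q)\sum_{r\bmod\mathfrak q}\Bigl|\sum_{\gamma\equiv r}c_\gamma\Bigr|^2\ll N(\mathfrak q)\Bigl(1+\frac{T^{-1}}{N(\mathfrak q)}\Bigr)\sum_\gamma|c_\gamma|^2 .\]
The last step counts points of a translate of the ideal lattice $\mathfrak qI^{-1}$ in the balanced box cut out by the support of $W_\infty$. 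In the significant range $T^{-1}\gg N(\mathfrak q)$, this is $T^{-1}\sum_\gamma|c_\gamma|^2$ instead of the trivial $N(\mathfrak q)T^{-1}\sum_\gamma|c_\gamma|^2$. It gives $A\ll N(\mathfrak q)^{-1}T^{-1-\epsilon}$, and hence the proposition.

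With this repair, your argument is a genuine alternative to the paper's:
\begin{itemize}
\item The paper applies Cauchy--Schwarz in $\gamma$ pointwise in $(y,I,t)$, then opens the square and applies Poisson summation in $\gamma$. This leads to zero-frequency correlations of $\chi_{\mathfrak q}$-twisted Kloosterman sums, which are large only when $k_1l_1^{-1}\delta_1\equiv k_2l_2^{-1}\delta_2\pmod{\mathfrak q}$.
\item Your Parseval step in $a$ is dual to that Poisson step. It reaches the same congruence count without any Kloosterman sums.
\end{itemize}
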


The bounds are totally analogous to \cite[Proposition 1, Proposition 2]{nelsonholowinsky}, with a different normalization, in \emph{loc. cit.} $N$ corresponds to our $T^{-1}$. In \emph{loc. cit.}, it is explained that the parameters $$\vert{L}\vert = N(\mathfrak{q})^{2/18},\quad \vert K\vert = N(\mathfrak{q})^{5/18},\quad \vert \Delta \vert = \frac{\vert K \vert T^{-1}}{N(\mathfrak{q})^{1+4\epsilon}\vert L \vert }$$ optimize the bounds and one has
    \begin{equation}\label{eq:finalequation}\mathcal{F}_T^{\chi} \ll N(\mathfrak{q})^{-1+\epsilon} N(\mathfrak{q})^{\frac{3}{4}-\frac{\kappa}{2}},\qquad \mathcal{O}_T^{\chi} \ll N(\mathfrak{q})^{-1+\epsilon}N(\mathfrak{q})^{\frac{3}{4}+(\frac{1}{4}-\frac{5}{18})} = N(\mathfrak{q})^{-1+\frac{3}{4}-\frac{1}{36}+\epsilon}. \end{equation}
    Hence, applying Proposition \ref{prop:splitFandO} we get the desired result.

\section{Bounds for $\mathcal{F}^{\chi}_T$}\label{sec:Bounds for FT}

The goal of this section is to prove the following proposition.

\begin{proposition}\label{prop:boundf} Let $T$ be in the significant range \eqref{eq:significant-range}.
Suppose that for all $v\in S_{\infty}$ we have \begin{equation}\label{eq:choicedelta_v2}K_vT^{-\frac{1}{d_v}}(\vert \alpha_\mathfrak{q}\vert_v\Delta_vL_v)^{-1} \ll 1,\end{equation}
then for any $I\in \mathcal{S}_{\cl},$ $y \in \widehat{O}_1^{\times}(\mathfrak{q})E$ and $t\asymp T$ we have the following. 
\[\vert{\mathcal{F}_t^\chi(y,I)}\vert^2 \ll N(\mathfrak{q})^{1+\epsilon}\vert{\Delta L}\vert^3t\left(\frac{1}{\vert{LK}\vert} + \frac{1}{\vert\Delta\vert^{1/2} \vert L\vert}+\frac{t^{-1}}{\vert\Delta\vert^{5/2}\vert L\vert ^2}\right).  \]
\end{proposition}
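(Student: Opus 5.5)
We follow the strategy of \cite[Section 4]{nelsonholowinsky}, transported to the adelic setting. First we expand $\mathbb{P}\phi_0^{u_{12}(\zeta_\mathfrak{q})}(a_1(I\iota_\mathfrak{q}(k(l\delta)^{-1})yz_t))$ by the Fourier--Whittaker expansion \eqref{eq:Fourierprojection}. This gives a sum over $\gamma\in F^\times$ of $\psi(\gamma\, l\delta k^{-1}\zeta_\mathfrak{q})W_{\phi_0}(a_1(\gamma I\iota_\mathfrak{q}(\cdot)yz_t))$. Since $l\delta$ and $k$ are coprime to $\mathfrak{q}$, we then apply additive reciprocity. The principal generators $k,l$ and the element $\delta$ (supported in $O_F$ with $N(\delta)\asymp|\Delta|$) let us replace the phase at $\mathfrak{q}$ by a phase at the places dividing $l\delta$ and $k$, with modulus $\alpha_\mathfrak{q}$, up to an archimedean factor $\psi_\infty(\gamma k/(\alpha_\mathfrak{q} l\delta))$. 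Hypothesis \eqref{eq:choicedelta_v2} says exactly that this archimedean factor is non-oscillating on the support $|\gamma|_v\asymp T^{-1/d_v}$. It can therefore be absorbed into an inert archimedean weight, so that $\mathcal{F}_t^\chi(y,I)$ becomes $N(\mathfrak{q})^{1/2}t^{-1/2}\sum_{\delta,k,l}a_\delta b_k c_l\,\Sigma(\delta,k,l)$. Here $\Sigma$ is a $\gamma$-sum of $W_{\fin}^\circ$ twisted by $\psi$ of $\gamma\overline{\alpha_\mathfrak{q}}$ over the modulus $l\delta$, with the $k$-dependence only in the scaling of $\gamma$.

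Next we apply the Voronoi formula, Corollary \ref{thm:voronoi1}, with $\zeta$ the finite adele supported at the places dividing $l\delta$. The dual side consists of Kloosterman sums $Kl(\cdot;l\delta,\psi^\circ)$ against $U^\circ$ at arguments of the form $\gamma\cdot(l\delta)^2/(k\ldots)$. The archimedean Bessel transforms are controlled by the decay \eqref{eq:decaybesselforinert}, which truncates the dual sum to $N(\gamma)\ll N(\mathfrak{q})^\epsilon|\Delta L|^3 t/|K|\cdot(\ldots)$. We reorganise so that $\gamma$ (or the dual variable $n$) is the outer variable, and apply Cauchy--Schwarz in it. The factor $\sum|U^\circ|^2N(\cdot)^2$ is bounded by the Rankin--Selberg bound, Lemma \ref{lem:rankinweneed}. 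This reduces everything to a second moment of the form
\[\sum_{n}V(n/X)\Big|\sum_{\delta,l,k}a_\delta c_l b_k\, Kl(n\omega_{k,l\delta};l\delta,\psi^\circ)\Big|^2 .\]
Opening the square yields the diagonal and off-diagonal correlation sums $\sum_n V(n X^{-1})Kl(n\omega_1;\mathfrak{d}_1)\overline{Kl(n\omega_2;\mathfrak{d}_2)}$ with $\mathfrak{d}_i=(l_i\delta_i)$.

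Each such correlation is then bounded by Lemma \ref{lem:4.3}, whose local inputs are the correlation lemmas of Section \ref{sec:gausskloosterman}. Three types of terms arise. The first is the diagonal $(\delta_1,l_1,k_1)=(\delta_2,l_2,k_2)$, together with coincidences $l_1\delta_1=l_2\delta_2$, $k_1=k_2$; using $|b_k|\ll|K|^{-1}$ and $\sum|a_\delta|\asymp1$, this gives the term $1/|LK|$. The second is $l_1\delta_1=l_2\delta_2$ with $k_1\neq k_2$, where the gcd factor $N((\mathfrak{d}_1,\mathfrak{d}_2,\Omega))^{1/2}$ in Lemma \ref{lem:4.3} is controlled and yields $1/(|\Delta|^{1/2}|L|)$. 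The third is the generic case, where only the Poisson remainder $N(\mathfrak{q})^\epsilon N([\mathfrak{d}_1,\mathfrak{d}_2])^{1/2}\asymp|\Delta L|$ survives; summed against the weights, it gives the term $t^{-1}/(|\Delta|^{5/2}|L|^2)$. Collecting factors yields the claimed bound for $|\mathcal{F}_t^\chi(y,I)|^2$. Unit ambiguities in $\delta,\gamma$ are handled by Lemmas \ref{lem:countingunits} and \ref{lem:choicegenerator}, at the cost of $N(\mathfrak{q})^\epsilon$.

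The main obstacle is the bookkeeping in the Voronoi step. One must track the local factors $K_R$ at the places dividing $l\delta$: the sum over $l$ in $K_v$ includes non-primitive levels when $v(l\delta)\ge2$, and for $\delta$ not squarefree these also enter the correlations. One must also check that the arguments $\omega_i$ satisfy the valuation hypotheses of Lemma \ref{lem:4.3}, in particular the condition $j_v\ge -2v([\mathfrak{d}_1,\mathfrak{d}_2])-\min v(\omega_i)$, once $I$, $\mathfrak{D}$ and $\mathfrak{C}(\pi)$ are inserted. We would first handle squarefree $l\delta$ coprime to $\mathfrak{C}(\pi)\mathfrak{D}$, and then bound the remaining $\delta$ trivially using the sparsity of non-squarefree elements.
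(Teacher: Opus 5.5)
Your skeleton (Fourier--Whittaker expansion, reciprocity with the archimedean phase made inert by \eqref{eq:choicedelta_v2}, Voronoi, Cauchy--Schwarz against Rankin--Selberg, then Lemma \ref{lem:4.3}) matches the paper's. However, the Cauchy--Schwarz step as you set it up does not give the stated bound. You keep only the dual variable outside and square the whole sum over $(\delta,k,l)$. After opening the square, the generic pairs then have $\delta_1\neq\delta_2$, with moduli $\mathfrak{d}_i=(l_i\delta_i)$ and $N([\mathfrak{d}_1,\mathfrak{d}_2])\asymp|\Delta|^2|L|^2$. As you say, the Poisson remainder of Lemma \ref{lem:4.3} is then $\asymp|\Delta L|$. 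These pairs carry total weight $\sum|a_{\delta_1}a_{\delta_2}b_{k_1}b_{k_2}c_{l_1}c_{l_2}|\asymp 1$, and the normalisation $N(\mathfrak{z}_{k_1,l_1,\delta_1}\mathfrak{z}_{k_2,l_2,\delta_2})^{3/2}/N(\gamma)\asymp t^{-1}$ is as in the paper. So they contribute $tN(\mathfrak{q})\cdot t^{-1}|\Delta L|=N(\mathfrak{q})|\Delta L|$ to $|\mathcal{F}_t^\chi(y,I)|^2$.

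In the bracket this is $t^{-1}/(|\Delta|^{2}|L|^{2})$, not $t^{-1}/(|\Delta|^{5/2}|L|^2)$, so you lose $|\Delta|^{1/2}$. The loss is fatal. Hypothesis \eqref{eq:choicedelta_v2} forces $|\Delta L|\gg|K|T^{-1}N(\mathfrak{q})^{-1}$, hence $N(\mathfrak{q})|\Delta L|\gg|K|\,T^{-1}$. For $T\asymp N(\mathfrak{q})^{-3/2}$ this already exceeds the convexity threshold $|\mathcal{F}_t^\chi|^2\ll N(\mathfrak{q})^{3/2}$. Your accounting of the other terms also does not fit your own set-up. The value $1/|LK|$ obtained ``using $\sum|a_\delta|\asymp1$'' is what one gets when $\delta$ is \emph{not} squared; with $\delta$ squared, the full diagonal gives $1/|\Delta KL|$.

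The missing idea is to apply Cauchy--Schwarz with $\gamma$, $\mathfrak{d}$ \emph{and} $\delta$ on the outside, weighted by $|a_\delta|$ on both sides, so that only the amplifier $(k,l)$ is squared. The Rankin--Selberg side still costs only $N(\mathfrak{q})^{\epsilon}$, because $\sum_\delta|a_\delta|\asymp1$. Both Kloosterman moduli then share the factor $\delta$, so $N([\mathfrak{z}_{k_1,l_1,\delta},\mathfrak{z}_{k_2,l_2,\delta}])\ll N(\mathfrak{q})^{\epsilon}|\Delta||L|^2$. The remainder becomes $|\Delta|^{1/2}|L|$, which is exactly the third term. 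The diagonal $k_1=k_2$, $l_1=l_2$ (equivalently $\Omega=0$) gives $1/|LK|$. The zero-frequency terms give $1/(|\Delta|^{1/2}|L|)$ for \emph{all} pairs $(k_1,l_1)\neq(k_2,l_2)$ with the same $\delta$, not only those with $l_1\delta_1=l_2\delta_2$. This comes from the factor $N((\cdot,\cdot,\Omega))^{1/2}$ with $\Omega\neq0$ and the divisor bound.

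Separately, your plan to discard non-squarefree $\delta$ ``by sparsity'' does not work. Such $\delta$ have positive density, and the trivial bound for them is far too weak. The paper keeps all $\delta$ and treats prime-power moduli through the sum over $\mathfrak{d}\mid\mathfrak{z}_{k,l,\delta}$ and the local correlation lemmas of Section \ref{sec:gausskloosterman}.
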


From the above Proposition, we deduce Proposition \ref{prop:boundF} by trivially estimates using the triangle inequality.

To prove Proposition \ref{prop:boundF} we fix $I\in \mathcal{S}_{\cl}$, $o(y)\in \widehat{O}_1^{\times}(\mathfrak{q})$, $y_{\infty}\in E$ (see \eqref{eq:definitionE}) and $t>0$ and we suppose that $\Delta, K$ and $L$ satisfy \eqref{eq:choicedelta_v}. Note that the case of interest is when $t  \asymp T \asymp N(\mathfrak{q})^{\frac{3}{2}}$ and $T^{-\frac{1}{d_{v}}} \asymp N(\mathfrak{q})^{\frac{3}{2d_{v}}}$. In particular, in this case, we require that $\Delta_v\gg K_vL_v^{-1}N(\mathfrak{q})^{\frac{3}{2d_{v}}}$ for each $v\in S_{\infty}$.

\subsection{The conductor lowering mechanism}
We expand $\mathbb{P}\phi$ using the Fourier expansion: for $k,l,\delta \in F^\times$ we have
\begin{align}\label{eq:10.1.1}
    &\mathbb{P}\phi_0^{u_{12}(\zeta_{\mathfrak{q}})}\left(a_1(Io(y)y_{\infty}z_t\iota_{\mathfrak{q}}(k(l\delta)^{-1}))\right) =     \nonumber\\    &\frac{1}{t^{\frac{1}{2}}}\sum_{\gamma \in F^{\times}}W_{\phi_0}\left(a_1(\gamma Io(y)y_{\infty}z_t\iota_{\mathfrak{q}}(k(l\delta^{-1}))\right)\psi(\gamma Io(y)y_{\infty}z_t\iota_\mathfrak{q}( k(l\delta)^{-1})\zeta_{\mathfrak{q}}).
\end{align}
By the properties of the selected Whittaker function, only those $\gamma \in I^{-1}O_F \subset O_F$\ contribute to the sum.\footnote{Recall the specific choice of the $I\in \mathcal{S}_{\cl}$, that is, if $I_v\neq 1$, then $v(I)<0$.}
We have
\begin{align*}\psi(\gamma Io(y)y_{\infty}z_t \iota_{\mathfrak{q}}(k(l\delta)^{-1})\zeta_{\mathfrak{q}})=
\psi_{v_{\mathfrak{q}}}(\gamma k(l\delta)^{-1}\alpha_{\mathfrak{q}}^{-1}).
\end{align*}
Also, for such a $\gamma \in O_F$ we have that:
\begin{align*}
    1 &= \psi(\gamma k(l\delta\alpha_{\mathfrak{q}})^{-1}) \\&= \psi_{\infty}(\gamma k(l\delta\alpha_{\mathfrak{q}})^{-1})\prod_{\substack{v<\infty\\v|l\delta \alpha_{\mathfrak{q}}\\v\neq v_{\mathfrak{q}}}}\psi_v(\gamma k(l\delta\alpha_{\mathfrak{q}})^{-1}) \times \psi_{v_{\mathfrak{q}}}(\gamma k(l\delta\alpha_{\mathfrak{q}})^{-1})
\end{align*}
We rewrite the sum on the right-hand side of \eqref{eq:10.1.1} as 
\begin{align*}
    &\frac{1}{t^{\frac{1}{2}}}\sum_{\gamma \in F^{\times}}W_{\fin}^{\phi_0}(a_1(\gamma I))\psi(-\gamma k \zeta_{k,l,\delta,\alpha_{\mathfrak{q}}})\prod_{v \in S_{\infty}} W^{\phi_0}_v(a_1(\gamma y_v (tN(I))^{\frac{1}{d_v}}) )\psi_v\left(-\gamma k(l\delta\alpha_{\mathfrak{q}})^{-1}\right),
\end{align*}
where $\zeta_{k,l,\delta,\alpha_{\mathfrak{q}}} \in \A_{\fin}$ is defined as follows:
\begin{align*}
    (\zeta_{k,l,\delta,\alpha_{\mathfrak{q}}})_v &= \begin{cases}
        0 & \text{if }v\nmid l\delta\alpha_{\mathfrak{q}} \text{ or } v= v_{\mathfrak{q}}\\
        k(l\delta \alpha_{\mathfrak{q}})^{-1}&\text{otherwise.}
    \end{cases}
\end{align*}
From this we obtain the identity
\begin{align*}\label{eq:fty} 
\mathcal{F}_t^{\chi}({y,I})=& (t^{-1}N(\mathfrak{q}))^{1/2}\sum_{\delta,l,k}a_{\delta}b_kc_l \sum_{\gamma \in F^{\times}}W^{\phi_0}_{\fin}(a_1(\gamma I))\psi(-\gamma \zeta_{k,l,\delta,\alpha_\mathfrak{q}})\\&\times \prod_{v\in S_{\infty}} W_v((N(I)t)^{\frac{1}{d_v}}y_v\gamma) \psi_v\left(-\gamma k(l\delta\alpha_{\mathfrak{q}})^{-1}\right).
\end{align*}

\begin{lemma} For each $v\in S_{\infty}$, $I \in \mathcal{S}_{\cl}$, $y\in E$, the function \begin{equation*}
    W_v^{k,l,\delta}(x) \coloneqq W^{\phi_0}_v(x)\psi_v(-xk(N(I)t)^{-\frac{1}{d_v}}(y_vl\delta\alpha_{\mathfrak{q}})^{-1}). 
\end{equation*} defined above is \emph{inert} in the sense of Definition \ref{def:inert}. 
\end{lemma}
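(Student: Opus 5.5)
Reading the statement with the intended argument $x$ of the lemma: the function is $x\mapsto W_v^{\phi_0}(a_1(x))\,\psi_v(-x\,\beta_v)$, where
\[\beta_v = k\,(N(I)t)^{-\frac{1}{d_v}}\,(y_v l\delta\alpha_{\mathfrak{q}})^{-1}.\]
This function is a product of the fixed inert function $x\mapsto W_v^{\phi_0}(a_1(x))$, which is inert by the choice of $\phi_0$ at the start of Section \ref{sec:approximatefunctionalequation}, and an additive character of frequency $\beta_v$. Support is unaffected by multiplication by $\psi_v(-x\beta_v)$, so $x\asymp 1$ on the support is inherited. The plan is therefore to show that $|\beta_v|_v^{1/[F_v:\R]}\ll 1$ uniformly in the data. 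Then every derivative $(x\partial_x)^n$ (respectively $(r\partial_r)^n\partial_\theta^m$) of the phase $e^{\mp 2\pi i\operatorname{tr}(x\beta_v)}$ on the region $|x|\asymp 1$ is $\ll_{n,m}1$. The Leibniz rule then gives the bounds of Definition \ref{def:inert} for the product.

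\textbf{Bounding $|\beta_v|_v$ factor by factor.}
\begin{itemize}
\item By Lemma \ref{lem:choicegenerator} (as applied in Section \ref{sec:amplification}), $|k|_v\ll |K_v|_vN(\mathfrak{q})^{\epsilon}$ and $|l|_v\gg |L_v|_vN(\mathfrak{q})^{-\epsilon}$.
\item Since $V_0(\delta\Delta^{-1})\neq 0$ in $a_\delta$, we have $|\delta|_v\asymp |\Delta_v|_v$.
\item Since $y\in E$ and $E$ is relatively compact, $|y_v|_v\asymp 1$.
\item Since $I$ ranges over the finite set $\mathcal{S}_{\cl}$, $N(I)\asymp 1$.
\end{itemize}
Combining these,
\[|\beta_v|_v\ll N(\mathfrak{q})^{2\epsilon}\,|K_v|_v\,\bigl(t^{\frac{1}{d_v}}|\alpha_{\mathfrak q}|_v|\Delta_v|_v|L_v|_v\bigr)^{-1}.\]

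\textbf{Invoking the hypothesis.} The right-hand side is, up to $t\asymp T$, exactly the quantity controlled by hypothesis \eqref{eq:choicedelta_v} of Proposition \ref{prop:boundF}, namely $\ll N(\mathfrak{q})^{-4\epsilon}$. (It is only $\ll 1$ under \eqref{eq:choicedelta_v2}, in which case the $N(\mathfrak q)^{2\epsilon}$ losses from the choice of generators should be absorbed by choosing $\epsilon$ in Lemma \ref{lem:choicegenerator} smaller.) Hence $|\beta_v|_v\ll N(\mathfrak{q})^{-2\epsilon}\le 1$, which gives inertness with implied constants independent of $\mathfrak{q},k,l,\delta,I,y,t$.

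\textbf{Main obstacle.} This is bookkeeping rather than analysis. First, the normalizations must be matched: the absolute value $|\cdot|_v$ at complex places is the square of the usual modulus, and $d_v$ differs between real and complex places, so one must check that the exponent $\frac1{d_v}$ on $t$ matches the one in the hypothesis. Second, the $\epsilon$-losses from choosing units via Lemma \ref{lem:choicegenerator} must be dominated by the margin in \eqref{eq:choicedelta_v}. In the complex case one also has to check the $\partial_\theta$ derivatives: writing $x=re^{i\theta}$, each $\partial_\theta$ applied to the phase produces a factor $\ll r|\beta_v|^{1/2}\ll 1$. So uniformity holds there as well.
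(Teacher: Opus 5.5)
Your proposal is correct and follows the paper's argument. Since $W_v$ is inert, it suffices to check that the additive character $\psi_v(-x\beta_v)$ does not oscillate on the region $|x|\asymp 1$. The paper gets this from hypothesis \eqref{eq:choicedelta_v} by "taking derivatives", and you simply make explicit the size bookkeeping for $k$, $l$, $\delta$, $y_v$ and $N(I)$ that the paper leaves implicit.
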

\begin{proof}
    Since we have chosen $W_v$ to be inert it suffices to check that $\psi_v$ is not oscillating on the interval where $W_v$ is not $0$. Suppose $v$ is complex, then $\psi_v(z) = e^{-2\pi i\operatorname{tr}_{\CC/\R}(z)} = e^{-4\pi i\real(z)}$. Write $z \in \CC$, $z = \vert z\vert_v^{1/2}e^{i\theta_z}$. Then we have 
    \begin{align*}
        &\psi_v(-zkt^{-\frac{1}{d_v}}(y_vl\delta\alpha_{\mathfrak{q}})^{-1}) = e^{4\pi i \vert zk{y}^{-1}{(l\delta\alpha_\mathfrak{q})}^{-1}\vert_v^{1/2} t^{-\frac{1}{2d_\infty}}\cos(\theta_{z}-\theta_y-\theta_{\varpi_\mathfrak{q}}-\theta_{\delta})}.
    \end{align*}
    Taking derivatives and recalling the bound \eqref{eq:choicedelta_v} we see that the function is not oscillating.
\end{proof}

\subsection{Application of Voronoi summation formula}

The next step is the Voronoi summation formula. Let $\delta,k,l$ be so that $a_{\delta},b_k,c_l$ are non-zero. Consider the sum
\begin{align}\label{eq:10.2.1}
    &\sum_{\gamma \in F^{\times}}W^{\phi_0}_{\fin}(a_1(\gamma I))\psi(-\gamma \zeta_{k,l,\delta,\alpha_{\mathfrak{q}}})\prod_{v \in S_{\infty}}W^{k,l,\delta}_{v}(t^{\frac{1}{d_v}}y_v\gamma) = \nonumber\\
    &\sum_{\gamma \in F^{\times}}W^{\circ}_{\fin}\left(a(\mathfrak{D}^{-1},\gamma \mathfrak{D}^{-1}I)\right)\psi(-\gamma \zeta_{k,l,\delta,\alpha_{\mathfrak{q}}})\prod_{v \in S_{\infty}}W^{k,l,\delta}_{v}(t^{\frac{1}{d_v}}y_v\gamma)
\end{align}

We apply Voronoi summation formula Proposition~\ref{thm:voronoi1} to \eqref{eq:10.2.1} and get,\footnote{By construction $(l\delta,\mathfrak{C}(\pi))=1$.}  with $$R=\{v<\infty,\ \vert{\zeta_{k,l,\delta,\alpha_{\mathfrak{q}}} \mathfrak{D} I^{-1}}\vert_v>1\},$$ up to terms of absolute value $1$ that we ignore,
\begin{align}\label{eq:10.2.2}
   \frac{\vert{\zeta_{k,l,\delta,\alpha_\mathfrak{q}} I\mathfrak{D}^{-1}}\vert_{R}N(\mathfrak{D})^{3/2}C(\pi)^{\frac{1}{2}}}{N(I)}\sum_{\gamma \in F^{\times}}K_{R}(\gamma,\zeta, \mathfrak{D}^{-1},\mathfrak{D}^{-1}I, U^{\circ}_R)U^{\circ R}(a(\mathfrak{D}^{-1},\gamma I^{-1}\mathfrak{D}^2\mathfrak{C}(\pi))) \nonumber\\
   \times\prod_{v\in S_{\infty}}B_{\pi_v}[W^{k,l,\delta}_v(t^{\frac{1}{d_v}}y_v\bullet)](\gamma). \end{align}To unfold the above sum, we define the following objects \begin{itemize}
       \item  $\mathfrak{u} = (\alpha_{\mathfrak{q}})\mathfrak{q}^{-1}\subset O_F$ (see \eqref{def:alpha}),
       \item  $\textswab{d}$ the ideal generated by $\delta$,
       \item $\mathfrak{z}_{k,l,\delta}=\prod_{v\in R}\mathfrak{p}_v^{-v(\zeta_{k,l,\delta,\alpha_{\mathfrak{q}}}\mathfrak{D}I^{-1})}=\textswab{d}\mathfrak{ul}\left(\textswab{d}\mathfrak{ul},\mathfrak{kD}I^{-1}\right)^{-1}$.\footnote{We are suppressing from the notation the dependency on $\mathfrak{u}$, $\mathfrak{D}$ and $I$ since they have absolutely bounded norm.}
       \item $J=\prod_{v\in R}\varpi_v^{v(\alpha_{\mathfrak{q}}I\mathfrak{D}^{-1})} = (l\frac{\delta}{(\delta,k)})^{-1}\mathfrak{z}_{k,l,\delta} \in \A^{\times}$.
   \end{itemize}   
   
   Then the sum \eqref{eq:10.2.2} can be written as follows (in what follows $M$ is a constant that depends at most polynomially on $I,C(\pi)$ and $N(\mathfrak{D})$ and it might change from line to line). 
    \begin{align*}
        &M\cdot N(\mathfrak{z}_{k,l,\delta})\sum_{\substack{\mathfrak{d}\subset O_F\\\mathfrak{d}|\mathfrak{z}_{k,l,\delta}}}N(\mathfrak{d})^{1/2}\sum_{\gamma \in F^{\times}}Kl(\gamma\zeta_{k,l,\delta,\alpha_{\mathfrak{q}}}^{-1}\mathfrak{D}^2;\mathfrak{d},\psi^{\circ})\\
    &\times U_{\fin}^{\circ}\left(a\left(\frac{\mathfrak{z}_{k,l,\delta}}{\mathfrak{d}}\mathfrak{D}^{-1},\gamma (\frac{\mathfrak{z}_{k,l,\delta}}{\mathfrak{d}})^{-2}\mathfrak{z}_{k,l,\delta}^3I^{-1}\mathfrak{D}^2\mathfrak{C}(\pi)\right)\right)\prod_{v\in S_{\infty}}B_{\pi_v}[W_v^{k,l,\delta}(t^{\frac{1}{d_v}}y_v\bullet)](\gamma)
\end{align*}
where for $\mathfrak{d}\subset O_F$: \begin{align*}N(\mathfrak{d})^{1/2}Kl(\gamma\zeta_{k,l,\delta,\alpha_{\mathfrak{q}}}^{-1}\mathfrak{D}^2;\mathfrak{d},\psi^{\circ}) &\coloneqq N(\mathfrak{d})^{1/2}\prod_{v(\mathfrak{d})>0}Kl(\gamma(\zeta_{k,l,\delta,\alpha_{\mathfrak{q}}}^{-1}\mathfrak{D}^2)_v;{v(\mathfrak{d})},\psi^{\circ}_v)  \\&= \prod_{v(\mathfrak{d})>0}S(\varpi_v^{-v(\mathfrak{D})},\gamma(\zeta_{k,l,\delta,\alpha_{\mathfrak{q}}}^{-1})_v\varpi_v^{v(\mathfrak{D})};{v(\mathfrak{d})},\psi_v),
\end{align*}
Recall that $\psi^{\circ} = \psi(\mathfrak{D}^{-1}\cdot)$.
From now on, to make notation less heavy, we write $Kl_v(x;{\mathfrak{d}},\psi^{\circ}),$ omitting the $v$'s in the arguments $x$ and $\mathfrak{d}$. 
After changing the variables $\mathfrak{d}\leftrightarrow\frac{\mathfrak{z}_{k,l,\delta}}{\mathfrak{d}}$ we get 
\begin{align*}
        &M\cdot N(\mathfrak{z}_{k,l,\delta})\sum_{\substack{\mathfrak{d}\subset O_F\\\mathfrak{d}|\mathfrak{z}_{k,l,\delta}}}\frac{N(\mathfrak{z}_{k,l,\delta})^{1/2}}{N(\mathfrak{d})^{1/2}}\left(\frac{N(\mathfrak{d})}{N(\mathfrak{z}_{k,l,\delta})^3}\right)^{1/2}\sum_{\gamma \in F^{\times}}\frac{1}{N(\gamma)^{1/2}}Kl(\gamma\zeta_{k,l,\delta,\alpha_{\mathfrak{q}}}^{-1}\mathfrak{D}^2;\frac{\mathfrak{z}_{k,l,\delta}}{\mathfrak{d}},\psi^{\circ})\nonumber\\
    &\times U_{\fin}^{\circ}(a(\mathfrak{d}\mathfrak{D}^{-1},\gamma \mathfrak{d}^{-2}\mathfrak{z}_{k,l,\delta}^3I^{-1}\mathfrak{D}^2\mathfrak{C}(\pi))\left(N(\gamma\mathfrak{d}^{-2}\mathfrak{z}_{k,l,\delta}^3)N(\mathfrak{d})\right)^{1/2}\prod_{v\in S_{\infty}}B_{\pi_v}[W_v^{k,l,\delta}(t^{\frac{1}{d_v}}y_v\bullet)]\left(\gamma\right).
\end{align*}
Let us denote $(\delta,k) = k$ if $k|\delta$ and $1$ if $k\nmid \delta$. We get finally, with the change of variables $\gamma \leftrightarrow \gamma(\frac{l\delta}{(\delta,k)})^{3}$:
      \begin{align*}
           &M\cdot N(\mathfrak{z}_{k,l,\delta})\sum_{\substack{\mathfrak{d}\subset O_F\\\mathfrak{d}|\mathfrak{z}_{k,l,\delta}}}\frac{N(\mathfrak{z}_{k,l,\delta})^{1/2}}{N(\mathfrak{d})^{1/2}}\left(\frac{N(\mathfrak{d})}{N(J)^3}\right)^{1/2}\sum_{\gamma \in F^{\times}}\frac{1}{N(\gamma)^{1/2}}Kl\left(\gamma\frac{(\delta,k)^3}{k(l\delta)^{2}}\alpha_{\mathfrak{q}}\mathfrak{D}^2;\frac{\mathfrak{z}_{k,l,\delta}}{\mathfrak{d}},\psi^{\circ}\right)\nonumber\\
    &\times U_{\fin}^{\circ}(a(\mathfrak{d}\mathfrak{D}^{-1},\gamma \mathfrak{d}^{-2}J^3I^{-1}\mathfrak{D}^2\mathfrak{C}(\pi))N(\gamma\mathfrak{d}^{-1}J^3)^{1/2}\prod_{v\in S_{\infty}}B_{\pi_v}[W_v^{k,l,\delta}(t^{\frac{1}{d_v}}y_v\bullet)]\left(\gamma(\frac{l\delta}{(\delta,k)})^{-3}\right).
      \end{align*}
By the properties of the newform $U_{v}^{\circ}$, $v<\infty$, the sums above, given $\mathfrak{d}|\mathfrak{z}_{k,l,\delta}$, are actually restricted to those $\gamma$ that satisfy \begin{equation*}
    v(\gamma) \geq v(\mathfrak{d}^{2}J^{-3}I\mathfrak{D}^{-3}\mathfrak{C}(\pi)^{-1})\qquad \forall v<\infty,\end{equation*}
We define so, for each $v < \infty$ and $\mathfrak{d}|\mathfrak{z}_{k,l,\delta}$, the function $f_v^{\mathfrak{d}} =\mathbb{1}_{\mathfrak{d}^{-2}J^{3}I\mathfrak{D}^{-3}\mathfrak{C}(\pi)^{-1}O_v}$.
Finally, we have \begin{align*}
    &B_{\pi_v}[W_v^{k,l,\delta}(t^{\frac{1}{d_v}}y_v\bullet)](x) = 
 \vert{t^{\frac{1}{d_{v}}}y_v}\vert_v B_{\pi_v}[W^{k,l,\delta}_{v}](t^{-\frac{1}{d_v}}y_v^{-1}x)\end{align*}
 and by \eqref{eq:decaybesselforinert} the latter is negligible small whenever $\vert{x}\vert_v \gg N(\mathfrak{q})^{\frac{\epsilon}{d_{\infty}}}T^{\frac{1}{d_v}}$.
After the above manipulations, we have 
\begin{align}\label{eq:10.2.3}
    \mathcal{F}^{\chi}_t(y,I) &=  M\cdot (tN(\mathfrak{q}))^{1/2}\sum_{\substack{\gamma,\delta \in F^{\times}\\\mathfrak{d}\subset O_F}}\sum_{\substack{k,l\\ \mathfrak{d}|\mathfrak{z}_{k,l,\delta}}}\frac{a_{\delta}b_kc_l}{N(\gamma)^{1/2}} \nonumber\\& \times U_{\fin}^{\circ}(a(\mathfrak{d}\mathfrak{D}^{-1},\gamma \mathfrak{d}^{-2}J^3I^{-1}\mathfrak{D}^2\mathfrak{C}(\pi))N(\gamma\mathfrak{d}^{-1}J^3)^{1/2} \nonumber\\&\times N(\mathfrak{z}_{k,l,\delta})^{3/2}Kl\left(\gamma\frac{(\delta,k)^3}{k(l\delta)^{2}}\alpha_{\mathfrak{q}}\mathfrak{D}^2;\frac{\mathfrak{z}_{k,l,\delta}}{\mathfrak{d}},\psi^{\circ}\right)\prod_{v\in S_{\infty}}B_{\pi_v}[W_v^{k,l,\delta}]\left(t^{-\frac{1}{d_v}}y_v^{-1}\gamma\left(\frac{l\delta}{(\delta,k)}\right)^{-3}\right).
\end{align}

\subsection{Cauchy--Schwarz}
Applying the Cauchy--Schwarz inequality to \eqref{eq:10.2.3}, we have

\begin{align*}\label{eq:FTcauchyschwartz}
    &\vert{\mathcal{F}^{\chi}_t(y,I)}\vert^2 \ll tN(\mathfrak{q})\nonumber\\&  \times\sum_{\substack{\gamma,\delta,\mathfrak{d}\\\\ N(\mathfrak{d})\ll \vert{\Delta L}\vert^3\\\forall v\in S_{\infty}:\\\vert{\gamma(\delta l)^{-3}}\vert_v\ll N(\mathfrak{q})^{\epsilon/d_{\infty}}T^{\frac{1}{d_v}}\\N(\gamma\mathfrak{d}^{-2})\ll N(\mathfrak{q})^{\epsilon}N(\delta l)^3T}} \vert{a_{\delta}}\vert\left\vert{U_{\fin}^{\circ}(a(\mathfrak{d}\mathfrak{D}^{-1},\gamma \mathfrak{d}^{-2}J^3I^{-1}\mathfrak{D}^2\mathfrak{C}(\pi))}\right\vert^2N(\mathfrak{d}) N(\gamma\mathfrak{d}^{-2})\nonumber\\ &
    \times \sum_{\mathfrak{d}\subset O_F}\sum_{\gamma,\delta}\frac{\vert{a_{\delta}}\vert}{N(\gamma)}\left\vert{\sum_{\substack{k,l\\\mathfrak{d}|\mathfrak{z}_{k,l,\delta}}}N(\mathfrak{z}_{k,l,\delta})^{3/2}b_kc_lC(\gamma,k,l,\delta,\mathfrak{d})}\right\vert^2 + O(N(\mathfrak{q})^{o(1)}).
\end{align*}
With \[C(\gamma,k,l,\delta,\mathfrak{d})=Kl\left(\gamma \frac{(\delta,k)^3}{k({l\delta})^{2}}\alpha_{\mathfrak{q}}\mathfrak{D}^2;\frac{\mathfrak{z}_{k,l,\delta}}{\mathfrak{d}};\psi^{\circ}\right)V_{k,l,\delta}\left(\gamma z_{t^{-1}}y_{\infty}^{-1}\tilde{T}_{\delta,k,l}\right),\]
where $\tilde{T}_{\delta,k,l}=\left(\left(\frac{l\delta}{(\delta,k)}\right)^{3}\right)_{v\in S_{\infty}} \subset F_{\infty}^{\times} \subset \A^{\times}$ is an idèle, trivial in the finite places, and
\[V_{k,l,\delta}(x) = \prod_{v<\infty}f_v^{\mathfrak{d}}(x_v)\times \prod_{v\in S_{\infty}}B_{\pi_v}[W_v^{k,l,\delta}](x_v), \qquad x \in \A^{\times}.\] 
By the Rankin--Selberg bound \eqref{eq:classicrankinselberg} and Lemma \ref{lem:countingunits} we have 
\begin{align*}\sum_{\substack{\gamma,\delta,\mathfrak{d}\\\\ N(\mathfrak{d})\ll \vert{\Delta L}\vert^3\\\forall v\in S_{\infty}:\\\vert{\gamma(\delta l)^{-3}}\vert_v\ll N(\mathfrak{q})^{\epsilon/d_{\infty}}T^{\frac{1}{d_v}}\\N(\gamma\mathfrak{d}^{-2})\ll N(\mathfrak{q})^{\epsilon}N(\delta l)^3T}} \vert{a_{\delta}}\vert\left\vert{U_{\fin}^{\circ}(a(\mathfrak{d}\mathfrak{D}^{-1},\gamma \mathfrak{d}^{-2}J^3I^{-1}\mathfrak{D}^2\mathfrak{C}(\pi))}\right\vert^2N(\gamma\mathfrak{d}^{-2})N(\mathfrak{d}) 
\ll N(\mathfrak{q})^{\epsilon}.
\end{align*}
Hence,
\begin{equation}\label{eq:10.3.1}\vert \mathcal{F}_t^\chi(y,I)\vert ^2 \ll tN(\mathfrak{q})^{1+\epsilon}\mathcal{F}_2 + O(N(\mathfrak{q})^{o(1)}),  \end{equation}
where
\begin{equation*}
    \mathcal{F}_2 = \sum_{\mathfrak{d}\subset O_F}\sum_{\gamma,\delta}\frac{\vert{a_{\delta}}\vert}{N(\gamma)}\left\vert{\sum_{\substack{k,l\\\mathfrak{d}|\mathfrak{z}_{k,l,\delta}}}N(\mathfrak{z}_{k,l,\delta})^{3/2}b_kc_lC(\gamma,k,l,\delta,\mathfrak{d})}\right\vert^2.
\end{equation*}
 Denote for each $v\in S_{\infty}$: \[\tilde{T}_v= (t^{\frac{1}{d_v}}y_v)\delta^3(\frac{l_1l_2}{(\delta,k_1)(\delta,k_2)})^{3/2} \] and let $\tilde{T} = (\tilde{T}_v)_{v\in S_{\infty}}, \tilde{T_1} = \tilde{T}_{\delta,k_1,l_1}, \tilde{T_2} = \tilde{T}_{\delta,k_2,l_2}\in F_{\infty}^{\times} \subset \A^{\times}$ be the corresponding idèle.
Let \[ U(x) = \frac{1}{\vert{x}\vert_{\infty}}V_{k_1,l_1,\delta}\left(\frac{x z_{t^{-1}}y_{\infty}^{-1}\tilde{T}}{\tilde{T}_1}\right)\overline{V_{k_2,l_2,\delta}\left(\frac{xz_{t^{-1}}y_{\infty}^{-1}\tilde{T}}{\tilde{T}_2}\right)}.
\]
We apply a partition of unity for each $v\in S_{\infty}$, to write:
\begin{align*}U(x)=\prod_{v<\infty}f_v^{\mathfrak{d}}(x_v)\sum_{(X_v)_v\in 2^{\Z^{d_{\infty}}}} \prod_{v\in S_{\infty}}\min(X_v^{1-2\theta},X_v^{-17})U_{X_v}\left(\frac{x_v}{X_v}\right),\end{align*}
where each $U_{X_v} \in C_c^{\infty}(F_v^{\times})$ is inert.
For each $X = (X_v)_v$ we denote \[U_X(x) = \prod_{v<\infty}f_v^{\mathfrak{d}}(x_v)\prod_{v\in S_{\infty}}U_{X_v}(x_v)\in C_c^{\infty}(\A).\]
Opening the square and applying the partition of unity we see that $\mathcal{F}_2$ is bounded by 
\begin{align}\label{eq:finalboundF} &\vert{\tilde{T}^{-1}}\vert N(\mathfrak{q})^{\epsilon}\sum_{X}\prod_{v\in S_{\infty}}\min(\vert{X}\vert_v^{1-2\theta},\vert{X}\vert_v^{-17})\sum_{\substack{\gamma,\delta,\mathfrak{d}}}\vert{a_{\delta}}\vert\times \nonumber\nonumber\\&\sum_{\substack{k_1,k_2,l_1,l_2\\\mathfrak{d}|(\mathfrak{z}_{k_1,l_1,\delta},\mathfrak{z}_{k_2,l_2,\delta})}}N(\mathfrak{z}_{k_1,l_1,\delta}\mathfrak{z}_{k_2,l_2,\delta})^{3/2}b_{k_1}\overline{b_{k_2}}c_{l_1}\overline{c_{l_2}}\mathscr{C}(\delta,\mathfrak{d},k_1,k_2,l_1,l_2,U_X(\frac{\cdot}{\tilde{T}X})),
\end{align}
where
\begin{align*}&\mathscr{C}\left(\delta,\mathfrak{d},k_1,k_2,l_1,l_2,U_X(\frac{\cdot}{\tilde{T}X} )\right) =\\ &\sum_{\gamma \in F^{\times}}U_X\left(\frac{\gamma}{\tilde{T}X}\right)Kl\left(\gamma\frac{(\delta,k_1)^3}{k_1(l_1\delta)^2}\alpha_{\mathfrak{q}}\mathfrak{D}^2;\frac{\mathfrak{z}_{k_1,l_1,\delta}}{\mathfrak{d}},\psi^{\circ}\right)\overline{Kl\left(\gamma\frac{(\delta,k_2)^3}{k_2\left({l_2\delta}\right)^{2}}\alpha_{\mathfrak{q}}\mathfrak{D}^2;\frac{\mathfrak{z}_{k_2,l_2,\delta}}{\mathfrak{d}},\psi^{\circ}\right)}.\end{align*}
and $X$ runs through the vectors $(X_v)_{v\in S_{\infty}}$ with $X_v = 2^k$ for some $k\in \Z$.
By Lemma \ref{lem:4.3} we have 
\begin{align*}\mathscr{C}&\left(\delta,\mathfrak{d},k_1,k_2,l_1,l_2,U_{X}(\frac{\cdot}{\tilde{T}X})\right) \\&\ll  \frac{\widehat{V}(0)\vert{X}\vert  }{N([\frac{\mathfrak{z}_{k_1,l_1,\delta}}{\mathfrak{d}},\frac{\mathfrak{z}_{k_2,l_2,\delta}}{\mathfrak{d}}])^{1/2}N(\mathfrak{d})^2} N\left((\frac{\mathfrak{z}_{k_1,l_1,\delta}}{\mathfrak{d}},\frac{\mathfrak{z}_{k_1,l_1,\delta}}{\mathfrak{d}},\Omega)\right)^{1/2}  +   N(\mathfrak{q})^{\epsilon}N\left([\frac{\mathfrak{z}_{k_1,l_1,\delta}}{\mathfrak{d}},\frac{\mathfrak{z}_{k_2,l_2,\delta}}{\mathfrak{d}}]\right)^{1/2},
\end{align*}
where, up to multiplication by elements of $\widehat O^{\times}$ of norm $1$, we have \begin{align*}\Omega &= \mathfrak{d}^{2}\prod_{v\in R}\mathfrak{p}_v^{v(J^{-3}I\mathfrak{D}^{-3})}\left(\frac{\alpha_{\mathfrak{q}}(\delta,k_1)^3}{k_1(l_1\delta)^2}-\frac{\alpha_{\mathfrak{q}}(\delta,k_2)^3}{k_2(l_2\delta)^2}\right)\left(\frac{(\mathfrak{z}_{k_1,l_1,\delta},\mathfrak{z}_{k_2,l_2,\delta})}{\mathfrak{d}}\right)^2 
\\&= \prod_{v\in R}\mathfrak{p}^{v(\alpha_{\mathfrak{q}}^{-3}I^{-2})}\left(\frac{\alpha_{\mathfrak{q}}(\delta,k_1)^3}{k_1(l_1\delta)^2}-\frac{\alpha_{\mathfrak{q}}(\delta,k_2)^3}{k_2(l_2\delta)^2}\right)\left(\frac{\delta^2 \mathfrak{u}^2(\mathfrak{l}_1,\mathfrak{l}_2)^2}{(\textswab{d},[\mathfrak{k}_1,\mathfrak{k}_2])^2(\textswab{d}\mathfrak{u},I^{-1}\mathfrak{D})^2}\right)\\ &=\mathfrak{D}^{-2}\left(\frac{(l_1,l_2)}{(\delta,[k_1,k_2])}\right)^2\left(\frac{(\delta,k_1)^3}{k_1l_1^2}-\frac{(\delta,k_2)^3}{k_2l_2^2}\right). \end{align*}
By \eqref{eq:decaybesselforinert} and \eqref{eq:decaybesselinert2} the sum \eqref{eq:finalboundF} is dominated by the contribution of $X=(1)_{S_\infty}$. Combining the bounds on $\mathcal{F}_2$ and \eqref{eq:10.3.1}, we have 
\begin{align*}
    &\vert{\mathcal{F}^{\chi}_{t}(y,I)}\vert^2 \ll tN(\mathfrak{q})^{1+\epsilon} \\ &\bigg( \sum_{\delta, k_1,k_2,l_1,l_2}\vert{a_{\delta}b_{k_1}b_{k_2}c_{l_1}c_{l_2}}\vert N(\mathfrak{z}_{k_1,l_1,\delta}\mathfrak{z}_{k_2,l_2,\delta})^{3/2}\\& \sum_{\mathfrak{d}|(\mathfrak{z}_{k_1,l_2,\delta},\mathfrak{z}_{k_2,l_2,\delta})}N(\mathfrak{d})^{-3/2}\frac{N\left((\frac{\mathfrak{z}_{k_1,l_1,\delta}}{\mathfrak{d}},\frac{\mathfrak{z}_{k_2,l_2,\delta}}{\mathfrak{d}},\Omega)\right)^{1/2}}{(\frac{N(\delta)}{N(\delta,k_1,k_2)})^{1/2}N([\mathfrak{l}_1,\mathfrak{l}_2])^{1/2}} +  \frac{1}{\vert{\tilde{T}}\vert N(\mathfrak{d})^{1/2}}N\left(\frac{\delta}{(\delta,k_1,k_2)}[\mathfrak{l}_1,\mathfrak{l}_{2}]\right)^{1/2}\bigg)
\end{align*}

\subsection{Conclusion}
We first bound the summand \begin{equation}\label{eq:finalFpart1} \sum_{\delta, k_1,k_2,l_1,l_2}\vert a_\delta b_{k_1}b_{k_2}c_{l_1}c_{l_2}\vert N(\mathfrak{z}_{k_1,l_1,\delta}\mathfrak{z}_{k_2,l_2,\delta})^{3/2}\sum_{\mathfrak{d}|(\mathfrak{z}_{k_1,l_2,\delta},\mathfrak{z}_{k_2,l_2,\delta})}N(\mathfrak{d})^{-3/2}\frac{N\left((\frac{\mathfrak{z}_{k_1,l_1,\delta}}{\mathfrak{d}},\frac{\mathfrak{z}_{k_2,l_2,\delta}}{\mathfrak{d}},\Omega)\right)^{1/2}}{(\frac{N(\delta)}{N(\delta,k_1,k_2)})^{1/2}N([\mathfrak{l}_1,\mathfrak{l}_2])^{1/2}}.\end{equation}

 We claim that $\Omega =0$ if and only if $k_1=k_2$ and $l_1=l_2$. Suppose that $k_1,k_2\nmid \delta$, then $\Omega = 0$ if and only if $k_1^{-1}l_1^{-2}-k_2^{-1}l_2^{-2}=0$ and the latter happens if and only if $k_1l_1^2 = k_2l_2^2$, and according to the assumptions on $k_i,l_i$ this happens if and only if $k_1 = k_2$ and $l_1= l_2$.   
Suppose that $k_1|\delta$ and $k_2\nmid \delta$. Then $\Omega = 0$ if and only if $k_1^2l_1^{-2}-k_2^{-1}l_2^{-2}=0$ and this can never happen. Suppose $k_1,k_2|\delta$, then $\Omega = 0$ if and only if $k_1^2l_1^{-2}-k_2^2l_2^{-2} =0$ and once again this happens if and only if $k_1 = k_2$ and $l_1 = l_2$.

We decompose \eqref{eq:finalFpart1} into two cases:
\[ \Omega =0,\quad \Omega \neq 0. \]First, we consider the case $\Omega = 0$, hence $k_1=k_2=k$ and $l_1=l_2=l$. We consider two subcases $k|\delta$ and $k\nmid \delta$, respectively. The first subcase contribution is bounded
\begin{equation}\label{eq:10.52}
    \ll N(\mathfrak{q})^{\epsilon}\sum_{k,l,\delta}\vert a_\delta b_{k_1}b_{k_2}c_{l_1}c_{l_2}\vert\frac{N(\frac{\delta}{k}l)^{7/2}}{N(\delta/k)^{1/2}N(l)^{1/2}} \ll \frac{N(\mathfrak{q})^{\epsilon}}{\vert{\Delta K^2L^2}\vert}\frac{\vert{\Delta^3 L^4}\vert}{\vert{K}\vert^{2}}(\vert{\Delta K^{-1}}\vert+1),
\end{equation}
whereas the second by 
\begin{equation}\label{eq:10.53}
    \ll N(\mathfrak{q})^{\epsilon}\sum_{k,l,\delta}\vert a_\delta b_{k_1}b_{k_2}c_{l_1}c_{l_2}\vert\frac{N(\delta l)^{7/2}}{N(\delta)^{1/2}N(l)^{1/2}} \ll \frac{N(\mathfrak{q})^{\epsilon}}{\vert{\Delta K^2L^2}\vert}\vert{\Delta L}\vert^{4}\vert{K}\vert,
\end{equation}
which dominates the first one.

For $\Omega \neq 0$ we have $k_1\neq k_2$ or $l_1\neq l_2$, similarly as in \eqref{eq:10.52}, the contribution if $k=k_1=k_2|\delta$ is less than the other contribution. By the divisor bound, the contribution of this case to \eqref{eq:finalFpart1} is then bounded by 
\begin{equation}\label{eq:10.54}
    \ll \frac{N(\mathfrak{q})^{\epsilon}}{\vert{\Delta K^2L^2}\vert}\vert{\Delta L}\vert^3 \frac{\vert{\Delta K^2 L^2}\vert}{\vert{\Delta^{1/2}L}\vert} = N(\mathfrak{q})^{\epsilon}\vert\Delta\vert^{5/2}\vert L\vert^2.
\end{equation}
Whereas the second summand contributes
\begin{align}\label{eq:10.55}
\sum_{\delta,k_1,k_2,l_1,l_2}\vert a_\delta b_{k_1}b_{k_2}c_{l_1}c_{l_2}\vert N(\mathfrak{z}_{k_1,l_1,\delta}\mathfrak{z}_{k_2,l_2,\delta})^{3/2}\sum_{\mathfrak{d}|(\mathfrak{z}_{k_1,l_1,\delta},\mathfrak{z}_{k_2,l_2,\delta})}\frac{ N(\delta[\mathfrak{l}_1,\mathfrak{l}_2]/(\delta,k_1,k_2))^{1/2}}{\vert{\tilde{T}}\vert N(\mathfrak{d})^{1/2}} \nonumber\\ \ll N(\mathfrak{q})^{\epsilon}\frac{1}{\vert{\Delta K^2L^2}\vert}t^{-1}\vert{\Delta^{1/2} L}\vert\vert{\Delta K ^2L^2}\vert = t^{-1}N(\mathfrak{q})^{\epsilon}\vert\Delta\vert^{1/2}\vert L\vert.
\end{align}

Combining \eqref{eq:10.52}, \eqref{eq:10.53}, \eqref{eq:10.54} and \eqref{eq:10.55} we obtain Proposition \ref{prop:boundf}. 

\section{Bounds for $\mathcal{O}_T^\chi$}\label{sec:Bounds for OT}
\begin{proposition}\label{prop:boundot} Suppose that $\frac{\vert{KL}\vert}{\vert\Delta\vert} \ll N(\mathfrak{q})^{-3\epsilon}$. Then for any $I\in \mathcal{S}_{\cl}$, $y \in \widehat{O}_1^{\times}(\mathfrak{q})E$ and $t\asymp T$, we have the following \[\mathcal{O}_t^{\chi}(y,I)\ll N(\mathfrak{q})^{1/2+\epsilon}t^{-1/2}\vert\Delta KL\vert^{-1}.  \]
\end{proposition}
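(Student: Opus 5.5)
We follow \cite[Section 5]{nelsonholowinsky}. The first step is to evaluate the $a$-sum in the amplified expression for $\mathcal{O}^\chi_t(y,I)$ as a Gauss sum. Since $y\in\widehat O_1^\times(\mathfrak q)E$ and $\phi=\phi_0^{u_{12}(\zeta_\mathfrak q)}$, we substitute the Fourier--Whittaker expansion
\[\mathbb P\phi(a_1(I\iota_\mathfrak q(a^{-1}k l^{-1})yz_t))=t^{-1/2}\sum_{\gamma}W_{\phi_0}(a_1(\gamma I yz_t\iota_\mathfrak q(\cdots)))\,\psi_{v_\mathfrak q}(\gamma a^{-1}kl^{-1}\varpi_\mathfrak q^{-1}).\]
The new-vector at $v_\mathfrak q$ is spherical because $\mathfrak q\nmid\mathfrak C(\pi)$, so the finite Whittaker value does not depend on $a$. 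The $a$-sum then becomes
\[\sum_{a\in(O/\mathfrak q)^\times}\chi(a^{-1}kl^{-1})\,\psi_{v_\mathfrak q}\bigl((a\delta+a^{-1}\gamma kl^{-1})/\varpi_\mathfrak q\bigr),\]
which is a twisted Kloosterman sum $S_\chi(\delta,\gamma k l^{-1};1,\psi_{v_\mathfrak q})$. Its size is $\ll N(\mathfrak q)^{1/2}$ by the Weil bound, which for $l=1$ is the sheaf-theoretic input already used in Lemma~\ref{lem:correlationkloosterman}. If $\gamma\equiv0\bmod\mathfrak q$, the sum is a Gauss sum of modulus $N(\mathfrak q)^{1/2}$. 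If $\delta\equiv 0$, it degenerates, but then $\widehat V_0$ is negligible, as shown in the key identity.

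This Weil bound alone does not suffice, so the second step exploits the amplifier. Writing $\chi(k l^{-1})$ explicitly, we change variables $a\mapsto a k l^{-1}$ (equivalently, we use the relation $\sum b_k\overline\chi(k)=1$ in reverse) to move the $k,l$ dependence into the additive phase, $\psi_{v_\mathfrak q}(a\delta l k^{-1}/\varpi_\mathfrak q)$. The key point is that the Hecke-type structure lets us combine $\delta l/k$ into a single new variable $\delta'=\delta l\bar k$ modulo $\mathfrak q$. Since $|\delta|_v\ll N(\mathfrak q)^\epsilon|\Delta|_v^{-1}$ and $|KL|/|\Delta|\ll N(\mathfrak q)^{-3\epsilon}$, distinct triples $(\delta,k,l)$ give distinct classes $\delta l\equiv \delta' k$ with no collisions modulo $\mathfrak q$, except for the genuine equality $\delta_1 l_1 k_2=\delta_2 l_2k_1$ in $O_F$. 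The hypothesis is used exactly here: the norm of $\delta l k$ is below $N(\mathfrak q)$. The expression then becomes a sum over $a$ of $\chi(\bar a)\,\mathbb P\phi(\cdots)$ against the coefficients $\sum_{\delta,k,l}b_kc_l\widehat V_0(\cdots)\psi(a\delta l \bar k/\varpi_\mathfrak q)$.

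The third step applies Cauchy--Schwarz in $a$. This gives
\[\Bigl(\sum_a|\mathbb P\phi(a_1(I\iota_\mathfrak q(a^{-1})yz_t))|^2\Bigr)^{1/2}\]
times the $\ell^2$ norm over $a$ of the amplified exponential sum. For the first factor, unfolding by Parseval over $(O/\mathfrak q)^\times$ together with the Rankin--Selberg bound (Lemma~\ref{lem:rankinweneed}) yields $\ll N(\mathfrak q)^{1+\epsilon}t^{-1}\cdot N(\mathfrak q)^{-1}$ per unit, in the same way as the bound $I_t\ll N(\mathfrak f)^{-1/2}t^{-1/2-\epsilon}$. The second factor, by orthogonality of additive characters mod $\mathfrak q$, counts solutions of $\delta_1l_1\bar k_1\equiv\delta_2l_2\bar k_2$. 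By the no-wraparound observation, these are exactly the diagonal solutions in $O_F$, and by the divisor bound and Lemma~\ref{lem:countingunits} this is $\ll N(\mathfrak q)^{1+\epsilon}\sum|b_kc_l|^2\cdot\#\{\delta\}\ll N(\mathfrak q)^{1+\epsilon}|KL|^{-1}|\Delta|^{-1}$, using $\#\delta\ll|\Delta|^{-1}N(\mathfrak q)$. Collecting the normalization $N(\mathfrak q)^{-1/2}$ in front of $\mathcal O_t$ together with $\widehat V_0\ll 1$ should give $N(\mathfrak q)^{1/2+\epsilon}t^{-1/2}|\Delta KL|^{-1}$ after bookkeeping, though we must check carefully whether the $\ell^2$ counting yields $|\Delta KL|^{-1}$ or only its square root. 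The main obstacle is the no-collision counting over a number field. There the units make $\delta$ and the generators $k,l$ non-unique, so Lemma~\ref{lem:choicegenerator} and the normalization of generators, together with Lemma~\ref{lem:countingunits}, are needed to keep the congruence count to $N(\mathfrak q)^\epsilon$ times the diagonal.
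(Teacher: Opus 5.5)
Your route differs from the paper's, and it is viable, but two quantitative steps are wrong as written and the result is not the exponent in the statement.

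\textbf{The two routes.} The paper keeps the Kloosterman sums. It applies Cauchy--Schwarz in $\gamma$ with Rankin--Selberg on one side. It then uses Poisson in $\gamma$, where only the zero frequency survives because $t^{-1}\gg N(\mathfrak q)$. Finally it bounds the zero-frequency correlation $\mathcal C_\eta(\omega_1,\omega_2,0;1,\psi_{v_\mathfrak q})$ by $N((\omega_1-\omega_2,\mathfrak q))$. You instead shift $a\mapsto akl^{-1}$ to separate $\mathbb P\phi(a_1(I\iota_\mathfrak q(a^{-1})yz_t))$ from the amplified exponential sum $A(a)$, and apply Cauchy--Schwarz in $a$. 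The amplifier side is then exact Plancherel on $O_F/\mathfrak q$, which is the paper's zero-frequency evaluation in dual form, so your Step 1 is unnecessary. Your no-collision diagonal count, using $|KL|/|\Delta|\ll N(\mathfrak q)^{-3\epsilon}$, is correct.

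\textbf{The automorphic factor.} What you need is
\[\sum_{a\in O_F/\mathfrak q}\bigl|\mathbb P\phi(a_1(I\iota_\mathfrak q(a^{-1})yz_t))\bigr|^2\ll N(\mathfrak q)^{\epsilon}\bigl(N(\mathfrak q)+t^{-1}\bigr)\ll N(\mathfrak q)^{\epsilon}t^{-1}.\]
This is an additive large sieve. Write the summand as $\bigl|\sum_\gamma c_\gamma\psi_{v_\mathfrak q}(\gamma a^{-1}\varpi_\mathfrak q^{-1})\bigr|^2$ with $c_\gamma=t^{-1/2}W_{\phi_0}(a_1(\gamma Iyz_t))$, and apply Plancherel in $a$. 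Then use that the balanced box $|\gamma|_v\asymp t^{-1/d_v}$ meets each residue class mod $\mathfrak q$ in $\ll 1+t^{-1}N(\mathfrak q)^{-1}$ points. Finish with Rankin--Selberg, $\sum_\gamma|c_\gamma|^2\ll t^{-\epsilon}$.

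Your justification (``Parseval over $(O/\mathfrak q)^\times$ \dots in the same way as $I_t\ll N(\mathfrak f)^{-1/2}t^{-1/2-\epsilon}$'') does not give this. Multiplicative Parseval fed with the pointwise bound for each twisted period $I_t(\phi,\eta)$ only yields $N(\mathfrak q)^{1+\epsilon}t^{-1}$. That loss of $N(\mathfrak q)$ is fatal: the final bound would then exceed convexity.

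\textbf{The amplifier factor and the exponent.} Your own expression $N(\mathfrak q)^{1+\epsilon}\sum|b_kc_l|^2\cdot\#\{\delta\}$ is right, but you dropped a factor. Since $\#\{\delta\}\ll N(\mathfrak q)^{1+\epsilon}|\Delta|^{-1}$, it equals $N(\mathfrak q)^{2+\epsilon}|\Delta KL|^{-1}$, not $N(\mathfrak q)^{1+\epsilon}|\Delta KL|^{-1}$. Combining with the prefactor $N(\mathfrak q)^{-1/2}$ gives
\[|\mathcal O_t^\chi(y,I)|^2\ll N(\mathfrak q)^{-1}\cdot N(\mathfrak q)^{\epsilon}t^{-1}\cdot N(\mathfrak q)^{2+\epsilon}|\Delta KL|^{-1},\]
that is, $\mathcal O_t^\chi(y,I)\ll N(\mathfrak q)^{1/2+\epsilon}t^{-1/2}|\Delta KL|^{-1/2}$.

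This settles the doubt you raised: the square root is genuine, and the $|\Delta KL|^{-1}$ in the statement is a misprint. The paper's own final display is exactly this bound for $|\mathcal O_t^\chi|^2$. The same exponent is what Proposition~\ref{prop:boundO} records, with $|\Delta KL|^{-1/2}$, and what the final exponent $\frac{1}{36}$ relies on. You should aim for $|\Delta KL|^{-1/2}$, not try to reach $|\Delta KL|^{-1}$.
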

Again, by trivially estimating, we get Proposition \ref{prop:boundO}.

We recall that
\begin{align*}
    \mathcal{O}^{\chi}_t(y,I)&=\frac{1}{N(\mathfrak{q})^{\frac{1}{2}}}\sum_{\substack{\delta,k,l\\ \forall v \in S_{\infty}:\\\vert{\delta}\vert_v \ll \vert{\Delta}\vert_vN(\mathfrak{q})^{\epsilon}}}b_kc_l\widehat{V_0}\left(\delta\varpi_\mathfrak{q}\Delta\right)\sum_{a\in F^{\mathfrak{q}}/F^{\mathfrak{q,1}}}\psi_{v_{\mathfrak{q}}}\left(\frac{a\delta}{\varpi_{\mathfrak{q}}}\right)\chi(\iota_{\mathfrak{q}}(k(al)^{-1}))\nonumber\\
    &\times \mathbb{P}\phi_0^{u_{12}(\zeta_{\mathfrak{q}})}(a_1(I\iota_{\mathfrak{q}}(k(la)^{-1})yz_t)\chi(y_{\infty}), 
\end{align*}

To prove Proposition \ref{prop:boundot} we fix $I\in \mathcal{S}_{\cl}$, $o(y) \in \widehat{O}_1^{\times}(\mathfrak{q})$, $y_{\infty} \in E$ and $t>0$. 
We express $\mathbb{P}\phi_0^{u_{12}(\zeta_{\mathfrak{q}})}$ using the Fourier-Whittaker expansion, and we see that 

\begin{align*}
      \mathcal{O}^{\chi}_t(y,I) &= \frac{1}{(tN(\mathfrak{q}))^{\frac{1}{2}}}\sum_{\substack{\delta,k,l\\ \forall v \in S_{\infty}:\\\vert{\delta}\vert_v \ll \vert{\Delta}\vert_vN(\mathfrak{q})^{\epsilon}}}b_kc_l\widehat{V_0}\left(\delta\varpi_\mathfrak{q}\Delta\right)\sum_{a\in F^{\mathfrak{q}}/F^{\mathfrak{q,1}}}\psi_{v_{\mathfrak{q}}}\left(\frac{a\delta}{\varpi_{\mathfrak{q}}}\right)\chi(\iota_{\mathfrak{q}}(k(al)^{-1}))\nonumber\\
    &\times \sum_{\gamma \in F^{\times}}W_{\phi_0}\left(a_1(\gamma Iyz_t\iota_{\mathfrak{q}}(k(l\delta^{-1}))\right)\psi_{v_{\mathfrak{q}}}(\gamma k(la)^{-1}\alpha_{\mathfrak{q}}^{-1})) \\ &= \frac{1}{t^{\frac{1}{2}}}\sum_{\substack{\delta,k,l\\ \forall v \in S_{\infty}:\\\vert{\delta}\vert_v \ll \vert{\Delta}\vert_v^{-1}N(\mathfrak{q})^{\epsilon}}}\chi(\iota_{\mathfrak{q}}(\delta kl^{-1}))b_kc_l\widehat{V_0}\left(\delta\varpi_\mathfrak{q}\Delta\right) \sum_{\gamma \in F^{\times}}Kl_{\chi_{\mathfrak{q}}}(kl^{-1}\delta\gamma;\mathfrak{q})W_{\phi_0}\left(a_1(\gamma Iy_{\infty}z_t)\right).
\end{align*} Here $Kl_{\chi_{q}}(\cdot\ ;\mathfrak{q})$ is a Kloosterman sum, and in the notation of Section \ref{sec:gausskloosterman}, equation \eqref{eq:Klosterman}, we have
\[Kl_{\chi_{\mathfrak{q}}}(kl^{-1}\delta \gamma;\mathfrak{q})= Kl_{\chi_{\mathfrak{q}}}(kl^{-1}\delta\gamma\varpi_{\mathfrak{q}}^{-2};1,\psi_{v_{\mathfrak{q}}}).\]
We apply Cauchy--Schwarz, and using the Rankin--Selberg bound \eqref{eq:classicrankinselberg} we get 
\begin{align*}
    \vert{O_t^{\chi}(y,I)}\vert^2 &\ll \frac{N(\mathfrak{q})^{\epsilon}}{t} \sum_{\substack{\gamma \in F^{\times}\\ \gamma I \subset O_F  }}\frac{1}{N(\gamma)}\left\vert W_{\infty}(a_1(\gamma z_{N(I)^{-1}t}y_\infty))\right\vert^2\\&\times\left\vert \sum_{\substack{\delta,k,l\\ \forall v \in S_{\infty}:\\\vert{\delta}\vert_v \ll \vert{\Delta}\vert_v^{-1}N(\mathfrak{q})^{\epsilon}}}\chi(\iota_{\mathfrak{q}}(\delta kl^{-1}))b_kc_l\widehat{V}\left(\delta\varpi_\mathfrak{q}\Delta\right)Kl_{\chi_{\mathfrak{q}}}(kl^{-1}\delta\gamma;\mathfrak{q})\right\vert^2,
\end{align*}
where $W_{\infty} = \prod_v W_v$.
We open the square and consider $\delta_1,\delta_2,k_1,k_2,l_1$ and $l_2$ to be fixed. We see that it suffices to bound sums of the shape 
\begin{equation}\label{eq:11.1}
    \sum_{\substack{\gamma \in F^{\times}\\ \gamma I \subset O_F}}\frac{\left\vert W_{\infty}\left(a_1(\gamma z_{N(I)^{-1}t}y_{\infty})\right)\right\vert^2}{N(I)t^{-1}}Kl_{\chi_{\mathfrak{q}}}(k_1l_1^{-1}\delta_1\gamma;q)\overline{Kl_{\chi_{\mathfrak{q}}}(k_2l_2^{-1}\delta_2\gamma;\mathfrak{q})},
\end{equation}
where we observed that if $\gamma$ is such that $W_{\infty}(a_1(\gamma z_{N(I)^{-1}t}y_{\infty}) \neq 0$, then $$\vert{\gamma}\vert_v \asymp \vert (N(I)t^{-1})^{\frac{1}{d_v}}y_v \vert_v$$ for all $v\in S_{\infty}$, and so $N(\gamma) \asymp  N(I)t^{-1}$.
Let $$W(x) = \prod_{v<\infty}\mathbb{1}_{I_v}(x_v) \times \prod_{v\in S_{\infty}}\vert{W_v}(a_1(x_v))\vert^2 \in C_c^{\infty}(\A).$$ By Poisson summation \eqref{eq:11.1} equals  \begin{align}\label{eq:11.2}
    \frac{1}{N(\mathfrak{q})}\sum_{\substack{\gamma \in F^{\times}}}\widehat{W}(\gamma z_{N(I)t^{-1}}y_{\infty}^{-1}) \mathcal{C}_{\eta}(k_1l_1^{-1}\delta_1,k_2l_2^{-1}\delta_2,\gamma;1,\psi_{v_{\mathfrak{q}}}),
\end{align}
where $\mathcal{C}_{\eta}(k_1l_1^{-1}\delta_1,k_2l_2^{-1}\delta_2,\gamma;1,\psi_{v_{\mathfrak{q}}})$ is defined in Section \ref{sec:gausskloosterman}. For each $v\in S_{\infty}$ the Fourier transform $\widehat{W}_v$ satisfies $\widehat{W}_v(x) \ll_A (1+\vert{x}\vert)^{-A}$ for every $A>0$. If $\gamma\neq 0$ contributes to \eqref{eq:11.2}, then $\gamma \in I\mathfrak{D}^{-1}O_F$ and so, then $N(\gamma)\gg 1$. In particular, there is $v\in S_{\infty}$ such that $\vert\gamma\vert_v \gg 1$ and $\widehat{W}_{\infty}(\gamma z_{N(I)t^{-1}}y_{\infty})$ is negligible small. Using Lemma \ref{lem:correlationkloosterman} we see that \eqref{eq:11.2} is bounded by 
\[\frac{1}{N(\mathfrak{q})}\vert \mathcal{C}_{\eta}(k_1l_1^{-1}\delta_1,k_2l_2^{-1}\delta_2,\gamma;1,\psi_{v_{\mathfrak{q}}}) \vert + O_A(N(\mathfrak{q})^{-A}) \ll \frac{1}{N(\mathfrak{q})}N((k_1l_1^{-1}\delta_1-k_2l_2^{-1}\delta_2,\mathfrak{q}).  \]
 To conclude, we have 
\begin{align*}\vert O_t^{\chi}(y,I)\vert ^2 &\ll \frac {N(\mathfrak{q})^{\epsilon}}{t}\sum_{k_1,k_2,l_1,l_2,\delta_1,\delta_2} b_{k_1}\overline{b_{k_2}}c_{l_1}\overline{c_{l_2}}\widehat{V}\left(\delta_1\varpi_\mathfrak{q} \Delta\right)\overline{\widehat{V}\left(\delta_2\varpi_\mathfrak{q} \Delta\right)}\frac{N((k_1l_1^{-1}h_1-k_2l_2^{-1}h_1,\mathfrak{q})}{N(\mathfrak{q})
} \\&\ll \frac {N(\mathfrak{q})^2N(\mathfrak{q})^{\epsilon}}{t\vert\Delta\vert^2}(N(\mathfrak{q})^{-1}+\frac{\vert\Delta\vert}{N(\mathfrak{q})\vert KL\vert} ).\end{align*}
Since $\vert{\Delta}KL\vert \leq N(\mathfrak{q})^{1-3\epsilon}$ we see that the second term dominates the first one and this concludes the proof of Proposition \ref{prop:boundot}.

\section{Appendix: $v$-adic Bessel transform for new-vectors}\label{appendix}

Let $v$ be a finite place of $F$ and suppose that $\psi_v$ is unramified.\footnote{This assumption is not necessary for the computations, but it slightly simplifies matters.} Let $a(\pi_v)$ be the exponent conductor of $\pi_v$. Recall that running the argument of \cite{IchinoTemplier} one is faced with the following Bessel transforms of the functions $W_v \in \mathcal{W}(\pi_v,\psi_v)$:
\[B_{\pi_v}[W_v]\colon F_v^{\times}\ni y \mapsto \int_{F_v}(W_v)^{\iota}\left(\begin{pmatrix}
    y&&\\&1&\\&&1
\end{pmatrix}\begin{pmatrix}1&&\\x_v&1&\\&&1\end{pmatrix}\sigma_{23}\right)\ \mathrm{d}x_v, \]
where we recall that $(W_v)^\iota(g) = W_v(\sigma_{13}g^{-t})$.
In this Appendix, we compute the Bessel transform of the new-vectors $W_v^{\circ} \in \mathcal{W}(\pi_v,\psi_v)$.

\begin{proposition}\label{prop:computation-v-adic-bessel-transform}For the new-vector $W_v^\circ \in \mathcal{W}(\pi_v,\psi_v)$ the following identity holds
    \[B_{\pi_v}[W^{\circ}_v](y) = \epsilon(\pi_v,\psi_v)q_v^{\frac{a(\pi_v)}{2}}U^{\circ}_v(a_1(y\varpi_v^{a(\pi_v)})),\]
    where $U^\circ_v \in \mathcal{W}(\tilde{\pi}_v,\overline{\psi_v})$ is the new-vector of the contragredient representation.
\end{proposition}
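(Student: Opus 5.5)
The plan is to characterize $B_{\pi_v}[W_v^\circ]$ through its Mellin transforms, using the local functional equation of Jacquet--Piatetski-Shapiro--Shalika for $\gelle_3\times\gelle_1$. The transform $B_{\pi_v}[W]$ is exactly the function that appears on the dual side of that equation. Concretely, for every character $\eta$ of $F_v^\times$ and $\real(s)$ in a suitable range,
\[\int_{F_v^\times} B_{\pi_v}[W](y)\,\eta^{-1}(y)\vert y\vert_v^{s-1}\,\mathrm{d}^\times y = \gamma(1-s,\pi_v\times\eta,\psi_v)\int_{F_v^\times}W(a_1(y))\eta(y)\vert y\vert_v^{-s}\,\mathrm{d}^\times y.\]
This is the non-archimedean analogue of the uniqueness property defining the archimedean Bessel transforms. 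To justify it, I would change variables in the double integral $\int\int (W)^\iota(\cdots)\,\mathrm{d}x\,\mathrm{d}^\times y$ so that it becomes the dual zeta integral $\tilde\Psi(1-s,\rho(\sigma_{23})\tilde W,\eta^{-1})$ of Cogdell's notes. Both sides are then rational in $q_v^{-s}$, and since the Mellin transforms over all $\eta$ determine a locally constant function on $F_v^\times$, it suffices to show that the right-hand side of the claimed identity has the same Mellin transforms.

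Next I would compute both sides using the properties of new-vectors recalled in Subsection \ref{subsect:newforms}. Since $W_v^\circ$ is right $a_1(O_v^\times)$-invariant, the Gauss sum computation preceding Lemma \ref{lem:shiftnewform} shows that $\Psi(W_v^\circ,\eta,\cdot)$ vanishes unless $\eta$ is unramified. For unramified $\eta$ it equals $L(\pi_v\times\eta,1-s)$, which is Matringe's characterization; here I use that $\psi_v$ is unramified, so no $\mathfrak D$-shift occurs.

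For ramified $\eta$ the left side must then vanish. For $U_v^\circ(a_1(y\varpi^{a}))$, where $a=a(\pi_v)$, this is automatic, since $U_v^\circ$ is also $a_1(O_v^\times)$-invariant. For unramified $\eta=\vert\cdot\vert^{it}$ I would use the conductor form of the $\gamma$-factor,
\[\gamma(1-s,\pi_v\times\eta,\psi_v)=\epsilon(1-s,\pi_v\times\eta,\psi_v)\frac{L(\tilde\pi_v\times\eta^{-1},s)}{L(\pi_v\times\eta,1-s)},\]
with $\epsilon(1-s,\pi_v\times\eta,\psi_v)=\epsilon(\pi_v,\psi_v)\eta(\varpi)^{a}q_v^{-a(1/2-s)}$. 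The right-hand side of the functional equation therefore becomes $\epsilon(\pi_v,\psi_v)\eta(\varpi)^{a}q_v^{a(s-1/2)}L(\tilde\pi_v\times\eta^{-1},s)$.

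On the other side, the substitution $y\mapsto y\varpi^{-a}$ in the Mellin transform of $U_v^\circ(a_1(y\varpi^{a}))$ against $\eta^{-1}\vert y\vert^{s-1}$ gives $\eta(\varpi)^{a}q_v^{a(s-1)}L(\tilde\pi_v\times\eta^{-1},s)$. Here I again apply Matringe's identity, now to $\tilde\pi_v$, whose conductor equals that of $\pi_v$. Multiplying by $\epsilon(\pi_v,\psi_v)q_v^{a/2}$ recovers exactly the previous expression, which gives the result.

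The main obstacle is the first step. One must check carefully that the matrix manipulation, involving $\iota$, $\sigma_{13}$, $\sigma_{23}$ and the normalizations of $\psi_v$ against $\overline{\psi_v}$, produces precisely this $\gamma$-factor with these arguments: $1-s$ rather than $s$, $\eta^{-1}$ rather than $\eta$, and the correct $\vert\cdot\vert^{\pm1}$ twists. It must also be checked that the integral defining $B_{\pi_v}[W_v^\circ]$ converges. I would handle this by following Ichino--Templier's unramified computation verbatim, replacing their spherical vector by an arbitrary $W$; their derivation uses only the local functional equation and never invokes sphericity.
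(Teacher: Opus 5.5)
Your proposal is correct and follows the paper's proof essentially step by step. You characterize $B_{\pi_v}[W_v^\circ]$ through its Mellin transforms via the $\gelle_3\times\gelle_1$ local functional equation (the paper simply cites this characterization from Ichino--Templier), use that the zeta integrals of the new-vector vanish for ramified $\eta$ and equal the $L$-factor for unramified $\eta$, and match against the Mellin transform of $U_v^\circ(a_1(y\varpi_v^{a(\pi_v)}))$. The only difference is cosmetic: you keep unramified twists $\eta=\vert\cdot\vert^{it}$ explicit through the factor $\eta(\varpi)^{a(\pi_v)}$, whereas the paper absorbs them into $s$ and restricts to $\eta\in\widehat{O_v^\times}$.
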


In \emph{loc. cit.} the authors show that $B_{\pi}[W_v]$ is uniquely characterized by the following identity. 
\[\frac{\int_{F_v^{\times}}B_{\pi_v}[W_v](y)\overline{\eta}(y)\vert{y}\vert_v^{s-1}\ \mathrm{d}^{\times}y}{L(\overline{\eta}\tilde{\pi_v},s)} = \varepsilon(\eta\pi_v,1-s,\psi_v)\frac{\int_{F_v^{\times}}W_v(a_1(y))\eta(y)\vert{y}\vert_v^{-s}\ \mathrm{d}^{\times}y}{L(\eta\pi_v,1-s)}, \]
for every $\eta \in \widehat{F_v^{\times}}$. We restrict our attention to $\eta \in \widehat{O_v^{\times}}$ since every character $\eta \in \widehat{F_v^{\times}}$ can be written as $\eta = \eta_0\vert{\cdot}\vert_v^{it}$ for some $\eta_0 \in \widehat{O_v^{\times}}$ and $t\in \R$.

The identity has to be understood by analytic continuation, since the integrals generally only converge in some region.

The $\varepsilon$-factor is of the shape
\[\varepsilon(\eta\pi_v,s,\psi_v) = \varepsilon(\eta\pi_v,\psi_v)q_v^{a(\eta\pi_v)(1/2-s)},\]
for $\varepsilon(\eta\pi_v,\psi_v)$ a complex number of absolute value $1$.

If $W^{\circ}_v$ is the local new-vector (unique up to scalar multiple) in the Whittaker model $\mathcal{W}(\pi_v,\psi_v)$, then when it converges, the integral on the right hand side is 
\begin{align*}
    \int_{F_v^{\times}}W^{\circ}_v(a_1(y))\eta(y)\vert{y}\vert_v^{-s}\ \mathrm{d}^{\times}y = \begin{cases}
        L(\pi_v,1-s)& \eta = 1,\\
        0 & \text{otherwise.}
    \end{cases}
\end{align*}
Recall that the $L$-factor is defined so that 
\[e(W,\eta,s)\colon \CC\ni s\mapsto \frac{\int_{F_v^{\times}}W(a_1(y))\eta(y)\vert{y}\vert_v^{s-1}\ \mathrm{d}^{\times}y}{L(\eta\pi_v,s)} \]
has an entire continuation for all $W\in \mathcal{W}(\pi_v,\psi_v)$ (more precisely it is an element of $\CC[q_v^{\pm s}]$).
By the above computation and by the the identity theorem we deduce that when $W^{\circ}_v$ is the new-vector we have \begin{align*}
    e(W^{\circ}_v,\eta,s) = \begin{cases}
        1 & \eta =1 \\ 
        0 & \text{otherwise.}
    \end{cases}
\end{align*}
Consider the new-vector $U^{\circ}_v\in \mathcal{W}(\tilde{\pi}_v,\overline{\psi_v}) $  and consider the vector $U_v = (U^{\circ}_v)^{(a_1(\varpi_v^{a(\pi_v)}))}$ obtained by right translating $U_v^{\circ}$. Then, whenever it converges, we have 
\begin{align*}\int_{F_v^{\times}}U_v(a_1(y))\overline{\eta}(y)  \vert{y}\vert_v^{s-1} \ \mathrm{d}^{\times}y &= q_v^{a(\pi_v)(s-1)}\overline{\eta}(\varpi_v^{-a(\pi_v)})\int_{F_v^{\times}}U^{\circ}_v(a_1(y))\overline{\eta}(y)\vert{y}\vert_v^{s-1}\ \mathrm{d}^{\times}y\\ &=\begin{cases}
    q_v^{a(\pi_v)(s-1)}L(\tilde{\pi_v},s)&\eta =1, \\
    0 & \text{otherwise.}
\end{cases}\end{align*}
We deduce that \begin{align*}\int_{F_v^{\times}}B_{\pi}[W^{\circ}_v](y)\eta(y) \vert{y}\vert^{s-1}\ \mathrm{d}^{\times}y &= \delta_{\eta=1}\varepsilon(\pi_v,\psi_v)q_v^{a(\pi_v)(s-1/2)}L(\tilde{\pi_v},s)\\ &=\varepsilon(\pi_v,\psi_v)q_v^{\frac{a(\pi_v)}{2}}\int_{F_v^{\times}}U^{\circ}_v(a_1(y\varpi_v^{a(\pi)}))\eta(y)\vert{y}\vert^{s-1}\ \mathrm{d}^{\times}y\end{align*}
for all $\eta \in \widehat{O_{F_v}^{\times}}$. Both expressions converge absolutely for $\real(s)>1$. Taking the inverse Mellin transform we deduce that for $W^{\circ}_v$ the new-vector we have \[B_{\pi_v}[W^{\circ}_v](y) = \epsilon(\pi_v,\psi_v)q_v^{\frac{a(\pi_v)}{2}}U^{\circ}_v(a_1(y\varpi_v^{a(\pi_v)})).\]

\bibliographystyle{alphaurl}
\bibliography{references}
\end{document}